\documentclass[a4paper,12pt]{amsart}
\usepackage{yhmath}
\usepackage{graphicx}
\usepackage{mathtools}
\usepackage{mathrsfs}
\usepackage{amsmath}
\usepackage{amsthm}
\usepackage{dsfont}
\usepackage{fancyhdr}
\usepackage{harpoon}
\usepackage{amssymb}
\usepackage{enumerate}
\usepackage[bookmarks=true,colorlinks,linkcolor=black]{hyperref}
\usepackage{geometry}
\usepackage{extarrows}
\usepackage{yfonts}
\usepackage{amsfonts}
\usepackage{tikz}
\usetikzlibrary{arrows}
\usepackage{blkarray}
\usepackage{multirow}
\usepackage{color}
\geometry{left=3cm,right=3cm}
\textwidth 15cm
\vfuzz2pt 
\hfuzz2pt 
\newtheorem{thm}{Theorem}[section]
\newtheorem{cor}[thm]{Corollary}
\newtheorem{conj}[thm]{Conjecture}

\newtheorem{lem}[thm]{Lemma}
\newtheorem{prop}[thm]{Proposition}
\theoremstyle{definition}
\newtheorem{defn}[thm]{Definition}

\newtheorem{rem}[thm]{Remark}
\theoremstyle{definition}

\numberwithin{equation}{section}

\DeclareMathOperator{\Imm}{Im}

\DeclareMathOperator{\Span}{Span}

\DeclareMathOperator{\Res}{Res}

\DeclareMathOperator{\Spec}{Spec}

\DeclareMathOperator{\Ad}{Ad}

\DeclareMathOperator{\Aut}{Aut}

\DeclareMathOperator{\graph}{graph}

\DeclareMathOperator{\supp}{supp}
\DeclareMathOperator{\Leb}{Leb}
\DeclareMathOperator{\sph}{sph}

\DeclareMathOperator{\BL}{BL}
\DeclareMathOperator{\op}{op}

    {\begin{center}%
     \textsc{Acknowledgements}
\end{center}}%
    {\null}
\begin{document}


\title{N\MakeLowercase{ew time-changes of unipotent flows on quotients of }L\MakeLowercase{orentz groups}}
\author{S\MakeLowercase{iyuan} T\MakeLowercase{ang}}%
\address{D\MakeLowercase{epartment of} M\MakeLowercase{athematics}, IU, B\MakeLowercase{loomington}, IN 47401}
\email{1992.siyuan.tang@gmail.com,  siyutang@indiana.edu}
\maketitle

\begin{abstract}
   We study the cocompact lattices $\Gamma\subset SO(n,1)$ so that the Laplace-Beltrami operator $\Delta$ on $SO(n)\backslash SO(n,1)/\Gamma$ has eigenvalues in $(0,\frac{1}{4})$, and then show that there exist time-changes of unipotent flows on $SO(n,1)/\Gamma$ that are not measurably conjugate to the unperturbed ones.

   A main   ingredient of the proof is a stronger version of the branching of the complementary series. Combining it with   a refinement of the works of Ratner and Flaminio-Forni is adequate for our purpose.
\end{abstract}

\tableofcontents

\section{Introduction}
\subsection{Main results}
Let $G$ be a semisimple Lie group, $\Gamma$ be a lattice of $G$. Let $X=G/\Gamma$ be the homogeneous space equipped with the Haar measure $\mu$.   Then classical  unipotent flows $u^{t}$ on $X$ have been studied by an extensive literature. Besides, one can build new parabolic flows in terms of $u^{t}$ via perturbations. Perhaps the simplest perturbations are \textit{time-changes}\index{time-changes}, i.e. flows that move points along the same orbits, but with different speeds. Time-changes preserve certain ergodic and spectral properties. For instance, \cite{forni2012time} and \cite{de2012spectral} showed that the sufficiently regular time-changes of horocycle flows have the \textit{Lebesgue spectrum}\index{Lebesgue spectrum}. Later \cite{simonelli2018absolutely} extended this result to the case of semisimple unipotent flows. In particular, we know that all time-changes of  unipotent flows satisfying a mild differentiability condition are \textit{strongly mixing}\index{strongly mixing} (the horocycle case was first discovered by \cite{marcus1977ergodic}).

One may then ask whether time-changes produce genuinely new flows, i.e. a time-change of the unipotent flow is actually not \textit{measurably conjugated} to the unperturbed one. However, the question is in general difficult and the answer is known only in a few cases. For horocycle flows, \cite{ratner1986rigidity} and \cite{flaminio2003invariant} showed that sufficiently regular time-changes which are measurably conjugate to the (unperturbed) horocycle flow are \textbf{rare} (in fact, they form a countable codimension subspace). However,   no similar results are known for other unipotent flows. On the other hand, it is worth mentioning that  \cite{ravotti2019parabolic} provides \textbf{new} examples of parabolic perturbations on $SL(3,\mathbf{R})/\Gamma$ for which one can study ergodic theoretical properties. They are not (but related to) time-changes. Whether  the perturbations produce genuinely new flows in that setting remains open.

 In this paper, we manage to generalize \cite{ratner1986rigidity} and \cite{flaminio2003invariant} to $G=SO(n,1)$ setting.  More precisely, let  $\mathfrak{g}=\mathfrak{so}(n,1)$ be the corresponding Lie algebra,   $U\in\mathfrak{g}$ be a nilpotent element. Then it induces a unipotent flow $\phi^{U}_{t}(x)=\exp(tU)x=u^{t}x$ on $X$. Let $\tau$ be a positive integrable function on $X$ with $\int_{X}\tau(x)d\mu(x)=1$. Then define a cocycle $\xi:X\times\mathbf{R}\rightarrow\mathbf{R}$   by
          \[\xi(x,t)\coloneqq\int_{0}^{t}\tau(\phi^{U}_{s}(x))ds=\int_{0}^{t}\tau(u^{s}x)ds.\]
Then the   flow  $\phi^{U,\tau}_{t}:X\rightarrow X$ obtained from the unipotent flow $u^{t}$ by the time-change $\tau$ is given by the relation
          \[\phi^{U,\tau}_{\xi(x,t)}(x)\coloneqq u^{t}x.\]
Besides, we require  that the time-changes have the effective mixing property. More precisely, let $\mathbf{K}(X)$ be the set  of all positive integrable functions $\alpha$ on $X$ such that $\alpha,\alpha^{-1}$ are bounded and satisfies
       \[\left|\int_{X}\alpha(x)\alpha(u^{t}x)d\mu(x)-\left(\int_{X}\alpha(x)\mu(x)\right)^{2}\right|\leq D_{\alpha}|t|^{-\sigma_{\alpha}}\]
       for some $D_{\alpha},\sigma_{\alpha}>0$. In other words, elements $\alpha\in\mathbf{K}(X)$ have polynomial decay of correlations. Note that \cite{kleinbock1999logarithm} has shown that sufficiently regular functions on $X$ are in  $\mathbf{K}(X)$.

 First of all, the construction of time-changes naturally connects it to \textit{cohomological properties}\index{cohomological properties}.
We say that two   functions $g_{1},g_{2}$ on $X$ are  \textit{measurable (respectively $L^{2}$, smooth, etc.) cohomologous over the flow $u^{t}$}\index{cohomologous} if there exists a measurable (respectively $L^{2}$, smooth, etc.) function  $f$ on $X$, called the \textit{transfer function}\index{transfer function}, such that
\begin{equation}\label{time change189}
  \int_{0}^{T}g_{1}(u^{t}x)-g_{2}(u^{t}x)dt=f(u^{T}x)-f(x).
\end{equation}
An elementary argument establishes that   the flows generated   by cohomologous time-changes are always measurably conjugated. On the contrary, we deduce the following generalization of \cite{ratner1986rigidity}:
\begin{thm}\label{time change192}   Let  $\tau\in \mathbf{K}(X)$. Suppose that there is a measurable conjugacy map  $\psi:(X,\mu)\rightarrow (X,\mu_{\tau})$ such that
            \[\psi (\phi^{U}_{t}(x))=\phi_{t}^{U,\tau}(\psi(x))\]
            for $t\in\mathbf{R}$ and $\mu$-a.e. $x\in X$, where $d\mu_{\tau}=\tau d\mu$.   Then $\tau(x)$ and $\tau( cx)$ are  measurably  cohomologous for all $c\in C_{G}(U)$. Besides, if $\tau(x)$ and $\tau( cx)$ are indeed $L^{1}$-cohomologous for all $c\in C_{G}(U)$, then    $1$ and $\tau$ are measurably cohomologous. Here $C_{G}(U)\coloneqq\{g\in G:\Ad g.U=U\}$ denotes the centralizer of $U$ in $G$.
\end{thm}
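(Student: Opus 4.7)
The plan is to use the measurable conjugacy $\psi$ to construct, for each $c \in C_G(U)$, a measurable transfer function witnessing the cohomology of $\tau$ and $\tau \circ L_c$ over $u^t$. The starting point is that $L_c$ commutes with $u^t$ and preserves $\mu$, so $\psi \circ L_c : (X, \mu) \to (X, \mu_\tau)$ is a second measurable conjugacy between $u^t$ and $\phi^{U,\tau}_t$; consequently $S_c := \psi \circ L_c \circ \psi^{-1}$ is a $\phi^{U,\tau}_t$-self-conjugacy of $(X, \mu_\tau)$. The crucial step is an \emph{orbit identity}: for $\mu$-a.e.\ $x$, the points $\psi(cx)$ and $c\psi(x)$ lie on a common $u^t$-orbit. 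Equivalently, $S_c(y)$ and $cy$ lie on the same orbit for $\mu_\tau$-a.e.\ $y$, allowing one to write $\psi(cx) = u^{h_c(\psi(x))}\, c\psi(x)$ for a measurable $h_c : X \to \mathbf{R}$.

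Granted the orbit identity, the first conclusion follows from a direct cocycle manipulation. Apply $\psi$ to the equality $c u^t x = u^t c x$ and expand both sides using $\phi^{U,\tau}_t(z) = u^{\eta(z, t)} z$, where $\eta(z, \cdot) := \xi(z, \cdot)^{-1}$. Comparing the two expressions on the common $u^t$-orbit of $c\psi(x)$ yields a functional equation for $h_c$ which, after applying $\xi(c\psi(x), \cdot)$ and invoking the $\xi$-cocycle identity $\xi(z, a+b) = \xi(z, a) + \xi(u^a z, b)$, collapses to
\[ H_c(u^r y) - H_c(y) \;=\; \int_0^r \bigl[\tau(u^\sigma y) - \tau(c u^\sigma y)\bigr]\, d\sigma, \qquad y = \psi(x), \]
with $H_c(y) := \xi(cy, h_c(y))$. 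Since $\psi$ is a measurable isomorphism, this is precisely the measurable cohomology of $\tau$ and $\tau \circ L_c$ asserted in the first part.

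The orbit identity is the principal obstacle. It is a rigidity statement about the measurable self-conjugacies of $\phi^{U,\tau}_t$: every such $S_c$ must differ from translation by an element of the algebraic centralizer $C_G(U)$ by a $\phi^{U,\tau}_t$-shift. For the horocycle case this is essentially forced by triviality of the centralizer combined with the analyses of \cite{ratner1986rigidity} and \cite{flaminio2003invariant}; in the $SO(n,1)$ setting $C_G(U)$ is genuinely larger, and the orbit identity requires classifying the ergodic self-joinings of $u^t$ via spectral information on $L^2(G/\Gamma)$. This is where the refinement of the branching of the complementary series advertised in the abstract enters, combined with a transport of the Flaminio-Forni invariant-distribution theory across $\psi$.

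For the second conclusion, the hypothesis upgrades each $H_c$ to an honest $L^1$ transfer function, giving a $C_G(U)$-parametrized family of $L^1$-coboundaries for the differences $\tau - \tau \circ L_c$. Choose a one-parameter subgroup $\{c_s\}_{s \in \mathbf{R}} \subset C_G(U)$ whose left action on $X$ is ergodic (such a subgroup exists in the centralizer of a nilpotent element of $\mathfrak{so}(n,1)$). The $L^1$ integrability permits averaging the coboundary relations in $s$; using the normalization $\int_X \tau\, d\mu = 1$ together with the polynomial decay of correlations $\tau \in \mathbf{K}(X)$ to justify interchanging averages with integrals and to identify the ergodic average of $\tau \circ L_{c_s}$ with the constant $1$, one produces a measurable $G : X \to \mathbf{R}$ with $\int_0^T [1 - \tau(u^\sigma x)]\, d\sigma = G(u^T x) - G(x)$, which is the claimed measurable cohomology of $1$ and $\tau$.
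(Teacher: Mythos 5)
Your reduction of the first conclusion to the ``orbit identity'' is the right shape (it mirrors the paper's Corollary 3.17, where the transfer function is built exactly by such a cocycle manipulation), but the orbit identity itself -- the entire content of the theorem -- is left unproved, and the route you sketch for it points at the wrong tools. In the paper this step is Theorem 3.1/Lemma 3.15: one shows $\psi(cx)=u^{\alpha(x,c)}\beta(c)\psi(x)$, where $\beta(c)\in\exp V^{\perp}_{C}$ is the value of a (generally nontrivial) automorphism of the centralizer, not $c$ itself; the statement you assert, with $c$ in place of $\beta(c)$, is stronger than what is available, and the paper recovers ``$\tau(x)\sim\tau(cx)$ for all $c$'' only by using surjectivity of $d\beta$ on $V^{\perp}_{C}$. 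More importantly, the proof of this rigidity does not go through branching of complementary series, Flaminio--Forni invariant distributions, or a classification of self-joinings: those ingredients are used only for Theorem 1.4 (producing a non-coboundary $g$). The conjugacy rigidity is proved by quantitative shearing of unipotent orbits (Proposition 3.4, resting on the polynomial form of $u^{t}hu^{-s}$ and $\Ad u^{s}$ on $V^{\perp}$, a Solovay-type interval/gap lemma, and an $a^{t}$-renormalization to rule out $\Gamma$-shifts), together with the effective ergodicity of the time-change cocycle coming from $\tau\in\mathbf{K}(X)$ and Lusin's theorem. As written, your argument assumes precisely what has to be established.

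The second conclusion also has a genuine gap. Averaging the coboundary relations over a one-parameter ergodic subgroup $\{c_{s}\}\subset C_{G}(U)$ handles the left-hand side (bounded $\tau$ plus Birkhoff gives $\frac{1}{S}\int_{0}^{S}\tau(c_{s}y)\,ds\to 1$), but on the right-hand side you average the transfer functions $f_{c_{s}}$, over which you have no joint measurability, no uniform $L^{1}$ bound, and no tightness; a cocycle that is a pointwise limit of coboundaries need not be a coboundary, so the averaged $F_{S}$ need not converge to any measurable $G$, and polynomial decay of correlations does not repair this. The paper's route is different: the $L^{1}$ hypothesis (equivalent, via Corollary 3.17, to $\alpha(\cdot,c)\in L^{1}$) feeds an ergodic-theorem argument for the rescaled conjugacies $\Psi_{n}(x)=a_{n}\psi(a_{-n}x)$ (Lemma 3.19), which upgrades the equivariance from $C_{G}(U)$ to the opposite unipotent direction (Lemma 3.20); Ratner's measure classification then forces the limit to be affine, $\psi(gx_{0})=c(gx_{0})\Phi(g)y_{0}$ with $\Phi$ fixing $SO(2,1)$, and the cohomology of $1$ and $\tau$ drops out of a direct computation with this algebraic form (Corollary 3.22). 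Without some substitute for this rigidity step, your averaging argument does not close.
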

\begin{rem}
   It is possible to extend the result to two time-changes $\tau_{1}\in \mathbf{K}(G/\Gamma_{1})$, $\tau_{2}\in \mathbf{K}(G/\Gamma_{2})$, similar to \cite{ratner1986rigidity}. More precisely, we can assume that there is a measurable conjugacy map $\psi:(G/\Gamma_{1},\mu_{\tau_{1}})\rightarrow (G/\Gamma_{2},\mu_{\tau_{2}})$ such that
            \[\psi (\phi^{U,\tau_{1}}_{t}(x))=\phi_{t}^{U,\tau_{2}}(\psi(x)).\]
   Then we shall again obtain cohomologous results for $\tau_{1},\tau_{2}$ and, with further regularity assumptions for  $\tau_{1},\tau_{2}$, we have $\Gamma_{1}$ and $\Gamma_{2}$ are conjugate. The proofs will be a bit more complicated and we do not need it here.
\end{rem}

Thus, in order to find a time-changed flow that is not measurably conjugated to the unperturbed one, we should study the cohomological equation (\ref{time change189}). Or equivalently, the differential equation
\begin{equation}\label{time change190}
  g(x)=Uf(x)
\end{equation}
once $f$ is differentiable along $U$-direction.  \cite{flaminio2003invariant} studied the equation (\ref{time change190}) on the irreducible unitary representations of $SO(2,1)$ by classifying the $U$-invariant distributions.  Flaminio-Forni realized that the invariant distributions are the \textbf{only} obstructions to the existence of smooth solutions of equation (\ref{time change190}). Besides,  acting by the geodesic flow on the $U$-invariant distributions, Flaminio-Forni established precise asymptotics for the ergodic averages along the orbits of the horocycle flow on $SO(2,1)/\Gamma$ when $\Gamma$ is a cocompact lattice. However, it seems difficult to generalize these ideas to $SO(n,1)$ for $n\geq 3$. One reason is that the equation (\ref{time change190}) in the $SO(2,1)$-representations is an \textit{ordinary difference equation}\index{ordinary difference equation} (OdE), but in the $SO(n,1)$-representations becomes a \textit{partial difference equation}\index{partial difference equation} (PdE) when $n\geq 3$. However,  if we pay our attention to certain complementary series, then one may possibly restrict the $SO(n,1)$-representations to the subgroup $SO(n-1,1)$, and hence \cite{flaminio2003invariant} may apply.

Let $G=SO(n,1)$, $H=SO(n-1,1)$, $\Gamma\subset G$ be a cocompact lattice such that the Laplace-Beltrami operator $\Delta$ has  eigenvalues in $(0,\frac{1}{4})$. See \cite{randol1974small}, \cite{schoen1980geometric}, \cite{brooks1988injectivity} for    the existence of these lattices (see Section \ref{time change194}). Then
 \cite{zhang2011discrete} (see also  \cite{mukunda1968unitary}, \cite{speh2012restriction}, \cite{speh2016restriction}) has shown that $L^{2}(G/\Gamma)$ contains a $G$-complementary series $\pi_{\nu}$ so that it further contains a $H$-complementary series $\pi^{\flat}_{\nu-\frac{1}{2}}$ as a direct summand. (See Section \ref{time change115} for further discussion of  the required definitions and facts.) Here we prove that   the corresponding Sobolev spaces  also have this property:
\begin{thm}\label{time change214}
    Let  $n\geq 3$, $\rho^{\flat}<\nu<\rho$, $s\geq0$, $G=SO(n,1)$ and $H=SO(n-1,1)$.  Then  $(\pi^{\flat}_{\nu-\frac{1}{2}},W_{H}^{s}(\mathcal{H}_{\nu-\frac{1}{2}}^{\flat}))$ is a direct summand of $(\pi_{\nu},W_{G}^{s}(\mathcal{H}_{\nu}))$ restricted to $H$, where $\rho=\frac{n-1}{2}$ and $\rho^{\flat}=\frac{n-2}{2}$ denote the half sum of the positive roots in $\mathfrak{g}$ and $\mathfrak{h}$, respectively.
\end{thm}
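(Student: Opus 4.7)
The plan is to lift the $L^{2}$-level direct summand assertion (supplied by the works of Mukunda, Speh, and Zhang cited just above) to the Sobolev level. At $L^{2}$ one has an $H$-equivariant isometric embedding $\iota:\mathcal{H}^{\flat}_{\nu-1/2}\hookrightarrow\mathcal{H}_{\nu}$ with adjoint orthogonal projection $P=\iota^{*}$ satisfying $P\iota=\mathrm{id}$. Since the other Plancherel pieces of $\pi_{\nu}|_{H}$ are unitary principal series whose Casimir spectrum avoids $c_{H}(\nu-\tfrac{1}{2})$, the image $\iota(\mathcal{H}^{\flat}_{\nu-1/2})$ is precisely the $c_{H}(\nu-\tfrac{1}{2})$-eigenspace of $\Omega_{H}$ acting on $\mathcal{H}_{\nu}$. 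The Sobolev-level assertion reduces to showing that $\iota$ and $P$ extend to continuous maps between $W_{H}^{s}(\mathcal{H}^{\flat}_{\nu-1/2})$ and $W_{G}^{s}(\mathcal{H}_{\nu})$, since $P\iota=\mathrm{id}$ and $H$-equivariance are preserved automatically.

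The projection direction is straightforward. The composite $\iota P$ is a self-adjoint, $H$-equivariant, contractive projection of $\mathcal{H}_{\nu}$, so it commutes with every element of $U(\mathfrak{h})$. Hence for any monomial $X_{i_{1}}\cdots X_{i_{k}}$ of degree $k\leq s$ in a basis of $\mathfrak{h}$,
\[
\|X_{i_{1}}\cdots X_{i_{k}}\iota Pv\|_{\mathcal{H}_{\nu}}=\|\iota P(X_{i_{1}}\cdots X_{i_{k}}v)\|_{\mathcal{H}_{\nu}}\leq\|X_{i_{1}}\cdots X_{i_{k}}v\|_{\mathcal{H}_{\nu}}.
\]
Summing over monomials and using the isometry of $\iota$ yields $\|Pv\|_{W_{H}^{s}(\mathcal{H}^{\flat})}\leq\|v\|_{W_{H}^{s}(\mathcal{H}_{\nu})}\leq\|v\|_{W_{G}^{s}(\mathcal{H}_{\nu})}$, the last step using $U(\mathfrak{h})\subset U(\mathfrak{g})$.

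The embedding direction is the technical core: for $w\in W_{H}^{s}(\mathcal{H}^{\flat}_{\nu-1/2})$ we must control transverse derivatives $\|Y_{j_{1}}\cdots Y_{j_{k}}\iota(w)\|_{\mathcal{H}_{\nu}}$, where the $Y_{j}$ range over a basis of the Killing-orthogonal complement $\mathfrak{m}$ of $\mathfrak{h}$ in $\mathfrak{g}$. The main tool is the Casimir identity
\[
(\Omega_{G}-\Omega_{H})\iota(w)=\bigl(c_{G}(\nu)-c_{H}(\nu-\tfrac{1}{2})\bigr)\iota(w),
\]
together with the decomposition $\Omega_{G}-\Omega_{H}\equiv\sum_{j}\delta_{j}Y_{j}^{2}$ modulo $U(\mathfrak{h})$-terms of first order, where $\delta_{j}\in\{\pm 1\}$ record the Killing signature on $\mathfrak{m}$. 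Pairing with $\iota(w)$ produces a linear constraint on the squared transverse derivatives $\|Y_{j}\iota(w)\|^{2}$.

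The main obstacle is that for $\mathfrak{so}(n,1)\supset\mathfrak{so}(n-1,1)$ the complement $\mathfrak{m}$ has mixed signature ($n-1$ compact directions and $1$ non-compact direction), so the signs $\delta_{j}$ are not uniform and the Casimir identity alone controls only a \emph{signed} sum of the $\|Y_{j}\iota(w)\|^{2}$. This is remedied by supplementing with a manifestly positive identity coming from the compact Casimir $\Omega_{K_{G}}=-\sum_{\mathfrak{k}}X^{2}$, which acts as a scalar on each $K_{G}$-type of $\pi_{\nu}$; the $SO(n)\downarrow SO(n-1)$ branching being multiplicity-free with classical interlacing lets us bound the $\mathfrak{m}\cap\mathfrak{k}$-derivatives of $\iota(w)$ polynomially in the $K_{H}$-isotypic weights of $w$, hence by $\|w\|_{W_{H}^{1}}$. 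Combining both yields the base case $\|\iota(w)\|_{W_{G}^{1}}\lesssim\|w\|_{W_{H}^{1}}$; higher $s$ follows by induction, using $[\mathfrak{h},\mathfrak{m}]\subset\mathfrak{m}$ to reduce mixed monomials of degree $s$ modulo $U(\mathfrak{h})$ to lower-order corrections of the base case.
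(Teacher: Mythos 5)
Your reduction---take the known $L^{2}$ splitting $\mathcal{H}_{\nu}|_{H}\supset\iota(\mathcal{H}^{\flat}_{\nu-\frac12})$ and lift it by proving that $\iota$ and $P=\iota^{*}$ are bounded between the Sobolev spaces---breaks at the embedding half, and not merely for lack of detail: the bound you are trying to prove there is false in the range of $s$ that matters. Work in the compact picture and let $e_{l}$ be the unit vector of $K^{\flat}$-type $l$ in the discrete summand. Since that summand is $(\ker\Res)^{\perp}=\overline{\im\Res^{*}}$ with respect to the Hilbert norms, the $\mathcal{W}_{\mathbf{m}}$-component of $e_{l}$ has $\mathcal{H}_{\nu}$-norm squared $\asymp\|\Res_{\mathbf{m},l}\|^{2}_{\mathcal{H}}\asymp(1+l)^{2\nu-1}(1+\mathbf{m})^{-2\nu}\cdot\frac{\mathbf{m}+1}{(\mathbf{m}+l+1)^{1/2}(\mathbf{m}-l+1)^{1/2}}$, spread over all $\mathbf{m}\geq l$ with $\mathbf{m}-l$ even. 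Hence $\|e_{l}\|^{2}_{W^{s}_{G}(\mathcal{H}_{\nu})}\asymp\sum_{\mathbf{m}}(1+\mathbf{m})^{2s}\|(e_{l})_{\mathbf{m}}\|^{2}_{\mathcal{H}_{\nu}}$ diverges as soon as $s\geq\nu-\tfrac12$, and $\nu-\tfrac12<\rho^{\flat}$. So already for moderate $s$ (in particular for every $s\geq\rho^{\flat}$, hence for the orders $s>1$ the paper needs later) the canonical isometric embedding does not even send $K^{\flat}$-finite, hence $H$-smooth, vectors into $W^{s}_{G}(\mathcal{H}_{\nu})$; no Casimir identity can rescue this, and the precise step where you claim the $\mathfrak{m}\cap\mathfrak{k}$-derivatives of $\iota(w)$ are bounded ``polynomially in the $K_{H}$-isotypic weights of $w$'' via multiplicity-free branching is exactly where the required quantitative input---the decay in $\mathbf{m}$ of the $K$-type coefficients of the embedded vectors against the Sobolev weights $(1+\mathbf{m})^{2s}$---is missing, and it goes the wrong way.

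This is why the paper argues with the restriction map rather than the embedding: it proves that $\Res:W^{s}_{G}(\mathcal{H}_{-\nu})\rightarrow W^{s}_{H}(\mathcal{H}^{\flat}_{\frac12-\nu})$ is bounded and surjective by computing $\|\Res_{\mathbf{m},l}\|_{\op}$ explicitly (the $L^{2}$ norm from spherical harmonics, then the complementary-series weights $d_{\mathbf{m}}$, then the Sobolev weights) and showing $0<\inf_{l}\sum_{\mathbf{m}}\|\Res_{\mathbf{m},l}\|^{2}_{\op}\leq\sup_{l}\sum_{\mathbf{m}}\|\Res_{\mathbf{m},l}\|^{2}_{\op}<\infty$; the complemented copy of $W^{s}_{H}$ inside $W^{s}_{G}$ is then obtained by splitting off $\ker\Res$ and applying the open mapping theorem. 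Crucially, that copy is $s$-dependent: the preimages $u_{l}$ built in the surjectivity step distribute their mass over the $K$-types with weights proportional to $\|\Res_{\mathbf{m},l}\|_{\op}$, not with the $L^{2}$-canonical weights, so the theorem is not a ``lift'' of the $L^{2}$ decomposition at all. Your projection half (boundedness of $P$, essentially because $U(\mathfrak{h})$-monomial norms are dominated by $U(\mathfrak{g})$-monomial norms; as written this covers only integer $s$) is plausible, but it is moot once the embedding half collapses.
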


Thus, by repeatedly using Theorem \ref{time change214}, we are able to study a  complementary series of $SO(2,1)$ as a direct summand of $L^{2}(G/\Gamma)$. Then by applying a similar argument of \cite{flaminio2003invariant}, we can study the ergodic average $\frac{1}{T}\int_{0}^{T}g(\phi^{U}_{t}(x))dt$ in terms of $U$-invariant distributions. Then we get
\begin{thm}\label{time change191}  Let the notation and assumptions be as above. Then
   there is a sufficiently regular function $g$ on $X=G/\Gamma$   of integral zero $\mu(g)=0$ that is not measurably cohomologous to $0$, i.e. there are no measurable functions $f$ satisfying
\[\int_{0}^{T}g(\phi^{U}_{t}(x))dt=f(\phi^{U}_{T}(x))-f( x).\]
Moreover, if there are some    $Z\in C_{\mathfrak{g}}(U)$, $\lambda\in\mathbf{R}$    such that  $\phi^{Z}_{\lambda}g$ is not $L^{2}$-cohomologous to $g$, then $\phi^{Z}_{\lambda}g$ is not  measurably cohomologous to $g$.
\end{thm}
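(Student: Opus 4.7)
The plan is to reduce the cohomological equation on $X=G/\Gamma$ to one on an $SO(2,1)$-complementary series summand and then invoke the Flaminio--Forni classification of $U$-invariant distributions to produce explicit obstructions.

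First, I would iterate Theorem~\ref{time change214} a total of $n-2$ times to descend through the chain $SO(n,1)\supset SO(n-1,1)\supset\cdots\supset SO(2,1)$. The hypothesis on small Laplace eigenvalues guarantees that $L^{2}(X)$ contains an $SO(n,1)$-complementary series $\pi_{\nu}$ with parameter $\nu$ determined by the given eigenvalue in $(0,\tfrac14)$. Applying Theorem~\ref{time change214} at each step, at every Sobolev level, produces an $SO(2,1)$-complementary series (with Casimir parameter shifted by $\tfrac12$ at each descent) as a direct summand of $W^{s}_{G}(\pi_{\nu})$, hence as an $SO(2,1)$-subrepresentation of $L^{2}(X)$ whose Sobolev regularity survives the iteration.

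On this $SO(2,1)$-summand the equation $Uf=g$ is the ordinary difference equation studied by Flaminio--Forni. Each irreducible component carries a nontrivial space of $U$-invariant distributions, and for any such distribution $\mathcal{D}$ with $\mathcal{D}(g)\neq 0$ one obtains an asymptotic
\[\int_{0}^{T}g(\phi^{U}_{t}(x))\,dt\;\sim\;c_{\mathcal{D}}(x)\,T^{\alpha},\qquad T\to\infty,\]
for $\mu$-a.e.\ $x$, with $\alpha\in(0,1)$ determined by the Casimir parameter. I would then choose $g$ sufficiently regular, of mean zero, projecting nontrivially onto this summand, and pairing nontrivially with some such $\mathcal{D}$. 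The polynomial divergence of the Birkhoff integral then rules out measurable coboundary: if there were a measurable $f$ with $\int_{0}^{T}g\circ\phi^{U}_{t}\,dt=f\circ\phi^{U}_{T}-f$, Lusin's theorem would give a set $K$ of positive measure on which $f$ is bounded, and by ergodicity of $\phi^{U}_{t}$ the orbit of a.e.\ $x$ would return to $K$ with positive frequency, forcing the coboundary to stay bounded along a sequence $T_{k}\to\infty$ of positive density --- in contradiction with the $T_{k}^{\alpha}$ growth. This establishes the first assertion.

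For the ``moreover'' clause, assume that $\phi^{Z}_{\lambda}g$ is measurably cohomologous to $g$. Since $Z\in C_{\mathfrak{g}}(U)$, the map $\phi^{Z}_{\lambda}$ commutes with $\phi^{U}_{t}$ and acts on each $SO(2,1)$-summand constructed above; the Birkhoff argument then forces $\mathcal{D}(\phi^{Z}_{\lambda}g-g)=0$ for every $U$-invariant distribution $\mathcal{D}$ on every summand. By the converse direction of the Flaminio--Forni description, simultaneous vanishing of all invariant distributions on a sufficiently regular function is equivalent to $L^{2}$-cohomological triviality, so $\phi^{Z}_{\lambda}g$ would in fact be $L^{2}$-cohomologous to $g$, contradicting the hypothesis. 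The hard part will be the quantitative step: verifying that the $U$-invariant distributions transported through the iterated Theorem~\ref{time change214} remain nontrivial on smooth test functions on $X$, and extracting pointwise-a.e.\ (as opposed to $L^{2}$) polynomial asymptotics sharp enough to obstruct \emph{measurable} coboundaries. This will require careful bookkeeping of Casimir parameters along the descent, together with the Sobolev-preservation explicitly built into Theorem~\ref{time change214}.
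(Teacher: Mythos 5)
Your first step (iterating Theorem \ref{time change214} down to an $SO(2,1)$-complementary series inside $L^{2}(X)$ and bringing in the Flaminio--Forni distributions of Theorem \ref{time change125}) is exactly the paper's starting point, but the core of your argument rests on an asymptotic that is neither proved in the paper nor supplied by you: the pointwise, $\mu$-a.e.\ asymptotic $\int_{0}^{T}g(\phi^{U}_{t}(x))\,dt\sim c_{\mathcal{D}}(x)T^{\alpha}$. Two things go wrong here. First, if $g$ merely ``projects nontrivially onto the summand and pairs nontrivially with $\mathcal{D}$'', its ergodic integral also contains the component $\mathcal{C}(x,T)$ coming from $W_{2}^{-r_{0},\nu}\oplus W_{G}^{-r_{0}}(\mathcal{H}_{\ast}^{\perp})$ in the decomposition (\ref{dynamical systems24}), which none of the available machinery controls; the paper removes this by taking $g$ in the one-dimensional dual line $(D^{\pm}_{\nu})^{\prime}$, so that $S_{x,T}(g)=c_{\pm}(x,T)\mathcal{D}^{\pm}_{\nu}(g)$ exactly. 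Second, even for such $g$ the paper never obtains pointwise a.e.\ lower bounds: the divergence is captured only in $L^{2}$, by applying Gottschalk--Hedlund (Lemma \ref{time change134}) to deduce $\sup_{T}T\|c_{\pm}(\cdot,T)\|_{L^{2}}=\infty$ from the non-existence of an $L^{2}$ transfer function, and then upgraded to the quantitative bound $\|c_{\pm}(\cdot,T)\|_{L^{2}}\gg T^{-\frac{1\pm2\nu}{2}}$ by renormalizing with the geodesic flow (Proposition \ref{time change164}). The matching pointwise upper bound plus a Chebyshev argument then yields sets $A_{T}$ of measure $\geq\gamma$ (uniform in $T$, but varying with $T$) on which $|Tc_{+}(x,T)|\gtrsim T^{\frac{1-2\nu}{2}}$, and the contradiction with a measurable transfer function is obtained by applying Lusin to $f$ for each fixed $T$ and intersecting with $A_{T}$ (Corollary \ref{time change202106.7}). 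Your recurrence-to-a-compact-set argument needs divergence at a fixed a.e.\ point along its return times, which you do not have; with only $T$-dependent positive-measure sets the Lusin-versus-$A_{T}$ comparison is the step that closes the argument, and it is missing from your proposal.

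The ``moreover'' clause has a second gap. You argue that measurable cohomology of $g$ and $\phi^{Z}_{\lambda}g$ forces all invariant distributions to vanish on $\phi^{Z}_{\lambda}g-g$, and then invoke an equivalence ``all invariant distributions vanish iff $L^{2}$-cohomologically trivial''. That equivalence is only available inside the chosen $SO(2,1)$-summand (Theorem \ref{time change125}); since $Z\in C_{\mathfrak{g}}(U)$ need not normalize the $\mathfrak{sl}_{2}$-triple, $\phi^{Z}_{\lambda}g-g$ has components outside the summand, where no classification of obstructions is known, so vanishing of the constructed distributions does not imply $L^{2}$-solvability on $X$. The paper avoids this entirely by arguing in the contrapositive: assuming $g-\phi^{Z}_{\lambda}g=Uf$ has no $L^{2}$ solution, Gottschalk--Hedlund and the commutation of $\phi^{Z}_{\lambda}$ with $\phi^{U}_{t}$ give non-equiboundedness of $T(c_{+}(\cdot,T)-c_{+}(\phi^{Z}_{\lambda}(\cdot),T))$, the geodesic renormalization gives the quantitative $L^{2}$ lower bound (\ref{time change162}), and the same Chebyshev--Lusin scheme rules out a measurable transfer function. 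You would need to replace your equivalence claim by this quantitative contrapositive argument.
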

Theorem \ref{time change192} and  \ref{time change191}    yield
\begin{cor}\label{time change202106.6}
   Let the notation and assumptions be as above. Then there is a time-change of a   unipotent flow on $X$  that is not measurably conjugate to the unperturbed unipotent flows.
\end{cor}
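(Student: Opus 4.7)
The plan is to combine Theorems \ref{time change192} and \ref{time change191} via a small perturbation of the trivial time-change. Let $g$ be the sufficiently regular function on $X$ with $\mu(g)=0$ that is not measurably cohomologous to $0$, as given by Theorem \ref{time change191}. For $\epsilon>0$ sufficiently small, set $\tau \coloneqq 1 + \epsilon g$; this is a positive function, bounded away from $0$ and $\infty$, with $\int_{X} \tau\,d\mu = 1$. A direct correlation-decay estimate (using $g \in \mathbf{K}(X)$, which holds by sufficient regularity, together with $\mu(g)=0$) shows $\tau \in \mathbf{K}(X)$, so Theorem \ref{time change192} applies.

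Suppose, for contradiction, that the associated time-changed flow $\phi^{U,\tau}_{t}$ is measurably conjugate to $\phi^{U}_{t}$. Since $\tau - 1 = \epsilon g$ is not measurably cohomologous to $0$, the second conclusion of Theorem \ref{time change192} (contrapositively) produces some $c \in C_{G}(U)$ for which $\tau(x) - \tau(cx) = \epsilon(g(x) - g(cx))$ is not $L^{1}$-cohomologous to $0$. Writing $c = \exp(\lambda Z)$ with $Z \in C_{\mathfrak{g}}(U)$ and $\lambda \in \mathbf{R}$ — possible because the identity component of $C_{G}(U)$ is covered by the exponential map (for $G = SO(n,1)$ and $U$ in the abelian nilpotent part, $C_{G}(U)^{\circ}$ is essentially a Euclidean-type group whose exponential is onto) — we conclude that $g - \phi^{Z}_{\lambda} g$ is not $L^{1}$-cohomologous to $0$. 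Since $X$ has finite measure and hence $L^{2} \subset L^{1}$, this is also a failure of $L^{2}$-cohomology.

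The second clause of Theorem \ref{time change191} now upgrades the failure of $L^{2}$-cohomology to a failure of measurable cohomology: $g$ and $\phi^{Z}_{\lambda} g$, and therefore $\tau(x)$ and $\tau(cx)$, are not measurably cohomologous. This contradicts the first conclusion of Theorem \ref{time change192}, which under the assumed conjugacy asserts the opposite. Hence $\phi^{U,\tau}_{t}$ is not measurably conjugate to $\phi^{U}_{t}$, proving the corollary. The substantive content sits entirely inside the two cited theorems; at the level of the corollary the only items to check are the perturbative construction $\tau = 1 + \epsilon g \in \mathbf{K}(X)$ and the soft promotion of failures of $L^{1}$-cohomology to failures of $L^{2}$-cohomology on the finite-measure space $X$, which is what bridges the output of Theorem \ref{time change192} and the hypothesis of Theorem \ref{time change191}.
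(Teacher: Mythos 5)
Your proposal is correct and takes essentially the same route as the paper: construct $\tau=1+\epsilon g$ (the paper uses Sobolev embedding to make $g$ continuous and rescales by a constant, which is the same perturbation), then play the two conclusions of Theorem \ref{time change192} against Theorem \ref{time change191}, passing from failure of $L^{1}$-cohomology to failure of $L^{2}$-cohomology and then, via the second clause of Theorem \ref{time change191}, to failure of measurable cohomology. The only difference is organizational — the paper argues by a case split (either some $c$ gives a non-$L^{1}$-cohomologous pair, or all do and then $\tau\sim 1$ measurably), whereas you run the contrapositive first — and your explicit remark about writing $c=\exp(\lambda Z)$ is a point the paper leaves implicit.
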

\begin{proof}
Let $g$ be given by Theorem \ref{time change191}.   Via \textit{Sobolev embedding}\index{Sobolev embedding} (Lemma \ref{time change126}), it is possible to choose $g$ to be continuous. After multiplying a constant if necessary, we can take $\tau=1+g$ to be positive and integrable. Now assume  that there is a measurable conjugacy map  $\psi:(X,\mu)\rightarrow (X,\mu_{\tau})$ such that
            \[\psi (\phi^{U}_{t}(x))=\phi_{t}^{U,\tau}(\psi(x))\]
            for $t\in\mathbf{R}$ and $\mu$-a.e. $x\in X$.  Then by Theorem  \ref{time change192},  $\tau(x)$ and $\tau( cx)$ are   measurably cohomologous for all $c\in  C_{G}(U)$.
               If there exists  $c\in C_{G}(U)$ such that $\tau(x)$ and $\tau( cx)$ are not $L^{1}$-cohomologous, then $g(x)=\tau(x)-1$ and $g( cx)=\tau(cx)-1$ are not $L^{1}$- (and hence are not $L^{2}$-) cohomologous. Thus, by Theorem \ref{time change191}, $\tau(x)$ and $\tau( cx)$ are not measurably cohomologous either, which leads to a contradiction.  Thus, we conclude that  $\tau(x)$ and $\tau( cx)$ are  indeed $L^{1}$-cohomologous for all  $c\in C_{G}(U)$. Via Theorem  \ref{time change192} again, we see that $\tau=1+g$ and $1$ are measurably cohomologous, but it again violates Theorem \ref{time change191}.
\end{proof}
Thus, we conclude that  sufficiently regular time-changes on $X$ which are measurably conjugate to the unperturbed unipotent flow are  rare, in the sense that  the complement of the set of these time-changes has at least  finite codimension.

 Besides, Theorem \ref{time change191} also implies that the \textit{central limit theorem} does not hold for unipotent flows on $X=G/\Gamma$:

  \begin{cor}\label{time change202106.9}  Let the notation and assumptions be as above. Then there is a function $g$ on $X$ such that,
  as $T\rightarrow\infty$, any weak limit of the probability distributions
    \[\frac{\frac{1}{T}\int_{0}^{T}g(\phi^{U}_{t}(x))dt}{\left\|\frac{1}{T}\int_{0}^{T}g(\phi^{U}_{t}(\cdot))dt\right\|_{L^{2}}}\]
has a nonzero compact support.
 \end{cor}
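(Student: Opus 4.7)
The plan is to extract from the proof of Theorem~\ref{time change191} a precise asymptotic expansion for the ergodic integrals $\int_{0}^{T}g\circ\phi^{U}_{t}\,dt$ in terms of $U$-invariant distributions, and then to read the non-CLT behavior off that expansion directly.

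The proof of Theorem~\ref{time change191} proceeds, via iterated application of Theorem~\ref{time change214}, by restricting the cohomological equation to $SO(2,1)$-complementary sub-representations of $L^{2}(X)$. On each such piece the refined Flaminio-Forni method yields, for any sufficiently regular zero-mean $g$, an asymptotic
\[
  \int_{0}^{T}g(\phi^{U}_{t}(x))\,dt \;=\; T^{\alpha}F_{g}(x)+o_{L^{2}}(T^{\alpha}),
\]
where $\alpha\in(\tfrac{1}{2},1)$ is determined by the largest eigenvalue of $\Delta$ in $(0,\tfrac{1}{4})$, and $F_{g}\in L^{\infty}(X)$ is obtained by pairing $g$ against finitely many $U$-invariant distributions of finite Sobolev order. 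The obstruction interpretation used in Theorem~\ref{time change191} shows that $F_{g}\equiv 0$ would force $g$ to admit a smooth (hence measurable) cohomological primitive along $u^{t}$; thus the $g$ produced by Theorem~\ref{time change191} satisfies $\|F_{g}\|_{L^{2}}>0$.

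Dividing by $T$ and taking $L^{2}$-norms we get
\[
  \frac{1}{T}\int_{0}^{T}g\circ\phi^{U}_{t}(x)\,dt=T^{\alpha-1}F_{g}(x)+o_{L^{2}}(T^{\alpha-1}),\qquad \Big\|\tfrac{1}{T}\int_{0}^{T}g\circ\phi^{U}_{t}\,dt\Big\|_{L^{2}}\sim T^{\alpha-1}\|F_{g}\|_{L^{2}},
\]
so the normalized ergodic average in the statement converges in $L^{2}(X,\mu)$ to $F_{g}(x)/\|F_{g}\|_{L^{2}}$. Because $F_{g}\in L^{\infty}$, the pushforward of $\mu$ under this function is a compactly supported probability measure on $\mathbf{R}$; because $\|F_{g}\|_{L^{2}}>0$, its second moment equals $1$, so it is not $\delta_{0}$. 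Since $L^{2}$-convergence of random variables on $(X,\mu)$ implies weak convergence of their laws, every weak subsequential limit of the distributions in the statement of Corollary~\ref{time change202106.9} equals this compactly supported, non-trivial probability measure.

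The main obstacle is to pin down the $L^{\infty}$-bound on $F_{g}$, rather than merely an $L^{2}$-bound. Writing $F_{g}(x)=\sum_{j}\mathcal{D}_{j}(\pi(g_{x})g)$ for a finite family of $U$-invariant distributions $\mathcal{D}_{j}$ of fixed Sobolev order and a bounded family of representatives $g_{x}\in G$ (available because $\Gamma$ is cocompact), one needs $\|\pi(g_{x})g\|_{W^{s}}$ uniformly bounded in $x$; this is standard once $g$ is taken with enough Sobolev regularity, which we may arrange via Sobolev embedding (Lemma~\ref{time change126}) exactly as in the construction of $g$ in Corollary~\ref{time change202106.6}.
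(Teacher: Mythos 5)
There is a genuine gap. Your argument rests entirely on the asymptotic expansion $\int_{0}^{T}g(\phi^{U}_{t}(x))\,dt=T^{\alpha}F_{g}(x)+o_{L^{2}}(T^{\alpha})$ with a \emph{fixed} function $F_{g}\in L^{\infty}$, and hence on $L^{2}$-convergence of the normalized averages to $F_{g}/\|F_{g}\|_{L^{2}}$. Nothing in the paper (nor in the Flaminio--Forni machinery it adapts) produces such an $F_{g}$: the analysis of Section \ref{time change220} yields a coefficient $c_{+}(x,T)$ depending on both $x$ and $T$, with the pointwise upper bound and the $L^{2}$ lower bound of Proposition \ref{time change164}, but no statement that $T^{\frac{1+2\nu}{2}}c_{+}(x,T)$ converges pointwise, in $L^{2}$, or in law. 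Under the geodesic renormalization (\ref{time change146}) these normalized coefficients behave like a cocycle over $\phi^{Y_{n}}_{t}$ and in general oscillate; this is precisely why the corollary is stated for \emph{any} weak limit rather than \emph{the} limit, and your convergence claim is an unproved strengthening on which the whole deduction hangs. A secondary point: for a general sufficiently regular zero-mean $g$ the component $\mathcal{C}(x,T)$ in (\ref{dynamical systems24}) is never estimated in the paper, so even a bounded-coefficient expansion of the kind you invoke is only available for the special test functions constructed there, not for all regular $g$ as you assert.

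The paper's own proof avoids convergence altogether. It takes $g=g^{+}\in(\mathcal{D}^{+}_{\nu})^{\prime}$, so that by (\ref{time change170}) the ergodic average equals $c_{+}(x,T)\mathcal{D}_{\nu}^{+}(g^{+})$ exactly, with no remainder, and then uses only two facts: the uniform bound $|c_{+}(x,T)|\leq C\|c_{+}(\cdot,T)\|_{L^{2}}$ of (\ref{time change168}), which makes the normalized random variables uniformly bounded and hence forces every weak limit to have compact support; and the Chebyshev-type estimate behind (\ref{time change165}), which gives sets $A_{T}$ with $\mu(A_{T})\geq\gamma>0$ on which the normalized ratio exceeds $\tfrac{1}{2}$, so every weak limit assigns mass at least $\gamma$ to $\{|v|\geq\tfrac{1}{2}\}$ and is therefore not concentrated at $0$. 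If you replace your $L^{2}$-convergence step by this boundedness-plus-positive-mass argument, the rest of your outline goes through.
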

\subsection{Structure of the paper}
In Section \ref{time change230} we recall basic definitions, including  some basic material on the Lie algebra $\mathfrak{so}(n,1)$ (in  Section \ref{time change202106.1}), as well as time-changes (Section \ref{time change202106.2}). In Section \ref{time change202106.3}, we  deduce  Theorem \ref{time change192}. This requires studying the shearing property of $u_{X}^{t}$ for nearby points. More precisely, we provide a quantitative estimate (Proposition \ref{timechange20}) for the difference of nearby points in terms of the length of unipotent orbits. Then, the estimate can deduce extra equivariant properties (Lemma \ref{timechange3} and \ref{time change202106.5}). Then combining Ratner's theorem, we obtain Theorem \ref{timechange14} which states that the measurable conjugacies are almost algebraic. In particular, we obtain the cohomologous relations. In Section \ref{time change115}, we state a number of results of the representation theory, which will be used as tools to study the cohomological equations. In particular, we prove a Sobolev version of the branching of complementary series  (Theorem \ref{time change214} or Theorem \ref{time change116}). Finally, in Section \ref{time change220}, we apply  Flaminio-Forni argument (Theorem \ref{time change125}) to find a required time-change function $\tau$ (Theorem \ref{time change191}) in Corollary \ref{time change202106.7}. Then combining Theorem \ref{time change192}, we conclude that $\tau$ is a nontrivial time-change (Corollary \ref{time change202106.6}).  Besides, we present the central limit theorem of unipotent flows does not hold (Corollary \ref{time change202106.9}) in Corollary \ref{time change202106.8}.

 \noindent
 \textbf{Acknowledgements.}
The paper was written under the guidance of Prof. David Fisher for my PhD thesis, and I am sincerely grateful for his help. I would also like to thank Prof. Livio Flaminio, Prof. Giovanni Forni and Prof. Adam Kanigowski for useful conversations. Besides, I would like to express my deep appreciation to the referee, who examined the paper with great care, pointing out many inaccuracies, and suggesting improvements in various aspects of writing.

\section{Preliminaries}\label{time change230}
\subsection{Definitions}\label{time change202106.1}
Let $G\coloneqq SO(n,1)$; more precisely we define
    \[G\coloneqq\left\{g\in SL_{n+1}(\mathbf{R}):\left[
            \begin{array}{ccc}
              I_{n} &    \\
                 &   -1  \\
            \end{array}
          \right]g^{T}\left[
            \begin{array}{ccc}
              I_{n} &    \\
                 &   -1 \\
            \end{array}
          \right]=g^{-1}\right\}\]
           where $I_{n}$ is the $n\times n$ identity matrix. The corresponding Lie algebra is given by
          \begin{align}
\mathfrak{g}=&\left\{v\in \mathfrak{sl}_{n+1}(\mathbf{R}):\left[
            \begin{array}{ccc}
              I_{n} &    \\
                 &   -1  \\
            \end{array}
          \right]v^{T}\left[
            \begin{array}{ccc}
              I_{n} &    \\
                 &   -1 \\
            \end{array}
          \right]=-v\right\}  \;\nonumber\\
=&  \left\{\left[
            \begin{array}{ccc}
              \mathbf{l} &    \\
                 &   0  \\
            \end{array}
          \right] :\mathbf{l}\in \mathfrak{so}(n) \right\} \oplus\left\{ \left[
            \begin{array}{ccc}
              0  &  \mathbf{p}  \\
               \mathbf{p}^{T}  &  0    \\
            \end{array}
          \right]: \mathbf{p}\in\mathbf{R}^{n}\right\}. \;  \nonumber
\end{align}
          Let $E_{ij}$ be the $(n\times n)$-matrix with $1$ in the $(i,j)$-entry and $0$ otherwise. Let $e_{k}\in\mathbf{R}^{n}$ be the $k$-th standard basis  (vertical) vector. Set
          \[Y_{k}\coloneqq\left[
            \begin{array}{ccc}
              0 & e_{k}    \\
               e_{k}^{T}  &   0 \\
            \end{array}
          \right],\ \ \ \Theta_{ij}\coloneqq\left[
            \begin{array}{ccc}
              E_{ji}-E_{ij} &  0    \\
               0  &   0 \\
            \end{array}
          \right].\]
  Then $Y_{i},\Theta_{ij}$ form a basis of $\mathfrak{g}=\mathfrak{so}(n,1)$.
Let $\mathfrak{g}=\mathfrak{l}\oplus\mathfrak{p}$ be the corresponding Cartan decomposition. Let $\mathfrak{a}=\mathbf{R}Y_{n}\subset\mathfrak{p}$ be a maximal abelian subspace of $\mathfrak{p}$. Then the root space decomposition of $\mathfrak{g}$ is given by
\begin{equation}\label{time change187}
  \mathfrak{g}=\mathfrak{g}_{-1}\oplus\mathfrak{m}\oplus\mathfrak{a}\oplus\mathfrak{g}_{1}.
\end{equation}
Denote by $\mathfrak{n}\coloneqq\mathfrak{g}_{1}$ the sum of the positive root spaces.
Let $\rho$ be the half sum of positive roots. Sometimes, we adopt the convention by identifying $\mathfrak{a}^{\ast}$ with $\mathbf{C}$ via $\lambda\mapsto\lambda(Y_{n})$. Thus, $\rho=\rho(Y_{n})=(n-1)/2$. We write
\[ a^{t}\coloneqq \exp(tY_{n})\]
 for the geodesic flow.

 Let  $\Gamma\subset G$ be a lattice,  $X\coloneqq   G/\Gamma$, $\mu$ be the Haar probability measure on $X$. Fix a nilpotent $U\in  \mathfrak{g}_{-1}^{\flat}$. Then $U$ defines a unipotent flow
 \[\phi^{U}_{t}(x)=\exp(tU)x=u^{t}x\]
 on $G/\Gamma$ and satisfies
\[[Y_{n},U]=-U.\]
Then using the \textit{Killing form}\index{Killing form}, there exists $\tilde{U}\in\mathfrak{g}$ such that $\{U,Y_{n},\tilde{U}\}$ spans a $\mathfrak{sl}_{2}$-triple. Denote
\[\tilde{u}^{t}\coloneqq\exp(t\tilde{U}).\]
For convenience, we   choose
\begin{equation}\label{time change211}
  U\coloneqq\left[
            \begin{array}{ccc}
              0 & e_{n-1}   & e_{n-1}    \\
              -e_{n-1}^{T} & 0   & 0   \\
               e_{n-1}^{T}  & 0 &   0 \\
            \end{array}
          \right],\ \ \ \tilde{U}\coloneqq\left[
            \begin{array}{ccc}
              0 & -e_{n-1}   & e_{n-1}    \\
              e_{n-1}^{T} & 0   & 0   \\
               e_{n-1}^{T}  & 0 &   0 \\
            \end{array}
          \right].
\end{equation}
Then $\langle u^{t},a^{t},\tilde{u}^{t}\rangle$ generates $SO(2,1)\subset SO(n,1)$.
 Further, if we   consider $\mathfrak{g}$ as a $\mathfrak{sl}_{2}(\mathbf{R})$-representation via the adjoint map, then by  the complete reducibility of $\mathfrak{sl}_{2}(\mathbf{R})$, there is a orthogonal decomposition
\[\mathfrak{g}=\mathfrak{sl}_{2}(\mathbf{R})\oplus V^{\perp}\]
where $V^{\perp}$ consists of irreducible representations with highest weights $2$ and $0$. For elements $g\in\exp\mathfrak{g}$, we decompose
\[g=h\exp(v),\ \ \ h\in SO(2,1),\ \ \ v\in V^{\perp}.\]
Moreover, it is convenient to think about $h\in SO(2,1)$ as a $2\times 2$ matrix with determinant $1$. Thus, consider the   isogeny $\iota:SL_{2}(\mathbf{R})\rightarrow SO(2,1)\subset G$ induced by $\mathfrak{sl}_{2}(\mathbf{R})\rightarrow\Span\{U,Y_{n},\tilde{U}\}\subset\mathfrak{g}$. This is a two-to-one immersion.    In the following, for $h\in SO(2,1)$ and $v$ in an irreducible representation, we often write
\[h=\left[
            \begin{array}{ccc}
              a &   b  \\
            c &   d\\
            \end{array}
          \right],\ \ \ v=b_{0}v_{0}+\cdots+b_{\varsigma}v_{\varsigma}\]
where $v_{i}$ are weight vectors in $\mathfrak{g}$ of weight $i$. Notice that $h$ should more appropriately be written as $\iota(h)$.

For  the centralizer $ C_{\mathfrak{g}}(U)$, we  have the corresponding decomposition:
\begin{equation}\label{time change184}
  C_{\mathfrak{g}}(U)=\mathbf{R}U\oplus V^{\perp}_{C}
\end{equation}
where $V^{\perp}_{C}$ consists of  highest weight vectors other than $U$ (see also Lemma \ref{time change179}). More precisely, under the setting (\ref{time change211}), one may calculate
\begin{equation}\label{time change212}
 C_{\mathfrak{g}}(U)=\left\{\left[
            \begin{array}{ccc}
              \mathbf{c} &    \\
                 &   0  \\
            \end{array}
          \right] :\mathbf{c}\in \mathfrak{so}(n-2) \right\} \oplus\left\{ \left[
            \begin{array}{ccc}
              0  &  \mathbf{u} &  \mathbf{u}  \\
             -\mathbf{u}^{T}  &  0 &   0\\
               \mathbf{u}^{T}  &0 &  0    \\
            \end{array}
          \right]:  \mathbf{u}\in\mathbf{R}^{n-1}\right\}.
\end{equation}
Note that the first summand consists of semisimple elements, and the second summand consists of nilpotent elements.
\subsection{Time-changes}\label{time change202106.2}
Let  $\phi^{U,\tau}_{t}$ be a \textit{time change}\index{time change} for the unipotent flow $\phi^{U}_{t}$, $t\in\mathbf{R}$. More precisely, we assume
          \begin{itemize}
            \item  $\tau:X\rightarrow\mathbf{R}^{+}$ is a integrable nonnegative function on $X$ satisfying
          \[\int_{X}\tau(x)d\mu(x)=1,\]
          \item  $\xi:X\times\mathbf{R}\rightarrow\mathbf{R}$ is the cocycle defined by
\begin{equation}\label{time change224}
 \xi(x,t)\coloneqq\int_{0}^{t}\tau(\phi^{U}_{s}(x))ds=\int_{0}^{t}\tau(u^{s}x)ds,
\end{equation}
          \item $\phi^{U,\tau}_{t}:X\rightarrow X$ is given by the relation
          \[\phi^{U,\tau}_{\xi(x,t)}(x)\coloneqq u^{t}x.\]
          \end{itemize}
\begin{rem}\label{timechange36}
  Note that $\phi^{U,1}_{t}=\phi^{U}_{t}$. Besides, one can check that  $\phi^{U,\tau}_{t}$ preserves the probability measure on $X$ defined by  $d\mu_{\tau}\coloneqq\tau d\mu$. On the other hand, if  $\tau$ is smooth, then the time-change $\phi_{t}^{U,\tau}$ is the flow on $X$ generated by the smooth vector field $U_{\tau}\coloneqq U/\tau$ (see \cite{forni2012time}).
\end{rem}

The unipotent flows  $\phi^{U}_{t}$, as well as their time-changes  $\phi^{U,\tau}_{t}$ are \textit{parabolic flows}\index{parabolic flow}, in the sense that  nearby orbits diverge polynomially in time. We shall quantitatively study it via the effective ergodicity of the unipotent flows.

 Besides,
the construction of time-changes naturally connects to cohomological properties.
We say that two   functions $g_{1},g_{2}$ on $X$ are  \textit{measurable (respectively $L^{2}$, smooth, etc.) cohomologous over the flow $u^{t}$}\index{cohomologous} if there exists a measurable (respectively $L^{2}$, smooth, etc.) function  $f$ on $X$, called the \textit{transfer function}\index{transfer function}, such that
\begin{equation}\label{time change215}
 \int_{0}^{T}g_{1}(u^{t}x)-g_{2}(u^{t}x)dt=f(u^{T}x)-f(x)
\end{equation}
for $\mu$-a.e. $x\in X$. We also say that $g$ is \textit{measurably (respectively $L^{2}$, smooth, etc.) trivial}\index{trivial} if $g$ and $0$ are cohomologous. For the related discussion, see \cite{avila2019mixing} and references therein.
    Then conjugacies naturally arise from cohomologous time-changes. More precisely, one may verify that   two time-changes $\tau_{1},\tau_{2}$ are cohomologous via a transfer function $f$ iff the map $\psi_{f}: X\rightarrow X$ defined by
\[\psi_{f}: x\mapsto \phi_{z(x)}^{U}(x)\]
where $z:X\times\mathbf{R}\rightarrow\mathbf{R}$ is defined by the relation
\[f(x)=\xi_{2}(x,z_{f}(x))=\int_{0}^{z_{f}(x)}\tau_{2}(\phi^{U}_{s}(x))ds,\]
is an invertible conjugacy between $\phi_{t}^{U,\tau_{1}}$ and  $\phi_{t}^{U,\tau_{2}}$, i.e.
\[\psi_{f} (\phi^{U,\tau_{1}}_{t}(x))=\phi_{t}^{U,\tau_{2}}(\psi_{f}(x)).\]
On the other hand, if $f$ is differentiable along $U$-direction, then differentiate (\ref{time change215}) along $U$ and   we get the \textit{cohomological  equation}\index{cohomological equation}
\[  g_{1}(x)-g_{2}(x)=Uf(x).\]
We shall discuss it further in Section \ref{time change220}.

       In \cite{ratner1986rigidity}, Ratner considered a particular class $\mathbf{K}(X)$ of time changes. More precisely, $\mathbf{K}(X)$ consists of all positive integrable functions $\alpha$ on $X$ such that $\alpha,\alpha^{-1}$ are bounded and satisfies
       \[\left|\int_{X}\alpha(x)\alpha(u^{t}x)d\mu(x)-\left(\int_{X}\alpha(x)\mu(x)\right)^{2}\right|\leq D_{\alpha}|t|^{-\sigma_{\alpha}}\]
       for some $D_{\alpha},\sigma_{\alpha}>0$. This is the effective mixing property of the unipotent flow $\phi_{t}^{U}$. Note that \cite{kleinbock1999logarithm} (see also \cite{venkatesh2010sparse})  have shown that there is $\kappa>0$ such that
       \[\left|\langle\phi^{U}_{t}(f),g\rangle-\left(\int_{X}f(x)\mu(x)\right)\left(\int_{X}g(x)\mu(x)\right)\right|\ll(1+|t|)^{-\kappa}\|f\|_{W^{s}}\|g\|_{W^{s}}\]
       for $f,g\in C^{\infty}(X)$, where $s\geq \dim(K)$ and $W^{s}$ denotes the Sobolev space on $X=G/\Gamma$ that will be defined later (Section \ref{time change117}).

\section{Measurable conjugacies and transfer functions}\label{time change202106.3}
 In this section, we shall use the shearing properties of unipotent flows and show that any measurable conjugacy between unipotent flows and their time-changes is almost algebraic. More precisely, we deduce
         \begin{thm}\label{timechange14} Let the notation and assumptions be as above.
            Let $\phi_{t}^{U,\tau}$ be a time change for the unipotent flow $u^{t}$ with $\tau\in \mathbf{K}(X)$. Suppose that there is a measurable conjugacy map  $\psi:(X,\mu)\rightarrow (X,\mu_{\tau})$ such that
            \[\psi (\phi^{U}_{t}(x))=\phi_{t}^{U,\tau}(\psi(x))\]
            for $t\in\mathbf{R}$ and $\mu$-a.e. $x\in X$. Then there exists a measurable map $\varpi:X\times C_{G}(U)\rightarrow  C_{G}(U)$ such that
            \begin{equation}\label{dynamical systems2000}
              \psi(cx)=\varpi(x,c)\psi(x)
            \end{equation}
             for $c\in  C_{G}(U)$, $\mu$-almost all $x\in X$. Besides,  $\varpi(x,c)=u^{\alpha(x,c)} \beta(c)$ where $\alpha(x,c)\in\mathbf{R}$ and $\beta(c)\in\exp V^{\perp}_{C}$. Moreover, if  $\alpha(\cdot,c)\in L^{1}(X)$ for all $c\in C_{G}(U)$, then there are points $x_{0},y_{0}\in X$, an  automorphism $\Phi$ of $G$ that fixes  $SO(2,1)$ (i.e. $\Phi(g)=g$ for $g\in SO(2,1)$) and a  map $c:X\rightarrow C_{G}(U)$ such that
             \begin{equation}\label{time change171}
               \psi(gx_{0})= c(gx_{0}) \Phi(g)y_{0}
             \end{equation}
            for any $g\in G$. Similarly, $c(x)=u^{a(x)}b$ where $a(x)\in\mathbf{R}$ and $b\in\exp V_{C}^{\perp}$.
         \end{thm}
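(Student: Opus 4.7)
The plan is to follow the classical Ratner rigidity strategy, adapted to the $SO(n,1)$ setting and to genuine time-changes rather than time-rescalings. The proof proceeds in two main stages, each corresponding to one of the conclusions of the theorem. Stage one establishes the centralizer-equivariance $\psi(cx) = \varpi(x,c)\psi(x)$ with $\varpi(x,c) \in C_G(U)$. Fix $c \in C_G(U)$. By Luzin's theorem, choose a compact set $K \subset X$ with $\mu(K) > 1-\delta$ on which $\psi$ is uniformly continuous. Since $\tau \in \mathbf{K}(X)$, the flow $u^t$ is effectively mixing and in particular ergodic, so for $\mu$-a.e.\ $x$ there is a sequence $t_k \to \infty$ with both $u^{t_k}x \in K$ and $u^{t_k}(cx) = c\cdot u^{t_k}x \in K$ (the equality uses $c \in C_G(U)$). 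The intertwining then forces $\phi^{U,\tau}_{t_k}(\psi(x))$ and $\phi^{U,\tau}_{t_k}(\psi(cx))$ both to lie in $\psi(K)$ along $t_k$. Proposition \ref{timechange20} quantifies the divergence of two $u^t$-orbits whose initial offset has a component transverse to $C_G(U)$: that component shears polynomially in $t$. The simultaneous recurrence rules out any such transverse component in the limiting relative position of the two $\phi^{U,\tau}$-orbits, and extracting a convergent subsequence yields a well-defined $\varpi(x,c) \in C_G(U)$.

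Next, using the Lie-algebra splitting (\ref{time change184}), write $\varpi(x,c) = u^{\alpha(x,c)}\beta(x,c)$ with $\alpha(x,c) \in \mathbf{R}$ and $\beta(x,c) \in \exp V_C^\perp$, and show that $\beta$ depends only on $c$. The cocycle identity $\varpi(x,c_1c_2) = \varpi(c_2x,c_1)\varpi(x,c_2)$, projected onto the abelian quotient $\exp V_C^\perp$ on which the $u^{\alpha}$-factor acts trivially, forces $\beta(\cdot,c)$ to be $u^t$-invariant in $x$ (since $\alpha$ can vary along $u^t$-orbits) and multiplicative in $c$. Ergodicity of $u^t$ then makes $\beta$ a function of $c$ alone. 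This is the content of Lemmas \ref{timechange3} and \ref{time change202106.5}.

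For the second conclusion, assume $\alpha(\cdot,c) \in L^1(X)$ for every $c \in C_G(U)$. I would pass to the graph joining $\Sigma \coloneqq (\id \times \psi)_* \mu$ on $X \times X$, which is invariant under the skew-product $(x,y) \mapsto (u^t x, \phi^{U,\tau}_t(y))$. The $L^1$-bound on $\alpha$ allows me to absorb the real-valued part of the centralizer cocycle into a measurable reparametrization of the target coordinate, after which $\Sigma$ becomes invariant under the honest diagonal $u^t$-action and remains equivariant under the diagonal $C_G(U)$-action through $\beta$. Ratner's measure-classification theorem then forces $\Sigma$ to be supported on a single orbit of a closed subgroup $L \subset G \times G$ projecting surjectively to each factor. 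Because $\psi$ intertwines $u^t$ exactly, $L$ must contain the diagonal embedding of the $SO(2,1)$ generated by $\langle u^t, a^t, \tilde u^t\rangle$; combined with the centralizer equivariance of Stage two, this identifies $L$ as the graph of an automorphism $\Phi$ of $G$ which restricts to the identity on $SO(2,1)$, with the residual twist absorbed into a measurable map $c: X \to C_G(U)$. Reading off a base point $x_0 \in X$ and its image $y_0 = \psi(x_0)$ gives (\ref{time change171}).

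The main obstacle is the third stage, and concretely the two linked points: first, justifying the reparametrization that turns the time-changed skew-product into a genuine diagonal unipotent action on $X\times X$ for which Ratner's theorem applies — this requires the $L^1$-integrability of $\alpha(\cdot,c)$ in an essential way, since otherwise the relevant real-valued cocycle need not be measurably cohomologically trivial and $\Sigma$ need not be a homogeneous joining; and second, extracting the precise algebraic shape of $L$, in particular verifying that $\Phi$ fixes $SO(2,1)$ pointwise rather than being twisted by an outer element coming from $C_G(U)$. By contrast, Stage one is essentially standard shearing-plus-Luzin once Proposition \ref{timechange20} is in hand, and Stage two is algebraic bookkeeping about the structure of $C_G(U)$ given in (\ref{time change212}).
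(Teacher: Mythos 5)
Your Stages one and two track the paper's argument (Lemma \ref{timechange3} plus the cocycle identity and ergodicity analysis showing $\beta(x,c)=\beta(c)$), though your attribution of Stage two to Lemma \ref{time change202106.5} is off: that lemma does something different. The genuine gap is Stage three, and it is not a technicality. Your plan to ``absorb the real-valued part of the centralizer cocycle into a measurable reparametrization of the target coordinate,'' so that the graph joining $(\id\times\psi)_*\mu$ becomes invariant under the honest diagonal $u^t\times u^t$-action, is not justified by the hypothesis and is in fact circular: what such a reparametrization requires is that the time-change cocycle be a measurable coboundary, i.e.\ that $\tau$ and $1$ be measurably cohomologous, and that is (essentially) the \emph{conclusion} one extracts from (\ref{time change171}) via Corollary \ref{time change172}, not an input. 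The hypothesis $\alpha(\cdot,c)\in L^1$ concerns a different cocycle: $\alpha(x,c)$ is the $u$-shift in $\psi(cx)=u^{\alpha(x,c)}\beta(c)\psi(x)$, a cocycle over the $C_G(U)$-action, and the paper uses its integrability only through the Birkhoff ergodic theorem to show (Lemma \ref{time change202106.5}) that the geodesically renormalized maps $\Psi_n(x)=a_n\psi(a_{-n}x)$ become asymptotically equivariant in the directions $\exp(V^{\perp}_{C}\cap\mathfrak{g}_{-1})$. The paper's substitute for your reparametrization is exactly this renormalization: the effective estimate $|t-z(\psi(x),t)|=O(t^{1-\eta})$ coming from $\tau\in\mathbf{K}(X)$ makes any pointwise limit $\Psi$ of $\Psi_n$ exactly $u^t$-equivariant, and Ratner's theorem is then applied to the graph of $\Psi$, not of $\psi$.

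Even granting a diagonal-invariant graph joining, you have no control in the opposite unipotent direction. Equivariance under $u^t$ and $C_G(U)$ is not enough, since $C_G(U)$ is a proper subgroup and $u^t$ together with $C_G(U)$ does not generate $G$; the paper needs Lemma \ref{timechange12}, proved by running the shearing Proposition \ref{timechange20} again after the $a$-adjustment and the estimate on $|z(\psi(a_n^{-1}\tilde u^p x),t)-\tilde t|$, to obtain $\Phi(\tilde u^p)=C_{\tilde u}(p)\tilde u^p$, and it is this statement combined with the Jacobson--Morozov conjugacy of $\mathfrak{sl}_2$-triples under $C_G(U)$ that forces the automorphism $\Phi$ to fix $SO(2,1)$ pointwise; the same ingredient (via Fubini and Corollary \ref{timechange37}) is what makes the limit map defined on a full-measure set in the first place. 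You explicitly flag both the reparametrization and the ``$\Phi$ fixes $SO(2,1)$'' step as the main obstacles but supply no argument for either, so the second half of the theorem is not proved by your proposal.
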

       The proof of Theorem \ref{timechange14} use the strategy similar to Ratner's theorem. We consider the unipotent orbits of nearby points and look at their images under the given measurable conjugacy $\psi$. By Lusin's theorem, the images inherit a similar behavior as in the range. Using this phenomenon, we can study the difference of the nearby points under $\psi$ and so obtain the extra equivariant properties of $\psi$. The difficulty is to pull the information of unipotent orbits from the homogeneous spaces back to Lie groups. It  requires us to observe the long unipotent orbits with ``gaps" and make use of their polynomial growth nature (see Proposition \ref{timechange20}).

   Note that combining    (\ref{dynamical systems2000})  (respectively (\ref{time change171})) with Corollary  \ref{dynamical systems2001}   (respectively Corollary \ref{time change172}), we obtain a criterion for the solutions of the cohomological equation (Theorem \ref{time change192}):
         \begin{cor}
          Let the notation and assumptions be as above.     Then $\tau(x)$ and $\tau(cx)$ are  measurably  cohomologous for all $c\in C_{G}(U)$. Besides, if $\tau(x)$ and $\tau(cx)$ are indeed $L^{1}$-cohomologous for all $c\in C_{G}(U)$, then    $1$ and $\tau$ are measurably cohomologous.
         \end{cor}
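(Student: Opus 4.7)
My plan is to extract the two cohomological statements from Theorem \ref{timechange14} by combining the centralizer equivariance of the measurable conjugacy $\psi$ with the intertwining property $\psi\phi_{t}^{U}=\phi_{t}^{U,\tau}\psi$.

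\textbf{First assertion.} Theorem \ref{timechange14} gives
\[
  \psi(cx) = u^{\alpha(x,c)}\beta(c)\psi(x), \qquad c\in C_{G}(U),
\]
with $\beta(c)\in\exp V_{C}^{\perp}$ commuting with $u^{t}$. I would apply $\phi_{T}^{U,\tau}$ to both sides and use $cu^{T}=u^{T}c$ and the intertwining to obtain
\[
  \phi_{T}^{U,\tau}\bigl(u^{\alpha(x,c)}\beta(c)\psi(x)\bigr)=u^{\alpha(u^{T}x,c)}\beta(c)\,\phi_{T}^{U,\tau}(\psi(x)).
\]
Since both sides lie on the same $u^{s}$-orbit of $\beta(c)\psi(x)$, equating the $u$-parameters yields an integral identity in $\tau$; differentiating in $T$ at $T=0$ produces the pointwise relation $U\alpha(x,c)=1/\tau(\psi(cx))-1/\tau(\psi(x))$. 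Integrating this relation along the orbit and changing variables via the $\psi$-induced reparameterization $t\mapsto\xi^{-1}_{\psi(x)}(t)$ converts it into a $\phi^{U}$-coboundary equation of the form (\ref{time change189}) for $\tau(x)-\tau(cx)$, with measurable transfer function built from $\alpha(\cdot,c)$.

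\textbf{Second assertion.} If $\tau(x)$ and $\tau(cx)$ are $L^{1}$-cohomologous for every $c\in C_{G}(U)$, then tracing back through the construction of the first part forces $\alpha(\cdot,c)\in L^{1}(X)$ for every such $c$. This is precisely the integrability hypothesis that activates the rigid half of Theorem \ref{timechange14}, namely
\[
  \psi(gx_{0})=u^{a(gx_{0})}b\,\Phi(g)y_{0},
\]
with $\Phi\in\Aut(G)$ fixing $SO(2,1)$ pointwise. Substituting $g\mapsto u^{t}g$ and using $\Phi(u^{t})=u^{t}$, I match the two available expressions for $\psi(u^{t}gx_{0})$ --- one from this algebraic form, the other from $\psi\phi_{t}^{U}=\phi_{t}^{U,\tau}\psi$ --- to obtain an identity of the shape $a(u^{t}gx_{0})+t-a(gx_{0})=s_{0}(t,\psi(gx_{0}))$, where $s_{0}(t,y)$ is the $u$-shift implicit in $\phi_{t}^{U,\tau}(y)=u^{s_{0}(t,y)}y$. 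Rewriting $s_{0}$ via $\phi_{\xi(\cdot,t)}^{U,\tau}=u^{t}$ turns this into the coboundary relation $\int_{0}^{T}(\tau(u^{t}x)-1)\,dt=f(u^{T}x)-f(x)$, exhibiting $1$ and $\tau$ as measurably cohomologous.

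The principal obstacle in both parts is the change-of-variable step that mediates between the orbit parameterization of $\phi_{t}^{U}$ on the source (controlled by $\tau$ evaluated along the orbit of $x$) and that of $\phi_{t}^{U,\tau}$ on the target (controlled by $\tau$ evaluated along the orbit of $\psi(x)$). Carrying these out rigorously --- and verifying that the resulting transfer functions are globally measurable on $X$ rather than only defined orbit-by-orbit --- is where the real work lies. This is also the point at which one invokes the effective mixing hypothesis $\tau\in\mathbf{K}(X)$ to promote orbit-wise definitions to genuine measurable functions on $X$.
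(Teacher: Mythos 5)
Your plan follows the paper's own route in both halves: the first assertion comes from the centralizer equivariance $\psi(cx)=u^{\alpha(x,c)}\beta(c)\psi(x)$ together with the cocycle identity $z(\psi(cx),t)+\alpha(x,c)=\alpha(u^{t}x,c)+z(\psi(x),t)$ (equation (\ref{time change186})), and the second from the affine form (\ref{time change171}) with $\Phi$ fixing $SO(2,1)$; in both cases the coboundary is produced by rewriting these identities through the time-change reparameterization, exactly as in Corollaries \ref{dynamical systems2001} and \ref{time change172}. However, three steps as you state them need repair. First, the differentiation at $T=0$ producing the pointwise relation $U\alpha(x,c)=1/\tau(\psi(cx))-1/\tau(\psi(x))$ is not justified: $\tau$ is only a bounded measurable element of $\mathbf{K}(X)$ and $\alpha(\cdot,c)$ is only measurable, so pointwise values of $1/\tau$ at prescribed points and the derivative of $t\mapsto\alpha(u^{t}x,c)$ at the specific time $t=0$ are representative-dependent. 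The detour is also unnecessary: the paper works with the integrated cocycle identity directly, which yields the explicit transfer function $g_{c}(y)=\int_{0}^{\alpha(\psi^{-1}(y),c)}\tau(u^{s}\beta(c)y)\,ds$ and a coboundary between $\tau$ and $\tau(\beta(c)\,\cdot)$; surjectivity of $d\beta$ on $V_{C}^{\perp}$ then upgrades this to all $c\in C_{G}(U)$, a step your sketch omits.

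Second, in the rigid half the identity $a(gx_{0})+z(\psi(gx_{0}),t)=t+a(u^{t}gx_{0})$ gives a coboundary between $1$ and $\tau(b\,\cdot)$ along the orbits $u^{s}b\Phi(g)y_{0}$, not between $1$ and $\tau$ as your final display claims; one must still invoke the standing hypothesis with $c=b$ (the $\exp V_{C}^{\perp}$ component of $c(x)=u^{a(x)}b$) to replace $\tau\circ b$ by $\tau$, which is precisely how the paper concludes. Third, the bridge ``$\tau$ and $\tau(c\,\cdot)$ are $L^{1}$-cohomologous for all $c$ forces $\alpha(\cdot,c)\in L^{1}$'' requires an argument: the hypothesized $L^{1}$ transfer function need not be the constructed $g_{c}$, so you need uniqueness of transfer functions up to an additive constant (ergodicity of the flow), after which boundedness of $\tau,\tau^{-1}$ and the equivalence $\mu\sim\mu_{\tau}$ give $\alpha(\cdot,c)\in L^{1}$. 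Finally, a small mischaracterization: the hypothesis $\tau\in\mathbf{K}(X)$ is not what makes the transfer functions globally measurable at this stage; it is used earlier, for the effective ergodicity entering the proof of Theorem \ref{timechange14} itself. With these adjustments your argument coincides with the paper's proof.
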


\subsection{Shearing properties}\label{time change202106.4}
We shall study the \textit{shearing property}\index{shearing property} of the unipotent flow $\varphi^{U}_{t}$. Roughly speaking, it states that if two points start out so close together that  we cannot tell them apart, then the first difference we see, which is often called the \textit{fastest relative motion}\index{fastest relative motion}, will fall in the centralizer $C_{G}(U)$. It is only much later that we will detect any other difference between their paths.

In the following, we shall prove Proposition \ref{timechange20}, which provide a quantitative estimate of the difference of two nearby points via the shearing property stated above. The philosophy of Proposition \ref{timechange20} is:
\begin{enumerate}[\ \ \ ]
  \item  For certain $\lambda>0$, $t:[0,\infty)\rightarrow[0,\infty)$, if $99\%$ of $s\in[0,\lambda]$ $d_{X}(u^{t(s)}y,u^{s}x)<\epsilon$, then the only possible situation is that there is a big
\textbf{interval} $I\subset [0,\lambda]$ (say of $90\%$ length) so that $d_{X}(u^{t(s)}y,u^{s}x)<\epsilon$ for all $s\in I$.
\end{enumerate}  Roughly speaking, it connects to the fact that polynomials do not have extreme oscillations.
The $SL(2,\mathbf{R})$ version of this  property has already been established by Ratner \cite{ratner1986rigidity}. The method is also inspired by the proof of \textit{Ratner's theorem}. See \cite{einsiedler2006ratner}, \cite{einsiedler2009effective} and references therein.

\begin{prop}[Shearing]\label{timechange20}  Let the notation and assumptions be as above. Given $\eta\in(0,1)$ and $m>1$, there are
\begin{itemize}
  \item  $\rho=\rho(\eta)>0$,
  \item $\theta=\theta(\rho)>0$,
  \end{itemize}
such that for any sufficiently small $\sigma\in(0,\sigma_{\rho})$,  there are
\begin{itemize}
  \item  a compact $K=K(\rho,\sigma)\subset X$ with $\mu(K)>1-\sigma$,
  \item  $\epsilon=\epsilon(K,m)\in(0,1)$ close to $0$
\end{itemize}   satisfying the following property: Let $x\in K$, $y\in B_{X}(x,\epsilon)$, and a subset $A\subset\mathbf{R}^{+}$ satisfy the following conditions
\begin{enumerate}[\ \ \ (i)]
  \item if $s\in A$, then
  \[u^{s}x\in  K \ \ \text{ and }\ \  d_{X}(u^{t(s)}y,u^{s}x)<\epsilon\]
  for some increasing function  $t:[0,\infty)\rightarrow[0,\infty)$,
  \item   we have the H\"{o}lder inequality:
  \begin{equation}\label{timechange25}
   |(t(s^{\prime})-t(s))-(s^{\prime}-s)|\leq |s^{\prime}-s|^{1-\eta}
  \end{equation}
      for all $s,s^{\prime}\in A$ with $s^{\prime}>s$, $\max\{(s^{\prime}-s),(t(s^{\prime})-t(s))\}\geq m$.
\end{enumerate}
Then for any $\lambda\in A$ satisfying $\Leb(A\cap[0,\lambda])>(1-\theta)  \lambda$, there is $s_{\lambda}\in A\cap[0,\lambda]$ such that
\begin{equation}\label{timechange33}
 u^{t(s_{\lambda})} y=h_{\lambda}\exp(v_{\lambda})u^{s_{\lambda}} x
\end{equation}
    where $h_{\lambda}\in SO(2,1)$ and $v_{\lambda}\in V^{\perp}$ satisfy
  \begin{align}
h_{\lambda}=&\left[
            \begin{array}{ccc}
               1+O(\lambda^{-2\rho}) &   O(\lambda^{-1-2\rho})  \\
              O(\epsilon) &   1+O(\lambda^{-2\rho})\\
            \end{array}
          \right] \;\nonumber\\
v_{\lambda}=& O(\lambda^{- \frac{1+2\rho}{2}\varsigma})v_{0}+O(\lambda^{- \frac{1+2\rho}{2}(\varsigma-1)})v_{1}+\cdots+O(\epsilon)v_{\varsigma}.\;  \nonumber
\end{align}
\end{prop}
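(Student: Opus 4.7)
The plan is to adapt Ratner's $SL_{2}(\mathbf{R})$ shearing argument to $SO(n,1)$ via the $\mathfrak{sl}_{2}$-weight decomposition $\mathfrak{g} = \Span\{U, Y_{n}, \tilde U\} \oplus V^{\perp}$; the central analytic tool is a Remez-type polynomial inequality for polynomials that are small on a Lebesgue-thick subset of an interval. After shrinking $\epsilon$ so that $K$ has injectivity radius at least $2\epsilon$, I write $y = gx$ with $g$ near $e$, and factor $g = h\exp(v)$ with $h = \bigl(\begin{smallmatrix} a & b \\ c & d \end{smallmatrix}\bigr) \in SO(2,1)$ (via $\iota$) and $v = b_{0}v_{0} + \cdots + b_{\varsigma}v_{\varsigma} \in V^{\perp}$. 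Setting $\tau(s) := t(s)-s$, hypothesis (ii) gives $|\tau(s)| \lesssim s^{1-\eta}$, while (i) translates to the displacement
\[ w(s) := u^{t(s)}gu^{-s} = u^{\tau(s)}\,(u^{s}hu^{-s})\,\exp\bigl(\Ad(u^{s})v\bigr) \]
lying in $B_{G}(e, O(\epsilon))$ for every $s \in A$.

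Next I expand $w(s)$ in the weight basis. A direct $SL_{2}$-calculation gives
\[ u^{s}hu^{-s} = \begin{pmatrix} a-bs & b \\ c + s(a-d) - bs^{2} & d + bs \end{pmatrix}, \]
while by the $\mathfrak{sl}_{2}$-structure $\Ad(u^{s})v_{k}$ is a polynomial in $s$ of bounded degree with values in weights $\leq k$ (since $\ad(U)$ lowers the $Y_{n}$-weight). Hence each weight-coordinate of $\Log w(s)$ is a polynomial $P_{j}(s)$ of degree $\leq \varsigma$, up to a correction $\tau(s) = O(s^{1-\eta})$ living purely in the $U$-direction. Hypothesis (i) gives $|P_{j}(s)| \leq C\epsilon$ on $A \cap [0,\lambda]$, a set of Lebesgue measure $\geq (1-\theta)\lambda$; the Remez inequality then upgrades this to $\sup_{[0,\lambda]}|P_{j}| \leq C'(\theta,\varsigma)\,(\epsilon + \lambda^{1-\eta})$, provided $\theta = \theta(\rho)$ is chosen small enough (depending on the a priori bounded degree $\varsigma$). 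Standard Chebyshev-type coefficient estimates then bound each of $a-1, b, c, d-1, b_{0},\ldots,b_{\varsigma}$ by an appropriate negative power of $\lambda$.

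Finally, I pick $s_{\lambda} \in A \cap [0,\lambda]$ within $\theta\lambda$ of $\lambda$ (such a point exists since $|A\cap[0,\lambda]| > (1-\theta)\lambda$), and substitute the coefficient bounds into the weight expansion of $w(s_{\lambda}) = h_{\lambda}\exp(v_{\lambda})$. The displayed asymptotics for $h_{\lambda}$ and $v_{\lambda}$ then drop out after bookkeeping, with the exponents $2\rho$ and $(1+2\rho)/2$ arising from balancing the Hölder defect $\lambda^{1-\eta}$ against the polynomial decay; fixing $\rho = \rho(\eta) > 0$ makes this balance strict.

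The main obstacle is the interplay between the nonpolynomial Hölder correction $\tau(s)$ and the polynomial shearing, compounded by the $\exp$-nonlinearity on $V^{\perp}$: lower-weight coordinates pick up BCH cross-terms from higher-weight coordinates, so the coefficient bounds must be extracted iteratively from highest weight downward rather than all at once. Controlling these cross-terms precisely, together with tracking which irreducible $\mathfrak{sl}_{2}$-submodule of $V^{\perp}$ each $v_{k}$ lives in, is what makes the bookkeeping in the last step delicate.
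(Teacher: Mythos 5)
There is a genuine gap at the very first reduction. You assert that hypothesis (i) "translates to" $w(s)=u^{t(s)}gu^{-s}\in B_{G}(e,O(\epsilon))$ for \emph{every} $s\in A$, with a single fixed lift $g$ of the displacement between $y$ and $x$. But (i) is a statement in $X=G/\Gamma$: it only gives, for each $s\in A$, some $\gamma_{s}\in\Gamma$ with $u^{t(s)}g_{y}\gamma_{s}u^{-s}g_{x}^{-1}$ of size $O(\epsilon)$, and since $A$ is only required to fill a $(1-\theta)$-fraction of $[0,\lambda]$, the orbit can leave the $\epsilon$-neighbourhood inside a gap of $A$ and return close to a \emph{different} $\Gamma$-translate, so $\gamma_{s}$ need not be constant on $A\cap[0,\lambda]$. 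Your injectivity-radius remark does not repair this: uniqueness of the nearby representative at each individual time $s$ says nothing about the representative being the same across the gaps. Consequently you are not entitled to a single polynomial $P_{j}(s)$ that is small on the whole thick set $A\cap[0,\lambda]$, which is exactly what your Remez step needs. This "shifting" phenomenon is the main difficulty, and the paper's proof is mostly devoted to it: it builds $\epsilon$-blocks $\beta_{0}$, merges them into $\beta_{\rho}$, and proves (Lemma \ref{timechange27}) that distinct blocks have an effective gap, where the shifted case $\gamma\neq e$ is excluded by pushing with the geodesic flow $a^{\omega}$ and invoking the compact set $K_{2}$ chosen via ergodicity of $a^{T}$ together with the injectivity-radius conditions (\ref{timechange30})--(\ref{timechange31}); the Solovay-type combinatorial Proposition \ref{dynamical systems1011} then forces one single block of length $>\tfrac34\lambda$, and only on that block do the polynomial coefficient estimates (Lemma \ref{time change176}) apply. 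This also explains why $K$ cannot be produced by Lusin/compactness alone (it must see the geodesic flow), why the parameter $m$ and the clause $\max\{(s'-s),(t(s')-t(s))\}\geq m$ in (ii) enter the choice of $\Delta$, and why the conclusion is stated at some $s_{\lambda}\in A\cap[0,\lambda]$ (the left endpoint of the long block) rather than at a time of your choosing near $\lambda$.

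If the uniform-lift issue were settled, your use of a Remez-type inequality on a set of measure $(1-\theta)\lambda$ plus Chebyshev coefficient bounds would be a reasonable substitute for the paper's Lemma \ref{dynamical systems4} and its interval bookkeeping, and the weight-space expansion of $u^{t(s)}h\exp(v)u^{-s}$ you set up matches the paper's (\ref{time change173}); also, since $\varsigma\leq 2$ for $\mathfrak{so}(n,1)$, the BCH "cross-term" worry you flag is handled in the paper simply by Lemma \ref{timechange106} and Lemma \ref{timechange102}, which split the estimate cleanly into the $SO(2,1)$-part and the $\Ad u^{s}$-orbit of $v$. But as written, the argument proves the proposition only in the special case where no shifting occurs, and that case is not the content of the statement.
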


\subsection{Quantitative estimates}
In order to prove Proposition \ref{timechange20}, we shall develop a collection $\alpha$ of finitely many subintervals of $[0,\lambda]$ through the assumptions. Then we shall show that there is a  big interval from the     collection $\alpha$. In the following, we first verify a combinatorial result that helps us to find the big interval in $\alpha$.

 Let $I$ be an interval in $\mathbf{R}$ and let $J_{i},J_{j}$ be disjoint subintervals of $I,J_{i}=[x_{i},y_{i}]$, $y_{i}<x_{j}$ if $i<j$. Denote
\[d(J_{i},J_{j})\coloneqq \Leb[y_{i},x_{j}]=x_{j}-y_{i}.\]
For a collection $\beta$ of finitely many intervals, we define
\[|\beta|\coloneqq \Leb\left(\bigcup_{J\in \beta}J\right).\]
Besides, for a collection $\beta$ of finitely many intervals, an interval $I$, let
\[\beta\cap I\coloneqq\{I\cap J:J\in\beta\}.\]
\begin{prop}[Existence of large intervals, Solovay, \cite{ratner1979cartesian}] \label{dynamical systems1011}
  Given $\rho\in(0,1)$, there is $\theta=\theta(\rho)\in(0,1)$ such that if $I$ is an interval of length $\lambda> 1 $   and $\mathcal{G}\cup\mathcal{B}=\{J_{1},\ldots,J_{n}\}$ is a partition of $I$ into good and bad intervals such that
  \begin{enumerate}[\ \ \ (1)]
    \item   for any two good intervals $J_{i},J_{j}\in\mathcal{G}$, we have
    \begin{equation}\label{time change175}
      d(J_{i},J_{j})\geq[\min\{\Leb(J_{i}),\Leb(J_{j})\}]^{1+\rho},
    \end{equation}
    \item $ \Leb(J)\leq  \frac{3}{4}\lambda$ for any good interval $J\in\mathcal{G}$,
    \item $ \Leb(J)\geq 1$ for any bad interval $J\in\mathcal{B}$,
  \end{enumerate}
  then the measure of bad intervals $\Leb(\bigcup_{J\in \mathcal{B}}J)\geq \theta\lambda$.
\end{prop}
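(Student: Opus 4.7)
The plan is to argue by contradiction: assume the total bad measure is less than $\theta\lambda$, so $\sum_{J \in \mathcal{G}} \Leb(J) > (1-\theta)\lambda$, and derive a contradiction from hypotheses (1)--(3). First I would relabel the good intervals $J_1,\ldots,J_k$ in their order along $I$, with lengths $L_i := \Leb(J_i)$ and consecutive gaps $g_i := d(J_i, J_{i+1})$. By (3) every non-empty gap has length $\geq 1$, and since no two good intervals can share an endpoint (otherwise $d=0$ violates (1)), every $g_i \geq 1$; combined with (1), we have $g_i \geq \max\{1, [\min(L_i, L_{i+1})]^{1+\rho}\}$. The total bad measure is then bounded below by $\sum_i g_i$, plus any bad intervals at the two ends of $I$.

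The trivial cases $k \leq 1$ dispose easily: hypothesis (2) gives $\sum_i L_i \leq \tfrac{3}{4}\lambda$, hence bad measure $\geq \tfrac{1}{4}\lambda$, so any $\theta \leq \tfrac{1}{4}$ works. For $k \geq 2$ the strategy is to control the packing scale-by-scale. I would group the good intervals into dyadic length classes $C_j := \{J_i : 2^{j-1} \leq L_i < 2^j\}$ for $j \in \mathbb{Z}$. Restricting to the sub-sequence of good intervals of length at least $2^{j-1}$ and looking at consecutive pairs among them (in position order), hypothesis (1) forces the gap to be at least $2^{(j-1)(1+\rho)}$, i.e.\ a $2^{(j-1)\rho}$-fraction of each of the two lengths. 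An elementary counting then yields an upper bound of the form $S_j(1 + c \cdot 2^{(j-1)\rho}) \leq \lambda$ on the total length $S_j$ occupied by the intervals in class $C_j$, and summing the resulting geometric series in $j$ --- truncated at the top by the cap $L_i \leq \tfrac{3}{4}\lambda$ of (2) --- produces $\sum_i L_i \leq r(\rho)\lambda$ for an explicit $r(\rho) < 1$. Setting $\theta := 1 - r(\rho) \in (0,1)$ closes the contradiction.

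The principal obstacle will be handling the interaction between different scales: the separation bound in (1) uses the \emph{minimum} of the two adjacent lengths, so a small good interval wedged between two long ones forces only a small gap there, and a naive scale-by-scale estimate could overcount. To control this I would complement the dyadic bucketing with a \emph{peel-the-largest} induction: repeatedly extract the longest good interval $J_\star$ of length $L_\star \leq \tfrac{3}{4}\lambda$, split $I = I_L \cup J_\star \cup I_R$, and apply the analysis recursively to $I_L, I_R$ with the reduced upper bound $L_i \leq L_\star$ inherited on each side. The hypothesis $\lambda > 1$ enters precisely in reconciling the two lower bounds on $g_i$, namely $1$ and $[\min]^{1+\rho}$: it guarantees that the scales of the problem are large enough that the multiplicative bound $[\min]^{1+\rho}$ dominates the additive bound $1$ for the intervals that carry most of the good mass, ensuring the geometric estimate does not degenerate.
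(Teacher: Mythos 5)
Your setup (ordering the good intervals, noting every gap is at least $\max\{1,[\min(L_i,L_{i+1})]^{1+\rho}\}$, disposing of $k\leq 1$ via (2)) is fine, but the quantitative core does not close, and the key displayed claim is false as stated. Hypothesis (1) only charges a gap between a \emph{pair} of good intervals, so a dyadic class containing a single interval produces no density gain: if there is one good interval of length $\tfrac34\lambda$ (allowed by (2)), your inequality $S_j(1+c\,2^{(j-1)\rho})\leq\lambda$ reads roughly $\tfrac34\lambda\bigl(1+c(\tfrac38\lambda)^{\rho}\bigr)\leq\lambda$, which fails for large $\lambda$. The correct per-class count only bounds $S_j$ through $(N_j-1)$ gaps, so all classes near the top scale are uncontrolled by this argument; indeed at most one good interval can have length exceeding about $\lambda^{1/(1+\rho)}$ (two such would have to be at distance $\geq\lambda$), so the extremal configurations are \emph{nested}, Cantor-like arrangements, which a single global dyadic decomposition of $I$ cannot see. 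Consequently the asserted bound $\sum_i L_i\leq r(\rho)\lambda$ with $r(\rho)<1$ is never actually derived, and $\theta:=1-r(\rho)$ has no support.

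You correctly identify the obstruction and your ``peel-the-largest'' recursion is the right fix -- it is essentially the paper's argument, which sorts the good intervals into classes $\mathcal{G}_n=\{J:(\tfrac34)^{n+1}\lambda\leq\Leb(J)\leq(\tfrac34)^{n}\lambda\}$ and shows that, on each component $J$ of the set $\mathcal{B}_{\leq n}$ left after deleting the classes up to $n$, removing the class-$(n+1)$ intervals costs at most a multiplicative factor, $|\mathcal{B}_{\leq n+1}\cap J|/\Leb(J)\geq\bigl(1+C(\tfrac34)^{(k-n)\rho}\bigr)^{-1}$, so that $\Leb(\bigcup_{J\in\mathcal{B}}J)\geq\prod_{m\geq 0}\bigl(1+C(\tfrac34)^{m\rho}\bigr)^{-1}\lambda=:\theta(\rho)\lambda$. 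But you do not execute this: you neither formulate the strengthened induction statement (with the cap $L_\star$ as a parameter in place of the $\tfrac34\lambda$ of (2)), nor -- and this is the essential point -- show that the losses incurred at successive levels of the recursion compound to a constant bounded away from $0$ uniformly in $\lambda$ and in the depth of nesting. That uniform control is exactly the convergent infinite product above (equivalently, a summable sequence of per-scale losses), and it is the heart of the proposition; without it the contradiction you aim for is not reached. A complete write-up along your lines would need this product/summability estimate, at which point it coincides with the paper's proof rather than offering an alternative.
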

\begin{rem} The idea of Proposition \ref{dynamical systems1011} is to consider   the arrangement of intervals in  $\alpha$. It turns out that under the assumptions, the worst arrangement would be like the complement of a Cantor set. A careful calculation of the quantities under this situation leads to the result.
\end{rem}
\begin{proof}
 Assume that  $\left(\frac{4}{3}\right)^{k-1} \leq \lambda\leq \left(\frac{4}{3}\right)^{k}$ for some $k\geq1$.  Let $\mathcal{G}_{n}\coloneqq\{J\in\mathcal{G}:\left(\frac{3}{4}\right)^{n+1}\lambda\leq |J|\leq \left(\frac{3}{4}\right)^{n}\lambda\}$, $\mathcal{G}_{\leq n}\coloneqq\bigcup_{i=1}^{n}\mathcal{G}_{i}$, and $\mathcal{B}_{\leq n}$ be the collection of remaining intervals forming $I\setminus\bigcup_{J\in\mathcal{G}_{\leq n}}J$.  Then given $n\in\mathbf{N}$, $J\in \mathcal{B}_{\leq n}$, by (\ref{time change175}), we have
    \begin{align}
\frac{|\mathcal{B}_{\leq n+1}\cap J|}{\Leb(J)}=&\frac{|\mathcal{B}_{\leq n+1}\cap J|}{|\mathcal{G}_{n+1}\cap J|+|\mathcal{B}_{\leq n+1}\cap J|}= \left(1+\frac{|\mathcal{G}_{n+1}\cap J|}{|\mathcal{B}_{\leq n+1}\cap J|}\right)^{-1}  \;\nonumber\\
\geq& \left(1+\frac{l\left(\frac{3}{4}\right)^{n+1}\lambda}{(l-1) \left(\frac{3}{4}\right)^{(n+2)(1+\rho)}\lambda^{1+\rho}}\right)^{-1} = \left(1+ C\left(\frac{3}{4}\right)^{(k-n)\rho}\right)^{-1}  \;  \nonumber
\end{align}
where $l\geq 2$ is the number of intervals in $\mathcal{G}_{n+1}\cap J$, and $C>0$ is some constant depending on $\rho$. One can also show that when $k=0,1$, we have a similar relation. By summing over $J\in \mathcal{B}_{\leq n}$, we obtain
\[\frac{|\mathcal{B}_{\leq n+1}|}{|\mathcal{B}_{\leq n}|}\geq\left(1+ C\left(\frac{3}{4}\right)^{(k-n)\rho}\right)^{-1}. \]
Note that by (2), $|\mathcal{B}_{\leq 0}|=\lambda$, and    by (3), $\mathcal{B}_{\leq n}=\mathcal{B}_{\leq n+1}$ for all $n\geq k$. We calculate
\[|\mathcal{B}|=|\bigcap_{k\geq 0}\mathcal{B}_{\leq k}|=\lim_{k\rightarrow\infty}|\mathcal{B}_{\leq k}|=\prod_{n=0}^{\infty}\frac{|\mathcal{B}_{\leq n+1}|}{|\mathcal{B}_{\leq n}|}\cdot \lambda\geq \prod_{n=0}^{k} \left(1+ C\left(\frac{3}{4}\right)^{(k-n)\rho}\right)^{-1}\cdot \lambda.\]
Take
\[\theta\coloneqq\prod_{m=0}^{\infty} \left(1+ C\left(\frac{3}{4}\right)^{m\rho}\right)^{-1}\leq\prod_{n=0}^{k} \left(1+ C\left(\frac{3}{4}\right)^{(k-n)\rho}\right)^{-1}\]
 and the proposition follows.
\end{proof}

In light of (\ref{time change175}), we say that two intervals $I,J\subset\mathbf{R}$ have an \textit{effective gap}\index{effective gap} if
\[   d(I,J)\geq[\min\{\Leb(I),\Leb(J)\}]^{1+\rho}\]
for some $\rho>0$.
Later, we shall obtain some quantitative results relative to the effective gap.

  \begin{lem}\label{timechange106}
  For sufficiently small vector $v\in\mathfrak{g}$, we have
  \[\log \exp(gvg^{-1})=gvg^{-1}\]
  for all $g\in G$, where $\log$ denotes the principal logarithm.
\end{lem}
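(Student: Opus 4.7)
The plan is to reduce the identity to two standard facts: (i) for vectors $v$ in a small enough neighborhood of $0\in\mathfrak{g}$, the Taylor series inversion gives $\log\exp v = v$, and (ii) the principal matrix logarithm is equivariant under similarity transformations whenever it is defined.

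The apparent difficulty is that $g$ ranges over the non-compact group $G$, so even if $\|v\|$ is small, $gvg^{-1}$ can be arbitrarily large. The key observation that bypasses this is spectral: conjugation preserves the set of eigenvalues, so the eigenvalues of $gvg^{-1}$ coincide with those of $v$, and likewise $\exp(gvg^{-1}) = g\exp(v)g^{-1}$ has the same spectrum as $\exp(v)$. Consequently, if we choose $v$ small enough that every eigenvalue of $v$ lies in, say, the disc $\{|z|<\log 2\}$, then every eigenvalue of $\exp(gvg^{-1})$ lies in the disc of radius $2$ around $1$, which in particular avoids the cut $(-\infty,0]$. This makes the principal matrix logarithm well-defined on $\exp(gvg^{-1})$ for \emph{every} $g\in G$, uniformly in $g$, once $v$ is chosen sufficiently small.

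First I would verify the local inversion step: for $\|v\|$ small, $\exp(v) = I + w$ with $\|w\|<1$, so the series $\log(I+w)=\sum_{k\geq 1}(-1)^{k+1}w^{k}/k$ converges absolutely and sums to $v$ by the usual formal manipulation with the exponential series, giving $\log\exp v = v$. Next I would invoke the similarity-equivariance $\log(gAg^{-1}) = g(\log A)g^{-1}$, which holds for any invertible $g$ and any matrix $A$ on which the principal logarithm is defined. This follows either from the holomorphic functional calculus (the same Cauchy contour encircling the common spectrum of $A$ and $gAg^{-1}$ computes both logarithms), or more elementarily from the Jordan form, since $A$ and $gAg^{-1}$ have conjugate Jordan decompositions and the principal log is determined block-by-block by the Taylor expansion of $\log$ at each eigenvalue.

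Combining the two steps yields the desired chain
\[
\log\exp(gvg^{-1}) \;=\; \log\bigl(g\exp(v)g^{-1}\bigr) \;=\; g\bigl(\log\exp v\bigr)g^{-1} \;=\; gvg^{-1},
\]
valid for every $g\in G$ once $v$ is small enough to trigger the local inversion on $\exp(v)$ and to confine its spectrum away from $(-\infty,0]$. The only point requiring mild care is recording that the smallness threshold on $v$ is independent of $g$; the spectral observation above delivers this at once.
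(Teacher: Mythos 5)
Your argument is correct and rests on the same essential observation as the paper's proof---that conjugation by $g$ does not change eigenvalues---but you package it differently. The paper simply quotes Higham's criterion that $\log\exp(v)=v$ holds precisely when every eigenvalue of $v$ has imaginary part of modulus less than $\pi$, and applies it directly to $gvg^{-1}$; since the spectrum is conjugation-invariant, the smallness condition on $v$ suffices uniformly in $g$. You instead transport the identity from $v$ to $gvg^{-1}$ via the equivariance $\log(gAg^{-1})=g(\log A)g^{-1}$ of the principal logarithm (a primary matrix function / holomorphic functional calculus fact), after using the spectral mapping theorem to confirm that the principal log is defined on $\exp(gvg^{-1})$ uniformly in $g$; this is a slightly longer but more self-contained route, since it avoids citing the exact iff criterion. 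One small numerical slip: if the eigenvalues of $v$ lie in $\{|z|<\log 2\}$, then the eigenvalues of $\exp(gvg^{-1})$ satisfy $|e^{\lambda}-1|\leq e^{|\lambda|}-1<1$, so they lie in the open disc of radius $1$ about $1$; the disc of radius $2$ about $1$ that you invoke does meet the cut $(-\infty,0]$ (for instance at $-1/2$), so the radius should be tightened, though this does not affect the substance of the argument.
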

\begin{proof}
  According to \cite{higham2008functions}, for any square complex matrix $v$, $\log \exp(v)=v$ iff $|\Imm \lambda_{i}|<\pi$ for every eigenvalue $\lambda_{i}$ of $v$. Then the consequence follows from the fact that $\Ad g$ does not change the eigenvalues of $v$.
\end{proof}
\begin{lem}\label{timechange102} Let $\mathfrak{g}$ be a   Lie algebra, and $\mathfrak{g}=V_{1}\oplus V_{2}$ be a decomposition of vector spaces. Then
  the multiplication map $\zeta:V_{1}\oplus V_{2}\rightarrow G$ defined by
  \[(\alpha,\beta)\mapsto\exp(\alpha)\exp(\beta)\]
  induces a diffeomorphism on small neighborhoods  $U_{1}\subset V_{1}$ and  $U_{2}\subset V_{2}$ of $0$.
\end{lem}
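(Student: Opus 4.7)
The plan is to apply the inverse function theorem at the origin. Since $\zeta$ is real analytic (it is a composition of the exponential map and group multiplication, both of which are smooth), it suffices to show that the differential $d\zeta_{(0,0)}: V_{1}\oplus V_{2} \to T_{e}G \cong \mathfrak{g}$ is a linear isomorphism.

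First I would compute $d\zeta_{(0,0)}$ directly. For a tangent vector $(\alpha,\beta) \in V_{1}\oplus V_{2}$, write
\[
\zeta(t\alpha, t\beta) = \exp(t\alpha)\exp(t\beta),
\]
and differentiate at $t = 0$. Using the Leibniz rule for group multiplication and the fact that $d\exp_{0} = \id_{\mathfrak{g}}$, one obtains
\[
\left.\frac{d}{dt}\right|_{t=0} \exp(t\alpha)\exp(t\beta) = \alpha + \beta.
\]
Thus $d\zeta_{(0,0)}$ is exactly the canonical identification $V_{1}\oplus V_{2} \to \mathfrak{g}$ given by the direct sum decomposition, which is a linear isomorphism by hypothesis.

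By the inverse function theorem, there exist open neighborhoods $U_{1}\times U_{2} \subset V_{1}\oplus V_{2}$ of $(0,0)$ and $W \subset G$ of $e$ such that $\zeta: U_{1}\times U_{2} \to W$ is a diffeomorphism. Shrinking $U_{1}$ and $U_{2}$ if necessary, we may assume they are neighborhoods of $0$ in $V_{1}$ and $V_{2}$ respectively, which gives the claim.

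There is no real obstacle here; the statement is a routine consequence of the inverse function theorem once the differential at the origin is identified. The only minor subtlety is verifying that $d\zeta_{(0,0)}(\alpha,\beta) = \alpha+\beta$, which follows immediately from the product rule and $d\exp_{0} = \id$.
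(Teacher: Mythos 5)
Your proof is correct and is essentially the paper's own argument: identify $d\zeta_{(0,0)}(\alpha,\beta)=\alpha+\beta$, note this is an isomorphism because $\mathfrak{g}=V_{1}\oplus V_{2}$, and invoke the inverse function theorem. The extra detail you supply in computing the differential is fine but does not change the route.
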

\begin{proof}
   Note that $d\zeta_{(0,0)}:(\alpha,\beta)\mapsto\alpha+\beta$. Then the consequence follows from the inverse function theorem.
\end{proof}

In the following, $A\ll B$   means there is a constant $C>0$  such that $A\leq CB$. Besides, we write $A\ll_{\kappa} B$ if the constant $C(\kappa)$ depends on some coefficient $\kappa$.

\begin{lem}\label{dynamical systems4} Fix numbers $\epsilon>0$, $\eta\in(0,1]$, a real polynomial $p(x)=v_{0}+v_{1}x+\cdots+v_{k}x^{k}\in \mathbf{R}[x]$. Assume further that there exist intervals $[0,\overline{l}_{1}]\cup[l_{2},\overline{l}_{2}]\cup\cdots\cup[l_{m},\overline{l}_{m}]$ such that
\begin{equation}\label{dynamical systems3}
  |p(t)|\ll\max\{\epsilon,t^{1-\eta}\}\ \  \text{ iff } \ \ t\in[0,\overline{l}_{1}]\cup[l_{2},\overline{l}_{2}]\cup\cdots\cup[l_{m},\overline{l}_{m}]
\end{equation}
Then $\overline{l}_{1}$  has the  lower bound $l$  depending on $\max_{i}|v_{i}|$, $\epsilon$, $\eta$ and the implicit constant such that $l\nearrow\infty$ as $\max_{i}|v_{i}|\searrow0$ for fixed $\epsilon,\eta$.
Besides, $m\leq k$ and we have
   \begin{enumerate}[\ \ \ (1)]
     \item  $|v_{i}|\ll_{k,\eta} \overline{l}_{1}^{1-i-\eta} $ for all $1\leq i\leq k$;
     \item Fix $\rho\in(0,1)$. For $1\leq j\leq k-1$, sufficiently large $\overline{l}_{j}$, assume that the intervals $[0,\overline{l}_{j}]$ and $[l_{j+1},\overline{l}_{j+1}]$ do not have an effective gap:
     \begin{equation}\label{dynamical systems6}
       l_{j+1}-\overline{l}_{j}\leq \min\{\overline{l}_{j},\overline{l}_{j+1}-l_{j+1}\}^{1+\rho}.
     \end{equation}
     Then there exists $\xi(\rho,k)\in(0,1)$ with $\xi(\rho,k)\rightarrow1$ as $\rho\rightarrow0$ such that
     \[|v_{i}|\ll_{k,\eta} \overline{l}_{j}^{\xi(\rho,k)(1-i-\eta)}\]
for all $1\leq i\leq k$.
   \end{enumerate}
\end{lem}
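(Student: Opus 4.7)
The three assertions separate naturally. For the lower bound on $\overline{l}_1$ together with item (1) I will apply the Markov--Bernstein inequality on the good interval $[0,\overline{l}_1]$; the bound $m\le k$ is a Rolle's-theorem count for the function $F(t):=p(t)/\max\{\epsilon,t^{1-\eta}\}$; and item (2), the substantive content of the lemma, will follow by induction on $j$ controlled by the gap hypothesis (\ref{dynamical systems6}).

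For item (1) and the lower bound: on $[0,\overline{l}_1]$ one has $|p(t)|\ll\overline{l}_1^{1-\eta}$ provided $\overline{l}_1\ge\epsilon^{1/(1-\eta)}$ (the complementary regime of small $\overline{l}_1$ is handled by the trivial bound $|p(t)|\le C\epsilon$). Affinely rescaling $[0,\overline{l}_1]$ to $[-1,1]$ and invoking Markov--Bernstein for polynomials of degree $k$ gives $\|p^{(j)}\|_{[0,\overline{l}_1]}\ll_{k,\eta}\overline{l}_1^{1-j-\eta}$, whence $|v_j|=|p^{(j)}(0)|/j!$ satisfies the same bound. Inverting this at the index $j_0\ge 1$ realising $\max_i|v_i|$ yields $\overline{l}_1\gg|v_{j_0}|^{-1/(j_0+\eta-1)}\to\infty$ as $\max_i|v_i|\to 0$; the case $j_0=0$ is settled by $|v_0|=|p(0)|\le C\epsilon$. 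For $m\le k$, the good intervals are exactly the connected components of $\{t\ge 0:|F(t)|\le C\}$. On $t\ge\epsilon^{1/(1-\eta)}$ the critical points of $F$ solve $tp'(t)+(\eta-1)p(t)=0$, a polynomial equation of degree $k$; this limits the number of monotone pieces of $F$ on this range, and a brief case analysis joining the two regimes at $t=\epsilon^{1/(1-\eta)}$ (which lies in the first good interval) yields $m\le k$.

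For item (2), I plan an induction on $j$ establishing $|v_i|\ll_{k,\eta}\overline{l}_j^{\xi_j(1-i-\eta)}$ with $\xi_1=1$ (from item (1)) and degradation rule $\xi_{j+1}:=\xi_j/(1+\rho)$, so that $\xi_j\ge(1+\rho)^{-(j-1)}$. The inductive step splits into two regimes. If $\overline{l}_{j+1}\le C_{\rho,k}\,\overline{l}_j^{1+\rho}$, the inequality for $\overline{l}_{j+1}$ follows from the one for $\overline{l}_j$ by the purely algebraic observation $\overline{l}_j^{\xi_j(1-i-\eta)}\le\overline{l}_{j+1}^{\xi_j(1-i-\eta)/(1+\rho)}=\overline{l}_{j+1}^{\xi_{j+1}(1-i-\eta)}$, using $1-i-\eta<0$. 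If instead $\overline{l}_{j+1}\ge C_{\rho,k}\,\overline{l}_j^{1+\rho}$, the bad set $\bigcup_{i\le j}[\overline{l}_i,l_{i+1}]$ has total Lebesgue measure at most $k\overline{l}_j^{1+\rho}\le (k/C_{\rho,k})\overline{l}_{j+1}$ by (\ref{dynamical systems6}); a Remez-type inequality on $[0,\overline{l}_{j+1}]$ then transfers the bound $|p(t)|\le Ct^{1-\eta}$ from the good subset to the full interval, and Markov--Bernstein gives $|v_i|\ll_{k,\rho}\overline{l}_{j+1}^{1-i-\eta}$, which is tighter than required. Since $j\le m\le k$, the induction terminates and $\xi(\rho,k):=(1+\rho)^{-(k-1)}$ satisfies $\xi\to 1$ as $\rho\to 0$.

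The main obstacle is the Remez step in the ``growing'' regime: the Remez constant depends on the ratio of bad-to-total measure and on $k$, and I must verify that across the (at most $k$) induction steps this constant remains harmlessly absorbed into the implicit constant of $\ll_{k,\eta}$. A concrete alternative that sidesteps Remez is to pick $k+1$ interpolation nodes spread geometrically across the good intervals $[0,\overline{l}_1],[l_2,\overline{l}_2],\ldots,[l_j,\overline{l}_j]$ and estimate the inverse Vandermonde matrix directly; this is more explicit but requires a careful choice of nodes compatible with the gap structure.
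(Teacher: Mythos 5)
Your proposal reaches the same estimates as the paper but by a different toolkit, and for the most part it works. The paper proves (1) by one finite interpolation step: it forms $F(x)=v_1(\overline{l}_1x)^{\eta}+\cdots+v_k(\overline{l}_1x)^{k-1+\eta}$, evaluates at the nodes $x=1/k,\dots,1$, and inverts the resulting generalized Vandermonde matrix to get $|v_i|\,\overline{l}_1^{\,i-1+\eta}\ll_{k,\eta}1$; your Markov--Bernstein rescaling yields the same bound and is equally legitimate. For (2) the paper runs exactly your induction with degradation $\xi(\rho,j+1)=\xi(\rho,j)/(1+\rho)$, but writes out only what it calls the difficult case $\overline{l}_j\le l_{j+1}-\overline{l}_j$ and $\overline{l}_{j+1}-l_{j+1}\le l_{j+1}-\overline{l}_j$, in which $\overline{l}_{j+1}\le 3\overline{l}_j^{1+\rho}$ and the bound transfers algebraically as in your first regime; the remaining configurations, where one of the good intervals occupies a definite proportion of $[0,\overline{l}_{j+1}]$, are left implicit, and your Remez-plus-Markov treatment of the regime $\overline{l}_{j+1}\ge C_{\rho,k}\overline{l}_j^{1+\rho}$ is a clean way to fill that in. Your worry about the Remez constant is unfounded: once the good set has density at least $1/2$ in $[0,\overline{l}_{j+1}]$ the constant depends only on $k$, and it is invoked at most $k$ times, so it is absorbed into $\ll_{k,\eta}$. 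Note only that bounding the total bad measure by $k\overline{l}_j^{1+\rho}$ uses the no-effective-gap hypothesis cumulatively for all earlier consecutive pairs; this is also how the paper's induction and its later applications (the construction of $\beta_\rho$) read the hypothesis. Your Rolle-type count for $m$ is on the same footing as the paper's unspecified ``elementary study of polynomials.''

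The one genuine flaw is your derivation of the lower bound for $\overline{l}_1$. Item (1) reads $|v_{j_0}|\ll_{k,\eta}\overline{l}_1^{\,1-j_0-\eta}$ with $1-j_0-\eta<0$, so inverting it gives $\overline{l}_1\ll |v_{j_0}|^{-1/(j_0-1+\eta)}$, an \emph{upper} bound on $\overline{l}_1$ when some coefficient is large, not the claimed $\overline{l}_1\gg|v_{j_0}|^{-1/(j_0+\eta-1)}$: smallness of the coefficients is compatible with (1) for every value of $\overline{l}_1$, so no growth of $\overline{l}_1$ can be extracted this way. The lower bound must be proved directly: if $M=\max_i|v_i|$, then $|p(t)|\le (k+1)M\max\{1,t\}^{k}\le\epsilon\le\max\{\epsilon,t^{1-\eta}\}$ for all $0\le t\le(\epsilon/((k+1)M))^{1/k}$, so the component of the good set containing $0$ has length at least $(\epsilon/((k+1)M))^{1/k}$, which tends to infinity as $M\searrow0$ for fixed $\epsilon,\eta$. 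Relatedly, your aside that the regime of small $\overline{l}_1$ is ``handled by the trivial bound $|p(t)|\le C\epsilon$'' does not give (1) there (for $p(t)=Mt$ with $M$ large one has $\overline{l}_1\asymp\epsilon/M$ while $|v_1|=M\gg\overline{l}_1^{-\eta}$); like the paper, you should simply work in the regime where $\overline{l}_1$ exceeds a threshold depending on $\epsilon,\eta$, which is exactly what the (correctly proved) lower-bound clause guarantees in the applications.
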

\begin{proof}
The number $m$ of intervals in (\ref{dynamical systems3}) can be bounded by $k$ via an elementary study of polynomials.

(1)   Let $F (x)\coloneqq  v_{1}(\overline{l}_{1}x)^{\eta} +\cdots+v_{k}(\overline{l}_{1}x)^{k-1+\eta}$ for $x\in[0,1]$.  Then we have
\[\left(
            \begin{array}{c}
             v_{1}\overline{l}_{1}^{\eta} \\
              v_{2}\overline{l}_{1}^{1+\eta} \\
             \vdots \\
             v_{k}\overline{l}_{1}^{k-1+\eta} \\
            \end{array}
          \right)=\left[
            \begin{array}{cccc}
              (1/k)^{\eta}&  (1/k)^{1+\eta} &  \cdots &  (1/k)^{k-1+\eta} \\
              (2/k)^{\eta}&  (2/k)^{1+\eta} &  \cdots &  (2/k)^{k-1+\eta} \\
                \vdots&  \vdots &  \ddots &  \vdots \\
                 1&  1 &  \cdots &  1 \\
            \end{array}
          \right]^{-1}\left(
            \begin{array}{c}
              F(1/k) \\
              F(2/k) \\
             \vdots \\
             F(1) \\
            \end{array}
          \right).\]
  By (\ref{dynamical systems3}), we know that   $|F (1/k)|,|F (2/k)|,\cdots,|F(1)|\ll 1 $. Thus, we obtain  $|v_{i}|\ll_{k,\eta} \overline{l}_{1}^{1-i-\eta} $ for all $1\leq i\leq k$.

   (2) This follows easily by induction. Assume that the statement holds for $j-1$. For $j$,  the only difficult situation is when $\overline{l}_{j}\leq l_{j+1}-\overline{l}_{j}$ and $\overline{l}_{j+1}-l_{j+1}\leq l_{j+1}-\overline{l}_{j}$.   If this is the case, then
\[ \overline{l}_{j+1}= (\overline{l}_{j+1}-l_{j+1})+(l_{j+1}-\overline{l}_{j})+\overline{l}_{j}\leq 3\overline{l}_{j}^{1+\rho}.\]
Thus, by induction hypothesis, we get
\[|v_{i}|\ll \overline{l}_{j}^{\xi(\rho,j)(1-i-\eta)}\ll \overline{l}_{j+1}^{\frac{\xi(\rho,j)}{1+\rho}(1-i-\eta)}\]
for all $1\leq i\leq k$.
    \end{proof}

\begin{lem}\label{time change179}
   By the weight decomposition, an irreducible $\mathfrak{sl}_{2}(\mathbf{R})$-representation $V_{\varsigma}$ is the direct sum of weight spaces, each of which is $1$ dimensional. More precisely, there exists a basis $v_{0},\ldots,v_{\varsigma}\in V_{\varsigma}$ such that
   \[U.v_{i}=(i+1)v_{i+1},\ \ \ Y_{n}.v_{i}= \frac{\varsigma-2i}{2}v_{i}.\]
\end{lem}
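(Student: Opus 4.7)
The plan is to carry out the standard highest weight construction for $\mathfrak{sl}_2(\mathbf{R})$, adapted to the normalization $[Y_n,U]=-U$ used in the paper (so $U$ lowers the $Y_n$-weight by $1$, while $\tilde U$ raises it by $1$). First I would establish the commutation relations of the triple $\{U,Y_n,\tilde U\}$ that are forced by the paper's setup, namely $[Y_n,U]=-U$, $[Y_n,\tilde U]=\tilde U$, and $[U,\tilde U]=-2Y_n$ (with a sign fixed by the choice of $\mathfrak{sl}_2$-triple). Since $V_\varsigma$ is a finite-dimensional representation, $Y_n$ admits a maximal real eigenvalue $\lambda$; I would then pick a nonzero vector $w\in V_\varsigma$ satisfying $Y_n.w=\lambda w$ and $\tilde U.w=0$ (such $w$ exists because applying $\tilde U$ strictly increases the $Y_n$-weight and the spectrum is bounded above).

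Next I would set $w_i\coloneqq U^i.w$ and prove by induction the two identities
\begin{equation*}
Y_n.w_i=(\lambda-i)w_i,\qquad \tilde U.w_i=i(2\lambda-i+1)w_{i-1},
\end{equation*}
using the commutation relations. The first identity is immediate from $[Y_n,U]=-U$; the second follows from $[U,\tilde U]=-2Y_n$ together with the inductive hypothesis. Because the $w_i$ are eigenvectors of $Y_n$ with pairwise distinct eigenvalues, they are linearly independent as long as they are nonzero, and finite-dimensionality forces a smallest $N\geq 1$ with $w_N=0$. Applying the second identity at $i=N$ gives $N(2\lambda-N+1)=0$, hence $2\lambda=N-1$.

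Now irreducibility comes in: the subspace $\mathbf{R}\langle w_0,\dots,w_{N-1}\rangle$ is invariant under all three of $U,Y_n,\tilde U$ by construction, so by irreducibility it equals $V_\varsigma$. In particular $\dim V_\varsigma=N$, and identifying $\varsigma=N-1$ gives $\lambda=\varsigma/2$. To obtain the precise normalization of the statement, I would set $v_i\coloneqq \tfrac{1}{i!}w_i$ for $0\leq i\leq \varsigma$; then
\begin{equation*}
U.v_i=\tfrac{1}{i!}w_{i+1}=(i+1)v_{i+1},\qquad Y_n.v_i=\bigl(\tfrac{\varsigma}{2}-i\bigr)v_i=\tfrac{\varsigma-2i}{2}\,v_i,
\end{equation*}
with the convention $v_{\varsigma+1}=0$, which is exactly the claim. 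Each weight space is one-dimensional because each weight $\tfrac{\varsigma-2i}{2}$ appears exactly once among $v_0,\dots,v_\varsigma$ and these already span $V_\varsigma$. There is no serious obstacle here: the only thing to be careful about is the sign conventions (since the paper takes $U$ in the negative root space $\mathfrak g_{-1}$, so $U$ is the lowering operator for $Y_n$, not the raising one), and checking that the inductive formulas produce exactly the factors $(i+1)$ and $\tfrac{\varsigma-2i}{2}$ rather than some other normalization.
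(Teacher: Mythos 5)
Your proof is correct, and there is nothing in the paper to diverge from: the paper states this lemma without proof, treating it as the classical weight-space description of irreducible $\mathfrak{sl}_{2}$-modules, and your argument is exactly the standard highest-weight construction. Your sign bookkeeping is consistent with the paper's normalization: for the explicit choice of $U,\tilde{U}$ in (\ref{time change211}) one indeed has $[Y_{n},U]=-U$, $[Y_{n},\tilde{U}]=\tilde{U}$, $[U,\tilde{U}]=-2Y_{n}$, and with $v_{i}=w_{i}/i!$ the recursions you derive give precisely $U.v_{i}=(i+1)v_{i+1}$ and $Y_{n}.v_{i}=\tfrac{\varsigma-2i}{2}v_{i}$, with one-dimensional weight spaces since the $\varsigma+1$ distinct $Y_{n}$-eigenvalues already exhaust $V_{\varsigma}$. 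The only step you should justify a bit more carefully is the opening claim that $Y_{n}$ has a maximal \emph{real} eigenvalue on the real space $V_{\varsigma}$: a real operator on a real representation need not have real eigenvalues in general, but here this is immediate, either because in the paper's setting $V_{\varsigma}\subset\mathfrak{g}$ and $\ad Y_{n}$ is $\mathbf{R}$-diagonalizable by the restricted root decomposition (\ref{time change187}), or, for an abstract $V_{\varsigma}$, by complexifying and using that $2Y_{n}$ acts with integer eigenvalues, so the characteristic polynomial of $Y_{n}$ has real roots and hence a real eigenvector of maximal weight exists; with that remark added, the proof is complete.
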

Thus, if $V_{\varsigma}$ is an irreducible representation of  $\mathfrak{sl}_{2}(\mathbf{R})$ with the highest weight $\varsigma\leq 2$, then for any $v=b_{0}v_{0}+\cdots+b_{\varsigma}v_{\varsigma}\in V_{\varsigma}$, we have
 \begin{align}
\exp(tU).v=&\sum_{n=0}^{\varsigma}\sum_{i=0}^{n}b_{i}\binom{n}{i}t^{n-i}v_{n},  \; \label{dynamical systems5}\\
 \exp(\omega Y_{n}).v=&  \sum_{n=0}^{\varsigma} b_{n}e^{ (\varsigma-2n)\omega /2} v_{n}. \;  \nonumber
\end{align}

In the following, we consider the decomposition $\mathfrak{g}=\mathfrak{sl}_{2}(\mathbf{R})\oplus V^{\perp}$, where  $\mathfrak{sl}_{2}(\mathbf{R})=\Span\{U,Y_{n},\tilde{U}\}\subset\mathfrak{g}$ is the $\mathfrak{sl}_{2}$-triple. We shall study $\mathfrak{sl}_{2}(\mathbf{R})$ and  $V^{\perp}$ separately. We can first assume that $V^{\perp}=V_{\varsigma}$ is irreducible.

 By Lemma \ref{timechange102}, for sufficiently small $\epsilon>0$, $g\in B_{G}(e,\epsilon)$, we have
\begin{equation}\label{dynamical systems1}
  g=h\exp v
\end{equation}
for some $h\in B_{SO(2,1)}(e,\epsilon)$  and $v\in B_{V^{\perp}}(0,\epsilon)$.
Now we discuss a   necessary condition for $h\in B_{SO(2,1)}(e,\epsilon)$  in a small neighborhood of the identity. Recall that we  consider $h$ as a $(2\times 2)$-matrix $h=\left[
            \begin{array}{ccc}
              a&  b \\
             c&  d\\
            \end{array}
          \right]\in SO(2,1)$. Then one may obtain  that a necessary condition for $h\in B_{SO(2,1)}(e,\epsilon)$ is that $|b|,|c|<\epsilon$, $1-\epsilon<|a|,|d|<1+\epsilon$.

Next, let $t(s)\in\mathbf{R}^{+}$ be a function of $s\in\mathbf{R}^{+}$. Then by (\ref{dynamical systems5}) , we have
\begin{align}
u^{t}gu^{-s}=&u^{t}h\exp v u^{-s} \;\nonumber\\
 =&(u^{t}hu^{-s})(u^{s}\exp(v)u^{-s}) \;\nonumber\\
 =&(u^{t}hu^{-s})\exp(\Ad u^{s}.v)  \;\nonumber\\
  =&(u^{t}hu^{-s})\exp\left(\sum_{n=0}^{\varsigma}\sum_{i=0}^{n}b_{i}\binom{n}{i}s^{n-i}v_{n}\right).   \;  \nonumber
\end{align}
Moreover, by Lemma \ref{timechange106},  \ref{timechange102}, $u^{t}gu^{-s}\ll \epsilon$ iff
\begin{equation}\label{time change173}
  u^{t}hu^{-s}\ll\epsilon,\ \ \ \Ad u^{s}.v=\sum_{n=0}^{\varsigma}\sum_{i=0}^{n}b_{i}\binom{n}{i}s^{n-i}v_{n}\ll\epsilon.
\end{equation}
Thus, we split the elements close to the identity into two parts, namely the $SO(2,1)$-part and the $V^{\perp}$-part.

As shown in (\ref{time change173}), we shall consider the elements of the form $u^{t}hu^{-s}\in B_{SO(2,1)}(e,\epsilon)$.
   A direct calculation shows
   \[u^{t}hu^{-s}=\left[
            \begin{array}{ccc}
              1&   \\
             t&  1\\
            \end{array}
          \right]\left[
            \begin{array}{ccc}
              a&  b \\
             c&  d\\
            \end{array}
          \right]\left[
            \begin{array}{ccc}
              1&    \\
            -s&  1\\
            \end{array}
          \right]=\left[
            \begin{array}{ccc}
              a-bs&  b \\
             c+(a-d)s-bs^{2}+(t-s)(a-bs)&  d+bt\\
            \end{array}
          \right].\]
   If we further require $|s-t|\ll_{\eta}\max\{\epsilon,s^{1-\eta}\}$ (cf. (\ref{timechange25})), then we see that
   \begin{align}
 &|-bs^{2}+(a-d)s+c+(-bs+a)(t-s)|<\epsilon \;\nonumber\\
\Rightarrow\ \ \ & |-bs^{2}+(a-d)s|-|c|-|(-bs+a)(t-s)|<\epsilon\;\nonumber\\
\Rightarrow\ \ \ & |-bs^{2}+(a-d)s|<2\epsilon+2|t-s|\;\nonumber\\
\Rightarrow\ \ \ & |-bs^{2}+(a-d)s|\ll_{\eta}\max\{\epsilon,s^{1-\eta}\}. \;\label{time change222}
\end{align}
By Lemma \ref{dynamical systems4}, we immediately obtain
\begin{lem}[Estimates for $SO(2,1)$-coefficients]\label{dynamical systems8}  Given $\eta\in(0,1)$, a sufficiently small $\epsilon>0$, a matrix $h=\left[
            \begin{array}{ccc}
              a&  b \\
             c&  d\\
            \end{array}
          \right]\in B_{SO(2,1)}(e,\epsilon)$, then the solutions $s\in[0,\infty)$ of the following inequality
          \begin{equation}\label{timechange17}
            |-bs^{2}+(a-d)s|\ll_{\eta} \max\{\epsilon,s^{1-\eta}\}
          \end{equation}
   consist of at most two intervals, say $[0,\overline{l}_{1}(h)]\cup[l_{2}(h),\overline{l}_{2}(h)]$  where  $\overline{l}_{1}$  has the  lower bound $l(\epsilon,\eta)$ such that $l(\epsilon,\eta)\nearrow\infty$ as $\epsilon\searrow0$ for fixed $\eta$.  Moreover, we have
   \begin{enumerate}[\ \ \ (1)]
     \item  $|b|\ll_{\eta} \overline{l}_{1}^{-1-\eta}$ and $|a-d|\ll_{\eta} \overline{l}_{1}^{-\eta}$;
     \item If we further assume     that the intervals $[0,\overline{l}_{1}]$ and $[l_{2},\overline{l}_{2}]$ do not have an effective gap (\ref{dynamical systems6}), i.e.   $l_{2}-\overline{l}_{1}\leq \min\{\overline{l}_{1},\overline{l}_{2}-l_{2}\}^{1+\rho}$, then
         \[|b|\ll_{\eta} \overline{l}_{2}^{\xi(\rho)(-1-\eta)},\ \ \ |a-d|\ll_{\eta} \overline{l}_{2}^{\xi(\rho)(-\eta)}.\]
   \end{enumerate}
\end{lem}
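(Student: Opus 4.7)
The plan is to recognize that the expression $-bs^{2}+(a-d)s$ on the left-hand side of \eqref{timechange17} is a real polynomial $p(s)=v_{0}+v_{1}s+v_{2}s^{2}$ of degree at most $2$, with coefficients $v_{0}=0$, $v_{1}=a-d$, $v_{2}=-b$. So this lemma is a direct specialization of Lemma \ref{dynamical systems4} with $k=2$. The hypothesis $h\in B_{SO(2,1)}(e,\epsilon)$ gives the required control on the size of the coefficients: by the necessary conditions for being in a small ball around the identity in $SO(2,1)$ discussed just before \eqref{dynamical systems1}, we have $|b|<\epsilon$ and $|1-a|,|1-d|<\epsilon$, hence $|a-d|<2\epsilon$; in particular $\max_{i}|v_{i}|\ll\epsilon$.

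Applying Lemma \ref{dynamical systems4} to $p(s)$, the solution set of $|p(s)|\ll_{\eta}\max\{\epsilon,s^{1-\eta}\}$ for $s\geq 0$ is a union of at most $m\leq k=2$ intervals, so I can write it as $[0,\overline{l}_{1}]\cup[l_{2},\overline{l}_{2}]$ (possibly with the second piece empty). The same lemma guarantees a lower bound $l(\epsilon,\eta)$ on $\overline{l}_{1}$ depending on $\max_{i}|v_{i}|$, $\epsilon$, and $\eta$, and tending to infinity as $\max_{i}|v_{i}|\searrow 0$ for fixed $\epsilon$ and $\eta$. Combined with $\max_{i}|v_{i}|\ll\epsilon$, this produces a lower bound depending only on $\epsilon,\eta$ that grows to infinity as $\epsilon\searrow 0$, which is exactly the asserted behaviour of $l(\epsilon,\eta)$.

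For part (1), I would invoke Lemma \ref{dynamical systems4}(1) with $i=1$ and $i=2$ to read off $|a-d|=|v_{1}|\ll_{\eta}\overline{l}_{1}^{-\eta}$ and $|b|=|v_{2}|\ll_{\eta}\overline{l}_{1}^{-1-\eta}$. For part (2), assuming the intervals $[0,\overline{l}_{1}]$ and $[l_{2},\overline{l}_{2}]$ fail to have an effective gap in the sense of \eqref{dynamical systems6}, Lemma \ref{dynamical systems4}(2) applied with $j=1$ provides an exponent $\xi(\rho):=\xi(\rho,2)\in(0,1)$ with $\xi(\rho)\to 1$ as $\rho\to 0$, together with the coefficient bound $|v_{i}|\ll_{\eta}\overline{l}_{2}^{\xi(\rho)(1-i-\eta)}$ for $i=1,2$, and this is precisely the claimed estimate. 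Since the whole statement is obtained by pulling back the generic polynomial-coefficient estimates of Lemma \ref{dynamical systems4} through the identification $(v_{1},v_{2})=(a-d,-b)$, there is no genuine obstacle here; the purpose of the lemma is merely to package those estimates in the precise form required later to control the $SO(2,1)$-component of the shearing matrix $u^{t}hu^{-s}$ appearing in \eqref{time change222}.
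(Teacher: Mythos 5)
Your proposal is correct and matches the paper's own route: the paper derives the inequality \eqref{time change222} and then obtains Lemma \ref{dynamical systems8} immediately by applying Lemma \ref{dynamical systems4} to the degree-two polynomial $p(s)=(a-d)s-bs^{2}$, exactly the specialization $(v_{0},v_{1},v_{2})=(0,a-d,-b)$, $k=2$, that you spell out, with the ball condition giving $|b|,|a-d|\ll\epsilon$ and hence the lower bound $l(\epsilon,\eta)\nearrow\infty$.
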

Next, we study the situation when $Ad u^{s}.v\ll \epsilon$. Again by Lemma \ref{dynamical systems4}, we have
\begin{lem}[Estimates for $V^{\perp}$-coefficients]\label{dynamical systems7}
  Fix $v=b_{0}v_{0}+\cdots+b_{\varsigma}v_{\varsigma}\in B_{V_{\varsigma}}(0,\epsilon)$. Assume that
   \[Ad u^{s}.v\ll \epsilon_{0}\ \ \ \text{ iff }\ \ \ s\in[0,\overline{l}_{1}(v)] \cup\cdots\cup[l_{m}(v),\overline{l}_{m}(v)] \]
    where  $\overline{l}_{1}$  has the  lower bound $l(\epsilon,\eta)$ such that $l(\epsilon,\eta)\nearrow\infty$ as $\epsilon\searrow0$ for fixed $\eta$.
   Then $m=m(v)$ is bounded by a constant depending on $\varsigma$. Moreover, for   $1\leq j\leq \varsigma-1$, the
   intervals $[0,\overline{l}_{j}]$ and $[l_{j+1},\overline{l}_{j+1}]$ do not have an effective gap (\ref{dynamical systems6}), i.e.   $l_{j+1}-\overline{l}_{j}\leq \min\{\overline{l}_{j},\overline{l}_{j+1}-l_{j+1}\}^{1+\rho}$, then  we have
   \[|b_{i}|\ll_{\varsigma,\eta} \overline{l}_{j}^{\xi(\rho,\varsigma)(-\varsigma+i)}.\]

\end{lem}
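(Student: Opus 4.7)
The strategy is to reduce the vector-valued hypothesis $\Ad u^{s}.v\ll \epsilon_{0}$ to a single scalar polynomial condition on the top-weight coefficient, and then invoke the polynomial estimate of Lemma \ref{dynamical systems4}.

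Using formula (\ref{dynamical systems5}), I would expand
\[
\Ad u^{s}.v=\sum_{n=0}^{\varsigma}P_{n}(s)\,v_{n},\qquad P_{n}(s)=\sum_{i=0}^{n}b_{i}\binom{n}{i}s^{n-i},
\]
so that $\Ad u^{s}.v\ll \epsilon_{0}$ is equivalent, up to constants coming from the fixed basis $\{v_{n}\}$, to the simultaneous scalar inequalities $|P_{n}(s)|\ll \epsilon_{0}$ for $0\le n\le\varsigma$. The key observation is that the highest-degree polynomial $P_{\varsigma}(s)$ already encodes all the $b_{i}$: its coefficient of $s^{\varsigma-i}$ equals $\binom{\varsigma}{i}b_{i}$, an invertible linear change of variables. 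The lower-degree conditions $|P_{n}|\ll \epsilon_{0}$ for $n<\varsigma$ can only shrink the sublevel set further and will be used only indirectly, as in the $\mathfrak{sl}_{2}$-case handled by Lemma \ref{dynamical systems8}.

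The bound $m\leq m(\varsigma)$ would follow from the elementary fact that a nonzero real polynomial of degree $\leq \varsigma$ has at most $\varsigma$ local extrema, so $\{|P_{\varsigma}|\leq C\epsilon_{0}\}\cap[0,\infty)$ consists of at most $\varsigma+1$ components; the actual $\Ad$-intervals $[l_{j},\overline l_{j}]$ sit inside these components. The lower bound $\overline l_{1}\geq l(\epsilon,\eta)$ is inherited from Lemma \ref{dynamical systems4}(1) applied to $P_{\varsigma}$. For the main coefficient estimate I would apply Lemma \ref{dynamical systems4}(2) to $p=P_{\varsigma}$ with $k=\varsigma$ and $\eta=1$, which collapses the envelope $\max\{\epsilon,s^{1-\eta}\}$ to the constant bound matching our hypothesis. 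Reading off the conclusion, the coefficient $\binom{\varsigma}{i}b_{i}$ of $s^{\varsigma-i}$ is $\ll \overline l_{j}^{\,\xi(\rho,\varsigma)(1-(\varsigma-i)-\eta)}$; setting $\eta=1$ gives the advertised $|b_{i}|\ll_{\varsigma}\overline l_{j}^{\,\xi(\rho,\varsigma)(-\varsigma+i)}$.

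The main obstacle I anticipate is the bookkeeping in matching $\Ad$-intervals to $P_{\varsigma}$-intervals: a single $P_{\varsigma}$-component can be split into several $\Ad$-intervals by one of the subordinate polynomials $P_{n}$ briefly exceeding $\epsilon_{0}$, so the indexing of ``gaps'' is not perfectly preserved. What saves the argument is monotonicity: the $\Ad$-intervals satisfy $\overline l_{j}\leq \overline l_{j}^{P_{\varsigma}}$ and $l_{j+1}\geq l_{j+1}^{P_{\varsigma}}$, so the no-effective-gap hypothesis (\ref{dynamical systems6}) transfers from the $\Ad$-intervals to the $P_{\varsigma}$-intervals (possibly after harmlessly adjusting $\rho$). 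Since the exponent $-\varsigma+i$ is non-positive, the bound $|b_{i}|\ll (\overline l_{j}^{P_{\varsigma}})^{\xi(\rho,\varsigma)(-\varsigma+i)}$ deduced from Lemma \ref{dynamical systems4} then implies the required bound with $\overline l_{j}$, in the correct direction, completing the proof.
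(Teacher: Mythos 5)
Your overall strategy is the paper's: the lowest-weight component of $\Ad u^{s}.v$ is the scalar polynomial $P_{\varsigma}(s)=\sum_{i}b_{i}\binom{\varsigma}{i}s^{\varsigma-i}$, whose coefficients recover all the $b_{i}$, and the estimate is read off from Lemma \ref{dynamical systems4} with constant envelope (your choice $\eta=1$ correctly explains why no $\eta$ appears in the exponent). The genuine gap is in your transfer step. Because you invoke Lemma \ref{dynamical systems4} as a black box, you need its interval data for the exact sublevel set $\{|P_{\varsigma}|\ll\epsilon_{0}\}$, and you assert $\overline{l}_{j}\leq\overline{l}_{j}^{P_{\varsigma}}$, $l_{j+1}\geq l_{j+1}^{P_{\varsigma}}$ and that condition (\ref{dynamical systems6}) transfers with matching indices. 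Neither is justified: the $\Ad$-sublevel set is only \emph{contained} in the $P_{\varsigma}$-sublevel set, and the $P_{\varsigma}$-interval containing the $j$-th $\Ad$-interval can have index strictly larger than $j$, since a $P_{\varsigma}$-component may contain no $\Ad$-points at all (a lower component $P_{n}$, $n<\varsigma$, being large there); moreover an $\Ad$-gap satisfying the no-effective-gap condition can contain several short intermediate $P_{\varsigma}$-components, and then the chain of no-effective-gap conditions that the inductive proof of Lemma \ref{dynamical systems4}(2) actually requires fails for the $P_{\varsigma}$-intervals even though it holds for the $\Ad$-intervals. The same over-reliance on $P_{\varsigma}$ alone weakens your count of $m$ (and, if read as a conclusion, the lower bound on $\overline{l}_{1}$): containment of the $\Ad$-intervals in at most $\varsigma+1$ components of $\{|P_{\varsigma}|\ll\epsilon_{0}\}$ bounds neither their number nor $\overline{l}_{1}$ from below; both require using all the components $P_{0},\dots,P_{\varsigma}$, whose sublevel sets each consist of boundedly many intervals and whose intersection is the $\Ad$-set.

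The repair, which is what the paper means by ``again by Lemma \ref{dynamical systems4}'', is to run the \emph{proof} of that lemma directly on the given $\Ad$-intervals with $p=P_{\varsigma}$: part (1) only evaluates the polynomial at points of $[0,\overline{l}_{1}]$, where $|P_{\varsigma}|\ll\epsilon_{0}$ is guaranteed because the full vector is small there, and part (2) is an induction using nothing beyond the relation $\overline{l}_{j+1}\leq 3\,\overline{l}_{j}^{1+\rho}$ furnished by the no-effective-gap hypothesis for those same intervals; the ``iff'' in the hypothesis of Lemma \ref{dynamical systems4} is never used. With this one-sided reading, no comparison with the intervals of $\{|P_{\varsigma}|\ll\epsilon_{0}\}$ is needed, and the monotonicity detour --- the only genuinely flawed step in your proposal --- can simply be deleted.
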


For $g=h\exp(v)\in G$, we conclude from   (\ref{time change222}), Lemma \ref{dynamical systems8} and   \ref{dynamical systems7} that
\begin{align}
 u^{t}hu^{-s}\ll\epsilon &\ \ \ \text{ implies } \ \ \ s\in [0,\overline{l}_{1}(h)]\cup[l_{2}(h),\overline{l}_{2}(h)]\;\nonumber\\
 Ad u^{s}.v\ll \epsilon  &\ \ \ \text{ iff } \ \ \ s\in[0,\overline{l}_{1}(v)]\cup\cdots\cup[l_{m(v)}(v),\overline{l}_{m(v)}(v)].\; \nonumber
\end{align}
Write $l_{1}(h)=l_{1}(v)=0$ and we shall consider the family of intervals
\begin{equation}\label{time change174}
  \{[l_{k}(g),\overline{l}_{k}(g)]\}_{k}\coloneqq\{[l_{i}(h),\overline{l}_{i}(h)]\cap[l_{j}(v),\overline{l}_{j}(v)]\}_{i,j}
\end{equation}
where the intervals $[l_{i}(h),\overline{l}_{i}(h)]$, $[l_{j}(v),\overline{l}_{j}(v)]$ are given by Lemma \ref{dynamical systems8},  \ref{dynamical systems7} respectively, and $\overline{l}_{k}(g)<l_{k+1}(g)$ for all $k$. Thus, in particular, $l_{1}(g)=0$ and $[0,\overline{l}_{1}(g)]=[0,\overline{l}_{1}(h)]\cap[0,\overline{l}_{1}(v)]$.

Now assume that there exists $k$ such that $[0,\overline{l}_{k}(g)]$  and $[l_{k+1}(g),\overline{l}_{k+1}(g)]$ do not have an effective gap (\ref{dynamical systems6}), i.e.
\[  l_{k+1}(g)-\overline{l}_{k}(g)\leq \min\{\overline{l}_{k}(g),\overline{l}_{k+1}(g)-l_{k+1}(g)\}^{1+\rho}.\]
Then clearly, the corresponding ``$SO(2,1)$-part" and ``$V^{\perp}$-part" should not have effective gaps either. More precisely, for the $SO(2,1)$-part, we define
\[i_{\geq k}\coloneqq\min\{i\in\{1,2\}:\overline{l}_{k}(g)\leq \overline{l}_{i}(h)\},\ \ \ i_{\leq k+1}\coloneqq\max\{i\in\{1,2\}:l_{k+1}(g)\geq l_{i}(h)\}.\]
Thus, we know
\[[0,\overline{l}_{k}(g)]\subset[0,\overline{l}_{i_{\geq k}}(h)],\ \ \ [l_{k+1}(g),\overline{l}_{k+1}(g)]\subset[l_{i_{\leq k+1}}(h),\overline{l}_{i_{\leq k+1}}(h)] \]
and hence $[0,\overline{l}_{i_{\geq k}}(h)]$  and $[l_{i_{\leq k+1}}(h),\overline{l}_{i_{\leq k+1}}(h)] $ do not have an effective gap (\ref{dynamical systems6}). Similarly, for the $V^{\perp}$-part, we define
\[j_{\geq k}\coloneqq\min\{j:\overline{l}_{k}(g)\leq \overline{l}_{j}(v)\},\ \ \ j_{\leq k+1}\coloneqq\max\{j:l_{k+1}(g)\geq l_{j}(v)\}.\]
Then we know
\[[0,\overline{l}_{k}(g)]\subset[0,\overline{l}_{j_{\geq k}}(v)],\ \ \ [l_{k+1}(g),\overline{l}_{k+1}(g)]\subset[l_{j_{\leq k+1}}(v),\overline{l}_{j_{\leq k+1}}(v)] \]
and hence $[0,\overline{l}_{j_{\geq k}}(v)]$  and $[l_{j_{\leq k+1}}(v),\overline{l}_{j_{\leq k+1}}(v)] $ do not have an effective gap (\ref{dynamical systems6}). Further, one observes
\begin{align}
[0,\overline{l}_{k}(g)]=&[0,\overline{l}_{i_{\geq k}}(h)]\cap [0,\overline{l}_{j_{\geq k}}(v)]\;\nonumber\\
 [l_{k+1}(g),\overline{l}_{k+1}(g)]= & [l_{i_{\leq k+1}}(h),\overline{l}_{i_{\leq k+1}}(h)]\cap [l_{j_{\leq k+1}}(v),\overline{l}_{j_{\leq k+1}}(v)].\; \nonumber
\end{align}
Now recall by the definition (\ref{time change174}) that the number of intervals in $\{[l_{k}(g),\overline{l}_{k}(g)]\}_{k}$ is bounded by a constant $C(\varsigma)>0$ because the numbers of intervals $\{[l_{i}(h),\overline{l}_{i}(h)]\}_{i}$, $\{[l_{j}(v),\overline{l}_{j}(v)]\}_{j}$   are. Since $\varsigma\leq 2$ when $\mathfrak{g}=\mathfrak{so}(n,1)$, we see that $C(\varsigma)$ is uniformly bounded for all $\varsigma$. Thus, we conclude that the number of intervals in $\{[l_{k}(g),\overline{l}_{k}(g)]\}_{k}$ is uniformly bounded for all $g\in G$.
Then, combining with Lemma \ref{dynamical systems7} and  \ref{dynamical systems8}, we obtain
\begin{lem}[Estimates for $G$-coefficients]\label{time change176}
     Let $g=h\exp v\in B_{G}(e,\epsilon)$ be as above, where
   \[h=\left[
            \begin{array}{ccc}
              a&  b \\
             c&  d\\
            \end{array}
          \right]\in SO(2,1),\ \ \ v=b_{0}v_{0}+\cdots+b_{\varsigma}v_{\varsigma}\in V_{\varsigma}.\]
          Next, let $t(s)\in\mathbf{R}^{+}$ be a function of $s\in\mathbf{R}^{+}$ which satisfies $|s-t(s)|\ll_{\eta}\max\{\epsilon,s^{1-\eta}\}$. Then there exist intervals $\{[l_{k}(g),\overline{l}_{k}(g)]\}_{k}$ such that
          \begin{equation}\label{time change221}
            u^{t}gu^{-s}\ll\epsilon\ \ \ \text{ implies }\ \ \ s\in \bigcup_{k}[l_{k}(g),\overline{l}_{k}(g)].
          \end{equation}
        where  $\overline{l}_{1}$  has the  lower bound $l(\epsilon,\eta)$ such that $l(\epsilon,\eta)\nearrow\infty$ as $\epsilon\searrow0$ for fixed $\eta$.    Besides, $k\leq C$ for some constant $C=C(\mathfrak{g})>0$, and
          \begin{enumerate}[\ \ \ (1)]
            \item   $|b|\ll_{\eta} \overline{l}_{1}(g)^{-1-\eta}$, $|a-d|\ll_{\eta} \overline{l}_{1}(g)^{-\eta}$, $|b_{i}|\ll_{\varsigma,\eta} \overline{l}_{1}(g)^{-\varsigma+i}$ for all $0\leq i\leq \varsigma$;
     \item If we further assume     that the intervals $[0,\overline{l}_{k}(g)]$ and $[l_{k+1}(g),\overline{l}_{k+1}(g)]$ do not have an effective gap (\ref{dynamical systems6}). Then there exists $\xi=\xi(\rho)\in(0,1)$ with $\xi\rightarrow1$ as $\rho\rightarrow0$ such that
      \[|b|\ll_{\eta} \overline{l}_{k}(g)^{-\xi(1+\eta)},\ \ \ |a-d|\ll_{\eta} \overline{l}_{k}(g)^{-\xi\eta},\ \ \ |b_{i}|\ll_{\varsigma,\eta} \overline{l}_{k}(g)^{-\xi (\varsigma-i)}\]
for all $1\leq i\leq \varsigma$.
          \end{enumerate}
\end{lem}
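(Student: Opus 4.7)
The plan is to synthesize Lemmas \ref{dynamical systems8} and \ref{dynamical systems7} via the Iwasawa-style decomposition $g = h\exp(v)$ from Lemma \ref{timechange102}. First I would invoke (\ref{time change173}), which says that for $g$ in a small neighborhood of the identity the condition $u^{t}gu^{-s}\ll\epsilon$ is equivalent to the conjunction of $u^{t}hu^{-s}\ll\epsilon$ and $\Ad u^{s}.v\ll\epsilon$. Combining the first with the H\"older constraint $|s-t(s)|\ll_{\eta}\max\{\epsilon,s^{1-\eta}\}$ reduces it, as shown in (\ref{time change222}), to the polynomial inequality (\ref{timechange17}), so Lemma \ref{dynamical systems8} produces the (at most two) admissible intervals $\{[l_{i}(h),\overline{l}_{i}(h)]\}$; Lemma \ref{dynamical systems7} applied to $\Ad u^{s}.v$ produces the (at most $C(\varsigma)$ many) intervals $\{[l_{j}(v),\overline{l}_{j}(v)]\}$. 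The solution set of (\ref{time change221}) is then contained in their pairwise intersection, giving the family (\ref{time change174}). The cardinality of this family is bounded by the sum of the two factor cardinalities, which is uniform in $g$ because the irreducible constituents of $\mathfrak{g}=\mathfrak{so}(n,1)$ under the adjoint action of the chosen $\mathfrak{sl}_{2}$-triple have highest weight at most $2$.

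For part (1) I would just read off the bound from the first intervals: since $[0,\overline{l}_{1}(g)]\subset[0,\overline{l}_{1}(h)]\cap[0,\overline{l}_{1}(v)]$ we have $\overline{l}_{1}(g)\leq\min\{\overline{l}_{1}(h),\overline{l}_{1}(v)\}$; substituting into Lemmas \ref{dynamical systems8}(1) and \ref{dynamical systems7} (taken in the trivial case with a single interval) and using monotonicity of $x\mapsto x^{-c}$ for $c>0$ transfers the estimates from the factor $\overline{l}$'s to the (smaller or equal) composite $\overline{l}_{1}(g)$, with the exponents unchanged.

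For part (2) I would verify that the absence of an effective gap between the composite intervals $[0,\overline{l}_{k}(g)]$ and $[l_{k+1}(g),\overline{l}_{k+1}(g)]$ descends to both factor families, and this is the main obstacle. Define $i_{\geq k},i_{\leq k+1}\in\{1,2\}$ and $j_{\geq k},j_{\leq k+1}$ by the minimal/maximal indices as in the excerpt, so that each composite endpoint is contained in the appropriate factor interval. The inclusions $[0,\overline{l}_{k}(g)]\subset[0,\overline{l}_{i_{\geq k}}(h)]$ and $[l_{k+1}(g),\overline{l}_{k+1}(g)]\subset[l_{i_{\leq k+1}}(h),\overline{l}_{i_{\leq k+1}}(h)]$ yield the inequalities $l_{i_{\leq k+1}}(h)-\overline{l}_{i_{\geq k}}(h)\leq l_{k+1}(g)-\overline{l}_{k}(g)$ and $\min\{\overline{l}_{k}(g),\overline{l}_{k+1}(g)-l_{k+1}(g)\}\leq\min\{\overline{l}_{i_{\geq k}}(h),\overline{l}_{i_{\leq k+1}}(h)-l_{i_{\leq k+1}}(h)\}$, so hypothesis (\ref{dynamical systems6}) for the composite intervals forces the analogous inequality for the $SO(2,1)$-factor intervals; the argument is identical for the $V^{\perp}$-factor intervals. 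Once the no-gap hypothesis is transferred, Lemmas \ref{dynamical systems8}(2) and \ref{dynamical systems7} supply the bounds in terms of $\overline{l}_{i_{\geq k}}(h)$ and $\overline{l}_{j_{\geq k}}(v)$, and a final application of monotonicity (using $\overline{l}_{k}(g)\leq\overline{l}_{i_{\geq k}}(h),\overline{l}_{j_{\geq k}}(v)$) weakens these to the claimed bounds in terms of $\overline{l}_{k}(g)$ with a single exponent $\xi(\rho)$ obtained as the minimum of the two $\xi$'s produced by the factor lemmas.

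If needed, the reduction to a single irreducible $V_{\varsigma}$ is harmless: the decomposition of $V^{\perp}$ into irreducibles is $\Ad u^{s}$-stable and orthogonal, so the vector estimate in Lemma \ref{dynamical systems7} is applied componentwise and the intersection of the resulting interval families remains of bounded cardinality since only finitely many irreducibles appear in $\mathfrak{so}(n,1)$.
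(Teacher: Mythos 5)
Your proposal is correct and follows essentially the same route as the paper: the splitting $u^{t}gu^{-s}\ll\epsilon$ into the $SO(2,1)$- and $V^{\perp}$-conditions via (\ref{time change173}), the intersection family (\ref{time change174}) with cardinality bounded uniformly because $\varsigma\leq 2$ in $\mathfrak{so}(n,1)$, the transfer of the no-effective-gap hypothesis to each factor through the indices $i_{\geq k},i_{\leq k+1},j_{\geq k},j_{\leq k+1}$ and the containments $[0,\overline{l}_{k}(g)]\subset[0,\overline{l}_{i_{\geq k}}(h)]$, $[l_{k+1}(g),\overline{l}_{k+1}(g)]\subset[l_{i_{\leq k+1}}(h),\overline{l}_{i_{\leq k+1}}(h)]$, and the final monotonicity step are exactly the paper's argument. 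In fact you spell out the gap-transfer inequalities slightly more explicitly than the paper does, so there is nothing to correct.
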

\begin{rem}
   Since $\varsigma\leq 2$ for $\mathfrak{g}=\mathfrak{so}(n,1)$, we might obtain Lemma \ref{time change176} via an explicit discussion of intervals (and this is simpler at first glance). However, as in general $V$ is not irreducible (for $n\geq4$), it would be convenient to repeat the above argument to conclude  Lemma \ref{time change176}.
\end{rem}
\subsection{Proof of Proposition \ref{timechange20}}
Now we start to prove Proposition \ref{timechange20}.
We shall adopt a similar strategy as in \cite{ratner1986rigidity}. More precisely, we shall construct a collection $\beta_{\rho}$ of disjoint subintervals of $\mathbf{R}^{+}$ so that its union covers $A\cap[0,\lambda]$ and  every pair has an effective gap (\ref{time change175}). Then, we apply Proposition \ref{dynamical systems1011} to obtain a large interval  in $\beta_{\rho}$. We first specify the quantities claimed in Proposition \ref{timechange20}.
\begin{itemize}
  \item  (Choice of $\rho$) Choose a small $\rho>0$ that satisfies
  \begin{equation}\label{time change182}
    \frac{1+2\rho}{\xi(2\rho)}<1+\eta,\ \ \   1+2\delta<1+2\rho<2\xi(2\rho)
  \end{equation}
    where $\xi(2\rho)$ was defined in Lemma \ref{time change176}, and $\delta\coloneqq 3\rho/4$.
  \item  (Choice of $\theta$) Let $\theta=\theta(\rho)$ be as in Proposition \ref{dynamical systems1011}.
  \item  (Choice of $\sigma_{\rho}$) Then  $\sigma_{\rho}>0$  can be chosen as
  \begin{equation}\label{time change183}
    \sigma_{\rho}<\frac{\rho}{4+6\rho}.
  \end{equation}
  \item (Choice of $\Delta$, $K_{1}$; injectivity radius) Let $\pi:G\rightarrow X$ be the natural quotient map. Since $\Gamma$ is discrete, there is a compact subset $K_{1}\subset X$, $\mu(K_{1})>1- \frac{1}{2}\sigma$ and $ \Delta\in(0,1)$ such that for any $g_{x}\in \pi^{-1}(K_{1})$, $g_{y}\in G$ satisfying
      \begin{equation}\label{timechange30}
       d(g_{x},g_{y})<2\Delta,\ \ \   d(u^{s}g_{x},u^{t} g_{y}\gamma)<2\Delta \text{ with } e\neq \gamma\in\Gamma ,
      \end{equation}
 we must have $\max\{|t|,|s|\}\geq m$. Here  $d$ denotes the metric on $X$, and  $m$ is given by the assumption of Proposition \ref{timechange20}. In particular, it implies that for any $g_{x}\in \pi^{-1}(K_{1})$, $g_{y}\in G$ satisfying
 \begin{equation}\label{timechange31}
    d(g_{x},g_{y})<2\Delta,\ \ \   d(g_{x}, g_{y}\gamma)<2\Delta
 \end{equation}
 for some $\gamma\in\Gamma$, then $\gamma=e$.
 \item (Choice of $K_{2}$, $K$, $T_{0}$, $T$; ergodicity of $a^{T}$)
Since the diagonal action $a^{T}$ is ergodic on $(X,\mu)$,  there is a compact subset $K_{2}\subset  G/\Gamma$, $\mu(K_{2})>1-\frac{1}{2}\sigma$ and $T_{0}=T_{0}(K_{2})>0$ such that  the relative length measure $K_{2}$ on $[x,a^{T}x]$ (and $[a^{-T}x,x]$) is greater than $1-\sigma$ for any $x\in K_{2}$,   $|T|\geq T_{0}$.   Assume that
\begin{equation}\label{dynamical systems1026}
  K\coloneqq K_{1}\cap K_{2}
\end{equation}
Note that $\mu(K)>1-\sigma$. The choice will be used in (\ref{dynamical systems3003}).
\item (Choice of $\epsilon$)
Let $0<\epsilon<\Delta$ be so small that for $g\in B_{G}(e,\epsilon)$
\begin{equation}\label{dynamical systems1020}
  \overline{l}_{1}(g)\geq l(\epsilon,\eta)>\max\{e^{(1+2\delta)^{-1}T_{0}},m\}
\end{equation}
where $\overline{l}_{1},l$ are defined in Lemma \ref{time change176}, and $\delta\coloneqq 3\rho/4$.
\end{itemize}

 Thus, $0<\rho,\xi,\theta,\epsilon<1$ and $K\subset X$ have been chosen.
 Next let us describe some notation that will be used later.
Let $x\in X$, $y\in B_{X}(x,\epsilon)$. We say that \textit{$(g_{x},g_{y})\in G\times G$ covers $(x,y)$} if $ d_{G}(g_{x},g_{y})<\epsilon$ and $\pi(g_{x})=x$, $\pi(g_{y})=y$.
\begin{defn}[$\epsilon$-block]  Suppose that  $x\in X$, $y\in B(x,\epsilon)$,   $(g_{x},g_{y})$ covers $(x,y)$, and $r\in (0,\infty]$ satisfies
\[d_{G}(u^{r}g_{x},u^{t(r)}g_{y})<\epsilon.\]
 Then we define    the \textit{$\epsilon$-block of $g_{x},g_{y}$ of length $r$}\index{$\epsilon$-block of $g_{x},g_{y}$ of length $r$}    by
 \[\BL(g_{x},g_{y})\coloneqq  \{(u^{s}g_{x},u^{t(s)}g_{y})\in G\times G:0\leq s\leq r\}.\]
 Similarly, we define the \textit{$\epsilon$-block of $x,y$ of length $r$} by
  \[\BL(x,y)\coloneqq \pi \BL(g_{x},g_{y})=\{(u^{s}x,u^{t(s)}y)\in X\times X:0\leq s\leq r\}.\]
   We also write
 \[\BL(x,y)=\{(x,y),(u^{r}x ,u^{t(r)}y)\}=\{(x,y),(\overline{x},\overline{y})\}\]
 emphasizing that $(x,y)$ is the first and $(\overline{x},\overline{y})$ is the last pair of the block $\BL(x,y)$.
\end{defn}
\noindent
\textbf{Construction of  $\beta_{0}$.}
 Let $x\in X$, $y\in B(x,\epsilon)$ and assume that $A\subset\mathbf{R}^{+}$ satisfies (i), (ii) as in Proposition \ref{timechange20} (and assume without loss of generality that $0\in A$). For $\lambda\in A$ denote
$A_{\lambda}\coloneqq A\cap[0,\lambda]$
and assume that
\begin{equation}\label{dynamical systems1036}
  \Leb(A_{\lambda})>(1-\theta)\lambda.
\end{equation}
Now we construct a collection $\beta_{0}$ of  $\epsilon$-blocks. Let $x_{1}\coloneqq x$, $y_{1}\coloneqq y$. Suppose that $(g_{x_{1}},g_{y_{1}})\in G\times G$ covers $(x_{1},y_{1})$ and
\[\overline{s}_{1}\coloneqq\sup\{s\in A_{\lambda}\cap[0,\overline{l}_{1}(g_{y_{1}}g_{x_{1}}^{-1})]: d_{G}( u^{t(s)}g_{y_{1}}  ,u^{s}g_{x_{1}}  )<\epsilon \}.\]
Let $\BL_{1}$ be the $\epsilon$-block of $x_{1},y_{1}$ of length $\overline{s}_{1}$, $\BL_{1}=\{(x_{1},y_{1}),(\overline{x}_{1},\overline{y}_{1})\}$.
To define $\BL_{2}$, we take
\[s_{2}\coloneqq\inf\{s\in A_{\lambda}:s>\overline{s}_{1} \}\]
and apply the above procedure to $x_{2}\coloneqq u^{s_{2}}x_{1},\ \ \ y_{2}\coloneqq  u^{t(s_{2})}y_{1}$ (Note that by (\ref{time change221}), $s_{2}>\overline{s}_{1}$).
 This process defines a collection $\beta_{0}=\{\BL_{1},\ldots, \BL_{n}\}$ of $\epsilon$-blocks on the orbit intervals $[x_{1},u^{\lambda}x_{1}]$, $[y_{1},u^{t(\lambda)}y_{1}]$ (see Figure \ref{time change223}):
\[x_{i}=u^{s_{i}}x_{1},\ \ \ \overline{x}_{i}=u^{\overline{s}_{i}}x_{1},\ \ \ y_{i}=u^{t_{i}} y_{1},\ \ \ \overline{y}_{i}=u^{\overline{t}_{i}}y_{1}.\]
 Note also that by the assumption of $A$, we have $x_{i},\overline{x}_{i}\in K$ for all $i$.

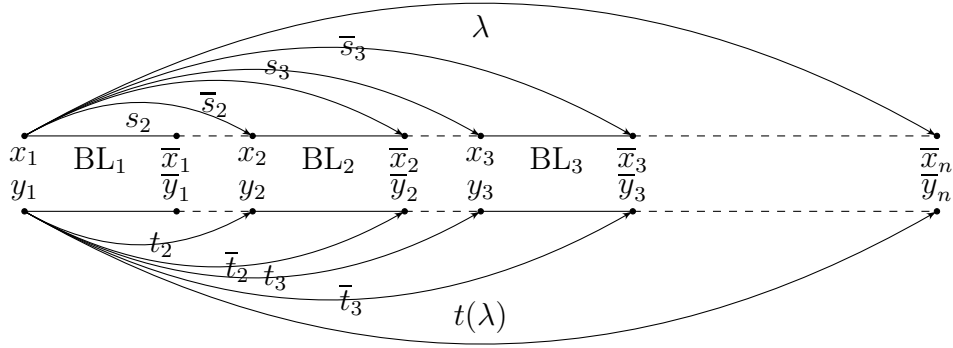
\begin{figure}
\centering
 \tikzstyle{init} = [pin edge={to-,thin,black}]
\begin{tikzpicture}[node distance=4cm,auto,>=latex']
\draw (0,0) coordinate (a0) --node[below] {$\BL_{1}$} (2,0) coordinate (a2) ;
\filldraw[black] (a0) circle(1pt) node[below] {$x_{1}$};
\filldraw[black] (a2) circle(1pt) node[below] {$\overline{x}_{1}$};

 \draw [dashed] (2,0) coordinate (a2) -- (3,0) coordinate (a3);
 \draw (3,0) coordinate (a3) --node[below] {$\BL_{2}$} (5,0) coordinate (a5) ;
\filldraw[black] (a3) circle(1pt) node[below] {$x_{2}$};
\filldraw[black] (a5) circle(1pt) node[below] {$\overline{x}_{2}$};

 \draw [dashed] (5,0) coordinate (a5) -- (6,0) coordinate (a6);
 \draw (6,0) coordinate (a6) --node[below] {$\BL_{3}$} (8,0) coordinate (a8) ;
\filldraw[black] (a6) circle(1pt) node[below] {$x_{3}$};
\filldraw[black] (a8) circle(1pt) node[below] {$\overline{x}_{3}$};

 \draw [dashed] (8,0) coordinate (a8) -- (12,0) coordinate (a12);
 \filldraw[black] (a12) circle(1pt) node[below] {$\overline{x}_{n}$};

 \path[->] (a0) edge [bend left] node [below]{$s_{2}$} (a3);
 \path[->] (a0) edge [bend left] node [below]{$\overline{s}_{2}$} (a5);

 \path[->] (a0) edge [bend left] node [right]{$s_{3}$} (a6);
 \path[->] (a0) edge [bend left] node [right]{$\overline{s}_{3}$} (a8);
  \path[->] (a0) edge [bend left] node [below]{$\lambda$} (a12);

  \draw (0,-1) coordinate (b0) --  (2,-1) coordinate (b2) ;
\filldraw[black] (b0) circle(1pt) node[above] {$y_{1}$};
\filldraw[black] (b2) circle(1pt) node[above] {$\overline{y}_{1}$};

 \draw [dashed] (2,-1) coordinate (b2) -- (3,-1) coordinate (b3);
 \draw (3,-1) coordinate (b3) --  (5,-1) coordinate (b5) ;
\filldraw[black] (b3) circle(1pt) node[above] {$y_{2}$};
\filldraw[black] (b5) circle(1pt) node[above] {$\overline{y}_{2}$};

 \draw [dashed] (5,-1) coordinate (b5) -- (6,-1) coordinate (b6);
 \draw (6,-1) coordinate (b6) --  (8,-1) coordinate (b8) ;
\filldraw[black] (b6) circle(1pt) node[above] {$y_{3}$};
\filldraw[black] (b8) circle(1pt) node[above] {$\overline{y}_{3}$};

 \draw [dashed] (8,-1) coordinate (b8) -- (12,-1) coordinate (b12);
 \filldraw[black] (b12) circle(1pt) node[above] {$\overline{y}_{n}$};

 \path[->] (b0) edge [bend right] node [right]{$t_{2}$} (b3);
 \path[->] (b0) edge [bend right] node [right]{$\overline{t}_{2}$} (b5);

 \path[->] (b0) edge [bend right] node [right]{$t_{3}$} (b6);
 \path[->] (b0) edge [bend right] node [right]{$\overline{t}_{3}$} (b8);
  \path[->] (b0) edge [bend right] node [above]{$t(\lambda)$} (b12);
\end{tikzpicture}
\caption{A collection of $\epsilon$-blocks $\{\BL_{1},\ldots,\BL_{n}\}$.  The solid straight lines are the unipotent orbits in the $\epsilon$-blocks and the dashed lines are the rest of the unipotent orbits. The bent curves indicate the   length defined by the letters.}
\label{time change223}
\end{figure}

  \begin{rem}\label{timechange28} Notice that  any $\BL_{i}=\{(x_{i},y_{i}),(\overline{x}_{i},\overline{y}_{i})\}\in\beta_{0}$ has length $\leq \overline{l}_{1}(g_{y_{i}}g_{x_{i}}^{-1})$. Write $g_{y_{i}}=h\exp(v)g_{x_{i}}$, where $h=\left[
            \begin{array}{ccc}
               a&  b \\
               c &   d\\
            \end{array}
          \right]\in SO(2,1)$. Then  by Lemma  \ref{time change176}, we immediately conclude that
 \begin{align}
|b| \ll_{\eta} &\overline{l}_{1}(h)^{-1-\eta}\leq \overline{l}_{1}(g_{y_{i}}g_{x_{i}}^{-1})^{-1-\eta}\leq |\BL_{i}|^{-1-\eta}   \;\nonumber\\
|a-d| \ll_{\eta}&  \overline{l}_{1}(h)^{-\eta} \leq \overline{l}_{1}(g_{y_{i}}g_{x_{i}}^{-1})^{-\eta}\leq |\BL_{i}|^{-\eta} \;  \nonumber
\end{align}
 where $|\BL_{i}|$ denotes the length of the $\epsilon$-block $\BL_{i}$.
\end{rem}

  For a collection $\beta_{0}$ of $\epsilon$-blocks, a shifting problem may occur.
  \begin{defn}[Shifting]\label{dynamical systems3004} For integers $i<j$, assume that $(g_{x_{i}},g_{y_{i}})\in G\times G$, $g_{y_{i}}\in B_{G}(g_{x_{i}},\epsilon)$ covers $(x_{i},y_{i})$. Then there is a unique $\gamma\in\Gamma$ such that
 \begin{equation}\label{dynamical systems1014}
  d_{G}(g_{x_{j}},g_{y_{j}}\gamma)<\epsilon
 \end{equation}
 where $g_{y_{j}}\coloneqq u^{t_{j}-t_{i}}g_{y_{i}}$, $g_{x_{j}}\coloneqq u^{s_{j}-s_{i}}g_{x_{i}}$. We  write
 \begin{itemize}
   \item (Shifting) $(x_{i},y_{i})\overset{\Gamma}{\sim}(x_{j},y_{j})$ if $\gamma\neq e$ in  (\ref{dynamical systems1014}),
   \item (Non-shifting) $(x_{i},y_{i})\overset{e}{\sim}(x_{j},y_{j})$ if $\gamma=e$ in  (\ref{dynamical systems1014}).
 \end{itemize}
\end{defn}
\noindent
\textbf{Construction of  $\beta_{\rho}$.}  Now we construct a new collection $\beta_{\rho}=\{\overline{\BL}_{1},\ldots,\overline{\BL}_{k}\}$ by the following procedure. The idea is to connect $\epsilon$-blocks in $\beta_{0}=\{\BL_{1},\ldots,\BL_{n}\}$  so that each pair of new blocks must have an effective gap.
  Take $\BL_{1}\in\beta_{0}$,   $g_{y_{1}}=h\exp(v)g_{x_{1}}$ and
       \[h=\left[
            \begin{array}{ccc}
              a&  b \\
             c&  d\\
            \end{array}
          \right]\in SO(2,1),\ \ \ v=b_{0}v_{0}+\cdots+b_{\varsigma}v_{\varsigma}\in V_{\varsigma}.\]
  Then by  Lemma  \ref{time change176}, one can write $u^{t(s)}gu^{-s}\in B_{G}(e,\epsilon)$ for
  \begin{equation}\label{time change177}
   s\in   \bigcup_{k}[l_{k}(g),\overline{l}_{k}(g)]
  \end{equation}
  where $k\leq C$ is uniformly bounded for all $g\in G$.
  Then consider the following two cases:
    \begin{enumerate}[\ \ \ (i)]
    \item  There is no $j\in\{2,\ldots,n\}$ such that $(x_{1},y_{1})\overset{e}{\sim}(x_{j},y_{j})$.
     \item There is $j\in\{2,\ldots,n\}$ such that   $(x_{1},y_{1})\overset{e}{\sim}(x_{j},y_{j})$.
    \end{enumerate}

  In case (i), we set $\overline{\BL}_{1}=\BL_{1}$. Then by Remark \ref{timechange28}, we have
  \begin{equation}\label{time change180}
    |b|\ll\overline{l}_{1}(g_{y_{1}}g_{x_{1}}^{-1})^{-1-\eta},\ \ \ |a-d|\leq  \overline{l}_{1}(g_{y_{1}}g_{x_{1}}^{-1})^{-\eta}
  \end{equation}

   In case (ii),  suppose that   $g_{x_{j}}=u^{s_{j}}g_{x_{1}}$, $g_{y_{j}}=u^{t_{j}}g_{y_{1}}$. Clearly, by the construction, $\overline{s}_{j}>\overline{l}_{1}(g_{y_{1}}g_{x_{1}}^{-1})$. On the other hand, by (\ref{time change177}), we get
    \[\overline{s}_{j}\in   \bigcup_{k}[l_{k}(g_{y_{1}}g_{x_{1}}^{-1}),\overline{l}_{k}(g_{y_{1}}g_{x_{1}}^{-1})]\]
    and $k\leq C$ is uniformly bounded for all $g\in G$.
    Assume that $j_{\max}$ is the maximal $j$ among $\overline{s}_{j}\in[l_{2}(g_{y_{1}}g_{x_{1}}^{-1}),\overline{l}_{2}(g_{y_{1}}g_{x_{1}}^{-1})]$.  Whether $[0,\overline{l}_{1}(g_{y_{1}}g_{x_{1}}^{-1})]$ and $[l_{2}(g_{y_{1}}g_{x_{1}}^{-1}),\overline{l}_{2}(g_{y_{1}}g_{x_{1}}^{-1})]$ have an effective gap leads to a dichotomy of choices:
   \[\overline{\BL}_{1}=\left\{\begin{array}{ll}
 \text{remains unchange}&,\text{ if } l_{2}(g_{y_{1}}g_{x_{1}}^{-1})-\overline{l}_{1}(g_{y_{1}}g_{x_{1}}^{-1})>\overline{l}_{1}(g_{y_{1}}g_{x_{1}}^{-1})^{1+2\rho}\\
 \{(x_{1},y_{1}),(\overline{x}_{j_{\max}},\overline{y}_{j_{\max}})\} &,\text{ otherwise}
\end{array}\right. .\]
If the first case occurs, we will not change $\overline{\BL}_{1}$ anymore.
 If the second case occurs, i.e. we  redefine $\overline{\BL}_{1}=\{(x_{1},y_{1}),(\overline{x}_{j_{\max}},\overline{y}_{j_{\max}})\}$, then we repeat the construction for the new $\overline{\BL}_{1}$ again:
  \begin{enumerate}[\ \  \ ]
    \item  Suppose that there is  $\overline{s}_{j}>\overline{l}_{2}(g_{y_{1}}g_{x_{1}}^{-1})$. Then assume $j_{\max}$ to be the maximal $j$ among $\overline{s}_{j}\in[l_{3}(g_{y_{1}}g_{x_{1}}^{-1}),\overline{l}_{3}(g_{y_{1}}g_{x_{1}}^{-1})]$. Then again, we set
  \[\overline{\BL}_{1}=\left\{\begin{array}{ll}
\text{remains unchange}&,\text{ if } l_{3}(g_{y}g_{x}^{-1})-\overline{l}_{3}(g_{y_{1}}g_{x_{1}}^{-1})>\overline{l}_{2}(g_{y_{1}}g_{x_{1}}^{-1})^{1+2\rho}\\
 \{(x_{1},y_{1}),(\overline{x}_{j_{\max}},\overline{y}_{j_{\max}})\} &,\text{ otherwise}
\end{array}\right. \]
and so on.
  \end{enumerate}
  The process will stop since the number of intervals is uniformly bounded for all $g\in G$. Now $\overline{BL}_{1}\in\beta_{\rho}$ has been constructed. By the choice
of $\overline{\BL}_{1}$ and Lemma \ref{time change176}, we conclude  that
\begin{equation}\label{time change178}
 |b|\ll_{\eta} |\BL_{1}|^{-\xi(1+\eta)},\ \ \ |a-d|\ll_{\eta} |\BL_{1}|^{-\xi\eta},\ \ \ |b_{i}|\ll_{\varsigma,\eta} |\BL_{1}|^{-\xi (\varsigma-i)}
\end{equation}
for all $1\leq i\leq \varsigma$.

Next, we repeat the above argument to construct $\overline{\BL}_{m+1}$. More precisely, suppose that $   \overline{\BL}_{m}=\{(x_{j_{m-1}+1},y_{j_{m-1}+1}),(\overline{x}_{j_{m}},\overline{y}_{j_{m}})\}\in\beta_{\rho}$ has been constructed. To define $\overline{\BL}_{m+1}$, we repeat the above argument  to  $\BL_{j_{m}+1}\in\beta_{0}$. Thus, $\beta_{\rho}$ is completely defined. Further, one may conclude some basic properties of $\beta_{\rho}$:
\begin{lem}\label{time change181}
   For any $\overline{\BL}_{i}=\{(x_{i}^{\prime},y_{i}^{\prime}),(\overline{x}_{i}^{\prime},\overline{y}_{i}^{\prime})\}$ in the collection  $\beta_{\rho}=\{\overline{\BL}_{1},\ldots,\overline{\BL}_{k}\}$ of $\epsilon$-blocks, we have
   \begin{equation}\label{timechange26}
     y_{i}^{\prime}=h_{i}\exp(v_{i})x_{i}^{\prime}
   \end{equation}
    where
    \[h_{i}=\left[
            \begin{array}{ccc}
               1+O(r_{i}^{-2\rho}) &   O(r_{i}^{-1-2\rho})  \\
              O(\epsilon) &   1+O(r_{i}^{-2\rho})\\
            \end{array}
          \right],\ \ \ v_{i}=O(r_{i}^{-\xi\varsigma})v_{0}+\cdots+O(\epsilon)v_{\varsigma}\]
  for some  $r_{i}\geq\max\{e^{(1+2\delta)^{-1}T_{0}},|\overline{\BL}_{i}|\}$ where $T_{0}$ is given by (\ref{dynamical systems1026}).
\end{lem}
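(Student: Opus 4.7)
The plan is essentially bookkeeping: the lemma is a compilation of the estimates in Lemma \ref{time change176} applied to the outcomes of the construction of $\beta_{\rho}$ from $\beta_{0}$. First I would fix a covering $(g_{x_{i}^{\prime}},g_{y_{i}^{\prime}})\in G\times G$ with $d_{G}(g_{x_{i}^{\prime}},g_{y_{i}^{\prime}})<\epsilon$, and invoke Lemma \ref{timechange102} to factor
\[
g_{y_{i}^{\prime}}(g_{x_{i}^{\prime}})^{-1}=h_{i}\exp(v_{i}),\qquad h_{i}\in SO(2,1),\ v_{i}\in V^{\perp}.
\]
Projecting back to $X$ immediately yields the equality $y_{i}^{\prime}=h_{i}\exp(v_{i})x_{i}^{\prime}$, and reduces the lemma to quantitative control of the entries of $h_{i}$ and the weight coefficients of $v_{i}$.

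Second, I would identify the scale $r_{i}$. By the construction of $\beta_{\rho}$, each $\overline{\BL}_{i}$ is assigned a terminal index $k=k(i)$ among the intervals $\{[l_{k}(g),\overline{l}_{k}(g)]\}_{k}$ of Lemma \ref{time change176}: one sets $k=1$ if $\overline{\BL}_{i}$ was left unchanged (case (i) in the construction) and $k\geq 2$ if $\overline{\BL}_{i}$ was obtained by merging across non–effective gaps (case (ii)). I would then take $r_{i}\coloneqq\overline{l}_{k}(g_{y_{i}^{\prime}}(g_{x_{i}^{\prime}})^{-1})$. The construction guarantees $r_{i}\geq|\overline{\BL}_{i}|$, and the choice of $\epsilon$ in (\ref{dynamical systems1020}) gives $r_{i}\geq\overline{l}_{1}\geq l(\epsilon,\eta)>e^{(1+2\delta)^{-1}T_{0}}$, which is exactly the lower bound asserted in the lemma.

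Third, I would plug $r_{i}$ into Lemma \ref{time change176}. In case (ii), the intervals $[0,\overline{l}_{k-1}(g)]$ and $[l_{k}(g),\overline{l}_{k}(g)]$ fail the effective gap condition by construction, so Lemma \ref{time change176}(2) yields
\[
|b|\ll_{\eta}r_{i}^{-\xi(1+\eta)},\quad |a-d|\ll_{\eta}r_{i}^{-\xi\eta},\quad |b_{j}|\ll_{\varsigma,\eta}r_{i}^{-\xi(\varsigma-j)}\ \ (1\leq j\leq\varsigma),
\]
with $\xi=\xi(2\rho)$. Case (i) is covered by Lemma \ref{time change176}(1) with strictly stronger exponents, and since $r_{i}\geq 1$ and $\xi<1$ these bounds also fit the form above. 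To match the statement, I would then verify that the choice of $\rho$ in (\ref{time change182}) gives $\xi(1+\eta)\geq 1+2\rho$ and $\xi\eta\geq 2\rho$ (the latter following algebraically from the former together with $\eta\geq 2\rho$); these are exactly the exponents $1+2\rho$ and $2\rho$ appearing in the matrix for $h_{i}$. The bound $|b_{0}|\ll r_{i}^{-\xi\varsigma}$ gives the claimed leading coefficient in $v_{i}$, while the top coefficient is $O(\epsilon)$ directly from $d_{G}(g_{x_{i}^{\prime}},g_{y_{i}^{\prime}})<\epsilon$.

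I expect the main obstacle to be bookkeeping rather than substantive: one must make sure that the dichotomy between case (i) and case (ii) is treated uniformly (in particular that the weaker $\xi$-exponents in case (ii) are the binding ones, so that the formula for $h_{i}$ and $v_{i}$ in the conclusion is genuinely dictated by (\ref{time change176})(2)), and one must be careful that the choice of $r_{i}$ is the same scale used in the estimate of $|\overline{\BL}_{i}|$ so that the comparison $r_{i}\geq|\overline{\BL}_{i}|$ is transparent. None of this requires a new idea beyond what is already developed in Lemmas \ref{dynamical systems4}--\ref{time change176} and the construction of $\beta_{\rho}$.
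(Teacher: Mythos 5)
Your proposal is correct and follows essentially the same route as the paper, whose proof of this lemma is just the observation that (\ref{timechange26}) follows from the case-(i) estimate (\ref{time change180}), the case-(ii) estimate (\ref{time change178}) coming from Lemma \ref{time change176}(2), the lower bound on $\overline{l}_{1}$ from the choice of $\epsilon$ in (\ref{dynamical systems1020}), and the exponent comparison forced by the choice of $\rho$ in (\ref{time change182}); your expansion of the bookkeeping (identifying $r_{i}$ with the terminal $\overline{l}_{k}$, checking $\xi(1+\eta)\geq 1+2\rho$ and $\xi\eta\geq 2\rho$, and noting case (i) gives strictly stronger exponents) is exactly what the paper leaves implicit.
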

\begin{proof}
 (\ref{timechange26}) follows immediately from (\ref{time change180}), (\ref{time change178}), (\ref{dynamical systems1020}), (\ref{time change182}).
\end{proof}
\begin{lem}\label{timechange27}
  For any $\overline{\BL}^{\prime}\neq \overline{\BL}^{\prime\prime}\in\beta_{\rho}$, we have
   \begin{equation}\label{dynamical systems1034}
    d(\overline{\BL}^{\prime},\overline{\BL}^{\prime\prime})>\max\{e^{(1+2\delta)^{-1}T_{0}},[\min\{|\overline{\BL}^{\prime}|,|\overline{\BL}^{\prime\prime}|\}]^{1+\rho}\}
  \end{equation}
  where the distance of blocks is defined by the distance of the intervals provided by the $x$-coordinate,  $\delta\coloneqq 3\rho/4$ and $|\BL|$ denotes the length of the $\epsilon$-block $\BL$.
\end{lem}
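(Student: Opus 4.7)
The plan is to reduce to adjacent blocks in $\beta_\rho$ and then read off the distance bound from the effective-gap criterion built into the construction, which uses the strictly stronger exponent $1+2\rho$ rather than the $1+\rho$ claimed here. For the non-adjacent case $\overline{\BL}_i, \overline{\BL}_j$ with $j \geq i+2$, the bound then follows from $d(\overline{\BL}_i, \overline{\BL}_j) \geq d(\overline{\BL}_{j-1}, \overline{\BL}_j)$ together with a short case analysis on which of $|\overline{\BL}_i|, |\overline{\BL}_j|$ realizes the minimum.

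For adjacent $\overline{\BL}_m, \overline{\BL}_{m+1}$, I would set $g \coloneqq g_{y_m} g_{x_m}^{-1}$ and recall from Lemma \ref{time change176} the intervals $\{[l_k(g), \overline{l}_k(g)]\}_k$ outside which $u^{t(s)} g u^{-s} \in B_G(e, \epsilon)$ fails. By the construction of $\beta_\rho$, $\overline{\BL}_m$ absorbs consecutive $\BL_j$'s precisely over intervals $[l_1, \overline{l}_1], \ldots, [l_K, \overline{l}_K]$ with no effective gap between neighbors, stopping once the criterion $l_{K+1}(g) - \overline{l}_K(g) > \overline{l}_K(g)^{1+2\rho}$ first holds. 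In particular $|\overline{\BL}_m| \leq \overline{l}_K(g)$ and the starting $s$-coordinate of $\overline{\BL}_{m+1}$ is at least $l_{K+1}(g)$, so
\[d(\overline{\BL}_m, \overline{\BL}_{m+1}) \geq l_{K+1}(g) - \overline{l}_K(g) > \overline{l}_K(g)^{1+2\rho}.\]

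Both halves of the claimed estimate then fall out. The absolute floor is a consequence of \eqref{dynamical systems1020}: $\overline{l}_K(g) \geq \overline{l}_1(g) \geq l(\epsilon, \eta) > e^{(1+2\delta)^{-1} T_0} > 1$, hence $d > \overline{l}_K(g)^{1+2\rho} > e^{(1+2\delta)^{-1} T_0}$. For the $\min^{1+\rho}$ piece, I would split according to which length is smaller. If $|\overline{\BL}_m|$ is the minimum, then $d > |\overline{\BL}_m|^{1+2\rho} \geq |\overline{\BL}_m|^{1+\rho}$, using $|\overline{\BL}_m| \geq 1$; if instead $|\overline{\BL}_{m+1}| \leq |\overline{\BL}_m| \leq \overline{l}_K(g)$, the chain $d > \overline{l}_K(g)^{1+2\rho} \geq |\overline{\BL}_{m+1}|^{1+2\rho} \geq |\overline{\BL}_{m+1}|^{1+\rho}$ still delivers it.

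The main obstacle is the sub-case where the merging of $\overline{\BL}_m$ terminates not from the effective-gap criterion but because every candidate continuation $\BL_j$ fails the non-shifting relation with $\BL_m$, i.e.\ all later pairs are separated by some nontrivial $\gamma \in \Gamma$. Here I would anchor the injectivity-radius bound \eqref{timechange30} at $(u^{\overline{s}_m} g_{x_m}, u^{t(\overline{s}_m)} g_{y_m})$, which is valid because $\overline{x}_m \in K \subset K_1$, and couple it with the H\"older estimate \eqref{timechange25} to force $s_{m+1} - \overline{s}_m \gtrsim m$; reapplying Lemma \ref{time change176} at the shifted base then restores the effective-gap-style power bound $d > \overline{l}_K(g)^{1+2\rho}$ needed to close the argument.
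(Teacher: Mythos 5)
Your non-shifting analysis (merging stops only when $l_{K+1}(g)-\overline{l}_K(g)>\overline{l}_K(g)^{1+2\rho}$, then use (\ref{dynamical systems1020}) for the floor $e^{(1+2\delta)^{-1}T_0}$) matches the one-line part of the paper's proof, but the heart of the lemma is precisely the case you flag as "the main obstacle" and your sketch for it does not close. When $\overline{\BL}'\overset{\Gamma}{\sim}\overline{\BL}''$, i.e.\ $g_{y''}=u^{t}g_{\overline{y}'}\gamma$ with $\gamma\neq e$, the injectivity-radius condition (\ref{timechange30}) anchored at $(\overline{x}',\overline{y}')$ together with the H\"older bound (\ref{timechange25}) only forces $s,t\gtrsim m$, a \emph{fixed} constant; the lemma demands $d>\min\{|\overline{\BL}'|,|\overline{\BL}''|\}^{1+\rho}$, and block lengths are unbounded (of order up to $\lambda$), so a bound of size $m$ is far too weak. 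Moreover "reapplying Lemma \ref{time change176} at the shifted base" cannot restore the power bound: that lemma describes the return times of a \emph{fixed} element $g$ under $s\mapsto u^{t(s)}gu^{-s}$, whereas after a shift the relevant element is $u^{t}g_{\overline{y}'}\gamma g_{\overline{x}'}^{-1}u^{-s}$ with an unknown nontrivial $\gamma$, so no effective-gap structure for the original $g$ is available. What the paper actually does here is a contradiction argument that is absent from your proposal: assuming $\min\{s,t\}\leq r^{1+\rho}$ with $r\geq\max\{e^{(1+2\delta)^{-1}T_0},|\overline{\BL}'|\}$, it renormalizes by the geodesic $a^{\omega}$ with $e^{\omega_0}=r^{1+2\delta}$, uses the ergodicity-based choice of $K_2,T_0$ to pick $\omega\in((1-\sigma)\omega_0,\omega_0]$ with $a^{\omega}x''\in K$, checks via the exponent bookkeeping in (\ref{time change182}), (\ref{time change183}) and Lemma \ref{time change181} that the conjugated $h$- and $v$-parts stay of size $O(\epsilon)$ while $|se^{-\omega}|,|te^{-\omega}|=O(r^{-\rho/4})<\Delta$, and then invokes (\ref{timechange31}) at the point $a^{\omega}g_{x''}\in\pi^{-1}(K)$ to force $\gamma=e$, a contradiction. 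This is why the floor $e^{(1+2\delta)^{-1}T_0}$ and the constant $\delta=3\rho/4$ appear in the statement at all; your argument never uses the geodesic flow, $K_2$, or $T_0$ in the shifting case, so it cannot reproduce the bound.

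A secondary problem: the reduction of non-adjacent pairs to adjacent ones via $d(\overline{\BL}_i,\overline{\BL}_j)\geq d(\overline{\BL}_{j-1},\overline{\BL}_j)$ does not give the stated estimate, since $\min\{|\overline{\BL}_i|,|\overline{\BL}_j|\}$ can exceed $\min\{|\overline{\BL}_{j-1}|,|\overline{\BL}_j|\}$ (e.g.\ two long blocks separated by one short block with short gaps is not excluded by the adjacent-pair bounds alone). The paper avoids this by treating an arbitrary pair directly: in the non-shifting case the start of $\overline{\BL}''$ lies beyond the first effective gap of the \emph{same} element $g=g_{\overline{y}'}g_{\overline{x}'}^{-1}$, and in the shifting case the renormalization argument above applies verbatim to any pair.
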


\begin{proof}

 Suppose $\overline{\BL}^{\prime},\overline{\BL}^{\prime\prime}\in\beta_{\rho}$ and write
\[\overline{\BL}^{\prime}=\{(x^{\prime},y^{\prime}),(\overline{x}^{\prime},\overline{y}^{\prime})\},\ \ \ \overline{\BL}^{\prime\prime}=\{(x^{\prime\prime},y^{\prime\prime}),(\overline{x}^{\prime\prime},\overline{y}^{\prime\prime})\}, \ \ \ x^{\prime\prime}=u^{s}\overline{x}^{\prime},\ \ \ y^{\prime\prime}=u^{t}\overline{y}^{\prime}.\]
 If    $\overline{\BL}^{\prime}\overset{e}{\sim}\overline{\BL}^{\prime\prime}$, then by the above construction, we know
  \[d(\overline{\BL}^{\prime},\overline{\BL}^{\prime\prime})\geq |\overline{\BL}^{\prime}|^{1+2\rho}\]
  and so (\ref{dynamical systems1034}) holds in this situation. It remains to show that if $\overline{\BL}^{\prime}\overset{\Gamma}{\sim}\overline{\BL}^{\prime\prime}$,  (\ref{dynamical systems1034}) also holds.
   Suppose that $\overline{\BL}^{\prime}\overset{\Gamma}{\sim}\overline{\BL}^{\prime\prime}$, and $g_{x^{\prime\prime}}=u^{s}g_{\overline{x}^{\prime}}$. It follows that
  \begin{equation}\label{dynamical systems1033}
   g_{y^{\prime\prime}}= u^{t}g_{\overline{y}^{\prime}}\gamma\ \ \ \text{ for some }e\neq \gamma\in\Gamma.
  \end{equation}
 Then via (\ref{timechange26}) and (\ref{time change178}), there exists $r\geq\max\{e^{(\frac{1}{2}+\delta)^{-1}T_{0}},\Leb(\overline{\BL}^{\prime})\}$  such that
  \begin{align}
   g_{\overline{y}^{\prime}}=& h^{\overline{y}^{\prime}}_{\overline{x}^{\prime}}\exp (v^{\overline{y}^{\prime}}_{\overline{x}^{\prime}})  u^{-s} g_{x^{\prime\prime}}\;\nonumber\\
 g_{\overline{y}^{\prime}} \gamma=& u^{-t}  h^{y^{\prime\prime}}_{x^{\prime\prime}}\exp (v^{y^{\prime\prime}}_{x^{\prime\prime}}) g_{x^{\prime\prime}}  \label{dynamical systems1028}
\end{align}
where $ g_{\overline{y}^{\prime}} g_{\overline{x}^{\prime}}^{-1}=h^{\overline{y}^{\prime}}_{\overline{x}^{\prime}}\exp (v^{\overline{y}^{\prime}}_{\overline{x}^{\prime}})  $, $g_{y^{\prime\prime}} g_{x^{\prime\prime}}^{-1}=h^{y^{\prime\prime}}_{x^{\prime\prime}}\exp (v^{y^{\prime\prime}}_{x^{\prime\prime}})$ can be estimated   by Lemma \ref{time change181}
\[h^{\overline{y}^{\prime}}_{\overline{x}^{\prime}}=\left[
            \begin{array}{ccc}
               1+O(\epsilon) &   O(r^{-1-2\rho})  \\
              O(\epsilon) &   1+O(\epsilon)\\
            \end{array}
          \right],\ \ \  v^{\overline{y}^{\prime}}_{\overline{x}^{\prime}}=O(r^{-\xi\varsigma})v_{0}+\cdots+O(\epsilon)v_{\varsigma}  \]
\[h^{y^{\prime\prime}}_{x^{\prime\prime}}=\left[
            \begin{array}{ccc}
               1+O(\epsilon) &   O(r^{-1-2\rho})  \\
              O(\epsilon) &   1+O(\epsilon)\\
            \end{array}
          \right],\ \ \ v^{y^{\prime\prime}}_{x^{\prime\prime}}=O(r^{-\xi\varsigma})v_{0}+\cdots+O(\epsilon)v_{\varsigma}.\]

  Now assume that one of $s,t$ is not greater than $r^{1+\rho}$. Then by (\ref{timechange25}) and (\ref{timechange30}), we know
  \begin{equation}\label{dynamical systems1030}
    0<s,t\leq O(r^{1+\rho}).
  \end{equation}
 Since $r>e^{(1+2\delta)^{-1}T_{0}}$, let $e^{\omega_{0}}\coloneqq   r^{1+2\delta}$ and we know $\omega_{0}>T_{0}$. Since $x^{\prime\prime}\in K\subset K_{2}$, it follows from the choice of $K_{2}$ and $T_{0}$  that the relative length measure of $K$ on $[x^{\prime\prime},a^{\omega_{0}}x^{\prime\prime}]$ is greater than $1-\sigma$. This implies that there is $\omega$ satisfying
  \[(1-\sigma)\omega_{0}<\omega\leq\omega_{0}\]
  such that $a^{\omega}x^{\prime\prime}\in K$ and therefore
  \begin{equation}\label{dynamical systems1031}
    a^{\omega}g_{x^{\prime\prime}}\in \pi^{-1}(K)
  \end{equation}
  where $a^{\omega}$ denotes the diagonal action.
  On the other hand, by (\ref{dynamical systems1028}), we have
  \begin{align}
  a^{\omega} g_{\overline{y}^{\prime}} =& (a^{\omega} h^{\overline{y}^{\prime}}_{\overline{x}^{\prime}}a^{-\omega})\exp (\Ad a^{\omega}. v^{\overline{y}^{\prime}}_{\overline{x}^{\prime}}) (a^{\omega}u^{-s}a^{-\omega}) a^{\omega} g_{x^{\prime\prime}}\;\nonumber\\
  a^{\omega} g_{\overline{y}^{\prime}}\gamma=& (a^{\omega}u^{-t} a^{-\omega}) (a^{\omega}h^{y^{\prime\prime}}_{x^{\prime\prime}}a^{-\omega})\exp (\Ad a^{\omega}.v^{y^{\prime\prime}}_{x^{\prime\prime}}) a^{\omega}g_{x^{\prime\prime}}  \label{dynamical systems1029}
\end{align}
 Notice that by the choice of $\omega$, we have
  \[e^{\omega/2}\in[ r^{(1-\sigma)(\frac{1}{2}+\delta)},r^{\frac{1}{2}+\delta}].\]
  Then according to  (\ref{dynamical systems1029}) and Lemma \ref{time change179}, we get
 \[ a^{\omega}\left[
            \begin{array}{ccc}
               1+O(\epsilon) &   O(r^{-1-2\rho})  \\
              O(\epsilon) &   1+O(\epsilon)\\
            \end{array}
          \right] a^{-\omega}=  \left[
            \begin{array}{ccc}
               1+O(\epsilon) &   O(r^{2\delta-2\rho})  \\
              O(\epsilon) &   1+O(\epsilon)\\
            \end{array}
          \right] \]
 \begin{align}
    &\Ad a^{\omega}. (O(r^{-\xi\varsigma})v_{0}+\cdots+O(\epsilon)v_{\varsigma})\; \label{dynamical systems3003}\\
= &  \left\{\begin{array}{ll}
O(r^{ 1+2\delta -2\xi })v_{0}+O(\epsilon)v_{1}+O(r^{ - (1-\sigma)(1+2\delta) })v_{2}&,\text{ if }   \varsigma=2\\
O(r^{\frac{1+2\delta-2\xi}{2} })v_{0} +O(r^{ - (1-\sigma)(\frac{1}{2}+\delta) })v_{1}&,\text{ if }   \varsigma=1\\
 O(\epsilon)v_{0} &, \text{ if }   \varsigma=0
\end{array}\right.      \nonumber
\end{align}
\[a^{\omega}u^{-t}a^{-\omega}=   u^{-te^{-\omega}}  , \ \  a^{\omega}u^{-s}a^{-\omega}=   u^{-se^{-\omega}}.   \]
 Thus,  by (\ref{dynamical systems3003}) and (\ref{time change182}), we can require $T_{0}$ sufficiently large, and then $r$ will be large so that
 \[ a^{\omega} h^{\overline{y}^{\prime}}_{\overline{x}^{\prime}}a^{-\omega},\ \exp (\Ad a^{\omega}. v^{\overline{y}^{\prime}}_{\overline{x}^{\prime}}),\    a^{\omega}h^{y^{\prime\prime}}_{x^{\prime\prime}}a^{-\omega},\ \exp (\Ad a^{\omega}.v^{y^{\prime\prime}}_{x^{\prime\prime}})\in B_{G}(e,\epsilon) .  \]
 On the other hand, by (\ref{time change183}), we have
 \[1+\rho-(1-\sigma)(1+2\delta)=1+\rho-(1-\sigma)(1+\frac{3}{2}\rho)<-\frac{1}{4}\rho\]
 and then by (\ref{dynamical systems1030})
  \[ |-te^{-\omega}|=O(r^{-\frac{1}{4}\rho})<\Delta,\ \ \ |-se^{-\omega}|=O(r^{-\frac{1}{4}\rho})<\Delta.\]
  It follows from (\ref{dynamical systems1026}), (\ref{dynamical systems1029})  that
   \[d_{G}(a^{\omega} g_{\overline{y}^{\prime}} , a^{\omega} g_{x^{\prime\prime}})<2\Delta\ \text{ and }\ d_{G}( a^{\omega} g_{\overline{y}^{\prime}} \gamma, a^{\omega} g_{x^{\prime\prime}})<2\Delta.\]
   Then by (\ref{timechange31}) and (\ref{dynamical systems1031}), we conclude
   $\gamma=e$,
   which contradicts (\ref{dynamical systems1033}). Thus, both $s,t$ are greater than $r^{1+\rho}$, and (\ref{dynamical systems1034}) follows.
\end{proof}

\begin{proof}[Proof of Proposition \ref{timechange20}] Let $I=A_{\lambda}= A\cap[0,\lambda]$, $\mathcal{G}$ be the subintervals of $I$ obtained by taking the $x$-coordinate of $\epsilon$-blocks in $\beta_{\rho}$. Note that according to the hypotheses,
\[\Leb\bigg(\bigcup_{J\in \mathcal{G}}J\bigg)\geq\Leb(A)\geq (1-\theta)\lambda.\]
Then  by (\ref{dynamical systems1034}), we can use Proposition \ref{dynamical systems1011} and obtain a good interval $J_{\lambda}=[x(\lambda),\overline{x}(\lambda)]\in\mathcal{G}$ satisfying
\begin{equation}\label{timechange34}
  \Leb(J_{\lambda})>\frac{3}{4}\lambda.
\end{equation}
 Correspondingly, there is a $\epsilon$-block  $\overline{\BL}(\lambda)=\{(x(\lambda),y(\lambda)),(\overline{x}(\lambda),\overline{y}(\lambda))\}\in\beta_{\rho}$ such that its $x$-coordinate has length greater than $\frac{3}{4}\lambda$. Then by (\ref{timechange26}) (and also the choice of $\xi(2\rho)$ in (\ref{time change182})), we get
   \begin{equation}\label{timechange32}
     y(\lambda)=h_{\lambda}\exp(v_{\lambda})x(\lambda).
   \end{equation}
    where
    \[h_{\lambda}=\left[
            \begin{array}{ccc}
               1+O(r^{-2\rho}) &   O(r^{-1-2\rho})  \\
              O(\epsilon) &   1+O(r^{-2\rho})\\
            \end{array}
          \right],\ \ \ v_{\lambda}=O(r^{- \frac{1+2\rho}{2}\varsigma})v_{0}+\cdots+O(\epsilon)v_{\varsigma}\]
  for some  $r\geq\max\{e^{(\frac{1}{2}+\delta)^{-1}T_{0}},\Leb(\overline{\BL}(\lambda))\}>\frac{3}{4}\lambda$.
Therefore, we complete the proof of Proposition \ref{timechange20}.
\end{proof}

\subsection{Proof of Theorem \ref{timechange14}}
Now we are in the position to verify the equivariant properties of $\psi$ (Theorem \ref{timechange14}) via Proposition \ref{timechange20}. We first consider the central direction $c\in C_{G}(U)$ in Lemma \ref{timechange3}. Then with the ``$a$-adjustment", we study the opposite unipotent direction $\tilde{u}$ in Lemma \ref{timechange12}. Finally, since
the central and the opposite unipotent directions generate the whole Lie group, we can obtain the rigidity of $\psi$ with the help of Ratner's theorem.

 Recall that in (\ref{time change224}), we have defined the cocycle $\xi$. Now define $z:X\times\mathbf{R}\rightarrow\mathbf{R}$ by the relation $t=\xi(x,z(x,t))$, i.e.
 \begin{equation}\label{time change225}
   t=\int_{0}^{z(x,t)}\tau(\phi^{U}_{s}(x))ds.
 \end{equation}
         Then by the conjugate assumption in Theorem \ref{timechange14}, we know that
         \[\psi(u^{t}x)=\phi_{t}^{U,\tau}(\psi(x))= u^{z(\psi(x),t)}\psi(x).\]
           Moreover, using ergodic theorem, we get
         \begin{equation}\label{timechange15}
          |t-z(\psi(x),t)|=o(t)
         \end{equation}
         for $\mu$-almost all $x\in X$. Further, according to   Lemma 3.1 \cite{ratner1986rigidity}, when $\tau\in\mathbf{K}(X)$, we have the effective ergodicity: given $\sigma>0$, there is $K=K(\sigma)\subset X$ with $\mu(K)>1-\sigma$ and $t_{K}$ such that
         \begin{equation}\label{timechange39}
           |t-z(\psi(x),t)|=O(t^{1-\eta})
         \end{equation}
         for some $\eta>0$,   all $t\geq t_{K}$ and $x\in K$.
\begin{prop}[Lusin's theorem]\label{timechange21}
  Let $(X,\mathscr{B},\mu)$ be the completion of $\mu$ on the Borel $\sigma$-algebra $\mathcal{B}_{X}$. Let $\psi:X\rightarrow X$ be measurable. Then given $\sigma\in(0,1)$, there is $K=K(\sigma)\in\mathscr{B}$, $\mu(K)>1-\sigma$ such that $\psi$ is uniformly continuous on $K$.
\end{prop}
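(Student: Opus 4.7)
The plan is to reduce the statement to the classical real-valued Lusin theorem by exploiting that $X=G/\Gamma$ is a compact second countable metric space carrying a Borel probability measure. First, I would fix a countable dense sequence $\{y_{k}\}_{k\geq 1}\subset X$ and define, for each $k$, the measurable real-valued function $f_{k}(x)\coloneqq d_{X}(\psi(x),y_{k})$.

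Next, applying the classical Lusin theorem (a direct consequence of inner regularity of $\mu$ on the Polish space $X$ together with Egorov's theorem) to each $f_{k}$ with tolerance $\sigma/2^{k+1}$ produces a compact set $K_{k}\subset X$ with $\mu(X\setminus K_{k})<\sigma/2^{k+1}$ on which $f_{k}$ is continuous. Setting $K\coloneqq\bigcap_{k\geq 1}K_{k}$ gives a closed, and hence compact, subset of $X$ with $\mu(K)>1-\sigma$ on which every $f_{k}|_{K}$ is continuous; the geometric weighting $\sigma/2^{k+1}$ is precisely what keeps the total loss below $\sigma$ after the countable intersection.

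Finally, I would promote the simultaneous continuity of the distance functions $f_{k}|_{K}$ to continuity of $\psi|_{K}$ itself: if $x_{n}\to x$ in $K$ and $\eps>0$, choose $y_{k}$ with $d_{X}(\psi(x),y_{k})<\eps/3$; continuity of $f_{k}$ on $K$ then gives $|f_{k}(x_{n})-f_{k}(x)|<\eps/3$ for large $n$, and the triangle inequality forces $d_{X}(\psi(x_{n}),\psi(x))<\eps$. Since $K$ is a compact metric space, continuity of $\psi|_{K}$ upgrades automatically to uniform continuity. No serious obstacle arises: the only conceptual ingredient beyond the real-valued Lusin theorem is the passage from continuity of a countable family of distance coordinates to continuity of the underlying $X$-valued map, which is immediate from the separability of $X$.
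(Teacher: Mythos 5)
Your argument is correct. Note, however, that the paper offers no proof of this proposition at all: it is quoted as the classical Lusin theorem (together with the standard upgrade from continuity to uniform continuity on a compact set), and the surrounding text simply uses it as a black box. So there is nothing in the paper to compare against; what you have written is a correct and standard derivation of the $X$-valued statement from the real-valued one. Concretely, your three steps all check out: (i) the functions $f_{k}(x)=d_{X}(\psi(x),y_{k})$ are measurable for the completed $\sigma$-algebra, and the scalar Lusin theorem (via tightness of the finite Borel measure on the Polish space $X=G/\Gamma$ plus inner regularity) applies to completion-measurable functions, yielding compact sets $K_{k}$ with total loss $\sum_{k}\sigma/2^{k+1}<\sigma$; (ii) the triangle-inequality argument correctly transfers simultaneous continuity of the countable family $\{f_{k}|_{K}\}$ to continuity of $\psi|_{K}$, using only density of $\{y_{k}\}$; (iii) $K$ is compact as a closed subset of the compact set $K_{1}$, so continuity of $\psi|_{K}$ into the metric space $X$ automatically gives uniform continuity, which is exactly the form the paper needs when it later chooses $\delta$ so that $d(\psi(x),\psi(y))<\epsilon$ whenever $d(x,y)<\delta$ on $K$. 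The only remark worth adding is that your proof does not actually need $X$ to be compact, only Polish with a finite Borel measure, which matches the generality in which the proposition is stated.
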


By (\ref{timechange39}),  there are $P_{n}\subset X$ with $\mu (P_{n})>1- 2^{-n}$ and $\lambda_{n}$ such that
\begin{equation}\label{timechange35}
 |t-z(\psi(x),t)|=O(t^{1-\eta})
\end{equation}
         for some $\eta>0$,   all $t\geq \lambda_{n}$ and $x\in P_{n}$.   Now
let   $a_{n}\coloneqq a^{(1+\gamma)\log\lambda_{n}}$ for some $\gamma\in(0,2\rho)$, and let
\begin{equation}\label{time change228}
  \Psi_{n}(x)\coloneqq a_{n}\psi(a_{-n}x).
\end{equation}
 Our goal is to  show that $\Psi_{n}$, after passing to a subsequence, has a pointwise limit $\Psi$ as $n\rightarrow\infty$. By the ergodic theorem (\ref{timechange15}), $\Psi$ will be $u^{t}$-equivariant, and then \textit{Ratner theorem}\index{Ratner theorem} applies. First, by an elementary argument, we have
\begin{lem}\label{timechange38}
   For $\mu$-almost all $x\in X$, there exists a subsequence $\{n(x,l)\}_{l\in\mathbf{N}}\subset\mathbf{N}$ and $y(x)\in X$ such that
   \[\lim_{l\rightarrow\infty}\Psi_{n(x,l)}(x)= y(x).\]
\end{lem}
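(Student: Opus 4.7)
The plan is to exploit the standing hypothesis that $\Gamma$ is cocompact, so that $X=G/\Gamma$ is a compact metrizable space; then the conclusion reduces to Bolzano--Weierstrass once we check that $\Psi_{n}(x)$ is well-defined for all $n$ simultaneously at $\mu$-almost every $x$.

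First I would record the trivial but necessary measure-theoretic fact. The map $\psi$ is only measurable, hence defined on a full-measure set $X_{0}\subset X$. Because the geodesic flow $a^{t}$ preserves the Haar probability measure $\mu$ on $X$, the set
\[
E_{n}\coloneqq\set{x\in X : a_{-n}x\notin X_{0}}
\]
is $\mu$-null for every $n$. Consequently $E\coloneqq \bigcup_{n\geq 1} E_{n}$ is $\mu$-null, and for every $x\in X\setminus E$ the point $a_{-n}x$ lies in $X_{0}$ for all $n\in\mathbf{N}$, so $\Psi_{n}(x)=a_{n}\psi(a_{-n}x)$ is defined for all $n$. Replacing $X_{0}$ by $X_{0}\setminus E$ if necessary, we may assume $\Psi_{n}(\cdot)$ is defined everywhere on a full-measure set.

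Next, since $\Gamma\subset G$ is cocompact, $X$ is compact. Fix $x$ in the full-measure set just obtained; then the sequence $\set{\Psi_{n}(x)}_{n\in\mathbf{N}}$ lies in the compact metric space $X$, so by the Bolzano--Weierstrass property there is a subsequence $\set{n(x,l)}_{l\in\mathbf{N}}\subset\mathbf{N}$ and a point $y(x)\in X$ with
\[
\lim_{l\rightarrow\infty}\Psi_{n(x,l)}(x)=y(x),
\]
which is the desired conclusion.

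There is really no obstacle to this lemma per se --- it is a pure compactness statement, and that is precisely why the paper labels the argument as \emph{elementary}. The real work begins immediately afterwards, where one has to verify that the pointwise limit $\Psi\coloneqq\lim_{l}\Psi_{n(x,l)}$ is independent of the subsequence in a strong enough sense, and that $\Psi$ intertwines $u^{t}$ with itself via (\ref{timechange15}); only then can Ratner's measure-rigidity theorem be invoked to promote $\Psi$ to an algebraic map and to extract the centralizer equivariance (\ref{dynamical systems2000}).
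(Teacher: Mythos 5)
Your argument is fine as far as it goes, but it hinges on $X$ being compact, and that is exactly the hypothesis the paper's own proof of Lemma \ref{timechange38} is written to avoid. Section \ref{time change202106.3} (and the setup in Section \ref{time change230}) only assumes that $\Gamma$ is a lattice; this is why all the surrounding arguments work with compact subsets $K\subset X$ of measure $1-\sigma$ rather than with $X$ itself. In that generality the sequence $\Psi_{n}(x)=a_{n}\psi(a_{-n}x)$ could a priori escape to infinity, and the Bolzano--Weierstrass step has no compact set to run in, so your reduction fails precisely where you declare the lemma ``a pure compactness statement.'' The missing idea is a non-escape-of-mass argument: the paper writes $X=\bigcup_{n}K_{n}$ with $K_{n}$ compact and $\mu(K_{n})\nearrow 1$, and shows that almost every $x$ lies in $\Omega=\bigcup_{n}\bigcap_{k}\bigcup_{m\geq k}\Psi_{m}^{-1}(K_{n})$, i.e.\ $\Psi_{m}(x)$ returns to some fixed $K_{n}$ infinitely often; only then does compactness (of $K_{n}$) give the convergent subsequence. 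The measure estimate uses exactly the structure you did not invoke: $\mu(\Psi_{m}^{-1}(K_{n}))=\mu(\psi^{-1}(a_{m}^{-1}K_{n}))=\mu_{\tau}(a_{m}^{-1}K_{n})$ by $a_{m}$-invariance of $\mu$ and $\psi_{\ast}\mu=\mu_{\tau}$, together with the uniform bound $\mu_{\tau}(a_{m}^{-1}K_{n})\geq 1-\|\tau\|_{\infty}(1-\mu(K_{n}))$, which is available because $\tau,\tau^{-1}$ are bounded for $\tau\in\mathbf{K}(X)$ and $\mu(a_{m}^{-1}K_{n})=\mu(K_{n})$.

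Your measurability preamble (using $a^{t}$-invariance of $\mu$ to get $\Psi_{n}$ defined for all $n$ on a common full-measure set) is correct and harmless. And if one restricts attention to the cocompact setting of the paper's headline results, your one-line compactness proof is indeed valid and strictly simpler than the paper's. But as the lemma is stated and used — for a general lattice, feeding into the Ratner-style rigidity of Theorem \ref{timechange14} — you need the recurrence/non-escape argument above, not just Bolzano--Weierstrass; cocompactness should either be added explicitly as a hypothesis or replaced by the paper's measure-theoretic claim $\mu(\Omega)=1$.
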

\begin{proof}
   Write $X= \bigcup_{n=1}K_{n}$, where $K_{n}$ are compact and $\mu(  K_{n})\nearrow 1$  as $n\rightarrow\infty$. We claim that $\mu(\Omega)=1$, where
  \[\Omega\coloneqq\bigcup_{n\geq1}\bigcap_{k\geq1}\bigcup_{m\geq k}\Psi_{m}^{-1}(K_{n}).\]
  But this follows from a direct calculation (recall that  $d\mu_{\tau}\coloneqq\tau d\mu$)
  \begin{align}
\mu\left(\bigcup_{n\geq1}\bigcap_{k\geq1}\bigcup_{m\geq k}\Psi_{m}^{-1}(K_{n})\right)\geq& \mu\left(\bigcap_{k\geq1}\bigcup_{m\geq k}\Psi_{m}^{-1}(K_{n})\right) \;\nonumber\\
=&\lim_{k\rightarrow\infty} \mu\left(\bigcup_{m\geq k}\Psi_{m}^{-1}(K_{n})\right) \geq \mu_{\tau}(a_{m}^{-1} K_{n}). \nonumber
\end{align}
Note that $\mu(a_{m}^{-1} K_{n})=\mu(K_{n})\nearrow 1$ as $n\rightarrow\infty$. Then since $\mu_{\tau}$ and $\mu$ are equivalent, $ \mu_{\tau}(a_{m}^{-1} K_{n} )\nearrow 1$ as $n\rightarrow\infty$ and the claim follows.
  For almost all $x\in \Omega$,   there exists $n\geq 1$ such that $ \Psi_{m}(x)\in K_{n}$ for infinitely many $m$.
\end{proof}

However, the subsequence  $\{n(x,l)\}_{l\in\mathbf{N}}\subset\mathbf{N}$ obtained from Lemma \ref{timechange38} relies on $x\in X=G/\Gamma$. To get rid of this, we manage to verify that the limit of $\Psi_{n}$ has some equivariant properties.
We first prove  (\ref{dynamical systems2000}) of Theorem \ref{timechange14}:

\begin{lem}\label{timechange3}
There is a measurable map   $\varpi:X\times C_{G}(U)\rightarrow  C_{G}(U)$  such that
\begin{equation}\label{time change227}
 \psi(cx)=\varpi(x,c)\psi(x)
\end{equation}
           for $c\in  C_{G}(U)$, $\mu$-almost all $x\in X$.
\end{lem}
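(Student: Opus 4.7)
The idea is to apply Proposition~\ref{timechange20} to the pair $\bigl(\psi(x_0),\psi(cx_0)\bigr)$ for $c \in C_G(U)$ close to the identity and then to read off, from the asymptotic form of its conclusion, that the correcting group element must lie in $C_G(U)$. The starting observation is that $c$ commutes with $u^s$: hence $u^s(cx_0) = c\,u^s x_0$ stays at constant distance from $u^s x_0$, and their $\psi$-images---once reparametrized via $t(s) \coloneqq z\bigl(\psi(cx_0),\,z^{-1}(\psi(x_0),s)\bigr)$, the time that makes both the left- and right-hand orbits equal the $\psi$-image of the same internal time---form two $u^t$-trajectories that remain within $\epsilon$ of each other for $s$ in a large-density set.

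First, by Lusin's theorem (Proposition~\ref{timechange21}) together with (\ref{timechange39}), fix a compact $K \subset X$ with $\mu(K) > 1-\sigma$ on which $\psi$ is uniformly continuous and the H\"older ergodicity $|z(\psi(\cdot),t) - t| = O(t^{1-\eta})$ holds for $t \geq t_K$. For $c$ sufficiently near the identity (so that uniform continuity of $\psi$ on $K$ gives $d_X(\psi(y),\psi(cy)) < \epsilon/2$ for $y \in K$) and a generic $x_0 \in K$, Birkhoff's theorem makes $A = \{s \geq 0 : u^s x_0 \in K\}$ a set of upper density at least $\mu(K) > 1-\theta$; the H\"older bound (\ref{timechange25}) required by Proposition~\ref{timechange20} is provided by the effective ergodicity (\ref{timechange39}). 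The proposition then produces, for every sufficiently large $\lambda$, a time $s_\lambda \in A \cap [0,\lambda]$ and a decomposition
\[
\psi(cx_0) \;=\; g_\lambda\,\psi(x_0),\qquad g_\lambda \;\coloneqq\; u^{-t(s_\lambda)}\,h_\lambda\exp(v_\lambda)\,u^{s_\lambda},
\]
in which $h_\lambda = I + O(\lambda^{-2\rho})$ except for its $(2,1)$-entry, which is $O(\epsilon)$, and $v_\lambda$ has only its highest-weight component $v_\varsigma$ surviving in size (of order $O(\epsilon)$), the lower-weight components decaying polynomially.

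The dominant surviving directions---the $(2,1)$-entry of $h_\lambda$, which generates a $u^{\mathbf{R}}$-factor, and the $v_\varsigma$-component of $v_\lambda$---exhaust $C_\mathfrak{g}(U) = \mathbf{R}U \oplus V_C^\perp$ (cf.\ (\ref{time change184}), (\ref{time change212}), Lemma~\ref{time change179}). Both directions commute with every $u^t$, so the outer conjugation $u^{-t(s_\lambda)}(\,\cdot\,)u^{s_\lambda}$ merely shifts the $u^{\mathbf{R}}$-coordinate and leaves the $V_C^\perp$-factor untouched. The sub-dominant corrections (the off-diagonal $b$ and the lower-weight $v_i$'s), although amplifiable through the $u^{\pm s_\lambda}$-conjugation by factors of $s_\lambda \sim \lambda$, are calibrated by the decay exponents of Proposition~\ref{timechange20} precisely so that the amplified error is again absorbed into the $u^{\mathbf{R}}$-direction. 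Using compactness of $X$ and discreteness of the $G$-stabilizer of $\psi(x_0)$, we extract a subsequence along which $g_\lambda$ converges in $C_G(U)$ to an element $\varpi(x_0,c) \in C_G(U)$ with $\varpi(x_0,c)\,\psi(x_0) = \psi(cx_0)$.

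Finally, one extends to arbitrary $c \in C_G(U)$ by chaining small pieces through the cocycle identity $\varpi(x,c_1 c_2) = \varpi(c_2 x, c_1)\,\varpi(x, c_2)$ (handling the finitely many connected components of $C_G(U)$ separately), and the measurability of $\varpi$ is obtained by a standard measurable selection on the Borel subset $\{(x,c,g) \in X \times C_G(U) \times C_G(U) : g\,\psi(x) = \psi(cx)\}$. The main obstacle is the identification step: the conjugations by $u^{\pm s_\lambda}$ can a priori inflate the small corrections in $h_\lambda - I$ and in $v_\lambda - b_\varsigma v_\varsigma$ by powers of $s_\lambda$, and one has to verify that the exact decay exponents provided by Proposition~\ref{timechange20} are sharp enough to confine the surviving contribution to $C_G(U)$ in the limit.
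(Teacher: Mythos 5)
Your setup (Lusin plus the effective ergodicity (\ref{timechange39}) to supply the H\"older condition (\ref{timechange25}), Birkhoff to get a density\,$>1-\theta$ set of good times, then Proposition \ref{timechange20} applied to the pair $\bigl(\psi(x),\psi(cx)\bigr)$) is exactly the paper's, and your reading of the output of Proposition \ref{timechange20} and of the decomposition $C_{\mathfrak g}(U)=\mathbf{R}U\oplus V_C^{\perp}$ from (\ref{time change184}) is correct. The gap is in the identification step you yourself flag. You form $g_\lambda=u^{-t(s_\lambda)}h_\lambda\exp(v_\lambda)u^{s_\lambda}$ and want to extract a convergent subsequence "by compactness of $X$ and discreteness of the stabilizer." But $s_\lambda$ can be comparable to $\lambda$, and conjugating back to time zero genuinely inflates the centralizer component: the $(2,1)$-entry of $u^{-s_\lambda}h_\lambda u^{s_\lambda}$ picks up $-bs_\lambda^{2}+(a-d)s_\lambda=O(\lambda^{1-2\rho})$, the $v_\varsigma$-coefficient of $\Ad u^{-s_\lambda}.v_\lambda$ can be of size $O(\lambda^{\varsigma(1-2\rho)/2})$, and the prefactor $u^{s_\lambda-t(s_\lambda)}$ can be of size $O(\lambda^{1-\eta})$. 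So while the non-central residues do tend to $e$ (your "absorption" claim is right about directions), the sequence $g_\lambda$ is not bounded, hence no convergent subsequence exists a priori. Discreteness of the coset $\{g:g\psi(x)=\psi(cx)\}$ does not rescue this: a discrete set can come arbitrarily close to the noncompact subgroup $C_G(U)$ along elements going to infinity without ever meeting it, so "distance of $g_\lambda$ to $C_G(U)$ tends to $0$" does not yield an exact element of $C_G(U)$ carrying $\psi(x)$ to $\psi(cx)$. As written, the conclusion $\psi(cx)\in C_G(U)\psi(x)$ does not follow.

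The paper closes exactly this hole differently: instead of working at the moving time $s_\lambda\to\infty$ with approximate estimates and conjugating back, it chooses an increasing sequence $\lambda_k$ growing slowly enough that the long intervals $J_k$ produced by Proposition \ref{timechange20} pairwise intersect, hence (by the effective-gap structure) are nested, and it evaluates the relating group element at the single fixed time $s_x=\inf\bigcup_k J_k$. That one element then satisfies the scale-$\lambda_k$ estimates for every $k$, so letting $k\to\infty$ forces the off-diagonal entry, the diagonal deviation, and all lower-weight components to vanish identically; the element lies exactly in $u^{\mathbf{R}}\exp(V_C^{\perp})\subset C_G(U)$. Only then is it conjugated by $u^{z(\psi(x),s_x)}$ and $u^{-z(\psi(cx),s_x)}$, which is harmless precisely because the element is already central. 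To repair your argument you would need either this nestedness/exactness device or some substitute argument showing the amplified centralizer component of $g_\lambda$ stays bounded, which the stated decay exponents alone do not give. Your extension to general $c$ via the cocycle identity and measurable selection is fine and matches the paper's iteration (\ref{timechange24}).
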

\begin{proof} By Proposition \ref{timechange21}, for some $\sigma\in(0,1)$ close to $0$,  there is  $K_{1}\subset X$ such that $\mu(K_{1})>1-\sigma$ and $\psi|_{K_{1}}$ is uniformly continuous. On the other hand, by (\ref{timechange39}), there are $K_{2}\subset X$ with $\mu(K_{2})>1-\sigma$ and $t_{K_{2}}$ such that
         \begin{equation}\label{timechange16}
           |t-z(y,t)|=O(t^{1-\eta})
         \end{equation}
         for some $\eta>0$,   all $t\geq t_{K_{2}}$ and $y\in K_{2}$.
Then letting $m=t_{K_{2}}$ and according to Proposition \ref{timechange20}, we obtain quantities $\rho,\theta,\epsilon$ and a compact set  $K_{3}$ with $\mu(K_{3})>1-\sigma$. Let $K\coloneqq K_{1}\cap K_{2}\cap K_{3}$ and $\sigma$ so small that $\sigma\ll\theta$. Then $\mu(K)>1-3\sigma$. We shall study the unipotent orbits on $K$.

  Next, fix a sufficiently small $\delta>0$ so that $d(\psi(x),\psi(y))<\epsilon$ whenever $d(x,y)<\delta$ and $x,y\in K$. Given a $u^{t}$-generic point $x\in X$, there is $A_{x}\subset\mathbf{R}^{+}$ such that $\psi(u^{s}cx)\in B(\psi(u^{s}x),\epsilon)$ for all $s\in A_{x}$. More precisely,  given $c\in B_{G}(e,\delta)\cap C_{G}(U)$, by ergodic theorem, there is $\lambda_{0}\gg m$ such that
   \[u^{z(\psi(cx) ,s)}\psi(cx)\in B(u^{z(\psi(x),s)}\psi(x),\epsilon)\]
  for  $s\in A_{x}$ and $\Leb(A_{x}\cap[0,\lambda])\geq (1-\sigma )\lambda$ whenever $\lambda\geq\lambda_{0}$.
     Then via Proposition \ref{timechange20} (see also (\ref{timechange34})) for any $\lambda_{k}\geq\lambda_{0}$, there is an interval  $J_{k}=\{(x_{k},y_{k}),(\overline{x}_{k},\overline{y}_{k})\}\subset[0,\lambda_{k}]$ with  $(x_{k},y_{k})\overset{e}{\sim}(\overline{x}_{k},\overline{y}_{k})$ and $x_{k}=u^{z(\psi(x),s_{k})}\psi(x)$, $y_{k}=u^{z(\psi(cx),s_{k})}\psi(cx)$ for some $s_{k}\in\mathbf{R}^{+}$ such that
     \[ y_{k}
         =h_{k}\exp(v_{k}) x_{k}\]
    where
    \[h_{k}=\left[
            \begin{array}{ccc}
               1+O(\lambda_{k}^{-2\rho}) &   O(\lambda_{k}^{-1-2\rho})  \\
              O(\epsilon) &   1+O(\lambda_{k}^{-2\rho})\\
            \end{array}
          \right],\ \ \ v_{k}=O(\lambda_{k}^{- \frac{1+2\rho}{2}\varsigma})v_{0}+\cdots+O(\epsilon)v_{\varsigma}\]
and $|J_{k}|\geq \frac{3}{4}\lambda_{k}$.  Then we can choose an increasing sequence of $\lambda_{k}$ so that $J_{n}\cap J_{m}\neq\emptyset$ for any $n, m\in\mathbf{N}$. But it forces $J_{k}\subset J_{k+1}$. Thus, write $s_{x}=\inf \bigcup_{k}J_{k}$ and we conclude
          \[
          u^{z(\psi(cx),s_{x})}\psi(cx)=h_{x} \exp(v_{x})u^{z(\psi(x),s_{x})} \psi(x),\ \ \ h_{x}=\left[
            \begin{array}{ccc}
               1  &     \\
              O(\epsilon) &   1 \\
            \end{array}
          \right],\ \ \ v_{x}\in C_{G}(U). \]
Thus, for  $c\in B_{G}(e,\delta)\cap C_{G}(U)$, we set
\[\varpi(x,c)\coloneqq u^{-z(\psi(cx),s_{x})}h_{x} \exp(v_{x})u^{z(\psi(x),s_{x})}. \]
For general $c\in C_{G}(U)$, we can define it by iteration, since
\begin{equation}\label{timechange24}
  \varpi(x,c^{k})=\prod_{j=0}^{k-1}\varpi(c^{j}x,c).
\end{equation}
The consequence follows.
\end{proof}
Now we explore   further properties of $\varpi$. First of all,  $\varpi$ satisfies
\begin{equation}\label{time change216}
 u^{z(\psi(cx),t)}\varpi(x,c)\psi(x)=\varpi(u^{t}x,c)u^{z(\psi(x),t)}\psi(x)
\end{equation}
for $\mu$-a.e. $x\in X$. Moreover
\begin{lem}
  For $t\in\mathbf{R}$, we have
   \begin{equation}\label{time change185}
  u^{z(\psi(cx),t)}\varpi(x,c)=\varpi(u^{t}x,c)u^{z(\psi(x),t)}
\end{equation}
for $\mu$-a.e. $x\in X$.
\end{lem}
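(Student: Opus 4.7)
Plan. The first move is to recast the desired identity as the triviality of a discrete-valued auxiliary function. Define
\[\delta(x,t,c) := u^{-z(\psi(x),t)}\,\varpi(u^{t}x,c)^{-1}\,u^{z(\psi(cx),t)}\,\varpi(x,c).\]
All four factors lie in $C_{G}(U)$, so $\delta(x,t,c)\in C_{G}(U)$; and equation (\ref{time change216}) is exactly the statement that $\delta(x,t,c)\psi(x)=\psi(x)$. Hence $\delta(x,t,c)$ lies in $C_{G}(U)\cap\psi(x)\Gamma\psi(x)^{-1}$, a discrete subgroup of $G$. The lemma reduces to showing $\delta(x,t,c)=e$ for $\mu$-a.e. $x$.

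The boundary condition is immediate: since $z(y,0)=0$ for every $y$, we have $\delta(x,0,c)=\varpi(x,c)^{-1}\varpi(x,c)=e$. To propagate this to general $t$, I would argue by continuity in $t$. The factors $u^{z(\psi(x),t)}$ and $u^{z(\psi(cx),t)}$ are continuous in $t$, since $z(y,\cdot)$ is the compositional inverse of the continuous strictly increasing cocycle $\xi(y,\cdot)$ from (\ref{time change224}). The only factor that is merely measurable is $\varpi(u^{t}x,c)$. Applying Lusin's theorem (Proposition \ref{timechange21}) to $y\mapsto\varpi(y,c)$ produces a compact $K'\subset X$ with $\mu(K')>1-\sigma$ on which $\varpi(\cdot,c)$ is uniformly continuous. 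For $\mu$-a.e. $x$, ergodicity of $u^{t}$ makes the return set $T_{x}:=\{t\in\mathbf{R}:u^{t}x\in K'\}$ have density $>1-\sigma$ and, after choosing $x\in K'$, contain $0$; on $T_{x}$ the map $t\mapsto\delta(x,t,c)$ is continuous. Being discrete-valued in a subgroup that does not depend on $t$, $\delta(x,\cdot,c)$ is locally constant on $T_{x}$, hence identically $e$ on the connected component of $0$ in $T_{x}$.

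To extend to every $t\in\mathbf{R}$, I would exhaust $K'$ by an increasing sequence $K'_{n}$ with $\mu(K'_{n})\nearrow 1$, so that $T_{x,n}$ fills $\mathbf{R}$ up to a set of measure zero, and then use the cocycle relation for $z$ together with (\ref{time change227}) to derive an identity of the form $\delta(x,t_{1}+t_{2},c)=\delta(x,t_{1},c)\cdot\bigl(\text{conjugate of }\delta(u^{t_{1}}x,t_{2},c)\bigr)$, which transports the vanishing of $\delta$ across the gaps in $T_{x}$. The main obstacle is precisely this final step: because $\varpi(u^{t}x,c)$ is only measurable, one cannot assert continuity in $t$ outright, and the argument that a locally constant, discrete-group-valued function cannot jump to a nontrivial value on the measure-zero complement of $T_{x}$ requires some care. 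A parallel subtlety to dispatch along the way is that $C_{G}(U)\cap\psi(x)\Gamma\psi(x)^{-1}$ could a priori be nontrivial on a positive-measure set of $x$; this is ruled out by observing that for any noncentral $\gamma\in\Gamma$ the set $\{y\in X:y\gamma y^{-1}\in C_{G}(U)\}$ is cut out by the real-algebraic condition $\Ad(y)^{-1}U\in C_{\mathfrak{g}}(\gamma)$, a proper subvariety of $X$ and therefore $\mu$-null.
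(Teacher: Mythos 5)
Your reduction is fine: setting $\delta(x,t,c)=u^{-z(\psi(x),t)}\varpi(u^{t}x,c)^{-1}u^{z(\psi(cx),t)}\varpi(x,c)$, equation (\ref{time change216}) does place $\delta(x,t,c)$ in the discrete group $C_{G}(U)\cap\psi(x)\Gamma\psi(x)^{-1}$, and $\delta(x,0,c)=e$. The genuine gap is the propagation from $t=0$ to all $t$, and it is exactly where you admit "some care" is needed. The Lusin/continuity route does not get off the ground: the return set $T_{x}=\{t:u^{t}x\in K'\}$ is a closed set with, in general, empty interior, so its connected components (in particular the component of $0$) are typically single points; continuity of $\delta(x,\cdot,c)$ relative to $T_{x}$ with values in a discrete set therefore yields only local constancy on $T_{x}$, which identifies the value as $e$ at $t=0$ and nowhere else. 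The subsequent "transport across the gaps" via a cocycle identity is precisely the nontrivial content, and you do not carry it out; note also that the identity $\delta(x,t_{1}+t_{2},c)=\delta(x,t_{1},c)\cdot(\text{conjugate of }\delta(u^{t_{1}}x,t_{2},c))$ involves the defect at the \emph{different} base point $u^{t_{1}}x$, so knowing $\delta(x,\cdot,c)=e$ on part of $T_{x}$ does not by itself control $\delta(x,t,c)$ for $t$ in a gap. The paper avoids continuity altogether: from $g_{x}^{-1}Y_{c}(x,t)g_{x}\in\Gamma$ it pigeonholes over the countable group $\Gamma$ to find a positive-measure set of $t$ on which the defect equals a fixed $\gamma$ for a.e.\ $x$, uses the cocycle identity (\ref{time change217}) and a translation $x\mapsto u^{t_{0}}x$ to reduce to $\gamma=e$, and then shows $\{T:Y_{c}(x,T)=e\text{ a.e.}\}$ is a subgroup of $\mathbf{R}$ of positive Lebesgue measure, hence all of $\mathbf{R}$ (Steinhaus). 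None of these three steps appears in your sketch, and without them the argument is incomplete.

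A separate remark: your closing "parallel subtlety" is in fact the seed of a different, and potentially shorter, proof. If one shows (working with lifts to $G$, taking the countable union over noncentral $\gamma\in\Gamma$ of the proper real-algebraic sets $\{g:\Ad(g^{-1})U\in\ker(\Ad\gamma-\mathrm{id})\}$, and using $\mu_{\tau}\sim\mu$) that $C_{G}(U)\cap\psi(x)\Gamma\psi(x)^{-1}$ is trivial for a.e.\ $x$, then for each fixed $t$ one gets $\delta(x,t,c)=e$ for a.e.\ $x$ immediately, with no continuity argument at all; the remaining work is only to pass from a per-$t$ null set to the form of the statement needed later, e.g.\ via the cocycle identity or by noting that the applications (\ref{time change186}) only use the identity for each fixed $t$ almost everywhere. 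As written, however, you present this as an aside inside the continuity argument rather than as the engine of the proof, and the central step of your stated plan remains unproved.
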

\begin{proof}
   First of all, by (\ref{time change225}), we have the cocycle identity
   \begin{equation}\label{time change217}
     z(\psi(x),T+t)=z(\psi(u^{t}x),T)+z(\psi(x),t)
   \end{equation}
   for all $t,T\in\mathbf{R}$ and $\mu$-a.e. $x\in X$. Let $g_{x}\in G$ be a representative of $\psi(x)$, i.e. $g_{x}\Gamma=\psi(x)$. Then for any $t\in\mathbf{R}$, we deduce from (\ref{time change216}) that
   \[g^{-1}_{x}Y_{c}(x,t)g_{x}\in \Gamma \]
   for  $\mu$-a.e. $x\in X$, where
   \[Y_{c}(x,t)\coloneqq(\varpi(u^{t}x,c)u^{z(\psi(x),t)})^{-1}u^{z(\psi(cx),t)}\varpi(x,c).\]
    It follows that there is a $\gamma\in\Gamma$ such that
   \[|\{t\in\mathbf{R}:g^{-1}_{x}Y_{c}(x,t)g_{x}= \gamma \text{ for }\mu\text{-a.e. } x\in X\}|>0.\]
   Fix arbitrary $t_{0}\in\mathbf{R}$ such that $g^{-1}_{x}Y_{c}(x,t_{0})g_{x}= \gamma$. Then we have
   \begin{equation}\label{time change226}
   |\{T\in\mathbf{R}:Y_{c}(x,T+t_{0})=Y_{c}(x,t_{0}) \text{ for }\mu\text{-a.e. } x\in X\}|>0.
   \end{equation}
   On the other hand, note that by (\ref{time change217}), we have
   \[Y_{c}(x,T+t_{0})(Y_{c}(x,t_{0}))^{-1}=Y_{c}(u^{t_{0}}x,T).\]
   Then after replacing $x$ by $u^{t_{0}}x$, we can assume that (\ref{time change226}) holds for $t_{0}=0$.

   Now suppose that there are $T,t\in\mathbf{R}$ such that
   \begin{equation}\label{time change218}
    Y_{c}(x,T)=Y_{c}(x,t)=Y_{c}(x,0)\equiv e
   \end{equation}
   for $\mu$-a.e. $x\in X$. Then we claim that $ Y_{c}(x,T+t)=e$ as well. In fact, replacing $x$ by $u^{t}x$ and using (\ref{time change217}), (\ref{time change218}),  we get
    \begin{align}
e= &Y_{c}(u^{t}x,T)=(\varpi(u^{T+t}x,c)u^{z(\psi(u^{t}x),T)})^{-1}u^{z(\psi(cu^{t}x),T)}\varpi(u^{t}x,c)\;\nonumber\\
 =& (\varpi(u^{T+t}x,c)u^{z(\psi(u^{t}x),T)+z(\psi(x),t)})^{-1}u^{z(\psi(cu^{t}x),T)}(\varpi(u^{t}x,c)u^{z(\psi(x),t)}) \;\nonumber\\
 =& (\varpi(u^{T+t}x,c)u^{z(\psi(x),T+t)})^{-1}u^{z(\psi(cu^{t}x),T)}(u^{z(\psi(cx),t)}\varpi(x,c))\;\nonumber\\
=& (\varpi(u^{T+t}x,c)u^{z(\psi(x),T+t)})^{-1}u^{z(\psi(cx),T+t)}\varpi(x,c)=Y_{c}(x,T+t) \;  \nonumber
\end{align}
for $\mu$-a.e. $x\in X$. Thus, we see that $\{T\in\mathbf{R}:Y_{c}(x,T)=e \text{ for }\mu\text{-a.e. } x\in X\}$ is a group with positive Lebesgue measure, which can only be the whole $\mathbf{R}$.
\end{proof}

In light of (\ref{time change185}), we consider the orthogonal decomposition (\ref{time change184}) and write
\begin{equation}\label{time change188}
  \varpi(x,c)=u^{\alpha(x,c)} \beta(x,c)
\end{equation}
where $\alpha(x,c)\in\mathbf{R}$ and $\beta(x,c)\in\exp V^{\perp}_{C}$. Then by (\ref{time change185}), we have
\begin{equation}\label{time change186}
  z(\psi(cx),t)+\alpha(x,c)=\alpha(u^{t}x,c)+z(\psi(x),t),\ \ \ \beta(x,c)=\beta(u^{t}x,c)
\end{equation}
for all $t\in\mathbf{R}$. Via the ergodicity of the unipotent flow $u^{t}$, we conclude that
\[\beta(x,c)\equiv\beta(c)\]
 for all $c\in C_{G}(U)$.
Besides, $\beta:C_{G}(U)\rightarrow\exp V^{\perp}_{C}$ must be surjective. This is because $\psi$ is bijective and so for a.e. $x\in X$,  $\varpi(x,\cdot):C_{G}(U)\rightarrow C_{G}(U)$ is surjective.

 On the other hand, consider
\[F(x,c_{1},c_{2})\coloneqq  (\varpi(x,c_{1}c_{2}))^{-1}\varpi(c_{2}x,c_{1})  \varpi(x,c_{2})\]
for $x\in X$, $c_{1},c_{2}\in C_{G}(U)$.
By (\ref{time change185}), one can show that
\[F(u^{t}x,c_{1},c_{2})=F(x,c_{1},c_{2})\]
for $c_{1},c_{2}\in C_{G}(U)$, a.e. $x\in X$. Then by the ergodicity,  $F(x,c_{1},c_{2})\equiv F(c_{1},c_{2})$. Besides, by  (\ref{time change227}), we know that
\[\psi(x)=F(x,c_{1},c_{2})\psi(x).\]
Since $\psi$ is bijective, we conclude that
\[  \varpi(x,c_{1}c_{2})=  \varpi(c_{2}x,c_{1})  \varpi(x,c_{2})\]
for a.e. $x$, $c_{1},c_{2}\in C_{G}(U)$. In particular, we have $\beta(c_{1} c_{2})=\beta(c_{1})\beta(c_{2})$. Further,
 we always have $\beta(u^{t})\equiv e$. Therefore, we can restrict our attention to $\exp V^{\perp}_{C}$ and conclude that $d\beta: V^{\perp}_{C}\rightarrow V^{\perp}_{C}$ is an automorphism.

Lemma \ref{timechange3} can be interpreted by the language of cohomology.  More precisely, Lemma \ref{timechange3} implies the time change  $\tau$ and $\tau\circ c$ are measurably cohomologous.
    \begin{cor}\label{dynamical systems2001} Let  $\tau\in \mathbf{K}(X)$. Suppose that there is a measurable conjugacy map  $\psi:(X,\mu)\rightarrow (X,\mu_{\tau})$ such that
            \[\psi (\phi^{U}_{t}(x))=\phi_{t}^{U,\tau}(\psi(x))\]
            for $t\in\mathbf{R}$ and $\mu$-a.e. $x\in X$. Then
            $\tau(x)$ and $\tau( cx)$ are (measurably) cohomologous for all $c\in C_{G}(U)$. Besides, $\alpha(\cdot,c)\in L^{1}(X)$ for some $c\in C_{G}(U)$ iff  the transfer function is   in $L^{1}$.
          \end{cor}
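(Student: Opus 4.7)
The starting point is the cocycle identity (\ref{time change186}) established in Lemma \ref{timechange3}, namely
\[
\alpha(u^{t}x,c)-\alpha(x,c)=z(\psi(cx),t)-z(\psi(x),t),
\]
which already displays $\alpha(\cdot,c)$ as a transfer function over the unipotent flow $u^{t}$. My plan is to rewrite the right hand side as a $u^{t}$-orbit integral of a concrete function, and then identify this function up to cohomology with $\tau-\tau\circ c$, so that $\alpha(\cdot,c)$ can be read off as a measurable transfer function for the cohomological equation appearing in the corollary.

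For the first step, I would differentiate the defining relation $t=\xi(\psi(x),z(\psi(x),t))$ in $t$. Since $\xi(\psi(x),\cdot)$ is the inverse of $z(\psi(x),\cdot)$, this gives $\partial_{t}z(\psi(x),t)=1/\tau(\psi(u^{t}x))$, and integrating yields
\[
z(\psi(x),t)=\int_{0}^{t}\frac{1}{\tau(\psi(u^{s}x))}\,ds.
\]
Subtracting the analogous formula for $z(\psi(cx),t)$, using that $c$ commutes with $u^{t}$ so that $\psi(cu^{s}x)=\psi(u^{s}cx)$, expresses the right hand side of the cocycle identity as a $u^{t}$-orbit integral of the difference $\bigl(1/(\tau\circ\psi\circ c)-1/(\tau\circ\psi)\bigr)$ evaluated along $u^{s}x$.

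The main obstacle is the translation step: passing from the cohomological relation so obtained (for the auxiliary pair $1/(\tau\circ\psi)$ and $1/(\tau\circ\psi\circ c)$) to the one claimed in the corollary for $\tau$ and $\tau\circ c$. My approach is to use the measurable conjugacy $\psi\colon(X,\mu)\to(X,\mu_{\tau})$ together with the change of variable $dt=\tau(u^{\sigma}\psi(x))\,d\sigma$ between $\phi^{U,\tau}$-time and $u^{t}$-time. Under this change of variables, combined with $\psi^{*}\mu_{\tau}=\mu$, the cohomological equation on the $\psi$-side transports to a cohomological equation on $(X,\mu)$ in which the integrand becomes $\tau-\tau\circ c$. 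The hypothesis $\tau\in\mathbf{K}(X)$, in particular $\tau,\tau^{-1}\in L^{\infty}$, is what ensures integrability of the intermediate expressions and justifies the change of variables. This is the delicate step of the argument.

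For the $L^{1}$ part, ergodicity of $u^{t}$ on $(X,\mu)$, which follows from the effective mixing encoded in $\tau\in\mathbf{K}(X)$, implies that the measurable transfer function is unique up to an additive constant. Consequently $\alpha(\cdot,c)\in L^{1}(X)$ if and only if the transfer function for $\tau-\tau\circ c$ is in $L^{1}(X)$, since by construction the two differ by a constant plus a bounded term arising from the change of variables above. Assembling these three steps completes the proof of the corollary.
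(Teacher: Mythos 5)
Your first step is correct: differentiating $t=\xi(\psi(x),z(\psi(x),t))$ gives $z(\psi(x),t)=\int_{0}^{t}\tau(\psi(u^{s}x))^{-1}ds$, and combined with (\ref{time change186}) this shows that $1/(\tau\circ\psi\circ c)-1/(\tau\circ\psi)$ is a measurable coboundary over $u^{t}$ with transfer function $\alpha(\cdot,c)$. But note that this is merely a restatement of the cocycle identity (\ref{time change186}): evaluating your two integrals gives back $z(\psi(cx),t)-z(\psi(x),t)$, so no new information has been produced. The step you yourself flag as ``delicate'' --- transporting this to a cohomological equation for $\tau-\tau\circ c$ via the change of variables $dt=\tau(u^{\sigma}\psi(x))d\sigma$ and $\psi_{*}\mu=\mu_{\tau}$ --- is exactly where the content of the corollary lies, and the two ingredients you invoke do not suffice. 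Carrying your relation through $\psi$ and rescaling time only yields that $\tau/(\tau\circ\psi c\psi^{-1})-1$ is a $u^{t}$-coboundary on the image side; to convert expressions involving $\tau\circ\psi\circ c$ (equivalently $\psi c\psi^{-1}$) into expressions involving $\tau$ at a translate of the \emph{same} point one must use the algebraic form of the conjugacy on the centralizer, namely (\ref{dynamical systems2000})/(\ref{time change227}) with $\varpi(x,c)=u^{\alpha(x,c)}\beta(c)$ from Theorem \ref{timechange14}, which neither time rescaling nor measure preservation supplies. This is a genuine gap, not a routine verification.

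The paper's proof uses precisely that relation: writing $\tau(u^{s}\psi(cx))=\tau(u^{s+\alpha(x,c)}\beta(c)\psi(x))$, shifting the integration variable, and applying (\ref{time change186}), it obtains $\int_{0}^{z(\psi(x),t)}\left[\tau(u^{s}\psi(x))-\tau(u^{s}\beta(c)\psi(x))\right]ds=g_{c}(\psi(u^{t}x))-g_{c}(\psi(x))$ with the explicit transfer function $g_{c}(y)=\int_{0}^{\alpha(\psi^{-1}(y),c)}\tau(u^{s}\beta(c)y)ds$. This shows $\tau$ and $\tau\circ\beta(c)$ are cohomologous, and the statement for all $c\in C_{G}(U)$ then follows from surjectivity of $\beta$ onto $\exp V_{C}^{\perp}$ together with the trivial cohomology in the $U$-direction --- a reduction your sketch also omits: even if completed, your argument naturally produces $\beta(c)$, not $c$. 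Finally, your $L^{1}$ claim rests on the assertion that your transfer function and $\alpha(\cdot,c)$ ``differ by a constant plus a bounded term,'' which is again the uncompleted translation step; in the paper it is immediate from the explicit formula for $g_{c}$, since $\tau,\tau^{-1}$ are bounded and $\mu_{\tau}\sim\mu$ with bounded density, so $g_{c}\in L^{1}$ iff $\alpha(\cdot,c)\in L^{1}$.
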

\begin{proof}
  By (\ref{time change186}), we have
  \begin{align}
 &\int_{0}^{z(\psi(x),t)}\tau(u^{s}\psi(x))-\tau(u^{s}\beta(c)\psi(x))ds\;\nonumber\\
 =&\int_{0}^{z(\psi(cx),t)}\tau(u^{s}\psi(cx))ds-\int_{0}^{z(\psi(x),t)}\tau(u^{s}\beta(c)\psi(x))ds\;\nonumber\\
=&\int_{0}^{z(\psi(cx),t)}\tau(u^{\alpha(x,c)+s}\beta(c)\psi(x))ds-\int_{0}^{z(\psi(x),t)}\tau(u^{s}\beta(c)\psi(x))ds\;\nonumber\\
=&\int_{0}^{\alpha(x,c)+z(\psi(cx),t)}\tau(u^{s}\beta(c)\psi(x))ds-\int_{0}^{\alpha(x,c)}\tau(u^{s}\beta(c)\psi(x))ds-\int_{0}^{z(\psi(x),t)}\tau(u^{s}\beta(c)\psi(x))ds\;\nonumber\\
 =&\int_{0}^{z(\psi(x),t)+\alpha(u^{t}x,c)}\tau(u^{s}\beta(c)\psi(x))ds-\int_{0}^{z(\psi(x),t)}\tau(u^{s}\beta(c)\psi(x))ds-\int_{0}^{\alpha(x,c)}\tau(u^{s}\beta(c)\psi(x))ds\;\nonumber\\
=& \int_{0}^{\alpha(u^{t}x,c)}\tau(u^{s}\beta(c)\psi(u^{t}x))ds-\int_{0}^{\alpha(x,c)}\tau(u^{s}\beta(c)\psi(x))ds. \;  \nonumber
\end{align}
Thus, we can take the transfer function as
\[g_{c}(y)\coloneqq\int_{0}^{\alpha(\psi^{-1}(y),c)}\tau(u^{s}\beta(c)y)ds.\]
  Then $\tau(x)$ and $\tau( \beta(c)x)$ are (measurably) cohomologous for all $c\in C_{G}(U)$. Since $d\beta: V^{\perp}_{C}\rightarrow V^{\perp}_{C}$ is surjective, this is equivalent to say that  $\tau(x)$ and $\tau( cx)$ are (measurably) cohomologous for all $c\in C_{G}(U)$.
Since $\tau$ is bounded, we conclude that $g_{c}\in L^{1}(X)$ iff $\alpha(\cdot,c)\in L^{1}(X)$.
\end{proof}

If $\tau(x)$ and $\tau( \beta(c)x)$ are cohomologous with a $L^{1}$ transfer function, then we are able to do more via the \textit{ergodic theorem}.
\begin{lem}\label{time change202106.5}
  If $\alpha(\cdot,c)\in L^{1}(X)$ for all $c\in C_{G}(U)$, then   for any $c\in \exp (V^{\perp}_{C}\cap\mathfrak{g}_{-1})$ (see (\ref{time change187})), there exists $C\in \exp (V^{\perp}_{C}\cap\mathfrak{g}_{-1})$ such that
   \[ \lim_{l\rightarrow\infty}d(\Psi_{n}(cx),C\Psi_{n}(x))=0\]
   for $\mu$-almost all $x\in X$, where $\Psi_{n}$ is given by (\ref{time change228}).
\end{lem}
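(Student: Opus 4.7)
The plan is to expand $\Psi_{n}(cx) = a_{n}\psi(a_{-n}cx)$ using Lemma \ref{timechange3}. Since $c = \exp(v)$ with $v \in V^{\perp}_{C}\cap\mathfrak{g}_{-1}$ satisfies $[Y_{n},v] = -v$, we have $\Ad(a_{-n}).v = \lambda_{n}^{1+\gamma} v$. Writing $T_{n} \coloneqq \lambda_{n}^{1+\gamma}$ and $c_{T_{n}} \coloneqq \exp(T_{n} v)$, it follows that $a_{-n}(cx) = c_{T_{n}}(a_{-n}x)$. Applying the equivariance $\psi(c_{T_{n}}y) = u^{\alpha(y,c_{T_{n}})}\beta(c_{T_{n}})\psi(y)$ from (\ref{time change188}), together with $\beta(c_{T_{n}}) = \beta(c)^{T_{n}}$ (from the homomorphism property of $\beta$ on one-parameter subgroups) and the fact that $\beta(c) \in C_{G}(U)$ commutes with $u^{t}$, one computes
\[
\Psi_{n}(cx) = u^{\alpha(a_{-n}x,c_{T_{n}})/T_{n}} \cdot \bigl(a_{n}\beta(c)^{T_{n}}a_{-n}\bigr) \cdot \Psi_{n}(x),
\]
so it suffices to show that the first two factors converge for $\mu$-a.e. $x$ to some $C \in \exp(V^{\perp}_{C}\cap\mathfrak{g}_{-1})$.

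For the central factor, I would decompose $d\beta(v) = w_{-1} + w_{0}$ along $V^{\perp}_{C} = (V^{\perp}_{C}\cap\mathfrak{g}_{-1}) \oplus (V^{\perp}_{C}\cap\mathfrak{m})$, so that $a_{n}\beta(c)^{T_{n}}a_{-n} = \exp(w_{-1} + T_{n}w_{0})$. The potential blow-up in the compact $\mathfrak{m}$-direction must be ruled out. Since $\beta$ is a continuous group homomorphism of $C_{G}(U)$ and, by the explicit description (\ref{time change212}), $\exp(V^{\perp}_{C}\cap\mathfrak{g}_{-1})$ is the unipotent radical of the identity component of $C_{G}(U)$ (a characteristic subgroup), $\beta$ preserves it and hence $w_{0} = 0$. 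Consequently the central factor converges to $C_{0} \coloneqq \beta(c) = \exp(w_{-1}) \in \exp(V^{\perp}_{C}\cap\mathfrak{g}_{-1})$.

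For the unipotent factor, differentiating the cocycle identity $\alpha(x,c_{s+t}) = \alpha(c_{t}x,c_{s}) + \alpha(x,c_{t})$ at $s = 0$ yields $\alpha(y,c_{T}) = \int_{0}^{T} f(c_{\tau}y)d\tau$ for the infinitesimal generator $f(y) \coloneqq \partial_{s}\alpha(y,c_{s})|_{s=0}$; the hypothesis $\alpha(\cdot,c) \in L^{1}(X)$ yields $f \in L^{1}(X)$. The conjugation identity $c_{sT_{n}}a_{-n} = a_{-n}c_{s}$ then gives, after the change of variable $\tau = sT_{n}$,
\[
\frac{\alpha(a_{-n}x, c_{T_{n}})}{T_{n}} = \int_{0}^{1} f(a_{-n}c_{s}x)\, ds.
\]
Mixing of the geodesic flow $a^{t}$ on $(X,\mu)$ implies that this quantity converges to $\bar{f} \coloneqq \int_{X} f\, d\mu$ in $L^{1}(X)$, hence along a subsequence pointwise $\mu$-a.e. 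Setting $C \coloneqq u^{\bar{f}} C_{0}$ gives the required element, noting that $u^{\bar{f}} \in \exp(\mathbf{R}U) \subset \exp(V^{\perp}_{C}\cap\mathfrak{g}_{-1})$.

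The main obstacle will be upgrading the convergence from in-average (or $L^{1}$) to pointwise a.e., since both the base point $a_{-n}x$ and the time scale $T_{n}$ diverge simultaneously, so neither Birkhoff for the $c$-flow nor for the $a$-flow applies directly. I would handle this by a Cantor diagonal extraction combined with uniform integrability of the family $\{x \mapsto \int_{0}^{1} f(a_{-n}c_{s}x)\,ds\}_{n}$ inherited from $f \in L^{1}$, together with the mixing rate of $a^{t}$ against the fixed $L^{1}$ function $\int_{0}^{1} f(c_{s}\cdot)\,ds$. The secondary technical point, namely that $\beta$ preserves the nilpotent radical of $C_{G}(U)$, is structural and follows from (\ref{time change212}) together with continuity of $\beta$ on connected components.
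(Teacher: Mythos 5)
Your algebraic reduction coincides with the paper's: you conjugate by $a_{n}$, use $a_{-n}c\,a_{n}=c^{\lambda_{n}^{1+\gamma}}$, apply Lemma \ref{timechange3} with (\ref{time change188}), and obtain $\Psi_{n}(cx)=u^{\lambda_{n}^{-(1+\gamma)}\alpha(a_{-n}x,\,c^{\lambda_{n}^{1+\gamma}})}\cdot a_{n}\beta(c)^{\lambda_{n}^{1+\gamma}}a_{-n}\cdot\Psi_{n}(x)$. Your treatment of the $\beta$-factor reaches the right conclusion, but the justification is shaky: $\exp(V^{\perp}_{C}\cap\mathfrak{g}_{-1})$ is not the unipotent radical of $C_{G}(U)^{\circ}$ (that radical also contains $u^{\mathbf{R}}$), and characteristic subgroups are preserved by automorphisms, whereas $\beta$ is only a homomorphism of $C_{G}(U)$ (it kills $u^{\mathbf{R}}$). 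The paper's argument is the clean one: $d\beta$ is an automorphism of $V^{\perp}_{C}$, so it preserves nilpotency, hence $d\beta(V_{i})\in\mathfrak{g}_{-1}$ and the conjugated factor is exactly $\exp(d\beta(V_{i}))$, constant in $n$. This part of your proposal is fixable.

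The genuine gap is in the unipotent factor, precisely the point you flag as ``the main obstacle'' without resolving it. First, the representation $\alpha(y,c_{T})=\int_{0}^{T}f(c_{\tau}y)\,d\tau$ presupposes a pointwise derivative $f(y)=\partial_{s}\alpha(y,c_{s})|_{s=0}$; but $\alpha$ is only a measurable cocycle coming from the measurable conjugacy, and no differentiability in $s$ is available. The paper needs no generator: by (\ref{timechange24}), $\alpha(x,c^{k})=\sum_{j=0}^{k-1}\alpha(c^{j}x,c)$, so Birkhoff's theorem for the ergodic flow $\phi^{V_{i}}_{t}$ already gives $\frac{1}{k}\alpha(x,c^{k})\to\int\alpha(\cdot,c)\,d\mu$ a.e. Second, and more seriously, what is needed is a.e.\ convergence of $\lambda_{n}^{-(1+\gamma)}\alpha(a_{-n}x,c^{\lambda_{n}^{1+\gamma}})$ along the given sequence, with base point and time scale diverging simultaneously. ``Mixing of the geodesic flow'' yields at best correlation statements for $L^{2}$ data (there are no mixing rates against mere $L^{1}$ functions, and even for $L^{2}$ the relevant decay would be that of the $c$-flow, not $a^{t}$), and $L^{1}$-convergence plus ``a subsequence pointwise a.e.'' does not produce convergence along the prescribed sequence for a.e.\ $x$, which is what the subsequent construction of the limit map $\Psi$ requires; your diagonal-extraction/uniform-integrability sketch does not close this. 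The paper's device is different and elementary: since the $\lambda_{n}$ may be enlarged, choose them so large that the set $W_{n}$ of points whose time-$\lambda_{n}^{1+\gamma}$ Birkhoff average deviates from $\int\alpha(\cdot,c)$ by more than $\frac{1}{n}$ has measure less than $2^{-n}$; then $a_{n}$-invariance of $\mu$ and Borel--Cantelli give $a_{-n}x\in W_{n}$ for all large $n$, for a.e.\ $x$, which is exactly the required convergence $u^{\lambda_{n}^{-(1+\gamma)}\alpha(a_{-n}x,c^{\lambda_{n}^{1+\gamma}})}\to u^{\int\alpha(\cdot,c)}$. Exploiting this freedom in the choice of $\lambda_{n}$ is the missing idea in your proposal.
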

\begin{proof} Fix an (orthonormal) basis $\{U,V_{1},\ldots,V_{n-2}\}\subset\mathfrak{g}_{-1}$. For $c_{i}=\exp V_{i}$, $\phi^{V_{i}}_{t}(x)=\exp(tV_{i})x=c_{i}^{t}x$ defines an (ergodic) unipotent flow.
  Thus, if  $\alpha(\cdot,c_{i})$ is integrable, via (\ref{timechange24}), (\ref{time change188}) and ergodic theorem, we obtain
\begin{equation}\label{timechange6}
  \left|\frac{1}{k}\alpha(x,c_{i}^{k})-\int\alpha(y,c_{i})d\mu(y)\right|\rightarrow0
\end{equation}
for $\mu$-almost all $x\in X$.
Thus, by Lemma \ref{timechange3} and (\ref{time change188}), one can calculate
          \begin{multline}
          \Psi_{n}(c_{i}x)= a_{n}\psi(a_{-n}c_{i}x)
=  a_{n}\psi(a_{-n}c_{i}a_{n}a_{-n}x)
=a_{n} \psi( c_{i}^{\lambda_{n}^{1+\gamma}}a_{-n}x) \\
    = a_{n} u^{\alpha(a_{-n}x,c_{i}^{\lambda_{n}^{1+\gamma}})} \beta(c_{i}^{\lambda_{n}^{1+\gamma}}) \psi(a_{-n}x)  =  u^{\lambda_{n}^{-(1+\gamma)}\alpha(a_{-n}x,c_{i}^{\lambda_{n}^{1+\gamma}})} \cdot a_{n}\beta(c_{i}^{\lambda_{n}^{1+\gamma}}) a_{-n}\cdot \Psi_{n}(x) .     \nonumber
          \end{multline}

Since $V_{i}\in V^{\perp}_{C}\cap \mathfrak{g}_{-1}$ is nilpotent (recall (\ref{time change212})), the fact that $d\beta: V^{\perp}_{C}\rightarrow V^{\perp}_{C}$ is an automorphism implies $d\beta(V_{i})\in \mathfrak{g}_{-1}$ is also nilpotent.
Write $\beta(c_{i})=\exp(v_{i})$, where $v_{i}\in\mathfrak{g}_{-1}$. Then
\[a_{n}\beta(c_{i}^{\lambda_{n}^{1+\gamma}}) a_{-n}=a_{n}\beta(c_{i})^{\lambda_{n}^{1+\gamma}} a_{-n}=\exp(v_{i}).\]

Next, by (\ref{timechange6}), we can enlarge $\lambda_{n}$ so that $\mu(W_{n})>1-2^{-n}$, where
\[W_{n}\coloneqq\left\{y\in X:\left|\frac{1}{\lambda_{n}^{1+\gamma}} \alpha(y, c^{\lambda_{n}^{1+\gamma}})-\int\alpha(\cdot,c)\right|<\frac{1}{n}\right\}.\]
It follows that $\mu(\bigcup_{m\geq1}\bigcap_{n\geq m}a_{n}W_{n})=1$. Then for any $x\in \bigcup_{m\geq1}\bigcap_{n\geq m}a_{n}W_{n}$, there exists a number $m>0$ such that for any $n\geq m$, we have
\[\left|\frac{1}{\lambda_{n}^{1+\gamma}} \alpha(a_{n}^{-1}x, c^{\lambda_{n}^{1+\gamma} })-\int\alpha(\cdot,c)\right|<\frac{1}{n}.\]
Thus,  we conclude that for $\mu$-almost all $x\in X$,
\[ \lim_{n\rightarrow\infty}d_{G}(u^{\lambda_{n}^{-(1+\gamma)}\alpha(a_{-n}x,c^{\lambda_{n}^{1+\gamma}})} ,u^{\int\alpha(\cdot,c)} )=0.\]
The consequence follows.
\end{proof}

 Let $\mathfrak{sl}_{2}(\mathbf{R})=\Span\{U,Y_{n},\tilde{U}\}\subset\mathfrak{g}$ be a $\mathfrak{sl}_{2}$-triple, $\tilde{u}=\exp(\tilde{U})$. It is again convenient to consider  $u,a,\tilde{u}\in SO(2,1)$ as $(2\times 2)$-matrices. Then we have
 \begin{lem}\label{timechange12}  Let the notation and assumption be as above. For   $\delta>0$, let $\tilde{u}^{p}\in B_{G}(e,\delta)$ for $p\in\mathbf{R}$.      Then  for sufficiently small $\delta>0$ and for $\mu$-almost all $x\in X$, there exists an element $C_{\tilde{u}}(x,p)\in C_{G}(\mathfrak{sl}_{2}(\mathbf{R}))$ such that \[\lim_{n\rightarrow\infty}d(\Psi_{n}(\tilde{u}^{p}x),C_{\tilde{u}}(x,p)\tilde{u}^{p}\Psi_{n}(x))=0.\]
\end{lem}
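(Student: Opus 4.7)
The plan is to parallel Lemmas \ref{timechange3} and \ref{time change202106.5}. Since $\tilde{u}^p\notin C_G(U)$, the ergodic-theorem input used there is unavailable, and the role of the cocycle $\alpha$ must instead be played by a direct application of the shearing Proposition \ref{timechange20}.

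First (reduction), from $[Y_n,\tilde{U}]=\tilde{U}$ one has $\Ad a_{-n}.\tilde{U}=\lambda_n^{-(1+\gamma)}\tilde{U}$, so $\Psi_n(\tilde{u}^p x)=a_n\psi(\tilde{u}^{p_n} y_n)$ with $y_n:=a_{-n}x$ and $p_n:=p\lambda_n^{-(1+\gamma)}$. Since any $C_{\tilde{u}}(x,p)\in C_G(\mathfrak{sl}_2(\mathbf{R}))$ commutes with $a_n$, writing $\psi(\tilde{u}^{p_n}y_n)=g_n\psi(y_n)$ via a small measurable lift inside the injectivity-radius ball, it suffices to prove that along some subsequence $a_n g_n a_{-n}\to C_{\tilde{u}}(x,p)\,\tilde{u}^p$ with the limit lying in $C_G(\mathfrak{sl}_2(\mathbf{R}))\cdot\tilde{u}^p$.

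Second (shearing), comparing the $u^s$-orbits of $y_n$ and $\tilde{u}^{p_n}y_n$ via the Bruhat computation
\[
u^s\tilde{u}^{p_n}u^{-s}=\begin{pmatrix}1+sp_n & -s^2 p_n\\ p_n & 1-sp_n\end{pmatrix},
\]
the two orbits remain $\epsilon$-close for $|s|\ll\sqrt{\epsilon/p_n}\asymp\lambda_n^{(1+\gamma)/2}$. Choosing a Lusin set $K$ on which $\psi$ is uniformly continuous and (\ref{timechange39}) holds, and running a Borel--Cantelli argument as in Lemma \ref{timechange38} to ensure $y_n\in K$ along a density-one subsequence, Proposition \ref{timechange20} applies at scale $\lambda\asymp\lambda_n^{(1+\gamma)/2}$ and produces $s_n$ together with a factorization
\[
\psi(u^{t(s_n)}\tilde{u}^{p_n}y_n)=h_n\exp(v_n)\,\psi(u^{s_n}y_n),
\]
whose $SO(2,1)$-part $h_n$ has upper-right coefficient $O(\lambda^{-1-2\rho})$ and diagonal difference $O(\lambda^{-2\rho})$. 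Tracking the shearing derivation through the Bruhat formula identifies the lower-left entry of $h_n$ as $p_n+O(p_n\lambda^{-2\rho})$, while the coefficients of $v_n$ satisfy the weight-by-weight bounds of Proposition \ref{timechange20}. The $u$-equivariance of $\psi$ (via the cocycle $z$) and the centralizer-direction equivariance of Lemma \ref{timechange3} then transfer this identity into an analogous expression for $g_n$ up to a $\varpi$-correction that already lies in $C_G(U)$.

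Third (renormalization and identification), conjugation by $a_n$ scales a $Y_n$-weight-$k$ element by $\lambda_n^{k(1+\gamma)}$, so substituting $p_n=p\lambda_n^{-(1+\gamma)}$ and $\lambda\asymp\lambda_n^{(1+\gamma)/2}$ gives: the $\tilde{u}$-entry $p_n$ amplifies exactly to $p$; the $u$-entry and the diagonal difference tend to $0$; the positive-weight parts of $v_n$ vanish in the limit (precisely when $\gamma<2\rho$, which is the constraint already imposed after (\ref{time change228})); and the negative-weight parts are shrunk to $0$ by $a_n$. The surviving limit is therefore a $Y_n$-weight-zero element of $V^\perp$ times $\tilde{u}^p$. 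To upgrade ``weight zero'' to ``in $C_G(\mathfrak{sl}_2(\mathbf{R}))$'', I would combine the $u$-commutation supplied by Lemma \ref{timechange3} in the limit with the $\tilde{u}$-commutation just produced; commutation with both $U$ and $\tilde{U}$, together with $Y_n$-invariance (automatic from $\Ad a_n$-invariance in the limit), forces commutation with the whole $\mathfrak{sl}_2$-triple and gives the required $C_{\tilde{u}}(x,p)$.

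The main obstacle is the simultaneous matching of the shearing-scale $\lambda$ and the renormalization-scale $\lambda_n^{1+\gamma}$: $\lambda$ must be small enough that the two orbits stay $\epsilon$-close (which forces $\lambda\leq c\sqrt{\epsilon/p_n}$), and large enough that the Proposition \ref{timechange20} error terms are damped below the $a_n$-amplification factor $\lambda_n^{1+\gamma}$. Both constraints can be met only because $\gamma<2\rho$ was arranged at the outset; a second technical point is that the lower-left coefficient of $h_n$ must be pinned down to $p_n+o(p_n)$ rather than just $O(\epsilon)$, and this relies on tracking the Bruhat decomposition through the construction of the $\epsilon$-blocks $\beta_\rho$ rather than using the raw output of Proposition \ref{timechange20}.
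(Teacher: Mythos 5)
Your plan fails at the matching of the shearing scale to the renormalization scale, and the failure is quantitative. Because you compare the $u^s$-orbits of $\tilde u^{p_n}y_n$ and $y_n$ at \emph{equal} times, the Bruhat entry $-s^2p_n$ forces the closeness window to be $\lambda\lesssim\sqrt{\epsilon/p_n}\asymp\lambda_n^{(1+\gamma)/2}$. At that scale the output of Proposition \ref{timechange20} is far too coarse to survive conjugation by $a_n$: the $\tilde u$-component of $h_\lambda$ is only bounded by $O(\lambda^{-1-2\rho})=O\bigl(\lambda_n^{-(1+\gamma)(1+2\rho)/2}\bigr)$, and $\Ad a_n$ multiplies it by $\lambda_n^{1+\gamma}$, leaving an error of size $\lambda_n^{(1+\gamma)(1-2\rho)/2}\to\infty$ (recall $\rho$ is chosen small in (\ref{time change182})); the same computation blows up the highest-weight coefficients of $v_\lambda$. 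The condition $\gamma<2\rho$ you invoke is the right one only when the orbits stay close for time $\asymp\lambda_n$, i.e.\ at full scale; at your half scale no choice of $\gamma$ helps. Worse, the quantity you hope to read off, $p_n=p\lambda_n^{-(1+\gamma)}\asymp\lambda^{-2}$, lies far below the resolution $O(\lambda^{-1-2\rho})$ of the shearing estimate, so ``pinning down the $\tilde u$-coefficient as $p_n+o(p_n)$'' cannot be extracted from this comparison, however carefully you track the construction of the $\epsilon$-blocks.

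The paper's proof avoids both obstacles by a different comparison: it reparametrizes time, taking $t(\lambda_n)=\lambda_n/(1-p\lambda_n^{-\gamma})$ so that $u^{t}\tilde u^{p_n}u^{-\lambda_n}$ stays within $\delta$ of the identity for the \emph{full} time $\lambda_n$ (and analogously with $\tilde t$ on the image side, using (\ref{timechange35}) to verify the H\"older hypothesis (\ref{timechange25})), and it compares $\psi(a_n^{-1}\tilde u^p x)$ with the point $\tilde u^{p_n}\psi(a_n^{-1}x)$, so the factor $\tilde u^{p_n}$ is never reconstructed from the shearing output at all: it sits outside the correction, renormalizes exactly to $\tilde u^p$ under $a_n$, and only the residual $h_n\exp(v_n)$, now carrying full-scale bounds $O(\lambda_n^{-1-2\rho})$ etc., must be beaten by the amplification, which is precisely where $\gamma<2\rho$ enters. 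Two further points: the limit lands in $C_G(\mathfrak{sl}_2(\mathbf{R}))$ simply because every $\varsigma=2$ component of $\Ad a_n.v_n$ tends to zero (the weight-zero middle coefficient is already $O(\lambda_n^{-\frac{1+2\rho}{2}})$ before conjugation), so only the $\varsigma=0$ part survives and your proposed ``commutes with both $U$ and $\tilde U$'' upgrade is neither needed nor, as sketched, justified; and your reduction to a subsequence depending on $x$ and $p$ is weaker than the stated lemma, whose full-sequence form (obtained via the full-measure set $\bigcup_{k}\bigcap_{n\geq k}a_nP_n$) is what the subsequent combination with Lemma \ref{timechange38} and Corollary \ref{timechange37} requires.
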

\begin{proof} Recall  we have defined $P_{n}$ in (\ref{timechange35}). Note  that
          \[\mu\left(\bigcup_{k\geq 1}\bigcap_{n\geq k} a_{n}P_{n}\right)=1.\]
        Suppose that   $x,\tilde{u}^{p}x\in \bigcup_{k\geq 1}\bigcap_{n\geq k} a_{n}P_{n}$.
            Let $t(\lambda_{n})\coloneqq\frac{\lambda_{n}}{1-p\lambda_{n}^{-\gamma}}$ and consider
\begin{multline}
  d(u^{t}a_{n}^{-1}\tilde{u}^{p}x,u^{\lambda_{n}}a_{n}^{-1}x)=d(u^{t}\tilde{u}^{p\lambda_{n}^{-1-\gamma}}a_{n}^{-1}x,u^{\lambda_{n}}a_{n}^{-1}x)\\
 =d\left( \left[
            \begin{array}{cc}
              1-\lambda_{n}^{-\gamma}p  & \lambda_{n}^{-1-\gamma} p \\
              0 &  1+\lambda_{n}^{-1-\gamma}pt\\
            \end{array}
          \right]u^{\lambda_{n}}a_{n}^{-1}x,u^{\lambda_{n}}a_{n}^{-1}x\right)\leq\delta.\nonumber
\end{multline}
     Then by the continuity of $\psi$, we get
 \begin{equation}\label{timechange10}
 d( u^{z(\psi(a_{n}^{-1}\tilde{u}^{p}x),t)}\psi(a_{n}^{-1}\tilde{u}^{p}x),u^{z(\psi(a_{n}^{-1}x),\lambda_{n})}\psi(a_{n}^{-1}x))<\epsilon.
 \end{equation}
 Similarly, letting $\tilde{t}(\lambda_{n})\coloneqq\frac{z(\psi(a_{n}^{-1}x),\lambda_{n})}{1-pz(\psi(a_{n}^{-1}x),\lambda_{n})\lambda_{n}^{-1-\gamma}}$,  we get
 \begin{align}
& d( u^{\tilde{t}}  \tilde{u}^{p\lambda_{n}^{-1-\gamma} } \psi(a_{n}^{-1}x), u^{z(\psi(a_{n}^{-1}x),\lambda_{n})}\psi(a_{n}^{-1}x))\;\nonumber\\
=& d( u^{\tilde{t}} \tilde{u}^{p\lambda_{n}^{-1-\gamma} } u^{-z(\psi(a_{n}^{-1}x),\lambda_{n})}u^{z(\psi(a_{n}^{-1}x),\lambda_{n})}\psi(a_{n}^{-1}x),u^{z(\psi(a_{n}^{-1}x),\lambda_{n})}  \psi(a_{n}^{-1}x) )\;\nonumber\\
=& d\left( \left[
            \begin{array}{cc}
                1-\lambda_{n}^{-1-\gamma}p z( \psi(a_{n}^{-1}x),\lambda_{n})& \lambda_{n}^{-1-\gamma} p \\
              0 &  1+\lambda_{n}^{-1-\gamma}pt\\
            \end{array}
          \right]  u^{z(\psi(a_{n}^{-1}x),\lambda_{n})}  \psi(a_{n}^{-1}x),u^{z(\psi(a_{n}^{-1}x),\lambda_{n})}  \psi(a_{n}^{-1}x)\right)<\delta.\;  \label{timechange11}
\end{align}
 Combining (\ref{timechange10}) with (\ref{timechange11}), we obtain
 \[d(u^{z(\psi(a_{n}^{-1}\tilde{u}^{p}x),t)}\psi(a_{n}^{-1}\tilde{u}^{p}x),u^{\tilde{t}}  \tilde{u}^{p\lambda_{n}^{-1-\gamma} } \psi(a_{n}^{-1}x))\ll\epsilon.\]
In order to apply Proposition \ref{timechange20}, we need to consider
    \begin{align}
&|z(\psi(a_{n}^{-1}\tilde{u}^{p}x),t)-\tilde{t}|\;\nonumber\\
\leq&  |z(\psi(a_{n}^{-1}\tilde{u}^{p}x),t)-t|+|t-\tilde{t}|\;\nonumber\\
=& O(t^{1-\eta})+\left|\frac{\lambda_{n}}{1-p\lambda_{n}^{-\gamma}}-\frac{z(\psi(a_{n}^{-1}x),\lambda_{n})}{1-pz(\psi(a_{n}^{-1}x),\lambda_{n})\lambda_{n}^{-1-\gamma}}\right|  \;\nonumber\\
\leq& O(\lambda_{n}^{1-\eta})+\left|\frac{\lambda_{n}}{1-p\lambda_{n}^{-\gamma}}-\frac{\lambda_{n}}{1-pz(\psi(a_{n}^{-1}x),\lambda_{n})\lambda_{n}^{-1-\gamma}}\right| \;\nonumber\\
&+\left|\frac{\lambda_{n}}{1-pz(\psi(a_{n}^{-1}x),\lambda_{n})\lambda_{n}^{-1-\gamma}}-\frac{z(\psi(a_{n}^{-1}x),\lambda_{n})}{1-pz(\psi(a_{n}^{-1}x),\lambda_{n})\lambda_{n}^{-1-\gamma}}\right| \;\nonumber\\
\leq& O(\lambda_{n}^{1-\eta})+\left|\frac{p\lambda_{n}^{-\gamma}(z(\psi(a_{n}^{-1}x),\lambda_{n})-\lambda_{n})}{(1-p\lambda_{n}^{-\gamma})(1-pz(\psi(a_{n}^{-1}x),\lambda_{n})\lambda_{n}^{-1-\gamma})}\right|+O(\lambda_{n}^{1-\eta})  \;\nonumber\\
=& O(\lambda_{n}^{1-\eta})+  o(\lambda_{n}^{1-\eta})+O(\lambda_{n}^{1-\eta}). \;  \nonumber
\end{align}
Thus,   via Proposition \ref{timechange20}, we conclude that
\[u^{t_{\lambda_{n}}}\psi(a_{n}^{-1}\tilde{u}^{p}x)=h_{n}\exp(v_{n}) u^{s_{\lambda_{n}}}\tilde{u}^{p\lambda_{n}^{-1-\gamma} } \psi(a_{n}^{-1}x) \]
    where
    \[h_{n}=\left[
            \begin{array}{ccc}
               1+O(\lambda_{n}^{-2\rho}) &   O(\lambda_{n}^{-1-2\rho})  \\
              O(\epsilon) &   1+O(\lambda_{n}^{-2\rho})\\
            \end{array}
          \right],\ \ \ v_{n}=O(\lambda_{n}^{- \frac{1+2\rho}{2}\varsigma})v_{0}+\cdots+O(\epsilon)v_{\varsigma}\]
     for some $t_{\lambda_{n}},s_{\lambda_{n}}\ll\lambda_{n}$.

It follows that
 \begin{align}
u^{t_{\lambda_{n}}\lambda_{n}^{-1-\gamma}}\Psi_{n}(\tilde{u}^{p}x)=& a_{n} h_{n}\exp(v_{n}) u^{s_{\lambda_{n}}}\tilde{u}^{p\lambda_{n}^{-1-\gamma} } \psi(a_{n}^{-1}x) \;\nonumber\\
=& a_{n} h_{n}a_{n}^{-1}\exp(\Ad a_{n}. v_{n}) u^{s_{\lambda_{n}}\lambda_{n}^{-1-\gamma}}\tilde{u}^{p} \Psi_{n}(x). \;  \nonumber
\end{align}
Then, one can calculate
\[a_{n} h_{n}a_{n}^{-1}=\left[
            \begin{array}{ccc}
               1+O(\lambda_{n}^{-2\rho}) &    O(\lambda_{n}^{\gamma-2\rho}) \\
               O(\lambda_{n}^{-1-\gamma}) &  1+O(\lambda_{n}^{-2\rho})\\
            \end{array}
          \right],\ \ \ \Ad a_{n}.v_{n}=O(\lambda_{n}^{  \frac{\gamma -2\rho}{2}\varsigma})v_{0}+\cdots+O(\epsilon)v_{\varsigma}.\]
Thus, letting $n\rightarrow\infty$, the consequence follows.
\end{proof}

It is worth noting that $\{c,\tilde{u}^{p}:c\in \exp\mathfrak{g}_{-1},p\in\mathbf{R}\}$ already generates the whole group $G=SO(n,1)$.
Thus, using Lemma   \ref{timechange3},  \ref{timechange12} and \textit{Fubini's theorem}, we get
\begin{cor}\label{timechange37}
   There exists a sufficiently small $\delta>0$, a map $f(g)\in G$ such that   for $\mu$-almost all $x\in X$,  we have
\begin{equation}\label{timechange13}
  \lim_{n\rightarrow\infty}d(\Psi_{n}(gx),f(g)\Psi_{n}(x))=0
\end{equation}
for almost all $g\in B_{G}(e,\delta)$.
\end{cor}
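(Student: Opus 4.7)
The plan is to decompose each sufficiently small $g \in B_G(e,\delta)$ as a finite product of one-parameter subgroup elements, each handled by one of the equivariance lemmas already proved, and then invoke Fubini's theorem to pass from ``for fixed $g$, $\mu$-a.e. $x$'' to ``for $\mu$-a.e. $x$, a.e. $g$.''

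For the local decomposition, one uses that $\mathfrak{g} = \mathfrak{so}(n,1)$ is generated as a Lie algebra by $V_C^\perp \cap \mathfrak{g}_{-1}$, $\mathbf{R}U$, and $\mathbf{R}\tilde{U}$: indeed $[\tilde{U}, U] \in \mathbf{R}Y_n$ recovers $\mathfrak{a}$, and $[\tilde{U}, \cdot\,]$ raises weights by one, sending $V_C^\perp \cap \mathfrak{g}_{-1}$ onto the missing parts of $\mathfrak{m} \oplus \mathfrak{g}_1$. Iterated application of Lemma \ref{timechange102} to a direct-sum decomposition of $\mathfrak{g}$ into lines along these generators produces a measurable factorization
\[
g = c_1(g)\,\tilde{u}^{p_1(g)}\,c_2(g)\,\tilde{u}^{p_2(g)}\cdots c_k(g)\,\tilde{u}^{p_k(g)}
\]
for every $g \in B_G(e,\delta)$, where $c_i(g) \in C_G(U)$ and $\tilde{u}^{p_i(g)}$ lie in small neighborhoods of the identity, depend measurably on $g$, and $k$ is uniformly bounded. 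Applying Lemma \ref{time change202106.5} to each centralizer factor (taking $c = u^t$ when it corresponds to the $\mathbf{R}U$-direction, with the ergodic-theorem argument from Lemma \ref{time change202106.5} applying verbatim) and Lemma \ref{timechange12} to each $\tilde{u}^{p_i(g)}$ factor, and composing the resulting equivariances along the factorization, one obtains for each fixed $g$ and $\mu$-a.e. $x$
\[
\lim_{n \to \infty} d\bigl(\Psi_n(gx),\, f(g,x)\,\Psi_n(x)\bigr) = 0,
\]
where $f(g,x) \in G$ is the product of the limit group elements from each lemma. The $x$-dependence that a priori enters through the $C_{\tilde{u}}(x,p)$-factor of Lemma \ref{timechange12} (and hence through $f(g,x)$) is removed via the ergodicity-of-$u^t$ argument used immediately after Lemma \ref{timechange3} to prove $\beta(x,c) \equiv \beta(c)$, yielding $f(g,x) \equiv f(g)$.

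The final step is Fubini. The set
\[
E = \bigl\{(g,x) \in B_G(e,\delta) \times X : \lim_{n \to \infty} d(\Psi_n(gx), f(g)\Psi_n(x)) = 0\bigr\}
\]
is $(\mathrm{Haar} \times \mu)$-measurable and, by the previous step, has full $\mu$-measure in each $g$-slice; hence $E$ has full product measure, and for $\mu$-a.e. $x \in X$ its $x$-slice has full Haar measure in $B_G(e,\delta)$, which is the claim.

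The main obstacle is ensuring that the equivariance element $f(g)$ is genuinely independent of $x$ after composing many factors, since the raw statement of Lemma \ref{timechange12} produces an $x$-dependent $C_{\tilde{u}}(x,p)$. Each composition step must therefore be accompanied by an invocation of $u^t$-ergodicity to collapse the $x$-dependence, and one must verify measurability of the assignment $g \mapsto f(g)$ to apply Fubini.
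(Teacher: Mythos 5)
Your overall route---reduce to generators handled by Lemmas \ref{timechange3}/\ref{time change202106.5} (centralizer directions) and Lemma \ref{timechange12} (the $\tilde{u}^{p}$ direction), compose the equivariances, and finish with Fubini---is exactly the paper's: its proof of the corollary is the one-line remark that $\{c,\tilde{u}^{p}\}$ generates $G$, followed by citing those lemmas and Fubini. However, the mechanism you give for the factorization is wrong as stated. The set $V_{C}^{\perp}\cap\mathfrak{g}_{-1}$, $\mathbf{R}U$, $\mathbf{R}\tilde{U}$ does generate $\mathfrak{g}$ as a Lie algebra, but it only \emph{spans} the subspace $\mathfrak{g}_{-1}\oplus\mathbf{R}\tilde{U}$, of dimension $n$, whereas $\dim\mathfrak{g}=n(n+1)/2$ (for $n=3$: $3<6$; adding the full centralizer $C_{\mathfrak{g}}(U)$ does not change this). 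So there is no ``direct-sum decomposition of $\mathfrak{g}$ into lines along these generators,'' and Lemma \ref{timechange102}, which is an inverse-function-theorem statement for a linear direct sum $\mathfrak{g}=V_{1}\oplus V_{2}$, cannot produce the factorization $g=c_{1}\tilde{u}^{p_{1}}\cdots c_{k}\tilde{u}^{p_{k}}$ of a neighborhood of $e$. What is true, and what the paper implicitly relies on, is the group-level fact that the subgroups $C_{G}(U)$ (or just $\exp\mathfrak{g}_{-1}$) and $\{\tilde{u}^{p}\}$ generate the connected group $G$, so that words of bounded length with factors near the identity cover a neighborhood of $e$, with a measurable choice of factors; that requires a different (standard, e.g. openness of the generated subgroup, but uncited) argument. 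As written, this key step fails.

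A secondary issue: your removal of the $x$-dependence of $C_{\tilde{u}}(x,p)$ ``by the same ergodicity argument as for $\beta(x,c)\equiv\beta(c)$'' does not transfer. That argument rested on the exact cocycle identity (\ref{time change185}), which came from the genuine pointwise relation $\psi(cx)=\varpi(x,c)\psi(x)$ of Lemma \ref{timechange3}; Lemma \ref{timechange12} only provides an asymptotic relation along $\Psi_{n}$, and $\tilde{u}^{p}$ does not commute with $u^{t}$, so no analogous exact identity to feed into ergodicity is available. Fortunately this is not essential to how the corollary is used: in the paper one first fixes a single generic $x$ and then extracts the subsequence, so an $x$-dependent $f_{x}(g)$ would serve just as well, and measurability comes for free from $f$ being an a.e.\ limit. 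But if you insist on the $x$-independent formulation, you need an actual argument rather than the $\beta$ one.
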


Now fix $x\in X$ so that Corollary \ref{timechange37} and Lemma \ref{timechange38} apply. Then by Lemma \ref{timechange38},   we can fix a universal subsequence $\{n(l)\}_{l\in\mathbf{N}}\subset\mathbf{N}$ and $y\in X$ such that
   \[\lim_{l\rightarrow\infty}\Psi_{n(l)}(x)= y.\]
     Write $\Psi(x)\coloneqq y$. Then,  (\ref{timechange13}) implies that $\Psi_{n(l)}(gx)\rightarrow f(g)y\eqqcolon\Psi(gx)$ as $l\rightarrow\infty$ for $g\in B_{G}(e,\delta)$. Finally, since $u^{t}$ is ergodic, we have $\mu(u^{\mathbf{R}}B_{G}(e,\delta)x)=1$ and
      \[\Psi(u^{t}gx)\coloneqq\lim_{l\rightarrow\infty} \Psi_{n(l)}(u^{t}gx)=\lim_{l\rightarrow\infty} u^{\lambda_{n(l)}^{-1-\gamma} z(\psi(gx),\lambda_{n(l)}^{1+\gamma} t)} \Psi_{n(l)}(gx)=u^{t}\Psi(gx)\]
 is well defined for   $u^{t}g\in u^{\mathbf{R}}B_{G}(e,\delta)$. In other words, we obtain a (surjective) $u^{t}$-equivariant map $\Psi:X\rightarrow X$. Next, consider the graph map $\overline{\Psi}:X\rightarrow X\times X$ defined by
 \[\overline{\Psi}:x\mapsto(x,\Psi(x)).\]
 Then $\overline{\Psi}_{\ast}\mu$ is a $(u^{t}\times u^{t})$-invariant and ergodic measure supported on $\graph(\Psi)$. By \textit{Ratner's theorem}\index{Ratner's theorem}, we conclude that there is a subgroup $S\leq G\times G$ and a point $(x_{0},y_{0})\in X\times X$ such that
 \[\graph(\Psi)=\supp(\overline{\Psi}_{\ast}\mu)=S.(x_{0},y_{0}).\]
It is then not hard to see that $S$ is the graph of an automorphism $\Phi:G\rightarrow G$ (cf.  \cite{morris2005ratner}).  Thus, we see that
\[\Psi(gx_{0})=\Phi(g)y_{0}\]
 is an affine map.
 By Lemma    \ref{timechange12}, we know that  $\Phi(\tilde{u}^{p})=C_{\tilde{u}}(p)\tilde{u}^{p}$ for some $C_{\tilde{u}}(p)\in C_{G}(U)$. On the other hand,  the Jacobson–Morozov theorem asserts that all $\mathfrak{sl}_{2}$-triples are conjugate under the action of the group $C_{G}(U)$. Since $\Phi$ fixes $u^{t}$, we conclude that $\Phi$ fixes $SO(2,1)$.

On the other hand, since
 \[\lim_{l\rightarrow\infty}d(\Psi_{n(l)}(gx),\Psi(gx))=0\]
for   $g\in B_{G}(e,\delta)$, $\mu$-almost all $x\in X$.   Thus, for sufficiently large $l\in\mathbf{N}$, most points $x\in X$, we have
\[\epsilon>d(\Psi_{n(l)}(u^{s}x),\Psi(u^{s}x))=d(u^{\lambda_{n(l)}^{-1-\gamma} z(\psi(gx),\lambda_{n(l)}^{1+\gamma} s)}\Psi_{n(l)}(x),u^{s}\Psi(x))\] for most of the time $s\in\mathbf{R}$. Applying Proposition \ref{timechange20}  to $t(s)=\lambda_{n(l)}^{-1-\gamma} z(\psi(gx),\lambda_{n(l)}^{1+\gamma} s)$ (similar to the proof of Lemma \ref{timechange3}), there exists $c(x)\in C_{G}(U)$ such that
\[\Psi_{n(l)}(x)= c(x)\Psi(x).\]
It follows that there exists a function $c:X\rightarrow\mathbf{C}$ such that
\[\psi(gx_{0})= c(gx_{0}) \Phi(g)y_{0}.\]
Therefore, we have proved Theorem \ref{timechange14}.

Similar to Corollary \ref{dynamical systems2001}, by (\ref{time change171}), we have
 \begin{cor}\label{time change172}  Let  $\tau\in \mathbf{K}(X)$. Suppose that there is a measurable conjugacy map  $\psi:(X,\mu)\rightarrow (X,\mu_{\tau})$ such that
            \[\psi (\phi^{U}_{t}(x))=\phi_{t}^{U,\tau}(\psi(x))\]
            for $t\in\mathbf{R}$ and $\mu$-a.e. $x\in X$. Assume further that
            $\tau(x)$ and $\tau( cx)$ are $L^{1}$-cohomologous for all $c\in C_{G}(U)$. Then $1$ and $\tau$ are cohomologous.
          \end{cor}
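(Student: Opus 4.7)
The plan is to combine the $L^1$-cohomology hypothesis with Theorem \ref{timechange14} to produce an explicit transfer function. First I would invoke the ``iff'' clause of Corollary \ref{dynamical systems2001}: since $\tau(x)$ and $\tau(cx)$ are $L^1$-cohomologous for every $c \in C_G(U)$, one obtains $\alpha(\cdot, c) \in L^1(X)$ for every $c$, which activates the second half of Theorem \ref{timechange14}. This provides points $x_0, y_0 \in X$, an automorphism $\Phi$ of $G$ fixing $SO(2,1)$ pointwise, and a measurable function $a : X \to \mathbf{R}$ together with a constant $b \in \exp V^{\perp}_{C}$ such that
\[ \psi(g x_0) = u^{a(gx_0)}\, b\, \Phi(g)\, y_0 \qquad \text{for all } g \in G. \]

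The second step is to extract a pointwise cocycle relation from this algebraic form. Applying the formula to both $g$ and $u^t g$, and using the two key facts that $\Phi(u^t) = u^t$ (since $\Phi$ fixes $SO(2,1)$) and that $b \in \exp V^{\perp}_{C} \subset C_G(U)$ commutes with $u^t$, we obtain
\[ \psi(u^t g x_0) = u^{a(u^t gx_0) + t}\, b\, \Phi(g)\, y_0. \]
Comparing this with $\psi(u^t gx_0) = u^{z(\psi(gx_0), t)} \psi(gx_0) = u^{z(\psi(gx_0), t) + a(gx_0)}\, b\, \Phi(g)\, y_0$ and cancelling (on the full-measure set of $u^t$-aperiodic orbits) yields
\[ a(u^t g x_0) - a(gx_0) = z(\psi(gx_0), t) - t. \]

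The third step is to recognise the right-hand side as an additive cocycle of $1 - \tau$. Putting $y = \psi(gx_0)$ and $T = z(y, t)$, the defining relation $t = \int_0^T \tau(u^r y)\, dr$ of $z$ (see (\ref{time change225})) gives
\[ z(y, t) - t = \int_0^T \bigl(1 - \tau(u^r y)\bigr)\, dr. \]
Since $\psi(u^t gx_0) = u^T y$, setting $f(y) \coloneqq a(\psi^{-1}(y))$ (well-defined $\mu_\tau$-a.e., hence $\mu$-a.e.\ because $\tau$ and $\tau^{-1}$ are bounded) produces
\[ f(u^T y) - f(y) = \int_0^T \bigl(1 - \tau(u^r y)\bigr)\, dr \]
for $\mu$-a.e.\ $y \in X$. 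This is precisely the assertion that $1$ and $\tau$ are measurably cohomologous, with transfer function $f$.

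The step that needs the most care is the passage from the cocycle relation along the $u^t$-orbit of a single marked point to the cohomology equation holding for $\mu$-a.e.\ $y \in X$. The key point is that (\ref{time change171}) pins down $\psi$ at \emph{every} $gx_0$, not merely along a single $u^t$-trajectory; as $g$ ranges over $G$ the image $\psi(gx_0)$ covers a conull subset of $X$ with respect to both $\mu$ and $\mu_\tau$, and the explicit formula $f = a \circ \psi^{-1}$ is compatible with $u^t$-translation along each such orbit, so no further choices or gluings are needed.
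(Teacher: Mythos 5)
Your proposal is correct and follows essentially the paper's own route: both arguments rest on the algebraic form (\ref{time change171}) from Theorem \ref{timechange14} (activated, as you note, through the $L^{1}$ clause of Corollary \ref{dynamical systems2001}) and on the same cocycle identity $a(gx_{0})+z(\psi(gx_{0}),t)=t+a(u^{t}gx_{0})$. The only difference is cosmetic: you change variables to the target side and take the transfer function $f=a\circ\psi^{-1}$, getting $1\sim\tau$ directly, whereas the paper keeps the domain parametrization, uses the transfer function $\int_{0}^{a(\cdot)}\tau(u^{s}b\Phi(\cdot)y_{0})\,ds$ to conclude $1\sim\tau\circ b$, and then invokes the hypothesis once more together with transitivity --- your bookkeeping is slightly cleaner but not a genuinely different argument.
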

\begin{proof}  Write $c(x)=u^{a(x)}b$, i.e. by (\ref{time change171}), $\psi(gx_{0}) =   u^{a(gx_{0})}b\Phi(g)y_{0}$.
  Note that $a(gx_{0})+z(\psi(gx_{0}),t)=t+a(u^{t}gx_{0})$. It follows that
  \begin{align}
 &\int_{0}^{t}1-\tau(u^{s}b  \Phi(g)y_{0})ds\;\nonumber\\
 =& \int_{0}^{z(\psi(gx_{0}),t)}\tau(u^{s}u^{a(gx_{0})}b\Phi(g)y_{0})ds-\int_{0}^{t} \tau(u^{s}b  \Phi(g)y_{0})ds\;\nonumber\\
=&    \int_{0}^{z(\psi(gx_{0}),t)+a(gx_{0})}\tau(u^{s} b\Phi(g)y_{0})ds- \int_{0}^{a(gx_{0})}\tau(u^{s} b\Phi(g)y_{0})ds-\int_{0}^{t} \tau(u^{s}b  \Phi(g)y_{0})ds\;\nonumber\\
=&   \int_{0}^{t+a(u^{t}gx_{0})}\tau(u^{s} b\Phi(g)y_{0})ds- \int_{0}^{a(gx_{0})}\tau(u^{s} b\Phi(g)y_{0})ds-\int_{0}^{t} \tau(u^{s}b  \Phi(g)y_{0})ds\;\nonumber\\
   =&   \int_{0}^{a(u^{t}gx_{0})}\tau(u^{s} b\Phi(u^{t}g)y_{0})ds- \int_{0}^{a(gx_{0})}\tau(u^{s} b\Phi(g)y_{0})ds. \;\nonumber
\end{align}
 Then $1$ and $\tau( b\Phi(g)y_{0})$ are cohomologous. Because $\tau$ and
$\tau\circ b$ are cohomologous by assumption, the consequence follows.
\end{proof}

\section{Restriction of representations}\label{time change115}
\subsection{Unitary representations of $SO(n,1)$}\label{time change118}
Now we adopt the standard notation in \cite{knapp2001representation} Chapter 7 to develop the unitary representation of $G$. Let $\sigma_{\mathbf{n}}$ be an irreducible unitary representation of $M=SO(n-1)$, where $\mathbf{n}$ indicates the highest weight. Besides,  we require $\mathbf{n}=(n_{i})_{1\leq i\leq\lfloor\frac{n-1}{2}\rfloor}$ satisfies
\[\begin{array}{ll}  0\leq n_{1}\leq\cdots\leq n_{k-1} &, \text{ if }n=2k  \\
   |n_{1}|\leq n_{2}\leq\cdots\leq n_{k}  &, \text{ if }n=2k+1  \\
 \end{array}.\]
  Then for $\nu\in\mathbf{C}$, let $(\mathcal{H}_{\mathbf{n},\nu},\pi_{\mathbf{n},\nu})$ be the induced representation of $G$ from $MAN$ given by
\[\{f:G\rightarrow\mathbf{C}\big|f(gme^{tY_{n}}n)=e^{-(\nu+\rho) t}\sigma_{\mathbf{n}}(m)^{-1}f(g),\ me^{tY_{n}}n\in MAN,\ f|_{K}\in L^{2}(K)\}\]
where $K=SO(n)$ is a maximal compact subgroup of $G$,
with the group operation
\[(\pi_{\mathbf{n},\nu}(g)f)(x)=f(g^{-1}x).\]
 It is possible to show that
\begin{equation}\label{time change200}
 \begin{array}{ll}  \pi_{\mathbf{n},\nu}\text{ is unitary equivalent to }\pi_{\mathbf{n},-\nu} &, \text{ if }n=2k  \\
 \pi_{\mathbf{n},\nu}\text{ is unitary equivalent to }\pi_{\mathbf{n}_{1},-\nu}  &, \text{ if }n=2k+1  \\
 \end{array}
\end{equation}
 where $\mathbf{n}_{1}=(-n_{1},n_{2},\ldots,n_{k})$.

Note that $f$ in $\pi_{\mathbf{n},\nu}$ are invariant under $M$. Thus, $\mathcal{H}_{\mathbf{n},\nu}$ can be realized on $L^{2}(K/M)=L^{2}(S^{n-1})$.
The natural $L^{2}$-norm on $L^{2}(S^{n-1})$ can define a unitary representation for $\pi_{\nu}$ only when $\nu=it$ for $t\in\mathbf{R}$. It is tempered, and called the \textit{principal series}\index{principal series}. However, it is still possible to unitarize the representations for $\nu\in(-\rho,0)\cup(0,\rho)$ by other norms (see Theorem \ref{time change196}). They are called the \textit{complementary series}\index{complementary series} and  not tempered.

For a fixed $(\mathcal{H}_{\mathbf{n},\nu},\pi_{\mathbf{n},\nu})$, the $K$-restricted representation of $K=SO(n)$ is a direct sum of $K$-irreducible representations $\mathcal{H}_{\mathbf{m}}$. Thus,  we have
\begin{equation}\label{time change193}
 \mathcal{H}_{\mathbf{n},\nu}=\bigoplus_{\mathbf{m}} \mathcal{W}_{\mathbf{m}}
\end{equation}
where $\mathbf{m}=(m_{i})_{1\leq i\leq \lceil\frac{n-1}{2}\rceil}$ indicates the highest weight and satisfies
\[\begin{array}{ll}  |m_{1}|\leq n_{1}\leq m_{2}\leq n_{2}\leq\cdots\leq m_{k-1}\leq n_{k-1}\leq m_{k}<\infty &, \text{ if }n=2k  \\
   |n_{1}|\leq m_{1}\leq n_{2}\leq m_{2}\leq\cdots\leq m_{k-1}\leq n_{k}\leq m_{k}<\infty  &, \text{ if }n=2k+1  \\
 \end{array}.\]
 There is a standard orthonormal basis for $\mathcal{W}_{\mathbf{m}}$ (and hence for  $\mathcal{H}_{\mathbf{n},\nu}$), called the \textit{Gelfand-Tsetlin basis}\index{Gelfand–Tsetlin bases}. See \cite{cetlin1950finite}, \cite{hirai1962infinitesimal}, \cite{ramirez2013invariant} for more details. However, we do not need it here.

 In the following, we are mainly interested in the case $\sigma_{\mathbf{n}}=1$ and hence $\mathbf{n}=0$.  (It follows that $\mathbf{m}=(0,\ldots,0,m_{k})$ and so we consider $\mathbf{m}$ as an integer.) In this case, the representations are \textit{spherical}\index{spherical representation} (or \textit{class one}\index{class one representation}) and we shall denote $(\mathcal{H}_{0,\nu},\pi_{0,\nu})$   by  $(\mathcal{H}_{\nu},\pi_{\nu})$. For more information about the general cases, one may see \cite{hirai1962irreducible}, \cite{thieleker1974unitary}, and so on.

In order to make the restriction map clear, we  review some facts about spherical harmonics (see \cite{johnson1977composition}, \cite{vilenkin1978special}, also \cite{zhang2011discrete}). We identify $\mathfrak{p}$ with $\mathbf{R}^{n}$, and consider the adjoint action of $K$ on $\mathfrak{p}$. We fix a $K$-invariant inner product on $\mathfrak{p}$ so that $Y_{1},\ldots,Y_{n}$ form an orthonormal basis.  Then the homogeneous space $K/M\cong S^{n-1}$.
  Let $\hat{K}$ be the unitary dual of $K$, i.e. the set of equivalent classes of irreducible finite dimensional representations of $K$. If $(\pi_{\gamma},V_{\gamma})\in\gamma\in \hat{K}$, let $V^{M}_{\gamma}\coloneqq\{v\in V_{\gamma}:\pi(M)v=v\}$ be the space of $M$-fixed vectors. General representation theory, namely \textit{Frobenius reciprocity}\index{Frobenius reciprocity} and \textit{Peter-Weyl theorem}\index{Peter-Weyl theorem}, implies that
\[L^{2}(S^{n-1})=\bigoplus_{\gamma\in\hat{K}}n_{\gamma}V_{\gamma}\]
where $n_{\gamma}=\dim V^{M}_{\gamma}$.

However, we can explore further properties of  $n_{\gamma}$ and $V^{M}_{\gamma}$.  Let $x_{1},\ldots,x_{n}$ be the standard coordinates for $\mathfrak{p}=\mathbf{R}^{n}$. Let $r^{2}=\sum_{i=1}^{n}x^{2}_{i}$ and $\Delta_{n}=\sum_{i=1}^{n}\partial^{2} /\partial x^{2}_{i}$ be the standard \textit{Laplacian}\index{Laplacian}. Let $\mathcal{P}^{p}$ be the space of all homogeneous polynomials of degree $p$ in the variables $x_{1},\ldots,x_{n}$ and let $W^{p}=\ker\Delta_{n}|_{\mathcal{P}^{p}}$ be the \textit{spherical harmonics}\index{spherical harmonics}. Clearly, $W^{p}$ is a $K$-representation, and $W^{0}$ is the trivial representation. Besides, it is known that
\[L^{2}(S^{n-1})=\bigoplus_{p\geq0}W_{p}\]
where we are identifying elements of $W^{p}$ and their restrictions to the unit sphere $S^{n-1}\subset\mathbf{R}^{n}$.
Moreover, it is proved for $p\geq 1$ that
\[\begin{array}{ll}   W_{p}=\mathbf{C}\chi_{p}\oplus\mathbf{C}\chi_{-p} &, \text{ if }n=2  \\
   W_{p}\text{ is irreducible}  &, \text{ if }n\geq3  \\
 \end{array}\]
where $\chi_{p}$ is the character on $S^{1}$ of degree $p$. Thus, we conclude that
\begin{equation}\label{time change195}
  \mathcal{W}_{\mathbf{m}}=\left\{\begin{array}{ll}  \mathbf{C}\chi_{\mathbf{m}} &, \text{ if }n=2  \\
  W_{\mathbf{m}} &, \text{ if }n\geq 3 \\
 \end{array}\right.
\end{equation}
to align  the notation.   The subspace $(\mathcal{W}_{\mathbf{m}})^{M}$ of $M$-fixed vectors is $1$-dimensional
\[(\mathcal{W}_{\mathbf{m}})^{M}=\mathbf{C}\phi_{\mathbf{m}}\]
where $\phi_{\mathbf{m}}$  is a generator normalized by $\phi_{\mathbf{m}}(Y_{n})=1$. They depend only on the last variable $x_{n}\in S^{n-1}$ of $x=(x_{1},\ldots,x_{n})$. In the following,
we put the upper-index the dimension $n$ as we shall  treat it as a variable, such as  $\phi_{\mathbf{m}}=\phi^{n}_{\mathbf{m}}$.
\begin{lem}[Theorem 3.1 \cite{johnson1977composition}]\label{discrete components5}
   The polynomials $\phi^{n}_{\mathbf{m}}$ is given as follows:
   \[x_{n}=\cos\xi,\ \ \ \phi^{n}_{\mathbf{m}}(x_{n})\coloneqq\cos^{\mathbf{m}}\xi F(-\frac{\mathbf{m}}{2},-\frac{\mathbf{m}-1}{2},\frac{n-1}{2},-\tan^{2}\xi) \]
   where $F(a,b,c,x)$ is the \textit{Gauss hypergeometric function}\index{Gauss hypergeometric function} $\prescript{}{2}F_{1}$,
\[F(a,b,c,x)=\sum_{m=0}^{\infty}\frac{(a)_{m}(b)_{m}}{(c)_{m}}\frac{x^{m}}{m!}\]
and $(a)_{m}=\prod_{j=0}^{m-1}(a+j)$ is the \textit{Pochammer symbol}\index{Pochammer symbol}.
\end{lem}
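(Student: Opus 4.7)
The plan is to determine $\phi_{\mathbf{m}}^n$ explicitly by solving the harmonicity equation for an $M$-invariant homogeneous polynomial of degree $\mathbf{m}$. Since $M=SO(n-1)$ acts transitively on the unit sphere of $(x_1,\ldots,x_{n-1})$, any $M$-fixed homogeneous polynomial of degree $\mathbf{m}$ depends only on $x_n$ and $\rho^2 := x_1^2+\cdots+x_{n-1}^2$; hence one can write it in the form
\[p(x)=\sum_{k=0}^{\lfloor \mathbf{m}/2\rfloor} a_k\, x_n^{\mathbf{m}-2k}\rho^{2k}.\]
The normalization $\phi^n_{\mathbf{m}}(Y_n)=1$ (evaluating at $x_n=1$, $\rho=0$) forces $a_0=1$, so the remaining task is to pin down the $a_k$ from $\Delta_n p = 0$ and then to recognize the resulting expression on $S^{n-1}$ as the stated hypergeometric series.

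Next, I would carry out the two elementary derivatives $\Delta_{n-1}\rho^{2k}=2k(n+2k-3)\rho^{2k-2}$ and $\partial_n^{2} x_n^{\mathbf{m}-2k}=(\mathbf{m}-2k)(\mathbf{m}-2k-1)x_n^{\mathbf{m}-2k-2}$, and match coefficients of $x_n^{\mathbf{m}-2k-2}\rho^{2k}$ in $\Delta_n p=0$. This produces the two-term recursion
\[a_{k+1}=-\frac{(\mathbf{m}-2k)(\mathbf{m}-2k-1)}{2(k+1)(n+2k-1)}\,a_k,\]
whose iterated solution with $a_0=1$ rearranges, in Pochhammer notation, into
\[a_k\,\tan^{2k}\xi \;=\; \frac{(-\mathbf{m}/2)_k\,(-(\mathbf{m}-1)/2)_k}{((n-1)/2)_k}\cdot\frac{(-\tan^{2}\xi)^{k}}{k!}.\]
Substituting $x_n=\cos\xi$ and $\rho=\sin\xi$ and factoring out $\cos^{\mathbf{m}}\xi$, the partial sum $\sum_k a_k\tan^{2k}\xi$ is precisely $F\!\left(-\mathbf{m}/2,-(\mathbf{m}-1)/2,(n-1)/2,-\tan^{2}\xi\right)$, which yields the claimed formula. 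The one-dimensionality of $(\mathcal{W}_{\mathbf{m}})^{M}$, recorded just before the statement, then guarantees that this candidate must in fact equal $\phi^n_{\mathbf{m}}$.

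The only real obstacle is the combinatorial bookkeeping that converts the iterated recursion into the Pochhammer ratio. The key is to pair the numerator factor $(\mathbf{m}-2j)$ with $(-\mathbf{m}/2)_k$ and $(\mathbf{m}-2j-1)$ with $(-(\mathbf{m}-1)/2)_k$ (each contributing $(-1)^k/2^k$), to absorb the factor $\prod_{j=0}^{k-1}(j+1)=k!$ from the denominator, to match $\prod_{j=0}^{k-1}(n+2j-1)$ with $((n-1)/2)_k$ up to a factor $2^{-k}$, and finally to absorb the remaining alternating sign $(-1)^k$ into the argument $z=-\tan^2\xi$ of the hypergeometric function. Beyond this rearrangement, the argument is entirely routine.
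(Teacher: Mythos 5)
Your derivation is correct. Writing the $M$-invariant degree-$\mathbf{m}$ harmonic polynomial as $\sum_k a_k x_n^{\mathbf{m}-2k}\rho^{2k}$ is legitimate (for $n\geq 3$, $SO(n-1)$-invariance forces dependence on $\rho^2$ and $x_n$ only), your Laplacian computations $\Delta_{n-1}\rho^{2k}=2k(n+2k-3)\rho^{2k-2}$ and $\partial_n^2 x_n^{\mathbf{m}-2k}=(\mathbf{m}-2k)(\mathbf{m}-2k-1)x_n^{\mathbf{m}-2k-2}$ are right, the two-term recursion follows by matching the coefficient of $x_n^{\mathbf{m}-2k-2}\rho^{2k}$, and the Pochhammer bookkeeping checks out: with $(-\mathbf{m}/2)_k(-(\mathbf{m}-1)/2)_k=2^{-2k}\prod_{j=0}^{k-1}(\mathbf{m}-2j)(\mathbf{m}-2j-1)$ and $((n-1)/2)_k=2^{-k}\prod_{j=0}^{k-1}(n+2j-1)$, one gets exactly $a_k\tan^{2k}\xi$ as the $k$-th term of the series, the series terminates at $k=\lfloor\mathbf{m}/2\rfloor$ because one of the first two Pochhammer symbols vanishes beyond that, and the normalization $a_0=1$ matches $\phi^n_{\mathbf{m}}(Y_n)=1$; one-dimensionality of $(\mathcal{W}_{\mathbf{m}})^M$ then pins the answer down.

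Note, however, that the paper does not prove this statement at all: it is quoted verbatim as Theorem 3.1 of Johnson's paper on composition series (where it is obtained in the course of analyzing the spherical principal series). So your argument is not a variant of the paper's proof but a self-contained, elementary replacement for the citation: solving the radial recursion for the zonal harmonic directly and recognizing the terminating ${}_2F_1$. That is a perfectly good trade-off — it costs only a page of calculus and invariant-theory boilerplate and removes the dependence on the external reference, whereas the citation buys brevity and situates $\phi^n_{\mathbf{m}}$ inside the representation-theoretic framework the paper later uses (e.g.\ in the branching computation of Lemma \ref{time change204}). The only points worth making explicit if you wrote this up: justify that an $SO(n-1)$-invariant polynomial is a polynomial in $\rho^2$ and $x_n$ (restrict to a plane and use evenness in the transverse variable), and observe that at $k=\lfloor\mathbf{m}/2\rfloor$ the numerator of the recursion vanishes, so the truncated sum really is harmonic with no leftover constraint.
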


Next, we introduce the notation on the subgroup $H=SO(n-1,1)\subset G$. In the following, we shall use superscript $\flat$ to indicate the corresponding $H$-data, as we obtained for $G$, and many of them are obtained by restriction of $H\subset G$. For example, we write
\[H=K^{\flat}AN^{\flat}\]
for the \textit{Iwasawa decomposition}\index{Iwasawa decomposition} of $H$ (note that as $H\subset G$, we may require the maximal abelian subgroups $A$ of $H$ and $G$ coincide). Besides, $K^{\flat}=K\cap H$ and $N^{\flat}=N\cap H$.
Here, for simplicity, we choose $H$ so that $Y_{1}$  is invariant under $K^{\flat}$.

Again, we are able to construct   the unitary representation of $H$. For $\nu\in\mathbf{C}$, let $(\pi^{\flat}_{\nu},\mathcal{H}^{\flat}_{\nu})$ be the induced representation of $H$ from $M^{\flat}AN^{\flat}$ given by
\[\{f:H\rightarrow\mathbf{C}\big|f(gme^{tY_{n}}n)=e^{-(\nu+\rho^{\flat}) t}f(g),\ me^{tY_{n}}n\in M^{\flat}AN^{\flat},\ f|_{K^{\flat}}\in L^{2}(K^{\flat})\}.\]
Similarly as for $G$, the complementary series of $H$ are defined for $\nu\in(-\rho^{\flat},0)\cup(0,\rho^{\flat})$. Now for $\nu\in(\rho^{\flat},\rho)$, we can define the restriction map $\Res:\mathcal{H}_{-\nu}\rightarrow\mathcal{H}^{\flat}_{\frac{1}{2}-\nu}$ by
\begin{equation}\label{time change202}
 \Res:f\mapsto f|_{H}.
\end{equation}
One important consequence is that $\Res$ is $H$-equivariant, i.e.
\begin{equation}\label{time change201}
 \Res(\pi_{-\nu}(h)f)=\pi^{\flat}_{\frac{1}{2}-\nu}(h)\Res(f)
\end{equation}
 for all $h\in H$ and $f\in \mathcal{H}_{-\nu}$. When we realize them as elements in $L^{2}$, then the restriction map (\ref{time change202}) becomes $\Res:L^{2}(K/M)\rightarrow L^{2}(K^{\flat}/M^{\flat})$ by
\[\Res: f\mapsto f|_{Y_{1}=0}.\]

It is known  that
\begin{equation}\label{time change197}
  L^{2}(K^{\flat}/M^{\flat})=L^{2}(S^{n-2})=\bigoplus_{l} \mathcal{V}_{l}
\end{equation}
where $\mathcal{V}_{l}$ is the space of harmonic polynomials in $n-1$ variables of degree $\mathbf{m}$ defined in (\ref{time change195}).
Then we have
\begin{lem}[Lemma 3.3 \cite{zhang2011discrete}]\label{time change204}
    The branching of $ \mathcal{W}_{\mathbf{m}}$ and $\Res(\mathcal{W}_{\mathbf{m}})$ under $K^{\flat}$ is given by
    \begin{equation}\label{time change198}
      \mathcal{W}_{\mathbf{m}}=\bigoplus_{|l|\leq \mathbf{m}}\widetilde{\mathcal{V}}_{l},\ \ \ \Res(\mathcal{W}_{\mathbf{m}})= \bigoplus_{\substack{|l|\leq \mathbf{m}\\
\mathbf{m}-l\text{ even}}}\mathcal{V}_{l}
    \end{equation}
where $\widetilde{\mathcal{V}}_{l}\subset L^{2}(S^{n-1})$ denotes the $K^{\flat}$-irreducible representation of highest weight $l$ in $L^{2}(K/M)$. Further,  the isomorphism $\mathcal{V}_{l}\rightarrow \widetilde{\mathcal{V}}_{l}$ is given by
\[h(x_{2},\ldots,x_{n})\mapsto h(x_{2},\ldots,x_{n})\phi^{n+2s}_{p-s}(x_{1})\]
where $\phi$ is given in (\ref{discrete components5}).
\end{lem}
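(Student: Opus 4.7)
The plan is to prove both decompositions by realizing $\mathcal{W}_{\mathbf{m}}$ as harmonic polynomials of degree $\mathbf{m}$ on $\mathbf{R}^{n}$ and constructing each $K^{\flat}$-equivariant embedding $\mathcal{V}_{l}\hookrightarrow\mathcal{W}_{\mathbf{m}}$ explicitly via an extension-from-$\mathbf{R}^{n-1}$ procedure. With our convention $Y_{1}$ is fixed by $K^{\flat}$, so $K^{\flat}=SO(n-1)$ acts on the variables $x_{2},\ldots,x_{n}$, while $x_{1}$ is a $K^{\flat}$-invariant together with $r^{2}\coloneqq x_{2}^{2}+\cdots+x_{n}^{2}$. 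Consequently any $K^{\flat}$-equivariant map $\mathcal{V}_{l}\to\mathcal{W}_{\mathbf{m}}$ is forced by Schur/invariant theory to have the form $h(x_{2},\ldots,x_{n})\mapsto h(x_{2},\ldots,x_{n})\,g(x_{1},r)$ for a polynomial $g$ that is homogeneous of degree $\mathbf{m}-l$.

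Next I would impose $\Delta_{n}(h\cdot g)=0$. Since $\Delta_{n}h=0$ and $h$ is $x_{1}$-independent, a direct calculation using Euler's identity $\sum_{i\geq 2}x_{i}\partial_{i}h=l\cdot h$ and the chain rule $\partial_{i}g=(x_{i}/r)\partial_{r}g$ for $i\geq 2$ reduces the harmonicity condition to
\[
\partial_{1}^{2}g+\partial_{r}^{2}g+\frac{n-2+2l}{r}\partial_{r}g=0.
\]
This is precisely the Laplace equation on $\mathbf{R}^{n+2l}$ for a function axially symmetric about the $x_{1}$-axis, with $r$ playing the role of the radial coordinate in an ambient $(n-1+2l)$-dimensional factor. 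The polynomial solutions of degree $\mathbf{m}-l$ form a one-dimensional space: they are the zonal spherical harmonics of degree $\mathbf{m}-l$ on $S^{n+2l-1}$. By Lemma \ref{discrete components5} applied in dimension $n+2l$, this solution is (up to scalar) $\phi^{n+2l}_{\mathbf{m}-l}(x_{1})$, i.e.\ the formula in the statement with $s=l$. This produces, for every integer $l$ in the allowed range, a $K^{\flat}$-equivariant embedding $\mathcal{V}_{l}\hookrightarrow\mathcal{W}_{\mathbf{m}}$ given by the stated formula, and hence the summand $\widetilde{\mathcal{V}}_{l}$. A routine dimension count (summing $\dim\mathcal{V}_{l}$ against the known $\dim\mathcal{W}_{\mathbf{m}}$) shows these summands exhaust $\mathcal{W}_{\mathbf{m}}$, yielding the first decomposition; the range $|l|\leq\mathbf{m}$ (rather than $0\leq l\leq\mathbf{m}$) accounts for the two signs of the $SO(2)$-weight when $n-1=2$.

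For the second decomposition, I would simply restrict the explicit elements $h(x_{2},\ldots,x_{n})\phi^{n+2l}_{\mathbf{m}-l}(x_{1})$ to $S^{n-2}=\{x_{1}=0\}$. The $h$-factor is unchanged; the $\phi$-factor becomes the scalar $\phi^{n+2l}_{\mathbf{m}-l}(0)$. Evaluating via Lemma \ref{discrete components5} with $x_{1}=\cos\xi$ and $\xi=\pi/2$: the hypergeometric series terminates because either $-(\mathbf{m}-l)/2$ or $-(\mathbf{m}-l-1)/2$ is a non-positive integer, and the resulting polynomial in $\tan^{2}\xi$, multiplied by $\cos^{\mathbf{m}-l}\xi$, evaluates to $\sin^{\mathbf{m}-l}\xi$ (times a nonzero constant) when $\mathbf{m}-l$ is even and to $\cos\xi\cdot\sin^{\mathbf{m}-l-1}\xi$ when $\mathbf{m}-l$ is odd. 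At $\xi=\pi/2$ this is nonzero precisely when $\mathbf{m}-l$ is even. Hence exactly the summands with $\mathbf{m}-l$ even survive under $\Res$, giving the second decomposition.

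The main obstacle, and the only step that is not routine bookkeeping, is the reduction of harmonicity to the displayed ODE/PDE and the recognition of its polynomial solutions as the zonal spherical harmonics of Lemma \ref{discrete components5} in the shifted dimension $n+2l$. Once this identification is made, the rest of the argument — multiplicity-free branching, dimension count, and the parity test via $\phi^{N}_{m}(0)$ — is mechanical.
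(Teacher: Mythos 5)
Your argument is sound, and it is worth noting that the paper itself offers no proof of this lemma: it is quoted verbatim (including the undefined letters $p,s$, which indeed stand for $\mathbf{m},l$) from Lemma 3.3 of Zhang's paper \cite{zhang2011discrete}, so there is no in-paper proof to compare against. What you have written is a correct, self-contained reconstruction of the classical branching argument that underlies the cited result: the separation-of-variables ansatz $h(x_{2},\ldots,x_{n})g(x_{1},r)$, the reduction of harmonicity to $\partial_{1}^{2}g+\partial_{r}^{2}g+\frac{n-2+2l}{r}\partial_{r}g=0$ (your computation via Euler's identity and the radial Laplacian in $n-1$ variables is right), the identification of the one-dimensional space of homogeneous polynomial solutions of degree $\mathbf{m}-l$ with the zonal harmonic $\phi^{n+2l}_{\mathbf{m}-l}$ in the shifted dimension, exhaustion by the standard dimension identity $\dim\mathcal{W}^{(n)}_{\mathbf{m}}=\sum_{l=0}^{\mathbf{m}}\dim\mathcal{W}^{(n-1)}_{l}$, and the parity test $\phi^{n+2l}_{\mathbf{m}-l}(0)\neq 0$ iff $\mathbf{m}-l$ is even, which you correctly extract from the terminating hypergeometric series (only the top term survives at $\xi=\pi/2$, and its coefficient is a nonzero ratio of Pochhammer symbols). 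Two small points to tighten: the assertion that every $K^{\flat}$-equivariant map $\mathcal{V}_{l}\to\mathcal{W}_{\mathbf{m}}$ \emph{must} have the product form is not pure Schur; it uses the classical decomposition of polynomials as harmonics times invariants ($x_{1}$ and $r^{2}$) to identify the $\mathcal{V}_{l}$-isotypic component — but since you only use the product form as an ansatz for existence and then close the argument with the dimension count, nothing is lost; and for $n=3$ one should read $l$ as a signed $SO(2)$-weight with $|l|$ in place of $l$ in the degree bookkeeping, as you indicate.
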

\subsection{Casimir  and Laplace operators}\label{time change194}
In this section, we review the Casimir operators and Laplace-Beltrami operators on $SO(n,1)$. See \cite{ramirez2013invariant} and the references therein. The \textit{Casimir operator}\index{Casimir operator} for $SO(n,1)$ is
\[\Box_{n}\coloneqq-\sum_{k=1}^{n}Y^{2}_{k}+\sum_{1\leq i<j\leq n}\Theta_{ij}^{2}.\]
It is in the center of the universal enveloping algebra of $\mathfrak{g}$, and therefore acts as a scalar $c_{n}(\mathbf{n},\nu)$ in any irreducible unitary representation $\mathcal{H}_{\mathbf{n},\nu}$. By \cite{thieleker1974unitary} Theorem 3 (or \cite{thieleker1973quasi} Lemma 6), we know that
\begin{align}
 c_{n}(\mathbf{n},\nu)=&  \rho_{n}^{2}-\nu^{2}-\langle\mathbf{n},\mathbf{n}+2\rho_{M_{n}}\rangle\;\nonumber\\
           =&\left\{\begin{array}{ll} \rho_{n}^{2}-\nu^{2}- \sum_{i=1}^{k-1}n_{i}(n_{i}+2i-1) &, \text{ if }n=2k  \\
   \rho_{n}^{2}-\nu^{2}- \sum_{i=1}^{k}n_{i}(n_{i}+2i-2) &, \text{ if }n=2k+1  \\
 \end{array}\right.\ \;  \label{time change102}
\end{align}
where $\rho_{n}$ and $\rho_{M_{n}}$ are the half-sum of positive roots of $SO(n,1)$ and $M_{n}=SO(n-1)$ respectively. Similarly, the \textit{Casimir operator}\index{Casimir operator} of $K_{n}=SO(n)$ is given by
\[\Box_{K_{n}}\coloneqq \sum_{1\leq i<j\leq n}\Theta_{ij}^{2}.\]
It again acts as a scalar in any irreducible unitary representation $\mathcal{H}_{\mathbf{m}}$. As $\mathbf{m}$ indicates the highest weight of $\mathcal{H}_{\mathbf{m}}$, we conclude from the standard representation theory (e.g. \cite{humphreys2012introduction} Section 23) that the scalar is
\begin{align}
 c_{K_{n}}(\mathbf{m})=& \langle \mathbf{m},\mathbf{m}+2\rho_{K_{n}}\rangle\;\nonumber\\
           =&\left\{\begin{array}{ll} - \sum_{i=1}^{k}m_{i}(m_{i}+2i-2) &, \text{ if }n=2k  \\
  - \sum_{i=1}^{k}m_{i}(m_{i}+2i-1) &, \text{ if }n=2k+1  \\
 \end{array}\right.\ \;  \label{time change103}
\end{align}
where $\rho_{K_{n}}$ is the half sum of the positive roots of $K_{n}$.

Note that now the \textit{Laplace-Beltrami operator}\index{Laplace-Beltrami operator}  $\Delta$ is then defined by
\[\Delta\coloneqq\Box_{n}-2\Box_{K_{n}}.\]
Since then $\Delta$ commute with $K_{n}$, we can define the Laplace-Beltrami operator and Sobolev norms on $K\backslash G/\Gamma$, after making a standard identification between $L^{2}(K\backslash G/\Gamma)$ and the subspace $L^{2}( G/\Gamma)^{K}$ of $K$-invariant elements of $L^{2}(G/\Gamma)$.

Also, recall that a \textit{spherical representation}\index{spherical representation} of $G=SO(n,1)$ (e.g. \cite{corlette1990hausdorff} Section 4) is a   representation   which contains a nontrivial $K$-fixed vector. Now define the spherical part $L^{2}(G/\Gamma)^{\sph}$ to be the minimal subrepresentation containing the $K$-fixed part $L^{2}(G/\Gamma)^{K}$. Then the spherical part $L^{2}(G/\Gamma)^{\sph}$ decomposes discretely or continuously into irreducible spherical unitary representations of $G$:
\[L^{2}(G/\Gamma)^{\sph}=\int\pi_{\lambda}d\mu(\lambda).\]
 Harish-Chandra (e.g. \cite{shalom2000rigidity}) has shown that the spherical  representation $\pi_{\lambda}$ occurs in the decomposition, correspond to the $L^{2}$-spectrum of the Laplacian $\Delta$ acting on the locally symmetric space $K\backslash G/\Gamma$. In particular,   the complementary series $\pi_{\nu}$ ($0<\nu<\rho_{n}$) lies in the support of $\mu$ iff $\rho_{n}^{2}-\nu^{2}$ lies in the spectrum of $\Delta$. In other words, the complementary series occurs iff the spectrum satisfies
 \[\Spec(\Delta)\cap(0,\rho_{n}^{2})\neq\emptyset.\]
 It is easy to see from many points of view that the smallest nonzero eigenvalue in $\Spec(\Delta)$ can be made arbitrarily small, even for cocompact lattice $\Gamma$. For instance, by \cite{millson1976first}, there exists a hyperbolic manifold $X$ with positive first Betti number. Then let $X^{k}$ be the cyclic covering of degree $k$ induced by a fixed surjective homomorphism
 \[\pi_{1}(X)\xrightarrow{\varphi}\mathbf{Z}\rightarrow\mathbf{Z}/k\]
 where $\varphi$ is independent of $k$. Then there are constants $c_{1}(X)$ and $c_{2}(X)$ such that the smallest nonzero eigenvalue $\lambda(X)$ in $\Spec(\Delta_{X})$ satisfies
 \[ c_{1}(X) k^{-2}\leq\lambda(X^{k})\leq c_{2}(X) k^{-2}\]
 as what we wanted.    See \cite{randol1974small}, \cite{schoen1980geometric}, \cite{brooks1988injectivity} for more details. Thus, we take it for granted that there exist cocompact lattices $\Gamma$ for which $L^{2}(G/\Gamma)$ contains complementary series with spectral parameter  $\nu\in(\rho_{n-1},\rho_{n})$ as a direct summand.

 Although not needed in our proof, it is  worth mentioning other results for the study of $\Spec(\Delta)$. For example,  Lax and Phillips  have shown that for \textit{geometrically finite}\index{geometrically finite} discrete subgroup $\Gamma$, the spectrum $\Spec(\Delta)$ of $\Delta$ on   $\mathbf{H}^{n}/\Gamma$ has at most finitely many $L^{2}$-eigenvalues in the interval  $[0,\rho_{n}^{2})$ \cite{lax1982asymptotic} and purely absolutely continuous spectrum of infinite multiplicity in $[\rho_{n}^{2},\infty)$ \cite{lax1984translation}. On the other hand, let $\widehat{G}^{\sph}$ be the spherical unitary dual of $G=SO(n,1)$, that is
 \[\widehat{G}^{\sph}=\{\pi_{\lambda}\bmod \pm1:\lambda\in i\mathbf{R}\cup[-\rho_{n},\rho_{n}]\}.\]
 Then let $\widehat{G}^{\sph}_{\Aut}$ be its automorphic dual, consisting of all $\pi_{\lambda}$ which occur in $L^{2}(G/\Gamma)$ where $\Gamma$ varies over all congruence subgroups of $G(\mathbf{Z})$.
 We have the following \textit{generalized Ramanujan conjecture}\index{generalized Ramanujan conjecture}
for $G$.
 \begin{conj}[Generalized Ramanujan conjecture] Let $G=SO(n,1)$. Then
 \[\widehat{G}^{\sph}_{\Aut}=i\mathbf{R}\cup\{\rho_{n},\rho_{n}-1,\ldots,\rho_{n}-\lfloor\rho_{n}\rfloor\}.\]
 \end{conj}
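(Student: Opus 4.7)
The statement is a famous open conjecture, so any honest proof proposal must acknowledge up front that a fully unconditional argument is out of reach. The route I would pursue is purely automorphic. First, I would invoke Arthur's endoscopic classification of the discrete spectrum for $SO(n,1)$, which parameterizes the irreducible subrepresentations of $L^{2}(G/\Gamma)$ (for congruence $\Gamma\subset G(\mathbf{Z})$) by $A$-parameters $(\phi,\psi)$, where $\phi$ is a tempered Langlands parameter into ${}^{L}G$ and $\psi\colon SL_{2}(\mathbf{C})\to{}^{L}G$ is the auxiliary ``Arthur $SL_{2}$''. A direct $\mathfrak{sl}_{2}$-computation with the spherical constituents (carried out in the same style as the branching formulas of Section \ref{time change115}) shows that the only complementary-series $\pi_{\nu}$ which can arise with non-trivial $\psi$ are those with $\nu\in\{\rho_{n},\rho_{n}-1,\ldots,\rho_{n}-\lfloor\rho_{n}\rfloor\}$, matching the right-hand side of the conjecture exactly.

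The remaining (would-be tempered) parameters would then be handled via Arthur's functorial transfer $SO(n,1)\leadsto GL_{N}$, or, equivalently, via a theta lift to an appropriate metaplectic pair. If every cuspidal constituent of the $GL_{N}$-lift is archimedean-tempered, the Arthur classification forces any further spherical representation in $L^{2}(G/\Gamma)^{\sph}$ to lie in the tempered dual $i\mathbf{R}$, and the conjecture follows. For small $n$ this strategy can be carried out unconditionally: $n=2$ reduces via $SO(2,1)\cong PGL_{2}(\mathbf{R})$ to Ramanujan--Petersson for $GL_{2}/\mathbf{Q}$ (Deligne, Deligne--Serre), and $n=3$ reduces via $SO(3,1)\cong PGL_{2}(\mathbf{C})$ to $GL_{2}$ over an imaginary quadratic field. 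I would write out these two base cases first, both as a sanity check on the $A$-parameter bookkeeping and to anchor any inductive step along a chain of embeddings.

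The main obstacle, and precisely the reason the general conjecture remains open, is that archimedean Ramanujan for $GL_{N}$ is itself unproven; it is known unconditionally only for cohomological, self-dual, regular-weight cuspidal representations (Clozel, Harris--Taylor, and subsequent work). Consequently, for general $n$ the best unconditional statement reachable by this strategy is an \emph{approximation}: combining Kim--Sarnak-type archimedean bounds on $GL_{N}$ with the Burger--Sarnak monotonicity principle applied to the tower $SO(2,1)\subset SO(3,1)\subset\cdots\subset SO(n,1)$, one can exclude spherical complementary-series parameters above some explicit threshold strictly smaller than $\rho_{n}$, but one cannot pin down the exact integer-spaced quantization stated. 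Bridging this gap appears to require genuinely new input on $GL_{N}$-Ramanujan and lies well beyond the representation-theoretic tools developed in Sections \ref{time change115}--\ref{time change220} of the present paper; this is why the result is stated only as a conjecture.
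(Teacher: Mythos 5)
The statement you were asked about is not proved in the paper at all: it is recorded purely as a conjecture (the generalized Ramanujan conjecture), with a pointer to Sarnak's notes and the remark that for $n=2$ it reduces to Selberg's $\frac{1}{4}$ conjecture. So there is no proof in the paper to compare against, and your decision to present a strategy sketch rather than a proof is the right reading of the situation; nothing in the paper's surrounding material (the small-eigenvalue constructions of Randol, Schoen, Brooks for non-congruence covers, the branching results of Section 4) comes close to touching the congruence/automorphic dual.

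One genuine error inside your sketch, though: your claim that the base cases $n=2$ and $n=3$ ``can be carried out unconditionally'' is false, and the paper itself flags why. The spherical complementary series of $SO(2,1)$ correspond to exceptional Laplace eigenvalues of Maass forms on congruence quotients, i.e.\ precisely Selberg's eigenvalue conjecture at the archimedean place, which is still open; Deligne and Deligne--Serre resolve the Ramanujan--Petersson conjecture at the finite places and for holomorphic forms, not the archimedean parameter of Maass forms. Likewise for $n=3$, Ramanujan for $GL_{2}$ over an imaginary quadratic field is known only in the form of partial bounds (Kim--Shahidi/Kim--Sarnak type), not in full. So the honest status is that even the smallest cases of the displayed equality are open, and the best unconditional results in every rank are the approximation-type bounds you describe in your last paragraph (Burger--Sarnak monotonicity plus known $GL_{N}$ bounds, or Luo--Rudnick--Sarnak property-$(\tau)$-style estimates). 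With that correction, your proposal accurately reflects why the paper leaves this as a conjecture.
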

 For $n=2$, it reduces to the \textit{Selberg's $1/4$ conjecture}\index{Selberg's 1/4 conjecture}. See \cite{sarnak2005notes} for more details.
\subsection{Hilbert and Sobolev structures}\label{time change117}
As mentioned in Section \ref{time change118}, for $\nu\in(-\rho,0)\cup(0,\rho)$, one may define a spherical complementary series $(\mathcal{H}_{\nu},\pi_{\nu})$. Besides, the elements $\mathcal{H}_{\nu}$ can be realized on $L^{2}(K/M)=L^{2}(S^{n-1})$. Then the norms $\|\cdot\|_{\mathcal{H}_{\nu}}$ can be obtained by
\begin{thm}[Theorem 6.2 \cite{johnson1977composition}, \cite{kostant1969existence}]\label{time change196} For $\nu\in(-\rho,\rho)$, $w=\sum_{\mathbf{m}}w_{\mathbf{m}}\in L^{2}(S^{n-1})=\bigoplus_{\mathbf{m}}\mathcal{W}_{\mathbf{m}}$, the norm $\|\cdot\|_{\pi_{\nu}}$ on $\mathcal{H}_{\nu}$ is given by
\begin{equation}\label{time change229}
  \|w\|^{2}_{\mathcal{H}_{\nu}}=\sum_{\mathbf{m}}d_{\mathbf{m}}(\nu)\|w_{\mathbf{m}}\|^{2}
\end{equation}
  where $\|w_{\mathbf{m}}\|^{2}$ is the $L^{2}$-norm, and
  \[d_{\mathbf{m}}(-\nu)=\frac{(\rho+\nu)_{\mathbf{m}}}{(\rho-\nu)_{\mathbf{m}}}=\frac{\Gamma(\rho+\nu+\mathbf{m})}{\Gamma(\rho+\nu)\Gamma(\rho-\nu+\mathbf{m})}.\]
\end{thm}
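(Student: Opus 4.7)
The plan is to realize the unitary structure on $\mathcal{H}_\nu$ through the Knapp--Stein intertwining operator
\[
A(\nu):\mathcal{H}_{-\nu}\to\mathcal{H}_{\nu},\qquad (A(\nu)f)(g)=\int_{\bar N} f(g\bar n)\,d\bar n,
\]
where $\bar N$ is the nilpotent subgroup opposite to $N$. For $\Re\nu>0$ the integral converges on $K$-finite vectors, and a standard change-of-variables argument using the Iwasawa decomposition of elements of $\bar N$ shows that $A(\nu)$ intertwines $\pi_{-\nu}$ with $\pi_\nu$; the remaining range is obtained by meromorphic continuation in $\nu$. The candidate inner product on $\mathcal{H}_{-\nu}$ is then
\[
\langle f_1,f_2\rangle_{\mathcal{H}_{-\nu}}\;\coloneqq\;\langle f_1,A(\nu)f_2\rangle_{L^2(K/M)}.
\]
It is automatically $G$-invariant because $A(\nu)$ is an intertwiner and the $L^2$ pairing is $K$-invariant; positivity for real $\nu\in(-\rho,\rho)$ will follow once the eigenvalues on $K$-types are computed and seen to be positive.

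Next I would diagonalize $A(\nu)$. Because $(\pi_\nu,\mathcal{H}_\nu)$ is spherical and multiplicity-free under $K=SO(n)$, with decomposition $\mathcal{H}_\nu=\bigoplus_{\mathbf{m}}\mathcal{W}_{\mathbf{m}}$ as in \eqref{time change193}--\eqref{time change195}, Schur's lemma forces $A(\nu)$ to act as a scalar $d_{\mathbf{m}}(-\nu)$ on each $\mathcal{W}_{\mathbf{m}}$. Pairing with \eqref{time change229} then gives
\[
\|w\|^2_{\mathcal{H}_\nu}=\sum_{\mathbf{m}}d_{\mathbf{m}}(\nu)\|w_{\mathbf{m}}\|^2.
\]
To identify $d_{\mathbf{m}}(-\nu)$ I would evaluate $A(\nu)$ on the unique $M$-fixed generator $\phi^n_{\mathbf{m}}\in\mathcal{W}_{\mathbf{m}}$ normalized by $\phi^n_{\mathbf{m}}(Y_n)=1$ (cf.\ Lemma \ref{discrete components5}), and read off the scalar by evaluating at the identity: $d_{\mathbf{m}}(-\nu)=(A(\nu)\phi^n_{\mathbf{m}})(e)/\phi^n_{\mathbf{m}}(e)$.

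The integral defining $(A(\nu)\phi^n_{\mathbf{m}})(e)$ reduces, via the rank-one Iwasawa decomposition $\bar n = k(\bar n)\,e^{H(\bar n)Y_n}\,n(\bar n)$ on $\bar N\cong\mathbf{R}^{n-1}$, to a one-variable integral of the form
\[
\int_{\mathbf{R}^{n-1}} (1+|x|^2)^{-(\rho+\nu)}\, F\!\Big(-\tfrac{\mathbf{m}}{2},-\tfrac{\mathbf{m}-1}{2};\tfrac{n-1}{2};-\tfrac{\langle x,e_n\rangle^2}{1+|x|^2-\langle x,e_n\rangle^2}\Big)\,dx,
\]
after applying Lemma \ref{discrete components5}. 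Expanding the $\prescript{}{2}F_1$ as a power series and integrating term-by-term with the standard beta-function identity on $\mathbf{R}^{n-1}$, one obtains a ratio of Gamma factors that collapses (using the Gauss summation theorem for $\prescript{}{2}F_1(a,b;c;1)$) to the announced Pochhammer quotient $(\rho+\nu)_{\mathbf{m}}/(\rho-\nu)_{\mathbf{m}}$. Positivity for $\nu\in(-\rho,\rho)$ is then immediate because both Pochhammer factors are positive on that range.

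The main obstacle I anticipate is the hypergeometric computation in the last step: keeping the dimensional constants from the Iwasawa volume factor, the spherical function $\phi^n_{\mathbf{m}}$, and the beta integral consistent with the chosen normalization of $\phi^n_{\mathbf{m}}$ is delicate, and there is a real risk of producing a correct formula up to a $\mathbf{m}$-independent constant that must then be pinned down by testing against the trivial $K$-type $\mathbf{m}=0$ (where $d_0(\nu)=1$ by construction). Everything else is representation-theoretic and, given that spherical $K$-types appear with multiplicity one, essentially forced by Schur's lemma.
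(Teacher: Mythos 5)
This theorem is imported by the paper from the literature (Theorem 6.2 of Johnson--Wallach and Kostant's work on complementary series); the paper gives no proof of its own, so there is no internal argument to compare against. Your sketch is essentially the standard proof from those references: realize the invariant Hermitian form through the Knapp--Stein intertwining operator, use the fact that each $K$-type $\mathcal{W}_{\mathbf{m}}$ occurs with multiplicity one in the spherical principal series so that Schur's lemma diagonalizes the operator with scalars $d_{\mathbf{m}}$, and then evaluate the scalar on the $M$-fixed vector $\phi^{n}_{\mathbf{m}}$ by the rank-one $\bar N$-integral, pinning down the $\mathbf{m}$-independent constant (the $c$-function factor) by normalizing on the trivial $K$-type --- exactly the fix you anticipate, and positivity on $(-\rho,\rho)$ does follow since $\rho\pm\nu>0$ makes every Pochhammer factor positive. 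One convention check: with the paper's induction convention $f(gme^{tY_{n}}n)=e^{-(\nu+\rho)t}f(g)$, the unnormalized integral $\int_{\bar N}f(g\bar n)\,d\bar n$ carries parameter $\nu$ to $-\nu$ and converges for $\Re\nu>0$, so your arrow $A(\nu):\mathcal{H}_{-\nu}\to\mathcal{H}_{\nu}$ with convergence for $\Re\nu>0$ has the direction reversed relative to that convention; this is harmless but should be stated consistently when you invoke meromorphic continuation and when you declare on which space $\langle f_{1},A(\nu)f_{2}\rangle_{L^{2}(K/M)}$ defines the inner product.
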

\begin{rem}
  Via \textit{Stirling’s formula}, we can estimate
  \begin{equation}\label{time change203}
    d_{\mathbf{m}}(-\nu)\asymp_{n,\nu}(1+\mathbf{m})^{2\nu}.
  \end{equation}
where $A\asymp B$ means there is a constant $C>0$ such that $C^{-1}B\leq A\leq CB$.
On the other hand, the norm clearly indicates that $\langle w_{\mathbf{m}_{1}},w_{\mathbf{m}_{2}}\rangle_{\mathcal{H}_{-\nu}}=0$ for $w_{\mathbf{m}_{1}}\in\mathcal{W}_{\mathbf{m}_{1}}$ and $w_{\mathbf{m}_{2}}\in\mathcal{W}_{\mathbf{m}_{2}}$. Thus  we still have the orthogonal decomposition
(\ref{time change193}):
\[\mathcal{H}_{-\nu}=\bigoplus_{\mathbf{m}} \mathcal{W}_{\mathbf{m}}.\]
\end{rem}

Having been introduced the norm (Hilbert structure) $\|\cdot\|_{\mathcal{H}_{\nu}}$ on $\mathcal{H}_{\nu}$, we can then discuss the Sobolev structure on it.
Let $\mathcal{H}$ be a unitary representation of $G$. As in \cite{flaminio2003invariant} and other related results, the Laplace-Beltrami operator $\Delta_{G}$ gives unitary representation spaces a   Sobolev structure. The \textit{Sobolev space of order $s\geq 0$}\index{Sobolev space} is the Hilbert space $ W_{G}^{s}(\mathcal{H})\subset  \mathcal{H}$ that is the maximal domain given by the inner product
\[\langle f,g \rangle_{W_{G}^{s}(\mathcal{H})}\coloneqq\langle (1+\Delta_{G})^{s}f,g\rangle\]
for $f,g\in\mathcal{H}$. Besides, the space of smooth vectors is given by
\[C^{\infty}(\mathcal{H})=\bigcap_{s\geq0}W_{G}^{s}(\mathcal{H}).\]
Denote by $\mathcal{E}^{\prime}(\mathcal{H})\coloneqq (C^{\infty}(\mathcal{H}))^{\prime}$ its distributional dual.
 Note that, when $\mathcal{H}=L^{2}(G/\Gamma)$, $W_{G}^{s}(G/\Gamma)\coloneqq W_{G}^{s}(L^{2}(G/\Gamma))$ coincides with the natural Sobolev structure on $G/\Gamma$ and hence $C^{\infty}(G/\Gamma)$ is the space of infinite differentiable functions on $G/\Gamma$.
On the other hand, for $s>0$, the distributional dual of $W_{G}^{s}(\mathcal{H})$ is the Sobolev space $W_{G}^{-s}(\mathcal{H})=(W_{G}^{s}(\mathcal{H}))^{\prime}\subset\mathcal{E}^{\prime}(\mathcal{H})$.

For an irreducible unitary representation $\mathcal{H}_{\mathbf{n},\nu}$ of $G=SO(n,1)$, the Sobolev inner product can be computed via (\ref{time change102}), (\ref{time change103}): for $f= \sum_{\mathbf{m}}f_{\mathbf{m}},g= \sum_{\mathbf{m}}g_{\mathbf{m}}\in W^{s}_{G}(\mathcal{H}_{\mathbf{n},\nu})$, we have
\begin{align}
 \langle f,g\rangle_{W_{G}^{s}(\mathcal{H}_{\mathbf{n},\nu})} =&  \langle (I+\Delta_{G})^{s}f,g\rangle_{\mathcal{H}_{\mathbf{n},\nu}}\;\nonumber\\
  =&  \sum_{\mathbf{m}}\langle (I+\Delta_{G})^{s} f_{\mathbf{m}},g\rangle_{\mathcal{H}_{\mathbf{n},\nu}}\;\nonumber\\
  =&  \sum_{\mathbf{m}}(1+c_{n}(\mathbf{n},\nu)+c_{K_{n}}(\mathbf{m}))^{s} \langle  f_{\mathbf{m}},g_{\mathbf{m}}\rangle_{\mathcal{H}_{\mathbf{n},\nu}}. \;  \label{time change107}
\end{align}
\begin{rem}
  Again, (\ref{time change107}) indicates that $\langle w_{\mathbf{m}_{1}},w_{\mathbf{m}_{2}}\rangle_{W_{G}^{s}(\mathcal{H}_{\mathbf{n},\nu})} =0$ for $w_{\mathbf{m}_{1}}\in\mathcal{W}_{\mathbf{m}_{1}}$ and $w_{\mathbf{m}_{2}}\in\mathcal{W}_{\mathbf{m}_{2}}$. Thus, we still have the orthogonal decomposition
(cf. (\ref{time change193})):
\[W_{G}^{s}(\mathcal{H}_{\mathbf{n},\nu})=\bigoplus_{\mathbf{m}} \mathcal{W}_{\mathbf{m}}.\]
\end{rem}
It is easy to estimate the coefficients
\begin{lem}\label{time change108} Let the notation and assumptions be as above. Then
\[1+c_{n}(\mathbf{n},\nu)+c_{K_{n}}(\mathbf{m})\asymp_{\mathbf{n},\nu} 1+\|\mathbf{m}\|_{\infty}^{2}\]
where $\|\mathbf{m}\|_{\infty}$ is the maximal number of $\mathbf{m}=(m_{1},\ldots,m_{k})$.
\end{lem}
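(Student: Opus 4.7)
\textbf{Proof proposal for Lemma \ref{time change108}.}

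The plan is a direct calculation based on the explicit formulas (\ref{time change102}) and (\ref{time change103}) combined with the branching constraints on the $K$-types recalled in Section \ref{time change118}. Fix $\mathbf{n}$ and $\nu$; then the quantity $c_n(\mathbf{n},\nu)$ is a constant $C(\mathbf{n},\nu)$, so it only contributes to the additive $O_{\mathbf{n},\nu}(1)$ term. The entire claim therefore reduces to controlling the $\mathbf{m}$-dependence of $c_{K_n}(\mathbf{m})$ in terms of $\|\mathbf{m}\|_\infty$.

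The key observation is that, under the interlacing rules governing $\mathbf{m}$ when $\mathbf{n}$ is fixed (namely $|m_1|\leq n_1\leq m_2\leq n_2\leq\cdots\leq m_{k-1}\leq n_{k-1}\leq m_k$ in the even case, and the analogous chain in the odd case), all entries except the last are uniformly bounded in terms of $\mathbf{n}$:
\[
 |m_i|\leq C(\mathbf{n})\quad\text{for all }i<k,\qquad \|\mathbf{m}\|_\infty=\max(|m_1|,\ldots,|m_k|)=m_k+O_{\mathbf{n}}(1)
\]
whenever $m_k$ is large. Using (\ref{time change103}), split the sum defining $c_{K_n}(\mathbf{m})$ into the contribution of the first $k-1$ entries, which is uniformly $O_{\mathbf{n}}(1)$, and the final term $m_k(m_k+2k-2)$ (or $m_k(m_k+2k-1)$ in the odd case), whose leading behavior is $m_k^2$ with a nonzero coefficient. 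Therefore
\[
 1+c_n(\mathbf{n},\nu)+c_{K_n}(\mathbf{m})=m_k^2+O_{\mathbf{n},\nu}(1+m_k),
\]
up to the sign convention for the Casimir. Dividing out by $1+m_k^2$ and invoking $\|\mathbf{m}\|_\infty\asymp_{\mathbf{n}}1+m_k$ yields the two-sided bound $\asymp_{\mathbf{n},\nu}1+\|\mathbf{m}\|_\infty^2$, completing the argument.

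I do not expect any genuine obstacle here: the interlacing constraints force all but one coordinate of $\mathbf{m}$ to be bounded in terms of $\mathbf{n}$, after which the comparison is essentially that a monic quadratic polynomial in one variable is comparable to $1$ plus the square of that variable. The only mildly delicate point is verifying that the coefficient of $m_k^2$ in (\ref{time change103}) is genuinely nonzero with the correct sign so that the displayed quantity $1+c_n+c_{K_n}$ is positive and grows, but this is automatic from the explicit formula once one unwinds the sign conventions used throughout Section \ref{time change194}.
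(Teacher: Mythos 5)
Your proof is correct, and it is exactly the argument the paper has in mind: the paper states Lemma \ref{time change108} without proof (``It is easy to estimate the coefficients''), and the intended route is precisely yours --- the interlacing branching conditions force $|m_i|\leq C(\mathbf{n})$ for all $i<k$, so $\|\mathbf{m}\|_\infty=m_k$ and the $K_n$-Casimir scalar contributes $m_k^2+O_{\mathbf{n}}(1+m_k)$, while $c_n(\mathbf{n},\nu)$ is a fixed constant. Your caveat about signs is also well placed: taken literally, the minus sign in (\ref{time change103}) together with the plus sign in (\ref{time change107}) would make $1+c_n(\mathbf{n},\nu)+c_{K_n}(\mathbf{m})$ negative for large $m_k$, so the two-sided bound holds for the intended positive eigenvalue of $1+\Delta_G$ with $\Delta_G=\Box_n-2\Box_{K_n}$ (i.e.\ up to the paper's sign and factor conventions), which is exactly what your computation establishes.
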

\subsection{Norms estimate}
In this section, we shall show that certain $G$-complementary series contains a $H$-complementary series as a discrete component. The result for $G=SO(3,1)$, $H=SO(2,1)$ has already been shown by \cite{mukunda1968unitary}. Here we adopt the  method as in \cite{zhang2011discrete} (or \cite{speh2012restriction}), and make a slight generalization. More precisely, the idea in  \cite{zhang2011discrete} is to estimate the operator norm of the projection with respect to the norms on Hilbert spaces. In the following, instead of thinking about the Hilbert norm, we make a more precise estimate for the Sobolev norm. We include the proofs to keep the paper as self-contained as possible.
\begin{thm}\label{time change116}
   Let  $n\geq 3$, $\rho^{\flat}<\nu<\rho$, $s\geq0$, $G=SO(n,1)$ and $H=SO(n-1,1)$.  Then  $(\pi^{\flat}_{\nu-\frac{1}{2}},W_{H}^{s}(\mathcal{H}_{\nu-\frac{1}{2}}^{\flat}))$ is a direct summand of $(\pi_{\nu},W_{G}^{s}(\mathcal{H}_{\nu}))$ restricted to $H$.
\end{thm}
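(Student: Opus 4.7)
The plan is to bootstrap Zhang's Hilbert-space level result \cite{zhang2011discrete} (the case $s = 0$) to arbitrary Sobolev order via an explicit $K^{\flat}$-type analysis. Recall that Zhang constructs a bounded $H$-equivariant embedding $\iota_0 : \mathcal{H}^{\flat}_{\nu-1/2} \hookrightarrow \mathcal{H}_\nu$ whose image admits a closed $H$-invariant complement. On the $K^{\flat}$-isotypic piece $\mathcal{V}_l \subset \mathcal{H}^{\flat}_{\nu-1/2}$, Lemma \ref{time change204} shows that $\iota_0$ sends $\mathcal{V}_l$ into $\bigoplus_{\mathbf{m} \geq l,\, \mathbf{m}-l \text{ even}} \widetilde{\mathcal{V}}_l \subset \mathcal{H}_\nu$ with explicit coefficients $c_{\mathbf{m},l}(\nu)$ coming from the lift $h \mapsto h(x_2,\dots,x_n)\phi^{n+2j}_{\mathbf{m}-j}(x_1)$, and Lemma \ref{discrete components5} expresses these coefficients as ratios of Gamma functions via the Gauss hypergeometric function.

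Combining (\ref{time change107}), Lemma \ref{time change108}, and the asymptotic (\ref{time change203}), the Sobolev norms decouple nicely along the $K^{\flat}$-decomposition. Namely, for $h \in \mathcal{V}_l$,
\[
\|h\|_{W^{s}_{H}(\mathcal{H}^{\flat}_{\nu-1/2})}^{2} \asymp_{\nu,s} (1+l)^{2s + 2\nu - 1}\,\|h\|_{L^{2}(S^{n-2})}^{2},
\]
while, since $\iota_0(h)$ decomposes across the $K$-types $\mathcal{W}_{\mathbf{m}}$ with $\mathbf{m} \geq l$ and $\mathbf{m} - l$ even,
\[
\|\iota_0(h)\|_{W^{s}_{G}(\mathcal{H}_\nu)}^{2} \asymp_{\nu,s} \sum_{\substack{\mathbf{m} \geq l \\ \mathbf{m} - l \text{ even}}}(1+\mathbf{m})^{2s + 2\nu}\,|c_{\mathbf{m},l}(\nu)|^{2}\,\|h\|_{L^{2}(S^{n-2})}^{2}.
\]
Thus the boundedness of $\iota_0 : W^{s}_{H}(\mathcal{H}^{\flat}_{\nu-1/2}) \hookrightarrow W^{s}_{G}(\mathcal{H}_\nu)$ and of the associated complementary projection both reduce to the single two-sided uniform estimate
\[
\sum_{\substack{\mathbf{m} \geq l \\ \mathbf{m} - l \text{ even}}}(1+\mathbf{m})^{2s + 2\nu}\,|c_{\mathbf{m},l}(\nu)|^{2} \asymp_{\nu,s} (1+l)^{2s + 2\nu - 1} \qquad \text{uniformly in } l.
\]

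To prove this, the strategy is to establish a sharp asymptotic for $|c_{\mathbf{m},l}(\nu)|^{2}$ showing that, after multiplication by $(1+\mathbf{m})^{2\nu}$, the resulting sequence is concentrated around $\mathbf{m} \sim l$ (with fast decay away from that window). Granting such concentration, the extra Sobolev weight $(1+\mathbf{m})^{2s}$ can be replaced, up to constants depending only on $\nu$ and $s$, by $(1+l)^{2s}$, and the full estimate follows from the $s = 0$ case already handled in \cite{zhang2011discrete}. The main technical obstacle is precisely this refined asymptotic: Zhang's argument at $s = 0$ only requires finite summability of the series, whereas here one needs sharper pointwise decay rates for $c_{\mathbf{m},l}(\nu)$, obtained by applying Stirling's formula to the Gamma-function expression provided by Lemma \ref{discrete components5}, much as in the derivation of (\ref{time change203}). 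Once this refined control is in hand, the Hilbert-level direct-summand property of $\iota_0$ extends by bounded continuation to the Sobolev level, yielding Theorem \ref{time change116}.
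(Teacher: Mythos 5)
There is a genuine gap, and it sits exactly at the step you flag as the ``main technical obstacle'': the claimed concentration of $(1+\mathbf{m})^{2\nu}|c_{\mathbf{m},l}(\nu)|^{2}$ near $\mathbf{m}\sim l$ is false, and with it the two-sided estimate $\sum_{\mathbf{m}}(1+\mathbf{m})^{2s+2\nu}|c_{\mathbf{m},l}|^{2}\asymp(1+l)^{2s+2\nu-1}$ to which you reduce everything. Zhang's computation (quoted in the paper as $\|\Res_{\mathbf{m},l}\|_{L^{2}}^{2}\asymp(\mathbf{m}+1)(\mathbf{m}+l+1)^{-1/2}(\mathbf{m}-l+1)^{-1/2}$, which tends to a nonzero constant as $\mathbf{m}\to\infty$ for fixed $l$) forces the coefficients of the Hilbert-level summand to satisfy, for $\mathbf{m}\gg l$, $(1+\mathbf{m})^{2\nu}|c_{\mathbf{m},l}|^{2}\asymp(1+l)^{4\nu-2}(1+\mathbf{m})^{-2\nu}$: only polynomial decay of exponent $2\nu$, with $2\nu<2\rho=n-1$ (for $n=3$, $2\nu\in(1,2)$). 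Hence $\sum_{\mathbf{m}}(1+\mathbf{m})^{2s+2\nu}|c_{\mathbf{m},l}|^{2}$ diverges as soon as $s\geq\nu-\tfrac12$, i.e.\ for every Sobolev exponent relevant to the applications (indeed for all $s\geq\tfrac12$ when $n=3$). So the fixed Hilbert-level copy of $\mathcal{H}^{\flat}_{\nu-\frac12}$ does not even have its $K^{\flat}$-isotypic vectors inside $W^{s}_{G}(\mathcal{H}_{\nu})$ for larger $s$, and no ``bounded continuation'' of $\iota_{0}$ can exist. This is precisely the phenomenon recorded in Remark \ref{time change141}: the direct summand realizing $W^{s}_{H}(\mathcal{H}^{\flat}_{\nu-\frac12})$ genuinely depends on $s$ and the summands are not nested.

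The paper avoids this by never fixing an embedding: it works with the restriction map $\Res$ at each Sobolev level, where the weights enter with the opposite (favorable) sign, so that $\|\Res_{\mathbf{m},l}\|_{\op}^{2}\asymp(1+l)^{2s+2\nu-1}(1+\mathbf{m})^{-2s-2\nu}\|\Res_{\mathbf{m},l}\|_{L^{2}}^{2}$ is summable in $\mathbf{m}$ for every $s\geq0$, uniformly in $l$ from above \emph{and} below (Lemma \ref{time change207} and Proposition \ref{time change206}). Boundedness gives continuity of $\Res:W^{s}_{G}(\mathcal{H}_{-\nu})\to W^{s}_{H}(\mathcal{H}^{\flat}_{\frac12-\nu})$, the lower bound gives surjectivity via Schur's lemma on each $K^{\flat}$-type, and then the $s$-dependent orthogonal decomposition $W^{s}_{G}(\mathcal{H}_{-\nu})=\ker(\Res)\oplus\ker(\Res)^{\perp}$ together with the open mapping theorem yields the $H$-equivariant isomorphism onto $W^{s}_{H}(\mathcal{H}^{\flat}_{\frac12-\nu})$. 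To repair your argument you would have to replace the fixed $\iota_{0}$ by an $s$-dependent section (essentially the $W^{s}$-adjoint of $\Res$), at which point you are reproducing the paper's proof rather than bootstrapping from the $s=0$ case.
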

In the following,   we   replace $\pi_{\nu}$ and $\pi_{\nu-\frac{1}{2}}^{\flat}$ by the unitarily equivalent representations $\pi_{-\nu}$ and $\pi_{\frac{1}{2}-\nu}^{\flat}$  via (\ref{time change200}) (or Section 6 \cite{johnson1977composition}), for the sake of introducing the restriction map.
First of all, we estimate the operator norm of the restriction map $\Res:W^{s}_{G}(\mathcal{H}_{-\nu})\rightarrow W^{s}_{H}(\mathcal{H}^{\flat}_{\frac{1}{2}-\nu})$ for $\nu\in(\rho^{\flat},\rho)$. By (\ref{time change193}),  (\ref{time change197}), (\ref{time change107}), we have
\begin{equation}\label{time change199}
  W^{s}_{G}(\mathcal{H}_{-\nu})=\bigoplus_{\mathbf{m}} \mathcal{W}_{\mathbf{m}} ,\ \ \ W^{s}_{H}(\mathcal{H}^{\flat}_{\frac{1}{2}-\nu})=\bigoplus_{l} \mathcal{V}_{l}.
\end{equation}
Via (\ref{time change198}), for $|l|\leq \mathbf{m}$, we consider the orthogonal projections
\[P_{\mathbf{m},l}:\mathcal{W}_{\mathbf{m}}\rightarrow \widetilde{\mathcal{V}}_{l},\ \ \ \overline{P}_{\mathbf{m},l}:\Res(\mathcal{W}_{\mathbf{m}})\rightarrow  \mathcal{V}_{l},\ \ \ \Res_{\mathbf{m},l}\coloneqq \overline{P}_{\mathbf{m},l}\Res:\mathcal{W}_{\mathbf{m}}\rightarrow \mathcal{V}_{l}.\]
Then $\Res=\sum_{\mathbf{m}}\sum_{|l|\leq \mathbf{m}}\Res_{\mathbf{m},l}$. Using  the orthogonality (\ref{time change199}), we can deduce an estimate for the operator norms via an elementary argument:
\begin{lem}\label{time change207} The operator norm  $\|\cdot\|_{\op}$ of $\Res:W^{s}_{G}(\mathcal{H}_{-\nu})\rightarrow W^{s}_{H}(\mathcal{H}^{\flat}_{\frac{1}{2}-\nu})$ satisfies
\[ \|\Res\|_{\op}^{2}=\sup_{l}\sum_{\mathbf{m}\geq|l|}\|\Res_{\mathbf{m},l}\|_{\op}^{2}.\]
\end{lem}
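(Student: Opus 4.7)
The plan is to factor $\Res$ through the orthogonal decomposition (\ref{time change199}) on both the source and target. Define $\Res^{l} : W^{s}_{G}(\mathcal{H}_{-\nu}) \to \mathcal{V}_{l}$ by $\Res^{l}(w) = \sum_{\mathbf{m} \geq |l|} \Res_{\mathbf{m},l}(w_{\mathbf{m}})$, so that $\Res = \sum_{l} \Res^{l}$. Since the $\mathcal{V}_{l}$ are pairwise orthogonal in $W^{s}_{H}(\mathcal{H}^{\flat}_{\frac{1}{2}-\nu})$, I immediately get $\|\Res(w)\|^{2} = \sum_{l} \|\Res^{l}(w)\|^{2}$. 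The next observation is that the $K^{\flat}$-equivariance (\ref{time change201}) forces $\Res$ to send the $K^{\flat}$-isotypic piece $\widetilde{\mathcal{V}}_{l'} \subset \mathcal{W}_{\mathbf{m}}$ into $\mathcal{V}_{l'}$; thus $\Res^{l}(w)$ depends only on the $l$-isotypic projection $w^{l} \coloneqq \sum_{\mathbf{m}} P_{\mathbf{m},l}(w_{\mathbf{m}})$. Combining orthogonality of the $\mathcal{W}_{\mathbf{m}}$ in $W^{s}_{G}$ with the internal orthogonal decomposition $\mathcal{W}_{\mathbf{m}} = \bigoplus_{|l| \leq \mathbf{m}} \widetilde{\mathcal{V}}_{l}$ from Lemma \ref{time change204} gives $\|w\|^{2} = \sum_{l} \|w^{l}\|^{2}$, and hence
\[ \|\Res(w)\|^{2} = \sum_{l} \|\Res^{l}(w^{l})\|^{2} \leq \bigl(\sup_{l} \|\Res^{l}\|_{\op}^{2}\bigr)\, \|w\|^{2}, \]
with equality achievable by concentrating $w$ on a single $l$-type. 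Consequently $\|\Res\|_{\op}^{2} = \sup_{l} \|\Res^{l}\|_{\op}^{2}$, and it remains to compute the $\|\Res^{l}\|_{\op}^{2}$ individually.

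Next I would compute $\|\Res^{l}\|_{\op}$ via the adjoint. A short calculation using pairwise orthogonality of the $\mathcal{W}_{\mathbf{m}}$ gives $(\Res^{l})^{*}(v) = \sum_{\mathbf{m} \geq |l|} \Res_{\mathbf{m},l}^{*}(v)$, with each summand landing in the distinct subspace $\mathcal{W}_{\mathbf{m}}$, so that
\[ \|(\Res^{l})^{*}(v)\|^{2} = \sum_{\mathbf{m} \geq |l|} \|\Res_{\mathbf{m},l}^{*}(v)\|^{2}. \]
Because $\|\Res^{l}\|_{\op} = \|(\Res^{l})^{*}\|_{\op}$, it then suffices to verify that $\|\Res_{\mathbf{m},l}^{*}(v)\| = \|\Res_{\mathbf{m},l}\|_{\op}\, \|v\|$ holds for \emph{every} $v \in \mathcal{V}_{l}$, not merely as a bound.

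The central point, and the main step that requires care, is this uniform identity for $\Res_{\mathbf{m},l}^{*}$. I would derive it from two Schur-type observations: (a) $K^{\flat}$-equivariance of $\Res$ forces $\Res_{\mathbf{m},l}$ to vanish on $\widetilde{\mathcal{V}}_{l'}$ for $l' \neq l$ and to restrict on $\widetilde{\mathcal{V}}_{l}$ to a $K^{\flat}$-intertwiner between two irreducible $K^{\flat}$-representations of the same highest weight, so, after complexification if necessary, it is a scalar multiple of the canonical identification from Lemma \ref{time change204}; (b) any two $K^{\flat}$-invariant Hilbert inner products on an irreducible unitary $K^{\flat}$-representation are proportional, so the canonical identification $\widetilde{\mathcal{V}}_{l} \to \mathcal{V}_{l}$ is itself a scalar multiple of an isometry for the ambient Sobolev norms. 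Hence $\Res_{\mathbf{m},l}$ is a scalar multiple of an isometric embedding (extended by zero on the orthogonal complement), and its adjoint has the analogous structure, giving $\|\Res_{\mathbf{m},l}^{*}(v)\| = \|\Res_{\mathbf{m},l}\|_{\op}\, \|v\|$ identically in $v$. Feeding this back yields $\|\Res^{l}\|_{\op}^{2} = \sum_{\mathbf{m} \geq |l|} \|\Res_{\mathbf{m},l}\|_{\op}^{2}$, and combining with the first paragraph completes the proof.
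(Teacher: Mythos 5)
Your proposal is correct and follows essentially the same route as the paper: both rest on the orthogonality of the $K^{\flat}$-isotypic pieces in (\ref{time change199})--(\ref{time change198}) and on Schur's lemma making each $\Res_{\mathbf{m},l}$ (hence $\Res_{\mathbf{m},l}^{\ast}$) a scalar multiple of an isometry, with the adjoint computation supplying the exact value. The only cosmetic difference is that you package the upper bound through the block operators $\Res^{l}$ acting on the isotypic projections $w^{l}$, whereas the paper gets the same inequality by Cauchy--Schwarz and proves sharpness separately via $\Res^{\ast}$.
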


\begin{proof}
  This is exactly Lemma 3.2 \cite{zhang2011discrete}. Recall Lemma \ref{time change204} and fix arbitrarily $w=\sum_{\mathbf{m}}\sum_{\mathbf{m}\geq|l|}P_{\mathbf{m},l}w_{\mathbf{m}}\in W^{s}_{G}(\mathcal{H}_{-\nu})$.
  Then by \textit{Cauchy-Schwarz inequality}, we have
  \begin{align}
 \|\Res w\|^{2}_{W^{s}_{H}(\mathcal{H}^{\flat}_{\frac{1}{2}-\nu})} =&   \sum_{l}\|\sum_{\mathbf{m}\geq|l|}\Res_{\mathbf{m},l} P_{\mathbf{m},l} w\|^{2}_{W^{s}_{H}(\mathcal{H}^{\flat}_{\frac{1}{2}-\nu})} \;\nonumber\\
  \leq&  \sum_{l}\left(\sum_{\mathbf{m}\geq|l|}\|\Res_{\mathbf{m},l}\|_{\op}\|P_{\mathbf{m},l} w\|_{W^{s}_{G}(\mathcal{H}_{-\nu})} \right)^{2}\;\nonumber\\
  \leq&  \sum_{l}\left(\sum_{\mathbf{m}\geq|l|}\|\Res_{\mathbf{m},l}\|_{\op}^{2}\right)\left(\sum_{\mathbf{m}\geq|l|}\|P_{\mathbf{m},l} w\|_{W^{s}_{G}(\mathcal{H}_{-\nu})}^{2} \right)\;\nonumber\\
  =& \left(\sup_{l}\sum_{\mathbf{m}\geq|l|}\|\Res_{\mathbf{m},l}\|_{\op}^{2}\right)\cdot\| w\|_{W^{s}_{G}(\mathcal{H}_{-\nu})}^{2}. \;  \nonumber
\end{align}
On the other hand, since $\Res$ is $H$- (or $K^{\flat}$-) equivariant, so is $\Res^{\ast}$. But then each $\Res_{\mathbf{m},l}^{\ast}$ is a scalar constant of an isometry operator by \textit{Schur's lemma}. Thus, for any $v\in\mathcal{V}_{l}$, we have
\begin{multline}
  \| \Res^{\ast} v\|^{2}_{W^{s}_{G}(\mathcal{H}_{-\nu})} =   \|\sum_{|l|\leq\mathbf{m}} \Res_{\mathbf{m},l}^{\ast} v\|^{2}_{W^{s}_{G}(\mathcal{H}_{-\nu})}
  = \sum_{|l|\leq\mathbf{m}} \|\Res_{\mathbf{m},l}^{\ast} v\|^{2}_{W^{s}_{G}(\mathcal{H}_{-\nu})} \\
=\sum_{|l|\leq\mathbf{m}} \|\Res_{\mathbf{m},l}^{\ast} \|^{2}_{\op}\|v\|^{2}_{W^{s}_{H}(\mathcal{H}^{\flat}_{\frac{1}{2}-\nu})}
  =  \left(\sum_{|l|\leq\mathbf{m}} \|\Res_{\mathbf{m},l} \|^{2}_{\op}\right)\cdot\|v\|^{2}_{W^{s}_{H}(\mathcal{H}^{\flat}_{\frac{1}{2}-\nu})}.\nonumber
\end{multline}
The consequence follows.
\end{proof}

Thus, we want to estimate the operator norm of $\Res_{\mathbf{m},l}$. With the help of the harmonic analysis, we can obtain the operator norm in $L^{2}$-sense.
\begin{lem}[Proposition 3.4 \cite{zhang2011discrete}]
   Let the notation and assumptions be as above. Then for $\mathbf{m}-l$ even, the $(L^{2}(S^{n-1}),L^{2}(S^{n-2}))$-norm of $\Res_{\mathbf{m},l}:\mathcal{W}_{\mathbf{m}}\rightarrow\mathcal{V}_{l}$ is given by
   \[\|\Res_{\mathbf{m},l}\|_{L^{2}}^{2}=\frac{(2\mathbf{m}+n-2)\Gamma(\frac{n}{2})\Gamma(\frac{n+\mathbf{m}+l-2}{2})\Gamma(\frac{\mathbf{m}-l+1}{2})}{\Gamma(\frac{n-1}{2})\Gamma(\frac{1}{2})\Gamma(\frac{\mathbf{m}-l+2}{2})\Gamma(\frac{n+\mathbf{m}+l-1}{2})}\asymp_{n,\nu}\frac{\mathbf{m}+1}{(\mathbf{m}+l+1)^{\frac{1}{2}}(\mathbf{m}-l+1)^{\frac{1}{2}}}.\]
\end{lem}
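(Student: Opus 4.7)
The plan is to compute the operator norm explicitly using the embedding from Lemma \ref{time change204}, combined with Schur's lemma and the orthogonality of Jacobi polynomials.

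First, since $\Res_{\mathbf{m},l}: \mathcal{W}_{\mathbf{m}} \to \mathcal{V}_l$ is $K^{\flat}$-equivariant and both $\mathcal{V}_l$ and $\widetilde{\mathcal{V}}_l$ are irreducible $K^{\flat}$-modules, by Schur's lemma the restriction of $\Res_{\mathbf{m},l}$ to $\widetilde{\mathcal{V}}_l$ is a scalar multiple of an isometry (and it vanishes on $(\widetilde{\mathcal{V}}_l)^{\perp} \cap \mathcal{W}_{\mathbf{m}}$). So the operator norm is determined by testing on a single nonzero vector in $\widetilde{\mathcal{V}}_l$, and I only need to compute the ratio of the relevant $L^{2}$-norms.

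Next I would pick any $h \in \mathcal{V}_l$ and form $\iota(h)(x) \coloneqq h(x_2,\ldots,x_n)\,\phi^{n+2l}_{\mathbf{m}-l}(x_1) \in \widetilde{\mathcal{V}}_l \subset \mathcal{W}_{\mathbf{m}}$ via Lemma \ref{time change204}. Restricting to $x_1 = 0$ gives $\Res(\iota(h)) = \phi^{n+2l}_{\mathbf{m}-l}(0)\,h(x_2,\ldots,x_n)$, so that
\[
\|\Res_{\mathbf{m},l}\|_{L^{2}}^{2} \;=\; \frac{|\phi^{n+2l}_{\mathbf{m}-l}(0)|^{2}\,\|h\|_{L^{2}(S^{n-2})}^{2}}{\|\iota(h)\|_{L^{2}(S^{n-1})}^{2}}.
\]
The numerator comes from Lemma \ref{discrete components5} evaluated at $\xi = \pi/2$: the hypergeometric series $F(-\tfrac{\mathbf{m}-l}{2},-\tfrac{\mathbf{m}-l-1}{2},\tfrac{n-1}{2}+l,\cdot)$ at a specific argument (or rather, the leading $\cos^{\mathbf{m}-l}\xi$ factor combined with the truncating hypergeometric) yields an explicit Gamma-function quotient; the parity condition "$\mathbf{m}-l$ even" ensures this value is nonzero.

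For the denominator, I would parametrize $S^{n-1}$ by $x_1 = \cos\xi$ and $(x_2,\ldots,x_n) = \sin\xi \cdot \omega$ with $\omega \in S^{n-2}$, so the surface measure factors as $\sin^{n-2}\xi\, d\xi\, d\omega$. Since $h$ is homogeneous of degree $l$, $h(\sin\xi\cdot\omega) = \sin^{l}\xi \cdot h(\omega)$, and Fubini gives
\[
\|\iota(h)\|_{L^{2}(S^{n-1})}^{2} = \|h\|_{L^{2}(S^{n-2})}^{2} \cdot \int_{0}^{\pi} |\phi^{n+2l}_{\mathbf{m}-l}(\cos\xi)|^{2}\,\sin^{2l+n-2}\xi\, d\xi.
\]
The integral reduces, after $t = \cos\xi$, to the standard orthogonality/normalization integral for Jacobi polynomials $P_{(\mathbf{m}-l)/2}^{(\alpha,\beta)}$ with weight $(1-t^{2})^{l+\frac{n-3}{2}}$, which produces another explicit Gamma expression. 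Combining these two Gamma quotients yields the stated closed form. The asymptotic $\asymp_{n,\nu} \frac{\mathbf{m}+1}{(\mathbf{m}+l+1)^{1/2}(\mathbf{m}-l+1)^{1/2}}$ is then immediate from Stirling's formula applied to the four Gamma factors in the numerator/denominator.

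The main technical obstacle is Step 4: identifying $\phi^{n+2l}_{\mathbf{m}-l}/\cos^{\mathbf{m}-l}\xi$ with the correct classical Jacobi polynomial so that the orthogonality relation applies with matched parameters, and then bookkeeping the various Gamma factors from both the hypergeometric at $\xi=\pi/2$ and the Jacobi normalization. Nothing is deep here, but the combinatorics of indices $(n, \mathbf{m}, l)$ must line up exactly to yield the claimed Gamma quotient; this is essentially the computation in Proposition 3.4 of \cite{zhang2011discrete}, which I would follow.
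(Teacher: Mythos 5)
The paper does not actually prove this lemma --- it is quoted directly from Proposition 3.4 of \cite{zhang2011discrete} --- and your reconstruction is exactly the computation carried out there: Schur's lemma reduces the operator norm to a single test vector in $\widetilde{\mathcal{V}}_{l}$, the embedding of Lemma \ref{time change204} together with the factorization of the spherical measure ($x_{1}=\cos\xi$, $d\sigma_{n-1}=\sin^{n-2}\xi\,d\xi\,d\sigma_{n-2}$) reduces everything to the one-dimensional integrals of $\phi^{n+2l}_{\mathbf{m}-l}$, and Stirling gives the stated asymptotics. The only two points to treat with a little care are the normalization of the measures (the factor $\Gamma(\frac{n}{2})/(\Gamma(\frac{n-1}{2})\Gamma(\frac{1}{2}))$ in the closed formula is precisely the area ratio $|S^{n-2}|/|S^{n-1}|$, so normalized surface measures are implicit) and the value $\phi^{n+2l}_{\mathbf{m}-l}(0)$, which in Johnson's normalization of Lemma \ref{discrete components5} appears as a $0\cdot\infty$ limit at $\xi=\pi/2$ and is most cleanly read off from the Gegenbauer/Jacobi polynomial value at the origin (using $\mathbf{m}-l$ even); both are routine bookkeeping, so your proposal is correct and coincides with the cited argument.
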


Then by Theorem \ref{time change196} (and (\ref{time change203})), the $(\mathcal{H}_{-\nu},\mathcal{H}^{\flat}_{\frac{1}{2}-\nu})$-norm of $\Res_{\mathbf{m},l}:\mathcal{W}_{\mathbf{m}}\rightarrow\mathcal{V}_{l}$ is given by
\[ \|\Res_{\mathbf{m},l}\|_{\mathcal{H}}^{2}=\frac{d_{l}^{\flat}(\frac{1}{2}-\nu)}{d_{\mathbf{m}}(-\nu)}\|\Res_{\mathbf{m},l}\|_{L^{2}}^{2}\asymp_{n,\nu}\frac{(1+l)^{2\nu-1}}{(1+\mathbf{m})^{2\nu}}\|\Res_{\mathbf{m},l}\|_{L^{2}}^{2}\]
where $d_{l}^{\flat}(\frac{1}{2}-\nu)=\frac{(\rho-\frac{1}{2}+\nu)_{l}}{(\rho+\frac{1}{2}-\nu)_{l}}$ denotes the coefficients in (\ref{time change229}) for $\mathcal{H}^{\flat}_{\frac{1}{2}-\nu}$.
Finally, by (\ref{time change107}) (and Lemma \ref{time change108}),  the $(W_{G}^{s}(\mathcal{H}_{-\nu}),W_{H}^{s}(\mathcal{H}^{\flat}_{\frac{1}{2}-\nu}))$-norm of $\Res_{\mathbf{m},l}:\mathcal{W}_{\mathbf{m}}\rightarrow\mathcal{V}_{l}$ is given by
\[ \|\Res_{\mathbf{m},l}\|_{\op}^{2}\asymp_{n,\nu} \frac{(1+l)^{2s}}{(1+\mathbf{m})^{2s}}\|\Res_{\mathbf{m},l}\|_{\mathcal{H}}^{2}.\]
 Thus, we conclude
 \begin{prop}\label{time change206}
    There is a constant $C=C(n,\nu,s)>1$ such that
    \[C^{-1}\leq\inf_{l}\sum_{\mathbf{m}\geq|l|}\|\Res_{\mathbf{m},l}\|_{\op}^{2}\leq\sup_{l}\sum_{\mathbf{m}\geq|l|}\|\Res_{\mathbf{m},l}\|_{\op}^{2}\leq C\]
 \end{prop}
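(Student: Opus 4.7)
The plan is to combine the three successive norm comparisons established just above the statement and then reduce the proposition to bounding a concrete numerical series uniformly in $l$. Substituting the $L^{2}$-norm formula from Proposition 3.4 of \cite{zhang2011discrete} into the chain of comparisons via Theorem \ref{time change196}, (\ref{time change203}), (\ref{time change107}), and Lemma \ref{time change108} yields
\[
\|\Res_{\mathbf{m},l}\|_{\op}^{2} \asymp_{n,\nu,s} \frac{(1+l)^{2s+2\nu-1}}{(1+\mathbf{m})^{2s+2\nu-1}(\mathbf{m}+l+1)^{1/2}(\mathbf{m}-l+1)^{1/2}}
\]
whenever $|l|\leq\mathbf{m}$ and $\mathbf{m}-l$ is even, with $\Res_{\mathbf{m},l}=0$ otherwise by Lemma \ref{time change204}. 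Writing $\mathbf{m}=|l|+2j$ for $j\geq 0$, the task is to show that $S(l):=\sum_{j\geq 0} a_{l,j}$ satisfies $C^{-1}\leq S(l)\leq C$ for some $C=C(n,\nu,s)$ and every $l$, where $a_{l,j}$ is the quantity displayed above.

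I would then split the sum into a head $0\leq j\leq |l|$ and a tail $j>|l|$. On the head, one has $1+\mathbf{m}\asymp 1+|l|$ and $\mathbf{m}+l+1\asymp 1+|l|$, so the powers of $1+|l|$ cancel and each summand reduces to $\asymp(1+|l|)^{-1/2}(1+j)^{-1/2}$; summing in $j$ up to $|l|$ is comparable to $(1+|l|)^{-1/2}\int_{0}^{|l|}(1+x)^{-1/2}\,dx\asymp 1$, uniformly in $l$. On the tail, $\mathbf{m}\asymp j$ and the summand is $\asymp(1+|l|)^{2s+2\nu-1}\,j^{-(2s+2\nu)}$; since the hypothesis $\nu>\rho^{\flat}=(n-2)/2\geq 1/2$ (valid for $n\geq 3$) combined with $s\geq 0$ gives $2s+2\nu>1$, the tail sums to $\asymp(1+|l|)^{2s+2\nu-1}\cdot(1+|l|)^{1-(2s+2\nu)}\asymp 1$. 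Together these supply the uniform upper bound, and the head estimate alone (or, when $l=0$, the explicit sum $\sum_{k\geq 0}(1+2k)^{-(2s+2\nu)}$) supplies the uniform positive lower bound.

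The main subtlety is the balancing act: both the prefactor $(1+l)^{2s+2\nu-1}$ and the denominator $(1+\mathbf{m})^{-(2s+2\nu-1)}$ must conspire with the $(j+1)^{-1/2}$ factor produced by the parity-constrained reparametrization to yield a scale-invariant constant, in both the head and the tail. The parity condition $\mathbf{m}\equiv l\pmod 2$ is essential here: it ensures that the smallest permissible value of $\mathbf{m}-l+1$ is $1$ (attained at $\mathbf{m}=|l|$), so no individual term in $S(l)$ is actually singular. The convergence condition $2\nu>1$, forced by $\nu>\rho^{\flat}$ for $n\geq 3$, is exactly what is required to make the tail summable with the right power of $|l|$ to cancel against the prefactor.
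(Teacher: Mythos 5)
Your proposal is correct and takes essentially the same route as the paper: the identical chain of comparisons (Theorem \ref{time change196}, (\ref{time change203}), (\ref{time change107}), Lemma \ref{time change108}, and the $L^{2}$-norm formula) reduces $\sum_{\mathbf{m}\geq|l|}\|\Res_{\mathbf{m},l}\|_{\op}^{2}$ to the series with summand $\asymp (1+l)^{2\nu+2s-1}(1+\mathbf{m})^{-(2\nu+2s-1)}\left((\mathbf{m}+l+1)(\mathbf{m}-l+1)\right)^{-1/2}$ over $\mathbf{m}\geq|l|$ with $\mathbf{m}-l$ even. The only difference is the final elementary step: you obtain the uniform two-sided bound by splitting the reparametrized sum at $j=|l|$, while the paper uses an integral test with the substitution $k=xl$; both hinge on exactly the same convergence condition $2\nu+2s>1$ forced by $\nu>\rho^{\flat}\geq\tfrac{1}{2}$.
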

 \begin{proof}
    By Lemma \ref{time change204} and the above estimates, for $\nu\in(\rho^{\flat},\rho)$, $2k=\mathbf{m}-l$, we have
\begin{align}
\sum_{\mathbf{m}\geq|l|}\|\Res_{\mathbf{m},l}\|_{\op}^{2}=&  \sum_{  \substack{\mathbf{m}\geq|l|\\
\mathbf{m}-l\text{ even}}}\|\Res_{\mathbf{m},l}\|_{\op}^{2} \;\nonumber\\
  \asymp&_{n,\nu}    \sum_{  \substack{\mathbf{m}\geq|l|\\
\mathbf{m}-l\text{ even}}}\frac{(1+l)^{2s}}{(1+\mathbf{m})^{2s}}\|\Res_{\mathbf{m},l}\|_{\mathcal{H}}^{2}\;\nonumber\\
\asymp&_{n,\nu}    \sum_{  \substack{\mathbf{m}\geq|l|\\
\mathbf{m}-l\text{ even}}}\frac{(1+l)^{2\nu+2s-1}}{(1+\mathbf{m})^{2\nu+2s}}\|\Res_{\mathbf{m},l}\|_{L^{2}}^{2}\;\nonumber\\
\asymp&_{n,\nu}    \sum_{  \substack{\mathbf{m}\geq|l|\\
\mathbf{m}-l\text{ even}}}\frac{(1+l)^{2\nu+2s-1}}{(1+\mathbf{m})^{2\nu+2s}}\frac{\mathbf{m}+1}{(\mathbf{m}+l+1)^{\frac{1}{2}}(\mathbf{m}-l+1)^{\frac{1}{2}}}\;\nonumber\\
  =& \sum_{ k\geq0}\frac{(1+l)^{2\nu+2s-1}}{(1+l+2k)^{2\nu+2s}}\frac{1+l+2k}{(1+2l+2k)^{\frac{1}{2}}(1+2k)^{\frac{1}{2}}}. \;  \label{time change205}
\end{align}
  It remains to show that (\ref{time change205}) is controlled by   constants independent of $l\geq0$. This can be done by the standard integral test. More precisely, the series is controlled by the first term
  \[\frac{(1+l)^{2\nu+2s-1}}{(1+l)^{2\nu+2s}}\frac{1+l}{(1+2l)^{\frac{1}{2}}}=\frac{1}{\sqrt{2l+1}}\]
  and the integral
  \[\int_{0}^{\infty}\frac{(1+l)^{2\nu+2s-1}}{(1+l+2k)^{2\nu+2s}}\frac{1+l+2k}{(1+2l+2k)^{\frac{1}{2}}(1+2k)^{\frac{1}{2}}}dk.\]
 The first term is bounded above by a constant independent of $l$. For the integral, we change the variable from $k$ to $xl$ and obtain
 \begin{align}
  &\overset{k=xl}{=\joinrel=}    \int_{0}^{\infty}\frac{(1+l)^{2\nu+2s-1}}{(1+l+2lx)^{2\nu+2s}}\frac{1+l+2lx}{(1+2l+2lx)^{\frac{1}{2}}(1+2lx)^{\frac{1}{2}}}ldx\;\nonumber\\
 &\asymp_{n,\nu,s}    \int_{0}^{\infty}\frac{1}{(1+2x)^{2\nu+2s-1}} \frac{1}{\sqrt{(2+2x)\cdot 2x}}dx.  \;  \nonumber
\end{align}
Since $\nu>\rho^{\flat}\geq \frac{1}{2}$, the latter integral is finite (and independent of $l$). This is already enough for Proposition \ref{time change206}.
 \end{proof}

 Finally, using  the \textit{open mapping theorem}\index{open mapping theorem}, we  are able to prove Theorem \ref{time change116}:

 \begin{proof}[Proof of Theorem \ref{time change116}]
   By (\ref{time change200}), we can replace $\pi_{\nu}$ and $\pi_{\nu-\frac{1}{2}}^{\flat}$ by the unitarily equivalent representations $\pi_{-\nu}$ and $\pi_{\frac{1}{2}-\nu}^{\flat}$. Since $\sup_{l}\sum_{\mathbf{m}\geq|l|}\|\Res_{\mathbf{m},l}\|_{\op}^{2}$ is bounded above and equal to the operator norm of $\Res$ by Lemma \ref{time change207}, we conclude that the restriction map $\Res:W^{s}_{G}(\mathcal{H}_{-\nu})\rightarrow W^{s}_{H}(\mathcal{H}_{\frac{1}{2}-\nu}^{\flat})$ is continuous.

   On the other hand, since $\inf_{l}\sum_{\mathbf{m}\geq|l|}\|\Res_{\mathbf{m},l}\|_{\op}^{2}$ is bounded below, we  may then deduce that $\Res:W^{s}_{G}(\mathcal{H}_{-\nu})\rightarrow W^{s}_{H}(\mathcal{H}_{\frac{1}{2}-\nu}^{\flat})$ is surjective. More precisely, assume that $v=\sum_{l}v_{l}\in W^{s}_{H}(\mathcal{H}_{\frac{1}{2}-\nu}^{\flat})$. Then fix $l$, and we conclude from Lemma \ref{time change204} that $\Res_{\mathbf{m},l}|_{\widetilde{\mathcal{V}}_{l}}$ are isomorphisms for all $\mathbf{m}\geq |l|$ and $\mathbf{m}-l$ even. For simplicity, the sums $\sum$ are all over $\mathbf{m}$ with $\mathbf{m}\geq |l|$ and $\mathbf{m}-l$ even. Let $u_{l}=\sum c^{\mathbf{m}}_{l}u^{\mathbf{m}}_{l}$ where
   \[u^{\mathbf{m}}_{l}\coloneqq (\Res_{\mathbf{m},l}|_{\widetilde{\mathcal{V}}_{l}})^{-1}v_{l},\ \ \ c^{\mathbf{m}}_{l}\coloneqq\frac{\|v_{l}\|_{W^{s}_{H}(\mathcal{H}_{\frac{1}{2}-\nu}^{\flat})}}{\sum\|\Res_{\mathbf{m},l}\|_{\op}^{2}}\cdot\frac{\|\Res_{\mathbf{m},l}\|_{\op}}{\|u^{\mathbf{m}}_{l}\|_{W^{s}_{G}(\mathcal{H}_{-\nu})}}.\]
   Then
   \begin{equation}\label{time change208}
    \Res u_{l}=\sum \Res_{\mathbf{m},l} c^{\mathbf{m}}_{l}u^{\mathbf{m}}_{l}=\sum  c^{\mathbf{m}}_{l}v_{l}.
   \end{equation}
Since $\Res_{\mathbf{m},l}|_{\widetilde{\mathcal{V}}_{l}}$ is a $K^{\flat}$-equivariant isomorphism, by \textit{Schur's lemma}, we know that $\Res_{\mathbf{m},l}|_{\widetilde{\mathcal{V}}_{l}}$ is a scalar constant of an isometry operator. Then we get
\begin{align}
\|\Res u_{l}\|_{W^{s}_{H}(\mathcal{H}_{\frac{1}{2}-\nu}^{\flat})}=&  \sum\| \Res_{\mathbf{m},l} c^{\mathbf{m}}_{l}u^{\mathbf{m}}_{l}\|_{W^{s}_{H}(\mathcal{H}_{\frac{1}{2}-\nu}^{\flat})}  \;\nonumber\\
=&  \sum\| \Res_{\mathbf{m},l}\|_{\op} \|c^{\mathbf{m}}_{l}u^{\mathbf{m}}_{l}\|_{W^{s}_{G}(\mathcal{H}_{-\nu})} \;\nonumber\\
  =& \sum\| \Res_{\mathbf{m},l}\|_{\op}\cdot \frac{\|v_{l}\|_{W^{s}_{H}(\mathcal{H}_{\frac{1}{2}-\nu}^{\flat})}}{\sum\|\Res_{\mathbf{m},l}\|_{\op}^{2}}\cdot\frac{\|\Res_{\mathbf{m},l}\|_{\op}}{\|u^{\mathbf{m}}_{l}\|_{W^{s}_{G}(\mathcal{H}_{-\nu})}}\cdot \|u^{\mathbf{m}}_{l}\|_{W^{s}_{G}(\mathcal{H}_{-\nu})} \;\nonumber\\
  =&  \|v_{l}\|_{W^{s}_{H}(\mathcal{H}_{\frac{1}{2}-\nu}^{\flat})} .\;  \label{time change209}
\end{align}
 Thus, $\Res u_{l}= v_{l}$ by (\ref{time change208}) and (\ref{time change209}). On the other hand, one has
\begin{align}
\|\Res u_{l}\|^{2}_{W^{s}_{H}(\mathcal{H}_{\frac{1}{2}-\nu}^{\flat})} =&  \left(\sum\| \Res_{\mathbf{m},l}\|_{\op} \|c^{\mathbf{m}}_{l}u^{\mathbf{m}}_{l}\|_{W^{s}_{G}(\mathcal{H}_{-\nu})}\right)^{2} \;\nonumber\\
  =&\sum\| \Res_{\mathbf{m},l}\|_{\op}^{2} \sum\|c^{\mathbf{m}}_{l}u^{\mathbf{m}}_{l}\|_{W^{s}_{G}(\mathcal{H}_{-\nu})}^{2}\;\nonumber\\
  =&\sum\| \Res_{\mathbf{m},l}\|_{\op}^{2}  \| u_{l} \|_{W^{s}_{G}(\mathcal{H}_{-\nu})}^{2}\;\nonumber\\
  \geq& \inf_{l}\left(\sum\| \Res_{\mathbf{m},l}\|_{\op}^{2} \right) \| u_{l} \|_{W^{s}_{G}(\mathcal{H}_{-\nu})}^{2}.\; \label{time change210}
\end{align}
Combining  (\ref{time change210}) with (\ref{time change209}), we get
\begin{align}
 \|v\|^{2}_{W^{s}_{H}(\mathcal{H}_{\frac{1}{2}-\nu}^{\flat})} =\sum_{l}v_{l}\|v_{l}\|^{2}_{W^{s}_{H}(\mathcal{H}_{\frac{1}{2}-\nu}^{\flat})} =& \sum_{l}\|\Res u_{l}\|^{2}_{W^{s}_{H}(\mathcal{H}_{\frac{1}{2}-\nu}^{\flat})} \;\nonumber\\
  \geq& \inf_{l}\left(\sum\| \Res_{\mathbf{m},l}\|_{\op}^{2} \right) \sum_{l} \| u_{l} \|_{W^{s}_{G}(\mathcal{H}_{-\nu})}^{2}.\;\nonumber
\end{align}
Thus, $u\coloneqq\sum_{l}   u_{l}\in W^{s}_{G}(\mathcal{H}_{-\nu})$ is well defined and satisfies $\Res u=v$, which proves the surjectivity.

    Now the orthogonal decomposition implies $W^{s}_{G}(\mathcal{H}_{-\nu})=\ker(\Res)\oplus\ker(\Res)^{\perp}$. It induces a continuous $H$-equivariant bijection
   \[\ker(\Res)^{\perp}\rightarrow W^{s}_{H}(\mathcal{H}_{\frac{1}{2}-\nu}^{\flat}).\]
   The consequence then follows from the  \textit{open mapping theorem}.
 \end{proof}

\section{Effective estimates for ergodic averages}\label{time change220}
In Section \ref{time change194}, we see that it is possible to find a cocompact lattice $\Gamma\subset G$ such that $L^{2}(G/\Gamma)$ contains a complementary series $\mathcal{H}_{\ast}$ of $G=SO(n,1)$  with spectral parameter  $\tilde{\nu}\in(\rho_{n-1},\rho_{n})$ as a direct summand. We write the orthogonal decomposition
 \[\mathcal{H}=\mathcal{H}_{\ast}\oplus\mathcal{H}_{\ast}^{\perp}.\]
       Let $H=SO(2,1)$. When we study the $H$-action on $\mathcal{H}_{\ast}$,   by repeatedly using Theorem \ref{time change116}, there is an $H$-complementary series $\mathcal{H}_{\nu}$ with $\nu=\tilde{\nu}-\rho_{n-1}\in(0,\frac{1}{2})$ such that for any $r\geq0$, we have
 \[ W^{r}_{G}(\mathcal{H}_{\ast})=W_{1}^{r,\nu}\oplus W_{2}^{r,\nu}.\]
 where the restriction map $\Res:W_{1}^{r,\nu} \rightarrow W^{r}_{H}(\mathcal{H}_{\nu})$ is $H$-equivariant   isomorphism.
  Then, for $r\in\mathbf{R}$, we further have the following decomposition
  \begin{equation}\label{time change122}
   W^{r}_{G}(G/\Gamma)=W_{1}^{r,\nu}\oplus W_{2}^{r,\nu} \oplus W^{r}_{G}(\mathcal{H}_{\ast}^{\perp}).
  \end{equation}
  \begin{rem}\label{time change141}
    It needs not be true that $W_{1}^{r,\nu}\subset W_{1}^{t,\nu}$ for $r>t$.
  \end{rem}

    Later, we want to make sure that some specific elements in $W^{r}(G/\Gamma)$ are bounded on $G/\Gamma$.   As $G/\Gamma$ is compact, we only need to verify that they are continuous, which can be done by \textit{Sobolev embedding theorem}\index{Sobolev embedding theorem}.
\begin{lem}[Sobolev embedding theorem]\label{time change126}
 For $r>r_{0}\coloneqq\dim(G/\Gamma)/2$, there is a constant $C=C(G/\Gamma)>0$ such that
  \[|f(x)| < C\|f\|_{W^{r}_{G}}\]
  for any $f\in W^{r}_{G}(G/\Gamma)$ and $x\in G/\Gamma$.
\end{lem}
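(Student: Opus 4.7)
The plan is to reduce this classical Sobolev embedding to its standard Euclidean counterpart via a partition of unity, using the compactness of $G/\Gamma$ and the ellipticity of $1+\Delta_G$.

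First, I would fix a basis $\{X_1,\dots,X_N\}$ of $\mathfrak{g}$ (with $N=\dim G=\dim(G/\Gamma)$) adapted to a bi-$K$-invariant inner product so that the associated operator $\mathcal{L}\coloneqq -\sum_i X_i^2$, acting on $C^\infty(G/\Gamma)$ by right-invariant vector fields, is elliptic of order $2$. Up to a bounded perturbation by elements of lower order coming from the $K$-part versus the $\mathfrak{p}$-part (compare the definition $\Delta = \Box_n - 2\Box_{K_n}$ in Section~\ref{time change194}), the Sobolev norm $\|f\|_{W_G^r}^2 = \langle (1+\Delta_G)^r f, f\rangle$ is equivalent to $\|(1+\mathcal{L})^{r/2} f\|_{L^2}^2$. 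This is the essential point where ellipticity enters: by Gårding's inequality on the compact manifold $G/\Gamma$, for integer $r$ this norm is equivalent to $\sum_{|\alpha|\leq r}\|X^\alpha f\|_{L^2}^2$, and by interpolation the same holds for all real $r\geq 0$.

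Next I would cover $G/\Gamma$ by finitely many charts $U_1,\dots,U_\ell$ obtained as diffeomorphic images of small balls in $\mathbf{R}^N$ via the exponential map (this is possible by compactness of $G/\Gamma$, after passing to an injectivity-radius-sized neighborhood in $G$ and pushing down). Choose a smooth partition of unity $\{\chi_i\}$ subordinate to this cover. For $f\in W_G^r(G/\Gamma)$, each $\chi_i f$ is compactly supported in $U_i$, and by the norm equivalence above combined with the fact that multiplication by $\chi_i\in C^\infty_c$ is bounded on $W^r$, we obtain
\[
 \|\chi_i f\|_{H^r(\mathbf{R}^N)} \leq C_i\, \|f\|_{W_G^r(G/\Gamma)}
\]
where $H^r(\mathbf{R}^N)$ denotes the standard Euclidean Sobolev space, via the chart.

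Finally, I would invoke the classical Euclidean Sobolev embedding: for $r>N/2=r_0$, there is a constant $C_{N,r}>0$ with $\|u\|_{L^\infty(\mathbf{R}^N)}\leq C_{N,r}\|u\|_{H^r(\mathbf{R}^N)}$ for all $u\in H^r(\mathbf{R}^N)$ (proved, e.g., by Fourier inversion and Cauchy--Schwarz after noting that $(1+|\xi|^2)^{-r}\in L^1(\mathbf{R}^N)$ precisely when $r>N/2$). Applied to each $\chi_i f$, this gives pointwise control $|\chi_i f(x)|\leq C_i' \|f\|_{W_G^r(G/\Gamma)}$. For any $x\in G/\Gamma$ one has $f(x)=\sum_i (\chi_i f)(x)$, so summing over $i$ yields $|f(x)|\leq C\|f\|_{W_G^r(G/\Gamma)}$ with $C=\sum_i C_i'$ independent of $x$. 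The only nontrivial step is the norm equivalence in the first paragraph; once that is in hand, the rest is a routine partition-of-unity argument.
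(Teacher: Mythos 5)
Your argument is correct; the paper gives no proof here beyond citing the standard Sobolev embedding theorem on the compact manifold $G/\Gamma$ (Aubin), and your partition-of-unity reduction to the Euclidean embedding $H^{r}(\mathbf{R}^{N})\hookrightarrow L^{\infty}$ for $r>N/2$ is precisely the standard proof of that cited result. The only remark worth making is that your hedge about a lower-order perturbation is unnecessary: with the paper's conventions $\Delta_{G}=\Box_{n}-2\Box_{K_{n}}=-\sum_{k}Y_{k}^{2}-\sum_{i<j}\Theta_{ij}^{2}$ is already the elliptic sum-of-squares operator, so the equivalence of $\langle(1+\Delta_{G})^{r}f,f\rangle^{1/2}$ with the usual Sobolev norm is immediate from elliptic regularity.
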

\begin{proof}
   This is the standard Sobolev embedding theorem, e.g. \cite{aubin1982nonlinear}.
\end{proof}

\subsection{Spectral decomposition of unipotent orbits}
Assume that $G/\Gamma$ is compact. Recall from Section \ref{time change117} that we  let $C^{\infty}(G/\Gamma)$ be the space of infinite differentiable functions on $G/\Gamma$, and   $\mathcal{E}^{\prime}(G/\Gamma)=(C^{\infty}(G/\Gamma))^{\prime}$ be its distributional dual. On the other hand, recall from Section \ref{time change118} that we choose $\mathfrak{a}=\mathbf{R}Y_{n}\subset\mathfrak{g}$. Recall that we fix a nilpotent $U\in  \mathfrak{g}_{-1}^{\flat}$. Thus, $U$ defines a unipotent flow $\phi^{U}_{t}(x)\coloneqq\exp(tU)x$ on $G/\Gamma$ and satisfies
\[[Y_{n},U]=-U.\]
In this section, we want to study the ergodic average
\begin{equation}\label{time change124}
  S_{x,T}(f)\coloneqq\frac{1}{T}\int_{0}^{T}f(\phi^{U}_{t}(x))dt
\end{equation}
 of unipotent flows for functions $f\in W_{1}^{r_{0},\nu}\subset L^{2}(G/\Gamma)$. The proof relies on the characterization of the space of \textit{invariant distributions}\index{invariant distributions} for unipotent flows. Specifically, we make use of the argument in \cite{flaminio2003invariant}. See also \cite{mieczkowski2006cohomological}, \cite{ramirez2013invariant}, \cite{wang2015cohomological} for related discussions.

The space of \textit{$U$-invariant distributions}\index{invariant distributions} for a given $H$-unitary representation $\mathcal{H}$ is then defined by
\[\mathcal{I}_{U}(\mathcal{H})\coloneqq\{\mathcal{D}\in\mathcal{E}^{\prime}(\mathcal{H}):\mathcal{L}_{U}\mathcal{D}=0\}.\]
Similarly, we define The space of \textit{$U$-invariant distributions of order $s$}\index{invariant distributions} to be
\[\mathcal{I}^{r}_{U}(\mathcal{H})\coloneqq\{\mathcal{D}\in W_{H}^{-r}(\mathcal{H}):\mathcal{L}_{U}\mathcal{D}=0\}.\]
Clearly,  the necessary condition  for $g\in W^{r}(\mathcal{H})$ having the form $g=Uf$  for some $f\in W_{H}^{r+1}(\mathcal{H})$ is $g\in \ker\mathcal{I}^{r}_{U}(\mathcal{H})=\{g\in\mathcal{H}:\mathcal{D}(g)=0\ \text{ for any }\mathcal{D}\in\mathcal{I}^{r}_{U}(\mathcal{H})\}$, since
\[\mathcal{D}(g)=\mathcal{D}(Uf)=-\mathcal{L}_{U}\mathcal{D}(f)=0\]
for any $\mathcal{D}\in\mathcal{I}^{r}_{U}(\mathcal{H})$. On the other hand,
Flaminio and Forni \cite{flaminio2003invariant} have characterized the spaces of $U$-invariant distributions for all $SO(2,1)$-irreducible unitary representations, and shown that they are the \textbf{only} obstructions to the existence of smooth solutions of the cohomological equation $Uf=g$. Here we need the results for the complementary series:
\begin{thm}[$SO(2,1)$-complementary series, \cite{flaminio2003invariant}]\label{time change125} For  $\nu\in(0,\frac{1}{2})$, let $(\mathcal{H}_{\nu},\pi_{\nu})$ be a complementary series of $H=SO(2,1)$. Then the space $\mathcal{I}_{U}(\mathcal{H}_{\nu})$ has dimension $2$ and it is generated by two $Y_{n}$-eigenvectors $\mathcal{D}_{\nu}^{\pm}$ of eigenvalues $-(1\pm 2\nu)/2$ and Sobolev order $(1\pm 2\nu)/2$, respectively. In other words
\[\mathcal{L}_{Y_{n}}\mathcal{D}_{\nu}^{\pm}=-\frac{1\pm 2\nu}{2}\mathcal{D}_{\nu}^{\pm},\ \ \ \mathcal{D}_{\nu}^{\pm}\in W_{H}^{-\frac{1\pm 2\nu}{2}}(\mathcal{H}_{\nu}).\]
Besides, let $s>(1+ 2\nu)/2$ and $t<s-1$. Then there is a constant $C(\nu,s,t)>0$ such that, for all $g\in \ker\mathcal{I}^{s}_{U}(\mathcal{H}_{\nu})$, the cohomological equation has a solution $f\in W_{H}^{t}(\mathcal{H}_{\nu})$ which satisfies the Sobolev estimate
\[\|f\|_{W^{t}_{H}(\mathcal{H}_{\nu})}\leq C(\nu,s,t)\|g\|_{W^{s}_{H}(\mathcal{H}_{\nu})}.\]

On the other hand, let $g\in W^{s}(\mathcal{H}_{\nu})$, $s>(1+2\nu)/2$. If the equation $Uf=g$ has a solution $f\in W^{t}(\mathcal{H}_{\nu})$ with $t\geq (2\nu-1)/2$, then $\mathcal{D}_{\nu}^{\pm}(g)=0$.
\end{thm}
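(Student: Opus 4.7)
The plan is to realize the complementary series $(\mathcal{H}_{\nu},\pi_{\nu})$ in a concrete model where $U$ acts as a translation, and then exploit the commutation relation $[Y_{n},U]=-U$ to decompose the (necessarily finite-dimensional) space of $U$-invariant distributions under the geodesic flow $a^{t}$.

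First I would use the non-compact (line) model: realize $\mathcal{H}_{\nu}$ on a weighted $L^{2}$ space of functions on $\mathbf{R}$, in which $u^{t}=\exp(tU)$ acts by translation $x\mapsto x+t$ and $a^{s}$ acts by the dilation $x\mapsto e^{-s}x$. Combined with Theorem \ref{time change196}, the Hilbert structure is equivalent on the Fourier side to a weighted norm of the shape $\int|\hat{f}(\xi)|^{2}|\xi|^{-2\nu}d\xi$, up to constants depending on $\nu$. In this picture any $U$-invariant distribution is translation-invariant, hence its Fourier transform is supported at the single point $\xi=0$.

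Because $a^{s}$ conjugates $U$ into $e^{-s}U$, it preserves the finite-dimensional space $\mathcal{I}^{r}_{U}(\mathcal{H}_{\nu})$ and therefore decomposes it into (generalized) eigenspaces of the $Y_{n}$-action. With the explicit weight $|\xi|^{-2\nu}$, the distributions supported at $\xi=0$ that pair continuously with the weighted Sobolev space are spanned by the Dirac mass $\delta_{0}$ and by the regularized power $|\xi|^{-1+2\nu}$; equivalently, on the $x$-side by the constant distribution and the Riesz kernel of order $2\nu$. Their scaling weights under the dilation action give precisely the two $Y_{n}$-eigenvalues $-(1\pm 2\nu)/2$, and pairing against the weighted norm yields the respective Sobolev orders $(1\pm 2\nu)/2$. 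This produces the two generators $\mathcal{D}_{\nu}^{\pm}$ and shows that $\dim\mathcal{I}_{U}(\mathcal{H}_{\nu})=2$.

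For the cohomological equation, given $g\in\ker\mathcal{I}^{s}_{U}(\mathcal{H}_{\nu})$ with $s>(1+2\nu)/2$, I would define $f$ by inverting $U$ on the Fourier side as $\hat{f}(\xi)=\hat{g}(\xi)/(i\xi)$. The two vanishing conditions $\mathcal{D}_{\nu}^{\pm}(g)=0$ are exactly what annihilates the two potentially singular modes of $\hat{g}$ near $\xi=0$, making $\hat{f}$ locally square-integrable against the weight; in the high-frequency regime the factor $1/i\xi$ yields one derivative of smoothing and gives the estimate $\|f\|_{W^{t}_{H}(\mathcal{H}_{\nu})}\leq C(\nu,s,t)\|g\|_{W^{s}_{H}(\mathcal{H}_{\nu})}$ for any $t<s-1$. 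The converse direction is a standard duality argument: when $f\in W^{t}_{H}(\mathcal{H}_{\nu})$ with $t\geq(2\nu-1)/2$, the pairing $\mathcal{D}_{\nu}^{\pm}(Uf)=-(\mathcal{L}_{U}\mathcal{D}_{\nu}^{\pm})(f)=0$ is legitimate because $t+(1\mp 2\nu)/2\geq 0$, so $\mathcal{D}_{\nu}^{\pm}(g)=0$.

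The hard part will be extracting the \emph{precise} Sobolev thresholds. The numerology ($s>(1+2\nu)/2$, solvability with $t<s-1$, converse bound $t\geq(2\nu-1)/2$) encodes the exact Sobolev orders of the two invariant distributions, and reproducing it requires carefully matching the $|\xi|^{-2\nu}$ weight of the complementary-series inner product against the $(1+|\xi|^{2})^{s}$ Sobolev weight in both the low-frequency regime (where the vanishing of $\mathcal{D}_{\nu}^{\pm}$ must be leveraged to remove the singular modes at $\xi=0$) and the high-frequency regime (where the division by $i\xi$ must cost exactly one derivative and no more).
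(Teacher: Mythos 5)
First, note that the paper does not prove this statement at all: it is quoted verbatim from Flaminio--Forni \cite{flaminio2003invariant}, so your proposal has to be measured against their argument. Your general strategy (a line model in which $U$ acts by translation, Fourier analysis, and the scaling action of the geodesic flow to read off eigenvalues and Sobolev orders) is in the same family as theirs, but the central classification step of your proposal has a genuine gap.

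The gap is in the identification of $\mathcal{I}_{U}(\mathcal{H}_{\nu})$. You argue that a $U$-invariant distribution is translation-invariant, ``hence its Fourier transform is supported at $\xi=0$'', and you take as generators $\delta_{0}$ and the regularized power $|\xi|^{-1+2\nu}$ (the Riesz kernel $|x|^{-2\nu}$ on the $x$-side). Two things go wrong. (i) The reduction to Fourier support $\{0\}$ is only valid for tempered distributions, whereas $\mathcal{I}_{U}(\mathcal{H}_{\nu})$ sits in the dual of the smooth vectors of $\mathcal{H}_{\nu}$; in the line model these smooth vectors have prescribed power-law tails at infinity, so their dual is strictly larger than what the tempered picture sees. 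Taken literally, your argument gives only $\operatorname{span}\{\delta_{0}^{(k)}\}$ on the Fourier side, of which only $\delta_{0}$ itself (i.e. $f\mapsto\int f\,dx$) is translation-invariant since $(e^{it\xi}-1)\delta_{0}^{(k)}\neq 0$ for $k\geq1$; this yields dimension one and misses precisely the distribution of the larger order $(1+2\nu)/2$, which is the one driving the lower bounds used later in the paper. (ii) Your proposed second generator is not $U$-invariant: $\xi\cdot|\xi|^{-1+2\nu}=\sgn(\xi)|\xi|^{2\nu}\neq0$, equivalently $\int f(x+t)|x|^{-2\nu}dx$ depends on $t$, so pairing against the Riesz kernel is not annihilated by $\mathcal{L}_{U}$. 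The correct second generator is the functional attached to the unique $U$-fixed boundary point: in the line model, the leading coefficient of the asymptotic expansion of $f$ at $x=\infty$ (equivalently, the coefficient of the homogeneous singularity of $\hat{f}$ at $\xi=0$), which exists only because the dual of the smooth vectors exceeds $\mathcal{S}'(\mathbf{R})$. Until that functional is identified, neither the dimension count nor the Sobolev orders $(1\pm2\nu)/2$ follow from your computation, even though your homogeneity bookkeeping under the dilation is the right mechanism.

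Two smaller points: your converse argument does not parse as written, since $\mathcal{L}_{U}\mathcal{D}_{\nu}^{\pm}=0$ and hence the issue is not an order count of the form $t+(1\mp2\nu)/2\geq0$ but the justification of the integration by parts when $f$ is only in $W^{t}$ with $t\geq(2\nu-1)/2$, which requires an approximation/renormalization argument as in \cite{flaminio2003invariant}. The positive direction (removing the two singular modes at $\xi=0$ via $\mathcal{D}_{\nu}^{\pm}(g)=0$ and then dividing by $i\xi$) is the right plan and is essentially how the quoted theorem is proved, but, as you acknowledge, all of the content lies in the low- and high-frequency threshold estimates you have deferred.
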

    Thus, for $r_{0}\coloneqq\dim(G/\Gamma)/2>1$ and $H$-complementary series $\mathcal{H}_{\nu}$, $\mathcal{I}_{U}^{r_{0}}(\mathcal{H}_{\nu})\subset W_{H}^{-r_{0}}(\mathcal{H}_{\nu})$ is closed. Then the   orthogonal decomposition is of the form
\begin{equation}\label{dynamical systems23}
  W_{H}^{-r_{0}}(\mathcal{H}_{\nu})=\mathcal{I}_{U}^{r_{0}}(\mathcal{H}_{\nu})\oplus\mathcal{I}_{U}^{r_{0}}(\mathcal{H}_{\nu})^{\perp}.
\end{equation}
Combining (\ref{dynamical systems23}) with (\ref{time change122}), we get
\begin{equation}\label{time change123}
  W^{-r_{0}}_{G}(G/\Gamma)=(\mathcal{I}_{1}^{r_{0}}\oplus\mathcal{I}_{2}^{r_{0}})\oplus W_{2}^{-r_{0},\nu}  \oplus W^{-r_{0}}_{G}(\mathcal{H}_{\ast}^{\perp}).
\end{equation}
where $\mathcal{I}_{1}^{r_{0}}\coloneqq\Res^{-1}\mathcal{I}_{U}^{r_{0}}(\mathcal{H}_{\nu})$, $\mathcal{I}_{2}^{r_{0}}\coloneqq\Res^{-1}\mathcal{I}_{U}^{r_{0}}(\mathcal{H}_{\nu})^{\perp}$.
\begin{rem}\label{time change139}
   The spaces $\mathcal{I}_{1}^{r_{0}}$,  $ W_{2}^{-r_{0},\nu}$, $W^{-r_{0}}_{G}(\mathcal{H}_{\ast}^{\perp})$ are  $\phi^{Y_{n}}_{t}$-invariant. However,   $\mathcal{I}_{2}^{r_{0}}$ is not $\phi^{Y_{n}}_{t}$-invariant.
\end{rem}

According to   the previous results,  $\phi^{X}_{t}$ has a spectral decomposition on the space $\mathcal{I}_{U}^{r_{0}}(\mathcal{H}_{\nu})$.  More precisely,  for all $t\in\mathbf{R}$, we have
\[\phi^{Y_{n}}_{t}(\mathcal{D}_{\nu}^{\pm})=e^{-\frac{1\pm 2\nu}{2}t}\mathcal{D}_{\nu}^{\pm}.\]
 Now we consider the ergodic average $S_{x,T}$ defined in (\ref{time change124}) as a distribution in $W^{-r_{0}}_{G}(G/\Gamma)$. By (\ref{time change123}), we can write
\begin{equation}\label{dynamical systems24}
  S_{x,T}= c_{+}(x,T)\mathcal{D}_{\nu}^{+}+c_{-}(x,T)\mathcal{D}_{\nu}^{-}+\mathcal{R}(x,T)+\mathcal{C}(x,T)
\end{equation}
where $\mathcal{D}_{\nu}^{\pm}\in \mathcal{I}_{1}^{r_{0}}$, $\mathcal{R}(x,T)\in \mathcal{I}_{2}^{r_{0}}$, $\mathcal{C}(x,T)\in  W_{2}^{-r_{0},\nu}\oplus W^{-r_{0}}_{G}(\mathcal{H}_{\ast}^{\perp})$.
\begin{rem}\label{time change143}
   The distributions $\mathcal{D}_{\nu}^{\pm}\in \mathcal{I}_{1}^{r_{0}}$ should more appropriately be written as $\mathcal{D}_{\nu}^{\pm}\circ\Res$, as $\mathcal{D}_{\nu}^{\pm}\in W_{H}^{-\frac{1\pm 2\nu}{2}}(\mathcal{H}_{\nu})$ has already been given in Theorem \ref{time change125}. We abuse notation if it makes no confusion.
\end{rem}
Thus, we can analyze the ergodic average $S_{x,T}$ via the distributions. The method has already been used to study the ergodic averages for horocycle flows in \cite{flaminio2003invariant}. We adopt the same strategy here and provide  proofs for the sake of completeness.
\begin{rem}
   It is possible to obtain a more explicit decomposition than (\ref{dynamical systems24}). For instance, \cite{mukunda1968unitary} provides the full decomposition of the complementary series $\mathcal{H}_{\nu}$ of $G=SO(3,1)$ under $H=SO(2,1)$. If $0<\nu\leq  \frac{1}{2}$, it is a sum of two direct integrals of spherical principal series, and if $\frac{1}{2}<\nu<1$, it contains one extra discrete component, the complementary series $\mathcal{H}_{\nu-\frac{1}{2}}$, as we have shown. However,  the relations of Sobolev structures on the principal series are not quite clear. Thus, it seems that we cannot apply the Flaminio-Forni argument to get further information.
\end{rem}

In the following, we want to apply Theorem \ref{time change125} with different Sobolev orders, and hence the space $W_{1}^{r,\nu}$ is no longer convenient, as indicated in Remark \ref{time change141}. Thus, for $s\geq0$, we introduce
\[W^{s,r_{0},\nu}_{1}\coloneqq\Res^{-1}(W_{H}^{r_{0}+s}(\mathcal{H}_{\nu})).\]
In particular, $W^{0,r_{0},\nu}_{1}=W^{r_{0},\nu}_{1}$.
Then we have $W^{s,r_{0},\nu}_{1}\subset W^{t,r_{0},\nu}_{1}$ whenever $s>t$. As in (\ref{time change123}), we have
\begin{equation}\label{time change142}
 W^{-s,-r_{0},\nu}_{1}=\mathcal{I}_{1}^{r_{0}+s}\oplus \mathcal{I}_{2}^{r_{0}+s}
\end{equation}
where $\mathcal{I}_{1}^{r_{0}+s}\coloneqq\Res^{-1}\mathcal{I}_{U}^{r_{0}+s}(\mathcal{H}_{\nu})$,  $\mathcal{I}_{2}^{r_{0}+s}\coloneqq\Res^{-1}\mathcal{I}_{U}^{r_{0}+s}(\mathcal{H}_{\nu})^{\perp}$.

Next, we collect some basic results with respect to the decomposition  (\ref{dynamical systems24}).  First, we observe that   the norms of $S_{x,T}$ in $W_{1}^{-s,-r_{0},\nu}\cong W_{H}^{-r_{0}-s}(\mathcal{H}_{\nu})$ are equivalent to their coefficients in the decomposition.
\begin{lem}[$W_{1}^{-s,-r_{0},\nu}$-norm estimates]\label{time change127}  For $s\geq 0$, we have
  \[ |c_{+}(x,T)|^{2} +|c_{-}(x,T)|^{2} +\|\mathcal{R}(x,T)\|^{2}_{W_{1}^{-s,-r_{0},\nu}} \asymp_{s} \|S_{x,T}\|^{2}_{W_{1}^{-s,-r_{0},\nu}}.\]
\end{lem}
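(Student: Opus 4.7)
The statement is a two-sided equivalence, and since the component $\mathcal{C}(x,T)$ of (\ref{dynamical systems24}) lies in $W_2^{-r_0,\nu}\oplus W^{-r_0}_G(\mathcal{H}_*^\perp)$ which is $W_G^{-r_0-s}$-orthogonal to $W_1^{-s,-r_0,\nu}$, the right-hand side $\|S_{x,T}\|^2_{W_1^{-s,-r_0,\nu}}$ equals the $W_1^{-s,-r_0,\nu}$-norm of the three-term sum $c_+\mathcal{D}_\nu^+ + c_-\mathcal{D}_\nu^- + \mathcal{R}$. So the goal becomes comparing this norm with $|c_+|^2+|c_-|^2+\|\mathcal{R}\|^2$.

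The upper bound is easy: by the triangle inequality, $\|c_+\mathcal{D}_\nu^+ + c_-\mathcal{D}_\nu^- + \mathcal{R}\|^2 \lesssim |c_+|^2\|\mathcal{D}_\nu^+\|^2 + |c_-|^2\|\mathcal{D}_\nu^-\|^2 + \|\mathcal{R}\|^2$, and by Theorem \ref{time change125} the invariant distributions $\mathcal{D}_\nu^\pm$ have Sobolev order $(1\pm 2\nu)/2 < r_0 \leq r_0+s$, so $\|\mathcal{D}_\nu^\pm\|_{W_1^{-s,-r_0,\nu}}$ are finite constants depending only on $s$ and $\nu$ and can be absorbed into the $\asymp_s$.

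For the lower bound I would invoke the $W_1^{-s,-r_0,\nu}$-orthogonal splitting (\ref{time change142}), $W_1^{-s,-r_0,\nu} = \mathcal{I}_1^{r_0+s}\oplus \mathcal{I}_2^{r_0+s}$, noting that $\mathcal{I}_1^{r_0+s}$ is the same two-dimensional space $V := \mathrm{span}\{\mathcal{D}_\nu^+,\mathcal{D}_\nu^-\}$ independent of $s$. Decompose $\mathcal{R} = \mathcal{R}_V + \mathcal{R}_\perp$ with $\mathcal{R}_V = \alpha\mathcal{D}_\nu^+ + \beta\mathcal{D}_\nu^-$ and $\mathcal{R}_\perp \in \mathcal{I}_2^{r_0+s}$; Pythagoras yields
\[
\|c_+\mathcal{D}_\nu^+ + c_-\mathcal{D}_\nu^- + \mathcal{R}\|^2 = \|(c_++\alpha)\mathcal{D}_\nu^+ + (c_-+\beta)\mathcal{D}_\nu^-\|^2 + \|\mathcal{R}_\perp\|^2,
\]
and the finite-dimensional equivalence of norms on $V$ reduces matters to bounding $|c_++\alpha|^2 + |c_-+\beta|^2 + \|\mathcal{R}_\perp\|^2$ from below by $|c_+|^2+|c_-|^2+|\alpha|^2+|\beta|^2+\|\mathcal{R}_\perp\|^2$.

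The hard part will be ruling out cancellations between $(c_+,c_-)$ and $(\alpha,\beta)$. For this I would exploit the defining constraint $\mathcal{R} \in \mathcal{I}_2^{r_0}$, i.e.\ $\langle \mathcal{R},\mathcal{D}_\nu^\pm\rangle_{W_1^{-r_0,\nu}} = 0$, which after expanding $\mathcal{R} = \mathcal{R}_V + \mathcal{R}_\perp$ pins down $(\alpha,\beta)$ as a bounded linear function of $\mathcal{R}_\perp$ via the invertible $2\times 2$ Gram matrix of $\{\mathcal{D}_\nu^\pm\}$ in the $W_1^{-r_0,\nu}$-inner product. The subtle point is that this bound is naturally phrased in the $W_1^{-r_0,\nu}$-norm whereas the conclusion requires the weaker $W_1^{-s,-r_0,\nu}$-norm; reconciling these relies on the finite dimensionality of $V$, on which all Sobolev norms are mutually equivalent with constants depending only on $s$ and $\nu$. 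This should yield the required lower bound after completing the square.
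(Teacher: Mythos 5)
Your reduction removing the $\mathcal{C}(x,T)$-component and your upper bound are fine, but the lower bound has a genuine gap at exactly the point you flag as subtle. After splitting $\mathcal{R}=\mathcal{R}_V+\mathcal{R}_\perp$ with respect to the $W_1^{-s,-r_0,\nu}$-orthogonal decomposition (\ref{time change142}), the level-$r_0$ constraint $\langle\mathcal{R},\mathcal{D}_\nu^\pm\rangle_{W_1^{-r_0,\nu}}=0$ and the Gram matrix give $|\alpha|+|\beta|\ll|\langle\mathcal{R}_\perp,\mathcal{D}_\nu^+\rangle_{W_1^{-r_0,\nu}}|+|\langle\mathcal{R}_\perp,\mathcal{D}_\nu^-\rangle_{W_1^{-r_0,\nu}}|\ll\|\mathcal{R}_\perp\|_{W_1^{-r_0,\nu}}$, i.e.\ a bound by the \emph{stronger} norm of $\mathcal{R}_\perp$, while you need a bound by the weaker norm $\|\mathcal{R}_\perp\|_{W_1^{-s,-r_0,\nu}}$. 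Equivalence of norms on the two-dimensional space $V$ cannot bridge this, since it applies to $\mathcal{R}_V$, not to $\mathcal{R}_\perp$. What is actually required is continuity of the functionals $\mathcal{T}\mapsto\langle\mathcal{T},\mathcal{D}_\nu^\pm\rangle_{W_1^{-r_0,\nu}}$ in the $W_1^{-s,-r_0,\nu}$-topology; writing $\langle\mathcal{R}_\perp,\mathcal{D}_\nu^\pm\rangle_{W^{-r_0}}=\langle(1+\Delta)^{-(r_0+s)}\mathcal{R}_\perp,(1+\Delta)^{s}\mathcal{D}_\nu^\pm\rangle_{L^2}$ shows this amounts to $\mathcal{D}_\nu^\pm\in W_H^{-(r_0-s)}(\mathcal{H}_\nu)$, i.e.\ $s\le r_0-\frac{1\pm2\nu}{2}$ by Theorem \ref{time change125}. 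Beyond that range the pairings are unbounded in the weak norm, the angle between $\mathcal{I}_2^{r_0}$ and $V$ measured in $W_1^{-s,-r_0,\nu}$ is not bounded away from zero, and no constant $C(s)$ valid for all $s\ge0$ can come out of your argument; "completing the square" does not help, because the cancellation $c_\pm\approx-\alpha,-\beta$ is precisely what must be excluded and your constraint no longer does so.

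The paper's own proof is different and much shorter: it reads the decomposition (\ref{dynamical systems24}) so that $\mathcal{R}(x,T)$ is orthogonal to $\mathcal{D}_\nu^\pm$ \emph{in the $W_1^{-s,-r_0,\nu}$ inner product itself} (this is how it is used later as well, cf.\ the assertion $\mathcal{R}(x,T)\in\mathcal{I}_2^{r_0+s}$ in the proof of Lemma \ref{time change131}); then there are no cross terms at all, and the lemma follows at once from Pythagoras plus the fact that $\{\mathcal{D}_\nu^+,\mathcal{D}_\nu^-\}$ is a basis of the two-dimensional space $\mathcal{I}_1^{r_0+s}$, on which all Hilbert norms are equivalent with constants depending only on $s,\nu$. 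So the cross-level comparison you set up is not part of the paper's argument; if you insist on the level-$r_0$ reading of (\ref{dynamical systems24}), your Gram-matrix step can only be made rigorous for $s\le r_0-\frac{1+2\nu}{2}$, or else one must bring in additional information about the specific distributions $S_{x,T}$, neither of which your proposal supplies.
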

\begin{proof}
   It follows directly from the orthogonal decomposition, and the fact that $\{\mathcal{D}_{\nu}^{\pm}\}\subset \mathcal{I}_{U}^{s}(\mathcal{H}_{\nu})$ is a basis by Theorem \ref{time change125}.
\end{proof}
Combining Lemma \ref{time change127} with Sobolev embedding theorem (Lemma \ref{time change126}), we obtain a uniform upper bound for the coefficients:
\begin{cor}\label{time change148}
  For   $s\geq0$, there exists a constant $C=C(s)>0$ such that
 \[ |c_{+}(x,T)|^{2} +|c_{-}(x,T)|^{2} +\|\mathcal{R}(x,T)\|^{2}_{W_{1}^{-s,-r_{0},\nu}} \leq C\]
 for all $x,T$.
\end{cor}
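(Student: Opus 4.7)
The plan is to reduce Corollary \ref{time change148} to Lemma \ref{time change127} by establishing a uniform bound on the full Sobolev norm of $S_{x,T}$ viewed as a distribution.

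First I would apply the Sobolev embedding (Lemma \ref{time change126}): for any $f \in W_G^{r_0}(G/\Gamma)$ and any $y \in G/\Gamma$, one has $|f(y)| \leq C\|f\|_{W_G^{r_0}}$ with a constant independent of $y$. Pairing $S_{x,T}$ against a test function $f$ and applying this pointwise bound inside the integral defining $S_{x,T}$, I obtain
\begin{equation*}
|S_{x,T}(f)| \leq \frac{1}{T}\int_0^T |f(\phi^U_t(x))|\,dt \leq C\|f\|_{W_G^{r_0}(G/\Gamma)},
\end{equation*}
which means $\|S_{x,T}\|_{W_G^{-r_0}(G/\Gamma)} \leq C$ uniformly in $x \in G/\Gamma$ and $T > 0$.

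Next, I would transfer this bound to the subspace $W_1^{-s,-r_0,\nu}$. The orthogonal decomposition (\ref{time change122}) at Sobolev order $-r_0$ yields a bounded projection $P_1 : W_G^{-r_0}(G/\Gamma) \to W_1^{-r_0,\nu}$, so $\|P_1 S_{x,T}\|_{W_1^{-r_0,\nu}} \leq C$. For $s \geq 0$, the definition $W_1^{-s,-r_0,\nu} = \Res^{-1}(W_H^{-(r_0+s)}(\mathcal{H}_\nu))$ together with the inclusion $W_H^{-r_0}(\mathcal{H}_\nu) \hookrightarrow W_H^{-(r_0+s)}(\mathcal{H}_\nu)$ (Sobolev spaces of distributions grow as the order decreases) shows that $\|\cdot\|_{W_1^{-s,-r_0,\nu}} \leq \|\cdot\|_{W_1^{-r_0,\nu}}$, hence
\begin{equation*}
\|S_{x,T}\|_{W_1^{-s,-r_0,\nu}} \leq \|P_1 S_{x,T}\|_{W_1^{-r_0,\nu}} \leq C.
\end{equation*}

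Finally, Lemma \ref{time change127} gives the equivalence $|c_+(x,T)|^2 + |c_-(x,T)|^2 + \|\mathcal{R}(x,T)\|^2_{W_1^{-s,-r_0,\nu}} \asymp_s \|S_{x,T}\|^2_{W_1^{-s,-r_0,\nu}}$, which combined with the above bound yields the desired uniform estimate. There is no serious obstacle here: the result is essentially a soft consequence of combining Sobolev embedding with the already-established norm equivalence; the only point requiring care is tracking the correct direction of the inclusion between the various Sobolev spaces $W_1^{-s,-r_0,\nu}$ as $s$ varies (cf.\ Remark \ref{time change141}), which is why the argument factors through the stronger norm $\|\cdot\|_{W_1^{-r_0,\nu}}$ before descending.
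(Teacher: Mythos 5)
Your argument is correct and is essentially the paper's own proof: the paper likewise observes that $|S_{x,T}(f)|\leq\max_{y}|f(y)|$ for $f\in W_{1}^{s,r_{0},\nu}\subset W_{1}^{r_{0},\nu}$, invokes the Sobolev embedding of Lemma \ref{time change126} to get a uniform bound on $\|S_{x,T}\|_{W_{1}^{-s,-r_{0},\nu}}$, and then applies the norm equivalence of Lemma \ref{time change127}. Your routing through $\|S_{x,T}\|_{W_{G}^{-r_{0}}}$, the orthogonal projection onto $W_{1}^{-r_{0},\nu}$, and the monotonicity of negative-order Sobolev norms just makes explicit the inclusions the paper leaves implicit.
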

\begin{proof}
   Note that $|S_{x,T}(f)|\leq \max_{x\in G/\Gamma}|f(x)|$ for any $f\in W_{1}^{s,r_{0},\nu}\subset W_{1}^{r_{0},\nu}$.
\end{proof}

\subsection{Estimates for coefficients via Gottschalk-Hedlund}
Based on the study of the cohomological equation $Uf=g$, we can obtain a better bound for  $\mathcal{R}(x,T)$. Recall that the restriction map $\Res:W_{1}^{s,r_{0},\nu}\rightarrow W_{H}^{r_{0}+s}(\mathcal{H}_{\nu})$ is $H$-equivariant. Thus, the cohomological equation $Uf=g$  on $W_{H}^{r_{0}+s}(\mathcal{H}_{\nu})$ is equivalent to $\Res(Uf)=U\Res(f)=\Res(g)$ on $W_{1}^{s,r_{0},\nu}$.
 \begin{lem}[Pointwise bound for $\mathcal{R}(x,T)$]\label{time change131}
  For   $s>1$, there exists a constant $C=C(\nu,s)>0$ such that
  \[\|\mathcal{R}(x,T)\|_{W_{1}^{-s,-r_{0},\nu}}\leq CT^{-1}.\]
\end{lem}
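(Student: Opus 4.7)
The plan is to run a Gottschalk--Hedlund argument on test functions annihilated by the two invariant distributions $\mathcal{D}_\nu^\pm$, and to reduce the general operator-norm bound for $\mathcal{R}(x,T)$ to this special case via an orthogonality trick. The hypothesis $s>1$ is precisely what allows the cohomological equation $Uf=\varphi$ to be solved with $f$ Sobolev-regular enough for Sobolev embedding on $G/\Gamma$.

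The key orthogonality observation is that $\mathcal{R}(x,T)\in \mathcal{I}_2^{r_0}$ is $W_H^{-r_0}$-orthogonal to $\mathcal{D}_\nu^\pm$, which by the Riesz isomorphism translates into $\mathcal{R}(x,T)(\Res^{-1}D_\pm^{r_0})=0$, where $D_\pm^{r_0}\in W_H^{r_0}(\mathcal{H}_\nu)$ are the Gram--Schmidt orthogonalized Riesz representatives of $\mathcal{D}_\nu^\pm$. The Sobolev orders $(1\pm 2\nu)/2$ of $\mathcal{D}_\nu^\pm$ from Theorem \ref{time change125} place $D_\pm^{r_0}$ in $W_H^{2r_0-(1\pm 2\nu)/2}(\mathcal{H}_\nu)$, which contains $W_H^{r_0+s}(\mathcal{H}_\nu)$ whenever $1<s\le r_0-(1+2\nu)/2$; this range is non-empty because $r_0=\dim(G/\Gamma)/2\ge 3$ for $n\ge 3$, and the case of larger $s$ follows from the norm-decreasing embedding $W_H^{-r_0-s_0}\hookrightarrow W_H^{-r_0-s}$ for $s_0<s$. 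Given $\varphi\in W_1^{s,r_0,\nu}$, I will decompose $\Res(\varphi)$ orthogonally in the $W_H^{r_0}$ inner product as
\[\Res(\varphi)=\tilde{\varphi}_0+\alpha_+ D_+^{r_0}+\alpha_- D_-^{r_0}\]
with $\tilde{\varphi}_0\in W_H^{r_0+s}(\mathcal{H}_\nu)\cap \ker \mathcal{I}_U^{r_0+s}(\mathcal{H}_\nu)$ and $\|\tilde{\varphi}_0\|_{W_H^{r_0+s}}\lesssim \|\varphi\|_{W_1^{s,r_0,\nu}}$. The orthogonality observation collapses $\mathcal{R}(x,T)(\varphi)$ to $\mathcal{R}(x,T)(\Res^{-1}\tilde{\varphi}_0)$, and since $\Res^{-1}\tilde{\varphi}_0\in W_1^{r_0,\nu}$ is annihilated both by $\mathcal{D}_\nu^\pm$ and, by the orthogonal decomposition of $W_G^{r_0}(G/\Gamma)$, by the $\mathcal{C}$-piece of \eqref{dynamical systems24}, this further simplifies to $S_{x,T}(\Res^{-1}\tilde{\varphi}_0)$.

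It remains to bound this Birkhoff average. I will pick $t$ with $r_0<t<r_0+s-1$ (possible since $s>1$); Theorem \ref{time change125} produces $f\in W_H^t(\mathcal{H}_\nu)$ with $Uf=\tilde{\varphi}_0$ and $\|f\|_{W_H^t}\lesssim \|\tilde{\varphi}_0\|_{W_H^{r_0+s}}$. The $H$-equivariant isomorphism $\Res:W_1^{t,\nu}\to W_H^t(\mathcal{H}_\nu)$ pulls $f$ back to $F=\Res^{-1}f\in W_1^{t,\nu}\subset W_G^t(G/\Gamma)$ satisfying $UF=\Res^{-1}\tilde{\varphi}_0$ with comparable $W_G^t$-norm, and Sobolev embedding (Lemma \ref{time change126}, valid because $t>r_0$) upgrades this to $\|F\|_\infty\lesssim \|\varphi\|_{W_1^{s,r_0,\nu}}$. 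The telescoping identity $\int_0^T \Res^{-1}\tilde{\varphi}_0(\phi^U_t x)\,dt = F(\phi^U_T x)-F(x)$ then gives $|S_{x,T}(\Res^{-1}\tilde{\varphi}_0)|\le 2\|F\|_\infty/T$, and taking the supremum over $\varphi$ of unit norm finishes the argument. The main obstacle will be the Riesz bookkeeping that converts the $W_H^{-r_0}$-orthogonality defining $\mathcal{R}(x,T)$ into genuine control on its action in the weaker $W_H^{-r_0-s}$ norm; the Riesz-representative trick above handles this, with the Sobolev regularity check on $D_\pm^{r_0}$ the only quantitative input.
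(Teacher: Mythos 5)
Your proposal is correct and is essentially the paper's own argument: decompose the test function into a part annihilated by the invariant distributions, solve the cohomological equation there via Theorem \ref{time change125}, pull the solution back through $\Res$, and conclude by Sobolev embedding (Lemma \ref{time change126}) together with the telescoping identity, the complementary two-dimensional piece being killed by $\mathcal{R}(x,T)$. The only divergence is bookkeeping: where the paper splits $W^{s,r_{0},\nu}_{1}=\ker(\mathcal{I}_{1}^{r_{0}+s})\oplus\ker(\mathcal{I}_{2}^{r_{0}+s})$ and treats $\mathcal{R}(x,T)$ as an element of $\mathcal{I}_{2}^{r_{0}+s}$, you encode the $W_{H}^{-r_{0}}$-orthogonality through the Riesz representatives $D_{\pm}^{r_{0}}$ (first for $1<s\le r_{0}-(1+2\nu)/2$, then by monotonicity of the dual norms), which is a legitimate and if anything more careful treatment of the shift in Sobolev order --- just note that your containment statement is reversed: the condition $s\le r_{0}-(1+2\nu)/2$ is what places $D_{\pm}^{r_{0}}$ inside $W_{H}^{r_{0}+s}(\mathcal{H}_{\nu})$, not $W_{H}^{r_{0}+s}$ inside $W_{H}^{2r_{0}-(1+2\nu)/2}$.
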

\begin{proof}
   The orthogonal decomposition (\ref{time change142}) implies
   \begin{equation}\label{time change133}
    W^{s,r_{0},\nu}_{1}=\ker(\mathcal{I}_{1}^{r_{0}+s})\oplus\ker(\mathcal{I}_{2}^{r_{0}+s}).
   \end{equation}
   Then, for any $g\in  W^{s,r_{0},\nu}_{1}$, there is a unique orthogonal decomposition $g=g_{1}+g_{2}$ where $g_{1}\in \ker(\mathcal{I}_{1}^{r_{0}+s})$ and $g_{2}\in  \ker(\mathcal{I}_{2}^{r_{0}+s})$. Since $\mathcal{R}(x,T)\in \mathcal{I}_{2}^{r_{0}+s}$, we have
   \begin{equation}\label{time change129}
     \mathcal{R}(x,T)(g)=\mathcal{R}(x,T)(g_{1}+g_{2})=\mathcal{R}(x,T)(g_{1})=S_{x,T}(g_{1}).
   \end{equation}
   Now since $g_{1}\in\ker(\mathcal{I}_{1}^{r_{0}+s})$, by Theorem \ref{time change125}, there exists a function $f_{1}\in  W^{t,r_{0},\nu}_{1}$ with $t\in(0,s-1)$, such that $Uf_{1}=g_{1}$ and
   \[\|f_{1}\|_{W^{t,r_{0},\nu}_{1}}\ll_{\nu,s,t}  \|g_{1}\|_{W^{s,r_{0},\nu}_{1}}.\]
   By the Sobolev embedding theorem (Lemma \ref{time change126}), we conclude that
   \[\max_{x\in G/\Gamma}|f_{1}(x)|\ll \|f_{1}\|_{W^{t,r_{0},\nu}_{1}}\ll_{\nu,s,t}  \|g_{1}\|_{W^{s,r_{0},\nu}_{1}}.\]
   It follows that
   \begin{equation}\label{time change128}
     |S_{x,T}(g_{1})|=\frac{1}{T}|f_{1}\circ\phi^{U}_{T}(x)-f_{1}(x)|\ll_{\nu,s,t}\frac{1}{T}\|g_{1}\|_{W^{s,r_{0},\nu}_{1}}.
   \end{equation}
   Therefore, using (\ref{time change129}), (\ref{time change128}), we make an appropriate choice of $t(s)\in(0,s-1)$ and then there exists $C=C(\nu,s)>0$ such that
   \[|\mathcal{R}(x,T)(g)|\ll_{\nu,s,t(s)}\frac{1}{T}\|g_{1}\|_{W^{s,r_{0},\nu}_{1}}\leq \frac{C}{T}\|g \|_{W^{s,r_{0},\nu}_{1}}.\]
   The consequence follows.
\end{proof}
We also need a $L^{2}$-bound for $\mathcal{R}(x,T)$ in order to get the lower bound for ergodic averages. The proof for the $L^{2}$-bound (Lemma \ref{time change130}) is completely similar to the pointwise bound (Lemma \ref{time change131}).
\begin{lem}[$L^{2}$-bound  for $\mathcal{R}(x,T)$]\label{time change130}  For   $s>1$, there exists a constant $C=C(\nu,s)>0$ such that for $g\in W^{s,r_{0},\nu}_{1}$, we have
  \[\|\mathcal{R}(\cdot,T)(g)\|_{L^{2}(G/\Gamma)}\leq\frac{C}{T}\|g\|_{W^{s,r_{0},\nu}_{1}}.\]
\end{lem}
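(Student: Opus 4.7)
The plan is to mirror the proof of Lemma \ref{time change131} almost verbatim, replacing the application of the Sobolev embedding theorem (which produced a pointwise bound on the transfer function) by the $L^{2}$-isometry property of the unipotent flow. In particular, the starting decomposition, the use of Theorem \ref{time change125}, and the reduction to the coboundary identity $S_{x,T}(g_1) = \tfrac{1}{T}(f_1\circ\phi^U_T(x) - f_1(x))$ carry over without change; only the final estimate of the transfer function is done in $L^2(G/\Gamma)$ rather than pointwise.

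Concretely, given $g \in W^{s,r_0,\nu}_1$ with $s>1$, I would split $g = g_1 + g_2$ along the orthogonal decomposition (\ref{time change133}), so that $g_1 \in \ker(\mathcal{I}_1^{r_0+s})$ and $g_2 \in \ker(\mathcal{I}_2^{r_0+s})$. Since $\mathcal{R}(x,T) \in \mathcal{I}_2^{r_0+s}$, the same argument as in (\ref{time change129}) gives $\mathcal{R}(x,T)(g) = S_{x,T}(g_1)$. By Theorem \ref{time change125}, pick $t \in (0, s-1)$; then there exists $f_1 \in W^{t,r_0,\nu}_1$ with $Uf_1 = g_1$ and
\[
\|f_1\|_{W^{t,r_0,\nu}_1} \ll_{\nu,s,t} \|g_1\|_{W^{s,r_0,\nu}_1}.
\]
Since $Uf_1 = g_1$, the cocycle identity gives $S_{x,T}(g_1) = \tfrac{1}{T}\bigl(f_1(\phi^U_T(x)) - f_1(x)\bigr)$ for $\mu$-a.e.\ $x$.

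Now, instead of invoking Sobolev embedding, I would take the $L^2$-norm in $x$ and use the triangle inequality together with the fact that $\phi^U_T$ preserves the Haar measure $\mu$:
\[
\|S_{\cdot,T}(g_1)\|_{L^2(G/\Gamma)} \le \tfrac{1}{T}\bigl(\|f_1 \circ \phi^U_T\|_{L^2} + \|f_1\|_{L^2}\bigr) = \tfrac{2}{T}\|f_1\|_{L^2(G/\Gamma)}.
\]
Since $t \ge 0$, we have $\|f_1\|_{L^2} \le \|f_1\|_{W^{t,r_0,\nu}_1} \ll_{\nu,s,t} \|g_1\|_{W^{s,r_0,\nu}_1} \le \|g\|_{W^{s,r_0,\nu}_1}$, and fixing an admissible choice $t(s) \in (0,s-1)$ yields a constant $C = C(\nu,s)>0$ such that $\|\mathcal{R}(\cdot,T)(g)\|_{L^2} \le CT^{-1}\|g\|_{W^{s,r_0,\nu}_1}$.

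There is no real obstacle: the only substantive replacement is swapping the uniform bound $\max_x |f_1(x)| \ll \|f_1\|_{W^{t,r_0,\nu}_1}$ (valid by Sobolev embedding when $t > r_0$) for the trivial $L^2$-bound $\|f_1\|_{L^2} \le \|f_1\|_{W^{t,r_0,\nu}_1}$ (valid for any $t \ge 0$). This is actually what makes the $L^2$ version slightly cleaner: the condition $s > 1$ suffices rather than requiring $s > r_0 + 1$, since we only need enough regularity to extract a transfer function $f_1$ lying in $L^2$, and Theorem \ref{time change125} guarantees this as soon as we can choose $t \in [0, s-1)$.
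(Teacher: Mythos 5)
Your proposal is correct and takes essentially the same route as the paper: the paper's own proof also splits $g=g_{1}+g_{2}$ along (\ref{time change133}), identifies $\mathcal{R}(\cdot,T)(g)=S_{\cdot,T}(g_{1})$, solves $Uf_{1}=g_{1}$ via Theorem \ref{time change125}, and bounds $\|S_{\cdot,T}(g_{1})\|_{L^{2}}\leq \frac{2}{T}\|f_{1}\|_{L^{2}}$ using the coboundary identity and invariance of $\mu$. The only (harmless) difference is the final estimate of $\|f_{1}\|_{L^{2}}$: the paper passes through $\max_{x}|f_{1}(x)|$ as in Lemma \ref{time change131}, whereas you bound $\|f_{1}\|_{L^{2}}$ directly by the Sobolev norm $\|f_{1}\|_{W^{t,r_{0},\nu}_{1}}$ (up to the constant coming from the norm equivalence under $\Res$), which is equally valid and, if anything, slightly more economical.
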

\begin{proof}
   As in the proof of Lemma \ref{time change131}, we write
    \[g=g_{1}+g_{2}\in \ker(\mathcal{I}_{1}^{r_{0}+s})\oplus\ker(\mathcal{I}_{2}^{r_{0}+s}).\]
    Then we have  $Uf_{1}=g_{1}$ and
    \begin{align}
  \|\mathcal{R}(\cdot,T)(g)\|_{L^{2}}=&   \|S_{\cdot,T}(g_{1})\|_{L^{2}}\leq \frac{2}{T} \|f_{1}\|_{L^{2}}  \;\nonumber\\
 \leq&   \frac{2}{T}\max_{x\in G/\Gamma}  |f_{1}(x) |  \ll_{\nu,s,t(s)}  \|g_{1}\|_{W^{s,r_{0},\nu}_{1}} \leq  \frac{C}{T}\|g\|_{W^{s,r_{0},\nu}_{1}}.  \;  \nonumber
\end{align}
    This proves   Lemma  \ref{time change130}.
\end{proof}
The following Gottschalk-Hedlund theorem  is a useful criterion for  $L^{2}$-solutions for the cohomological equation  for ergodic measurable flows $\phi_{t}$.
\begin{lem}[Gottschalk-Hedlund]\label{time change134}
If an $L^{2}$-function $f$ is a solution of the equation
\begin{equation}\label{time change132}
  \frac{df\circ\phi_{t}}{dt}\bigg|_{t=0}=g
\end{equation}
then the one-parameter family of functions $G_{T}$ defined by
\[G_{T}(x)\coloneqq\int_{0}^{T}g(\phi_{t}(x))dt\]
is equibounded in $L^{2}$ by $2\|f\|$. Conversely, if the family $G_{T}$ is equibounded, then the cohomological equation has an $L^{2}$-solution.
\end{lem}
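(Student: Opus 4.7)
The plan is to handle the two implications separately. The forward direction is essentially a direct calculation from the measure-preserving property of $\phi_{t}$, while the converse is a soft functional-analytic argument based on weak compactness of Cesaro averages in $L^{2}$.

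For the forward direction, if $f \in L^{2}$ satisfies the differential relation (interpreted as $g = Uf$ where $U$ is the generator of the Koopman flow $U_{t}h := h \circ \phi_{t}$), then integrating gives $G_{T}(x) = f(\phi_{T}(x)) - f(x)$ for $\mu$-a.e.\ $x$. Since $\phi_{T}$ is measure-preserving, $U_{T}$ is an $L^{2}$-isometry, and the triangle inequality yields $\|G_{T}\|_{L^{2}} \leq \|f \circ \phi_{T}\|_{L^{2}} + \|f\|_{L^{2}} = 2\|f\|_{L^{2}}$.

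For the converse, the key tool is the cocycle identity $G_{T+S}(x) = G_{S}(x) + G_{T}(\phi_{S}(x))$, which follows from a change of variable. Define the Cesaro averages
\[
H_{N}(x) \coloneqq -\frac{1}{N}\int_{0}^{N} G_{T}(x)\, dT,
\]
so that $\|H_{N}\|_{L^{2}} \leq \sup_{T}\|G_{T}\|_{L^{2}} \eqqcolon C$ by Minkowski's inequality. Integrating the cocycle identity in $T$ yields, after a change of variable,
\[
(H_{N}\circ \phi_{S})(x) - H_{N}(x) = G_{S}(x) - \frac{1}{N}\int_{N}^{N+S} G_{T}(x)\, dT + \frac{1}{N}\int_{0}^{S} G_{T}(x)\, dT,
\]
and the two boundary terms have $L^{2}$-norm bounded by $CS/N$, hence tend to $0$ strongly in $L^{2}$ as $N \to \infty$. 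By reflexivity of $L^{2}$ and Banach-Alaoglu, extract a weakly convergent subsequence $H_{N_{k}} \rightharpoonup f$. Because $U_{S}$ is a bounded (in fact unitary) operator on $L^{2}$, it preserves weak convergence, so $H_{N_{k}} \circ \phi_{S} \rightharpoonup f \circ \phi_{S}$. Combining weak convergence with the strong convergence of the residual above gives the identity $f\circ \phi_{S} - f = G_{S}$ in $L^{2}$ for every fixed $S$, which is exactly the desired cohomological equation.

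The subtle point, and the reason Cesaro averaging is essential rather than simply taking a weak limit of $G_{T_{n}}$, is that the cocycle identity naturally relates $G_{T}$ to $G_{T\pm S}$: a weak limit of $G_{T_{n}}$ need not coincide with a weak limit of $G_{T_{n}\pm S}$, so the identity produced along a subsequence would yield $G_{S} = f_{S} \circ \phi_{S} - f$ with an undetermined $f_{S}$ depending on $S$. The averaging step dissolves this obstacle by forcing the discrepancy between the two shifts into an $O(S/N)$ strongly-small residual, so that a single weak limit $f$ simultaneously serves as a transfer function for every $S$. This is the main content of the argument; the rest is routine.
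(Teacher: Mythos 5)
Your proof is correct and follows essentially the same route as the paper: both directions are handled the same way, and your Cesaro averages $H_{N}$ are exactly the paper's auxiliary family $f_{T}(x)=-\frac{1}{T}\int_{0}^{T}\int_{0}^{t}g(\phi_{s}(x))\,ds\,dt$, with the transfer function obtained as a weak $L^{2}$-limit of these averages. The only difference is that you spell out the cocycle-identity computation and the $O(S/N)$ residual showing the weak limit actually solves $f\circ\phi_{S}-f=G_{S}$, a verification the paper leaves implicit.
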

\begin{proof}
   The $L^{2}$-norm of $G_{T}$ is clearly bounded by $2\|f\|$ if $f$ is a solution of (\ref{time change132}). On the other hand, if the family of functions $\{G_{T}\}_{T\geq0}$ is equibounded in $L^{2}$, then the family of functions  $\{f_{T}\}_{T\geq0}$ defined by
   \[f_{T}(x)\coloneqq-\frac{1}{T}\int^{T}_{0}\int^{t}_{0}G(\phi_{s}(x))dsdt\]
   is equibounded in $L^{2}$. Then by ergodic theorem, $G$ has zero ergodic average, and any weak limit $f\in L^{2}$ of $\{f_{T}\}_{T\geq0}$ is a $L^{2}$-solution of (\ref{time change132}).
\end{proof}
 The following results provide an important information about $L^{2}$-bounds for the ergodic averages.

 In the following we shall use \textit{Hahn-Banach theorem} to construct functions dual to $\mathcal{D}^{\pm}_{\nu}$ in order to estimate the coefficients. More precisely, there is a $1$-dimensional space $(\mathcal{D}^{+}_{\nu})^{\prime} \subset W^{s,r_{0},\nu}_{1}$ such that $g\in (\mathcal{D}^{+}_{\nu})^{\prime}$ satisfies
  \[\mathcal{D}^{+}_{\nu}(g)\neq0,\ \ \ \mathcal{D}^{-}_{\nu}(g)= \mathcal{R}(x,T)(g)=\mathcal{C}(x,T)(g)=0\]
  and $(\mathcal{D}^{-}_{\nu})^{\prime}$ can be similarly defined.
\begin{lem}[$L^{2}$-bound  for $c_{\pm}(x,T)$]\label{time change158}
   For  $s>1$,  $\mathcal{D}_{\nu}^{\pm}\in \mathcal{I}_{1}^{r_{0}+s}$, there exists a constant $C(\mathcal{D}_{\nu}^{\pm})>0$ such that
   \begin{equation}\label{time change136}
     \|c_{\pm}(\cdot,T)\|_{L^{2}}\leq  C(\mathcal{D}_{\nu}^{\pm}).
   \end{equation}
   On the other hand, $c_{\pm}(x,T)$ satisfies the $L^{2}$ lower bound
   \begin{equation}\label{time change137}
     \sup_{T\in\mathbf{R}^{+}}T\|c_{\pm}(\cdot,T)\|_{L^{2}}=\infty.
   \end{equation}
   Moreover, if $Z\in C_{\mathfrak{g}}(U)$, $\lambda\in\mathbf{R}$, and $g\in (D^{\pm}_{\nu})^{\prime}$ such that the equation
   \begin{equation}\label{time change135}
     g-\phi^{Z}_{\lambda}g=Uf
   \end{equation}
   has no $L^{2}$-solutions $f$,
   then we have
    \begin{equation}\label{time change138}
    \sup_{T\in\mathbf{R}^{+}}T\|c_{\pm}(\cdot,T)-\phi^{Z}_{\lambda}c_{\pm}(\cdot,T)\|_{L^{2}}=\infty.
   \end{equation}
\end{lem}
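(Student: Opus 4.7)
The plan is to exploit the spectral decomposition \eqref{dynamical systems24} by testing $S_{x,T}$ against a carefully chosen dual function $g$, which reduces everything to the cohomological equation $Uf = g$.

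First I would construct the $1$-dimensional space $(\mathcal{D}^{\pm}_{\nu})^{\prime}$ by Hahn-Banach. Since $\mathcal{D}_\nu^+,\mathcal{D}_\nu^-$ are linearly independent in $\mathcal{I}_U^{r_0+s}(\mathcal{H}_\nu)$ (Theorem \ref{time change125}) and this space is $2$-dimensional, its predual inside $W_H^{r_0+s}(\mathcal{H}_\nu)$ is also $2$-dimensional; choose $g_+$ in this predual with $\mathcal{D}_\nu^+(g_+)=1$, $\mathcal{D}_\nu^-(g_+)=0$, and lift it through the $H$-equivariant isomorphism $\Res$ to $W_1^{s,r_0,\nu}$. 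Because the decomposition \eqref{time change122} is orthogonal, $g_+\in W_1^{s,r_0,\nu}$ automatically annihilates $W_2^{-r_0,\nu}\oplus W^{-r_0}_G(\mathcal{H}_\ast^\perp)$, so $\mathcal{C}(x,T)(g_+)=0$; and because $g_+$ lies in the predual of $\mathcal{I}_U^{r_0+s}(\mathcal{H}_\nu)$, it kills $\mathcal{I}_U^{r_0+s}(\mathcal{H}_\nu)^\perp$, giving $\mathcal{R}(x,T)(g_+)=0$. Thus $S_{x,T}(g_+)=c_+(x,T)\mathcal{D}_\nu^+(g_+)$, and the upper bound \eqref{time change136} is then immediate from Jensen's inequality and the $U$-invariance of $\mu$:
\[
\|c_+(\cdot,T)\|_{L^2}^2 = |\mathcal{D}_\nu^+(g_+)|^{-2}\int_{X}\left|\tfrac{1}{T}\int_0^T g_+(\phi^U_t x)\,dt\right|^2 d\mu(x) \leq |\mathcal{D}_\nu^+(g_+)|^{-2}\|g_+\|_{L^2}^2.
\]

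Next, for the lower bound \eqref{time change137} I would argue by contradiction. Suppose $\sup_T T\|c_+(\cdot,T)\|_{L^2} = M < \infty$. Then $\left\|\int_0^T g_+(\phi^U_t\cdot)\,dt\right\|_{L^2} = T|\mathcal{D}_\nu^+(g_+)|\cdot\|c_+(\cdot,T)\|_{L^2}$ is uniformly bounded in $T$, so the Gottschalk-Hedlund criterion (Lemma \ref{time change134}) produces an $L^2$-solution $f$ of $Uf = g_+$. Restricting through $\Res$ gives an $L^2$-solution in $\mathcal{H}_\nu$ itself; since $\nu\in(0,1/2)$ we have $0\geq (2\nu-1)/2$, so the last clause of Theorem \ref{time change125} forces $\mathcal{D}_\nu^+(g_+)=0$, contradicting our construction. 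The case of $c_-$ is identical.

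Finally, for the relative version \eqref{time change138}, take $g\in(\mathcal{D}_\nu^\pm)'$ as above and use that $Z\in C_\mathfrak{g}(U)$ implies $\phi^U_t\phi^Z_\lambda = \phi^Z_\lambda\phi^U_t$. A direct change of variables gives
\[
S_{x,T}(g)-S_{\phi^Z_\lambda x,T}(g) = \tfrac{1}{T}\int_0^T\bigl[g-\phi^Z_\lambda g\bigr](\phi^U_t x)\,dt = S_{x,T}(g-\phi^Z_\lambda g),
\]
so $\mathcal{D}_\nu^\pm(g)\bigl(c_\pm(x,T)-\phi^Z_\lambda c_\pm(x,T)\bigr)=S_{x,T}(g-\phi^Z_\lambda g)$. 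Boundedness of $T\|c_\pm(\cdot,T)-\phi^Z_\lambda c_\pm(\cdot,T)\|_{L^2}$ would then yield an $L^2$-solution of $Uf=g-\phi^Z_\lambda g$ via Gottschalk-Hedlund, contradicting \eqref{time change135}.

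The main obstacle is not really any of the analytic estimates but rather the Hahn-Banach construction in the first paragraph: one must verify that the chosen $g_\pm$ simultaneously kills \emph{all} of $\mathcal{D}_\nu^\mp$, $\mathcal{R}(x,T)$, and $\mathcal{C}(x,T)$, which requires keeping careful track of the several orthogonal decompositions \eqref{time change122}, \eqref{time change123}, and \eqref{time change142} and the way $\Res$ intertwines them. Everything else is a clean combination of the Flaminio-Forni classification (Theorem \ref{time change125}) with Gottschalk-Hedlund.
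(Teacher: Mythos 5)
Your proposal is correct and follows essentially the same route as the paper: construct the dual functions $g_{\pm}\in(\mathcal{D}^{\pm}_{\nu})^{\prime}$ so that $S_{x,T}(g_{\pm})=c_{\pm}(x,T)\mathcal{D}^{\pm}_{\nu}(g_{\pm})$, get the upper bound from invariance of $\mu$, and obtain both lower bounds by combining the Gottschalk--Hedlund criterion (Lemma \ref{time change134}) with the nonexistence of $L^{2}$-solutions guaranteed by Theorem \ref{time change125} (for \eqref{time change137}) or by the hypothesis \eqref{time change135} (for \eqref{time change138}). The only differences are cosmetic: you argue by contradiction where the paper argues directly, and you make explicit the Jensen/Minkowski step and the condition $0\geq(2\nu-1)/2$ that the paper leaves implicit.
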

\begin{proof}
   We only consider the coefficient $c_{+}(x,T)$. Then there is a unique function $g\in (D^{+}_{\nu})^{\prime}\subset  W^{s,r_{0},\nu}_{1}$ (cf.  (\ref{time change133})) such that
   \[D^{+}_{\nu}(g)=1,\ \ \ D^{-}_{\nu}(g)= \mathcal{R}(x,T)(g)=\mathcal{C}(x,T)(g)=0\]
   for all $x,T$.
   It follows that
   \[\|c_{+}(\cdot,T)\|_{L^{2}}=\|S_{\cdot,T}(g)\|_{L^{2}}\leq \|g\|_{L^{2}}.\]
   This proves (\ref{time change136}).
   On the other hand, since $D^{+}_{\nu}(\Res (g))\neq0$ (see Remark \ref{time change143}), by Theorem \ref{time change125}, the equation $U\Res(f)=\Res (g)$ has no solutions   $\Res(f)\in\mathcal{H}_{\nu}$. Thus, we conclude that $Uf=g$ does not have $L^{2}$-solutions $f$. Then by Gottschalk-Hedlund theorem (Lemma \ref{time change134}), the family of functions
   \[Tc_{+}(x,T)=TS_{x,T}(g)=\int_{0}^{T}g(\phi^{U}_{t}(x))dt\]
   is not equibounded in $L^{2}(G/\Gamma)$. This proves (\ref{time change137}). Similarly, if (\ref{time change135}) has no $L^{2}$-solutions $f$, then the family of functions
    \begin{align}
 TS_{x,T}( g-\phi^{Z}_{\lambda}g)= & \int_{0}^{T}g(\phi^{U}_{t}(x))-g(\phi^{U}_{t}\phi^{Z}_{\lambda}(x))dt  \;\nonumber\\
 =&   T(c_{+}(x,T)-c_{+}(\phi^{Z}_{\lambda}(x),T))\mathcal{D}(g)  \;  \nonumber
\end{align}
 is not  in $L^{2}$-equibounded by Gottschalk-Hedlund theorem again.
    This proves (\ref{time change138}).
\end{proof}
\subsection{Estimates for coefficients via geodesic renormalization}
Recall that by the choice of $U$, $Y_{n}$, we have the renormalization
\[\phi_{t}^{Y_{n}}\circ\phi^{U}_{s}=\phi^{U}_{se^{-t}}\circ\phi_{t}^{Y_{n}}.\]
It follows that
\[\phi^{Y_{n}}_{t}(S_{x,T})=S_{\phi^{Y_{n}}_{-t}(x),e^{t}T}.\]
We shall use (\ref{dynamical systems26}) to study the asymptotic behavior of $S_{x,T}$.
Recall that in the decomposition (\ref{time change123}),
   $\mathcal{I}_{U}^{s}(\mathcal{H}_{\nu})^{\perp}$ is not $\phi^{Y_{n}}_{t}$-invariant. We need to show that the remainder term $\mathcal{R}(x,T)\in \mathcal{I}_{U}^{s}(\mathcal{H}_{\nu})^{\perp}$  is still negligible under $Y_{n}$-action.

  It is convenient to discretize the geodesic flow. More precisely, fix $\sigma\in[1,2]$, $x\in G/\Gamma$, $T\geq0$. For any $l\in\mathbf{N}$,  we consider
\begin{equation}\label{dynamical systems26}
\phi^{Y_{n}}_{l\sigma}(S_{x,T})=S_{\phi^{Y_{n}}_{l\sigma}(x),e^{l\sigma}T}.
\end{equation}
  Similar to (\ref{dynamical systems24}), the ergodic average $ \phi^{Y_{n}}_{l\sigma}(S_{x,T})$ has the decomposition
\begin{equation}\label{time change140}
   \phi^{Y_{n}}_{l\sigma}(S_{x,T})= c_{+}^{T}(x,l)\mathcal{D}_{\nu}^{+}+c_{-}^{T}(x,l)\mathcal{D}_{\nu}^{-}+\mathcal{R}^{T}(x,l)+\mathcal{C}^{T}(x,l).
\end{equation}
We prove pointwise and $L^{2}$-bounds for the functions $c_{\pm}^{T}(x,l)$, $\mathcal{R}^{T}(x,l)$. By the identity (\ref{dynamical systems26}) and the definition (\ref{dynamical systems24}), we have
\begin{equation}\label{time change145}
  c_{\pm}^{T}(x,l)=c_{\pm}( \phi^{Y_{n}}_{l\sigma}(x),e^{l\sigma}T),\ \ \ \mathcal{R}^{T}(x,l)=\mathcal{R}( \phi^{Y_{n}}_{l\sigma}(x),e^{l\sigma}T).
\end{equation}
Note that $\mathcal{R}$-component is not $\phi^{Y_{n}}_{t}$-invariant, but we still have $\phi^{Y_{n}}_{t}\mathcal{R}^{T}(x,l)\in W^{-s,-r_{0},\nu}_{1}$. Now we estimate the remainder term $\mathcal{R}^{T}(x,l)$ after pushforward by one geodesic step $\phi^{Y_{n}}_{\sigma}$. Let $r_{\pm}^{T}(x,l)\coloneqq c_{\pm}( \phi^{Y_{n}}_{\sigma}\mathcal{R}^{T}(x,l)) $ be its $\mathcal{D}_{\nu}^{\pm}$-component.  Then, similar to (\ref{dynamical systems24}), we have
\begin{align}
 \phi^{Y_{n}}_{\sigma}\mathcal{R}^{T}(x,l)= & c_{+}(\phi^{Y_{n}}_{\sigma}\mathcal{R}^{T}(x,l))\mathcal{D}_{\nu}^{+}+c_{-}(\phi^{Y_{n}}_{\sigma}\mathcal{R}^{T}(x,l))\mathcal{D}_{\nu}^{-}+\mathcal{R}(\phi^{Y_{n}}_{\sigma}\mathcal{R}^{T}(x,l)) \;\nonumber\\
  =&   r_{+}^{T}(x,l)\mathcal{D}_{\nu}^{+}+ r_{-}^{T}(x,l)\mathcal{D}_{\nu}^{-}+\mathcal{R}^{T}(x,l+1). \;  \label{time change144}
\end{align}
Moreover, we get
\begin{equation}\label{time change146}
  c_{\pm}^{T}(x,l+1)= c_{\pm}( \phi^{Y_{n}}_{\sigma}(c_{\pm}^{T}(x,l)+\mathcal{R}^{T}(x,l)))
  =  c_{\pm}^{T}(x,l)e^{\frac{1\pm2\nu}{2}\sigma} +r_{\pm}^{T}(x,l).
\end{equation}
We want to have an effective estimate for the coefficients $c_{\pm}^{T}(x,l)$ of $\mathcal{D}$-components. To solve the recurrence relation (\ref{time change146}), we need the following elementary result.
\begin{lem}\label{time change147}
  Let $A:V\rightarrow V$ be a linear map. Let $\{R_{l}\}\subset V$. The solution $x_{l}$ of the following difference equation
  \[x_{l+1}=A(x_{l})+R_{l}\]
  has the form
  \[x_{l}=A^{l}(x_{0})+\sum_{j=0}^{l-1}A^{l-j-1}R_{j}.\]
\end{lem}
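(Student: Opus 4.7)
The plan is to proceed by straightforward induction on $l \geq 0$, since the claim is a closed-form solution to a first-order linear recurrence and the structure of the answer already tells us exactly what identity to verify at each stage. No clever trick should be needed; the content of the lemma is purely algebraic.

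First I would dispense with the base case $l=0$, where the asserted formula reads $x_0 = A^0(x_0) + \sum_{j=0}^{-1} A^{-j-1} R_j = x_0$ under the usual convention that an empty sum is $0$ and $A^0 = \id$. (If one prefers to avoid the empty-sum convention, one can equivalently check $l=1$: $x_1 = A(x_0) + R_0$, which matches $A^1(x_0) + A^{0} R_0$.)

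For the inductive step, assume $x_l = A^l(x_0) + \sum_{j=0}^{l-1} A^{l-j-1} R_j$. Applying $A$, using linearity, and then adding $R_l$ gives
\[
x_{l+1} = A(x_l) + R_l = A^{l+1}(x_0) + \sum_{j=0}^{l-1} A^{l-j} R_j + R_l.
\]
Rewriting $A^{l-j}$ as $A^{(l+1)-j-1}$ and observing that the extra term $R_l$ is precisely $A^{(l+1)-l-1} R_l$, one absorbs it into the sum to obtain
\[
x_{l+1} = A^{l+1}(x_0) + \sum_{j=0}^{l} A^{(l+1)-j-1} R_j,
\]
which is the formula at level $l+1$. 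This closes the induction.

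The only thing that could even be called an obstacle is making sure the index bookkeeping in the reindexing step lines up correctly, which is why I would be explicit about rewriting the exponent $l-j$ as $(l+1)-j-1$ before appending the new term. Otherwise the lemma is entirely formal and will be invoked in the next step only to solve the recurrence \eqref{time change146} for $c_\pm^T(x,l)$ in closed form, with $A$ a scalar multiplication by $e^{(1\pm 2\nu)\sigma/2}$ and $R_l = r_\pm^T(x,l)$.
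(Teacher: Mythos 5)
Your induction is correct and complete: the base case, the application of linearity, and the reindexing $A^{l-j}=A^{(l+1)-j-1}$ with the new term $R_l=A^{(l+1)-l-1}R_l$ are exactly the verification needed. The paper itself states this lemma as an elementary fact and gives no proof at all, so your argument simply supplies the standard induction that the paper leaves implicit; there is nothing to compare beyond that, and your closing remark about how the lemma is applied to the recurrence \eqref{time change146} matches the paper's subsequent use.
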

Thus, it remains to estimate the remainder terms $r_{\pm}^{T}(x,l)$.
\begin{lem}[Pointwise bound for $r_{\pm}^{T}(x,l)$]\label{time change149} For fixed $\sigma\in[1,2]$, $s>1$,
  there exists a constant $C =C (\nu,s)>0$ such that
  \[|r_{+}^{T}(x,l)|^{2}+|r_{-}^{T}(x,l)|^{2}\leq C(e^{l\sigma}T)^{-2}\]
  for all $x\in G/\Gamma,T\in\mathbf{R}^{+},l\in\mathbf{N}$.
\end{lem}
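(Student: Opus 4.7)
The plan is to bound $r_\pm^T(x,l)$ by the Sobolev norm of $\phi^{Y_n}_\sigma \mathcal{R}^T(x,l)$, and then use the pointwise decay of $\mathcal{R}$ established in Lemma \ref{time change131} together with the boundedness of $\phi^{Y_n}_\sigma$ on the relevant Sobolev space.

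First, recall from (\ref{time change144}) that $r_\pm^T(x,l)$ is precisely the $\mathcal{D}_\nu^\pm$-coefficient of the distribution $\phi^{Y_n}_\sigma \mathcal{R}^T(x,l) \in W_1^{-s,-r_0,\nu}$. By the $W_1^{-s,-r_0,\nu}$-norm equivalence of Lemma \ref{time change127} applied to this distribution, we have
\[
|r_+^T(x,l)|^2 + |r_-^T(x,l)|^2 \;\ll_s\; \bigl\|\phi^{Y_n}_\sigma \mathcal{R}^T(x,l)\bigr\|^2_{W_1^{-s,-r_0,\nu}}.
\]

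Second, since $W_1^{-s,-r_0,\nu} \cong W_H^{-r_0-s}(\mathcal{H}_\nu)$ is $H$-invariant and $\phi^{Y_n}_\sigma$ is an element of the $H$-action, it acts as a bounded operator on $W_1^{-s,-r_0,\nu}$. The operator norm depends continuously on $\sigma$ and, since $\sigma\in[1,2]$ ranges over a compact set, can be bounded by a constant $C_0 = C_0(\nu,s) > 0$ independent of $\sigma$, $x$, $T$, and $l$. Hence
\[
\bigl\|\phi^{Y_n}_\sigma \mathcal{R}^T(x,l)\bigr\|_{W_1^{-s,-r_0,\nu}} \;\leq\; C_0\,\bigl\|\mathcal{R}^T(x,l)\bigr\|_{W_1^{-s,-r_0,\nu}}.
\]

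Third, unwinding the definition $\mathcal{R}^T(x,l) = \mathcal{R}(\phi^{Y_n}_{l\sigma}(x), e^{l\sigma}T)$ from (\ref{time change145}) and invoking the pointwise bound of Lemma \ref{time change131} at the point $\phi^{Y_n}_{l\sigma}(x)$ with time $e^{l\sigma}T$, we obtain
\[
\bigl\|\mathcal{R}^T(x,l)\bigr\|_{W_1^{-s,-r_0,\nu}} \;=\; \bigl\|\mathcal{R}(\phi^{Y_n}_{l\sigma}(x), e^{l\sigma}T)\bigr\|_{W_1^{-s,-r_0,\nu}} \;\leq\; C_1(\nu,s)\,(e^{l\sigma}T)^{-1}.
\]
Squaring and combining the three estimates gives the desired bound with $C = C(\nu,s)$. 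Since each step is a direct application of results already established, I do not anticipate a real obstacle; the only point deserving mild care is confirming that the operator norm of $\phi^{Y_n}_\sigma$ on $W_1^{-s,-r_0,\nu}$ can be bounded uniformly for $\sigma\in[1,2]$, which is standard for unitary representations via the identification with $W_H^{-r_0-s}(\mathcal{H}_\nu)$ and continuity of the $H$-action on Sobolev spaces.
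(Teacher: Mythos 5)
Your proof is correct and follows essentially the same route as the paper's: you bound $|r_\pm^T(x,l)|$ by the $W_1^{-s,-r_0,\nu}$-norm of $\phi^{Y_n}_\sigma\mathcal{R}^T(x,l)$ via the norm equivalence of Lemma \ref{time change127}, pull out the operator norm of $\phi^{Y_n}_\sigma$ uniformly over $\sigma\in[1,2]$, and then apply (\ref{time change145}) together with the pointwise bound of Lemma \ref{time change131} at the point $\phi^{Y_n}_{l\sigma}(x)$ with time $e^{l\sigma}T$. This is exactly the paper's argument, including the uniform constant $C(s)=\max_{\sigma\in[1,2]}\|\phi^{Y_n}_\sigma\|$.
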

\begin{proof}
Let $C(s)\coloneqq\max_{\sigma\in[1,2]}\|\phi^{Y_{n}}_{\sigma}\|$, where $\|\cdot\|$ denotes the operator norm. Similar to Lemma \ref{time change127}, using (\ref{time change144}), we have the estimate:
\[|r_{\pm}^{T}(x,l)|^{2}= |c_{\pm}( \phi^{Y_{n}}_{\sigma}\mathcal{R}^{T}(x,l))|^{2}\ll_{s} \| \phi^{Y_{n}}_{\sigma}\mathcal{R}^{T}(x,l)\|^{2}_{W_{1}^{-s,-r_{0},\nu}}\ll_{s} C(s)\| \mathcal{R}^{T}(x,l)\|^{2}_{W_{1}^{-s,-r_{0},\nu}}.\]
 Then by (\ref{time change145}) and Lemma \ref{time change131}, we have
 \[\| \mathcal{R}^{T}(x,l)\|^{2}_{W_{1}^{-s,-r_{0},\nu}} =\|\mathcal{R}( \phi^{Y_{n}}_{l\sigma}(x),e^{l\sigma}T)\|^{2}_{W_{1}^{-s,-r_{0},\nu}}\ll_{\nu,s} (e^{l\sigma}T)^{-2}.\]
The consequence follows.
\end{proof}
As in Lemma \ref{time change158}, we can also estimate the $L^{2}$-norm of $r_{\pm}^{T}(x,l)$.
\begin{lem}[$L^{2}$-bound  for $r_{\pm}^{T}(x,l)$]\label{time change155} For fixed $\sigma\in[1,2]$, $s>1$,
   there exists a constant $C=C(\nu,s)>0$ such that
   \[\|r_{\pm}^{T}(\cdot,l)\|_{L^{2}}\leq C(e^{l\sigma}T)^{-1}\]
    for any $T\in\mathbf{R}^{+},l\in\mathbf{N}$.
\end{lem}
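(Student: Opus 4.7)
The plan is to adapt the proof of Lemma \ref{time change149} by replacing the pointwise Sobolev bound from Lemma \ref{time change131} with the $L^{2}$-bound from Lemma \ref{time change130}, combined with a duality trick that expresses $r_{\pm}^{T}(x,l)$ as a distribution applied to a fixed test function.

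First, I would use the Hahn-Banach construction already invoked in Lemma \ref{time change158}: fix test functions $g_{\pm}\in(\mathcal{D}_{\nu}^{\pm})^{\prime}\subset W_{1}^{s,r_{0},\nu}$ such that $\mathcal{D}_{\nu}^{\pm}(g_{\pm})=1$ while $\mathcal{D}_{\nu}^{\mp}(g_{\pm})=0$ and $g_{\pm}$ also kills the remainder and complementary components. Reading off the $\mathcal{D}_{\nu}^{\pm}$-coefficient via $g_{\pm}$ and using that $\phi^{Y_{n}}_{\sigma}$ acts dually on Sobolev duals, equation (\ref{time change144}) gives
\[
r_{\pm}^{T}(x,l)=c_{\pm}\bigl(\phi^{Y_{n}}_{\sigma}\mathcal{R}^{T}(x,l)\bigr)=\bigl(\phi^{Y_{n}}_{\sigma}\mathcal{R}^{T}(x,l)\bigr)(g_{\pm})=\mathcal{R}^{T}(x,l)\bigl(\phi^{Y_{n}}_{-\sigma}g_{\pm}\bigr).
\]
Invoking the identification (\ref{time change145}), this becomes
\[
r_{\pm}^{T}(x,l)=\mathcal{R}\bigl(\phi^{Y_{n}}_{l\sigma}(x),e^{l\sigma}T\bigr)\bigl(\phi^{Y_{n}}_{-\sigma}g_{\pm}\bigr).
\]

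Next, I would exploit that $\phi^{Y_{n}}_{l\sigma}$ preserves the Haar measure $\mu$ on $G/\Gamma$, which allows the substitution $y=\phi^{Y_{n}}_{l\sigma}(x)$ inside the $L^{2}$-norm:
\[
\|r_{\pm}^{T}(\cdot,l)\|_{L^{2}}=\bigl\|\mathcal{R}(\cdot,e^{l\sigma}T)\bigl(\phi^{Y_{n}}_{-\sigma}g_{\pm}\bigr)\bigr\|_{L^{2}}.
\]
Now Lemma \ref{time change130} applies with test function $\phi^{Y_{n}}_{-\sigma}g_{\pm}\in W_{1}^{s,r_{0},\nu}$ and time $e^{l\sigma}T$, giving a bound of the form $C(\nu,s)(e^{l\sigma}T)^{-1}\|\phi^{Y_{n}}_{-\sigma}g_{\pm}\|_{W_{1}^{s,r_{0},\nu}}$. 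The Sobolev norm of the pushforward is controlled by the operator-norm constant $C(s)=\max_{\sigma\in[1,2]}\|\phi^{Y_{n}}_{\sigma}\|$ already used in Lemma \ref{time change149}, so $\|\phi^{Y_{n}}_{-\sigma}g_{\pm}\|_{W_{1}^{s,r_{0},\nu}}\leq C(s)\|g_{\pm}\|_{W_{1}^{s,r_{0},\nu}}$. Combining these estimates absorbs all dependencies into a single constant $C=C(\nu,s)$ and yields the claimed inequality.

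The argument is essentially mechanical, so no serious obstacle is anticipated; the one point requiring care is verifying that $g_{\pm}$ lies in $W_{1}^{s,r_{0},\nu}$ (so that Lemma \ref{time change130} is applicable at the chosen Sobolev order $s>1$) and that duality $(\phi^{Y_{n}}_{\sigma})^{\ast}=\phi^{Y_{n}}_{-\sigma}$ on the Sobolev scale holds with a norm bound uniform in $\sigma\in[1,2]$. Both follow from the finite-dimensionality of the $\mathcal{D}_{\nu}^{\pm}$-dual subspace and from the fact that $\phi^{Y_{n}}_{t}$ acts as a one-parameter group of bounded operators on each $W_{1}^{s,r_{0},\nu}$, respectively.
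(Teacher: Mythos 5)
Your proposal is correct and follows essentially the same route as the paper: it uses the dual test functions $g_{\pm}\in(\mathcal{D}_{\nu}^{\pm})^{\prime}$, the identity $r_{\pm}^{T}(x,l)=\mathcal{R}^{T}(x,l)(\phi^{Y_{n}}_{-\sigma}g_{\pm})=\mathcal{R}(\phi^{Y_{n}}_{l\sigma}(x),e^{l\sigma}T)(\phi^{Y_{n}}_{-\sigma}g_{\pm})$, and then Lemma \ref{time change130} at time $e^{l\sigma}T$. The only difference is that you spell out the measure-invariance of $\phi^{Y_{n}}_{l\sigma}$ and the uniform bound on $\|\phi^{Y_{n}}_{-\sigma}\|$ for $\sigma\in[1,2]$, which the paper leaves implicit.
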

\begin{proof}
 Again, we choose the function $g\in (\mathcal{D}^{+}_{\nu})^{\prime}\subset  W^{s,r_{0},\nu}_{1}$  such that
   \[\mathcal{D}^{+}_{\nu}(g)=1,\ \ \ \mathcal{D}^{-}_{\nu}(g)= \mathcal{R}^{T}(x,l+1)(g)=0.\]
   Recall that by the definition
\[ r_{+}^{T}(x,l) =\mathcal{R}^{T}(x,l)(\phi^{Y_{n}}_{-\sigma}g)=\mathcal{R}( \phi^{Y_{n}}_{l\sigma}(x),e^{l\sigma}T)(\phi^{Y_{n}}_{-\sigma}g).\]
Then by Lemma  \ref{time change130}, we have
  \[\|r_{+}^{T}(\cdot,l)\|_{L^{2}}=\|\mathcal{R}^{T}(\cdot,l)(\phi^{Y_{n}}_{-\sigma}g)\|_{L^{2}} \ll_{\nu,s}\|g\|_{W^{s,r_{0},\nu}_{1}}(e^{l\sigma}T)^{-1}.\]
  The argument for $r_{-}^{T}(x,l)$ is similar.
\end{proof}

Now we are in the position to estimate the coefficients  $c_{\pm}^{T}(x,l)$ of $\mathcal{D}$-components.
\begin{lem}[Pointwise bound for $c_{\pm}^{T}(x,l)$]\label{time change153}
  Let  $\sigma\in[1,2]$, $s>1$, $T\in\mathbf{R}^{+}$. Then there exists a constant $C=C(\nu,s,T)>0$ such that
  \[|c_{\pm}^{T}(x,l)|\leq C e^{-\frac{1\pm 2\nu}{2}l\sigma}   \]
  for all $x\in G/\Gamma,l\in\mathbf{N}$.
\end{lem}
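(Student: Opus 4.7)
The plan is to solve the first-order recurrence relation (\ref{time change146}) explicitly and then control the two resulting pieces using the estimates already in hand. Setting $A_\pm \coloneqq e^{-\frac{1\pm2\nu}{2}\sigma}$, which lies in $(0,1)$ precisely because $\nu\in(0,\tfrac12)$ (so that $\tfrac{1\pm2\nu}{2}\in(0,1)$), Lemma \ref{time change147} applied to the recurrence yields
\[c_\pm^{T}(x,l)\;=\;A_\pm^{\,l}\,c_\pm^{T}(x,0)\;+\;\sum_{j=0}^{l-1}A_\pm^{\,l-j-1}\,r_\pm^{T}(x,j).\]
Both pieces are now treatable by what we have already proved: the initial coefficient $c_\pm^{T}(x,0)=c_\pm(x,T)$ is uniformly bounded by a constant $C_0=C_0(s)$ via Corollary \ref{time change148}, while Lemma \ref{time change149} supplies the pointwise decay $|r_\pm^{T}(x,j)|\leq C_1(\nu,s)\,(e^{j\sigma}T)^{-1}$ on the remainder terms.

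Substituting these bounds and factoring, the convolution sum collapses into a geometric series: a short computation gives
\[A_\pm^{\,l-j-1}\,e^{-j\sigma}\;=\;A_\pm^{\,l-1}\,e^{-\frac{1\mp2\nu}{2}\sigma j},\]
so that
\[\sum_{j=0}^{l-1}A_\pm^{\,l-j-1}e^{-j\sigma}\;\leq\; A_\pm^{\,l-1}\sum_{j=0}^{\infty}e^{-\frac{1\mp2\nu}{2}\sigma j}\;=\;\frac{A_\pm^{\,l-1}}{1-e^{-\frac{1\mp2\nu}{2}\sigma}}.\]
The decisive point is that $\tfrac{1\mp2\nu}{2}>0$ holds for both the $+$ and the $-$ cases, again because of $\nu<\tfrac12$, so the geometric ratio is strictly less than $1$ and the sum is majorized by a constant times $A_\pm^{\,l-1}$. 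Collecting the two contributions we obtain
\[|c_\pm^{T}(x,l)|\;\leq\;C_0\,A_\pm^{\,l}\;+\;\frac{C_1\,T^{-1}}{1-e^{-\frac{1\mp2\nu}{2}\sigma}}\,A_\pm^{\,l-1}\;\leq\;C(\nu,s,T)\,e^{-\frac{1\pm2\nu}{2}\sigma l},\]
which is exactly the asserted estimate.

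The proof is essentially mechanical once the ingredients are assembled; the only mildly delicate matter is bookkeeping the signs to ensure that, for each of $\mathcal{D}_\nu^+$ and $\mathcal{D}_\nu^-$, the series arising from the convolution sum remains geometric with ratio $<1$. This is precisely where the inclusion of the complementary series at spectral parameter $\tilde\nu\in(\rho_{n-1},\rho_n)$ fixed at the start of Section \ref{time change220} (equivalently $\nu=\tilde\nu-\rho_{n-1}\in(0,\tfrac12)$) is used; had $\nu\geq\tfrac12$ the summation would contribute an additional polynomial or logarithmic factor in $l$ that would destroy the decay rate. No new ingredient beyond Lemma \ref{time change147}, Corollary \ref{time change148}, and Lemma \ref{time change149} is required.
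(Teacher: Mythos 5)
Your proposal is correct and follows essentially the same route as the paper: solve the recurrence (\ref{time change146}) via Lemma \ref{time change147} with the contracting factor $e^{-\frac{1\pm2\nu}{2}\sigma}$, bound the initial term by Corollary \ref{time change148} and the remainders by Lemma \ref{time change149}, and sum the resulting geometric series with ratio $e^{-\frac{1\mp2\nu}{2}\sigma}<1$ (using $\nu<\tfrac12$). The paper's argument is the same computation, so there is nothing to add.
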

\begin{proof}
   Choose $V=\mathbf{C}$, $A=e^{-\frac{1\pm 2\nu}{2}\sigma}$, $x_{l}=|c_{\pm}^{T}(x,l)|$ and $R_{l}=|r_{\pm}^{T}(x,l)|$. Then by (\ref{time change146}) and Lemma \ref{time change147}, we obtain
   \begin{equation}\label{time change152}
    |c_{\pm}^{T}(x,l)|\leq|c_{\pm}^{T}(x,0)|e^{-\frac{1\pm 2\nu}{2}l\sigma}+\sum_{j=0}^{l-1}|r_{\pm}^{T}(x,j)|e^{-\frac{1\pm 2\nu}{2}(l-j-1)\sigma}.
   \end{equation}
   By Corollary \ref{time change148}, we have
   \begin{equation}\label{time change150}
    |c_{\pm}^{T}(x,0)|^{2}=|c_{\pm}(x,T)|^{2}\leq C(s).
   \end{equation}
   On the other hand, by Lemma \ref{time change149}, we have
    \[|r_{\pm}^{T}(x,j)|^{2}\leq C (\nu,s)(e^{j\sigma}T)^{-2}.\]
    It follows that
     \begin{align}
     \sum_{j=0}^{l-1}|r_{\pm}^{T}(x,j)|& e^{-\frac{1\pm 2\nu}{2}(l-j-1)\sigma}  \leq   C (\nu,s)\sum_{j=0}^{l-1}(e^{j\sigma}T)^{-1}e^{-\frac{1\pm 2\nu}{2}(l-j-1)\sigma}\;\nonumber\\
=&  C (\nu,s)e^{\frac{1\pm 2\nu}{2}\sigma}T^{-1}e^{-\frac{1\pm 2\nu}{2}l\sigma}\sum_{j=0}^{l-1} e^{ \frac{-1\pm 2\nu}{2}j\sigma} \leq C (\nu,s)T^{-1}e^{-\frac{1\pm 2\nu}{2}l\sigma}. \;  \label{time change151}
\end{align}
Recall that $\nu\in(0,1/2)$. Then by (\ref{time change152}), (\ref{time change150}), (\ref{time change151}), we conclude
\[ |c_{\pm}^{T}(x,l)|\leq C(\nu,s) T^{-1}e^{-\frac{1\pm 2\nu}{2}l\sigma}\]
The proves Lemma \ref{time change153}.
\end{proof}
\begin{lem}[$L^{2}$-bound for $c_{\pm}^{T}(x,l)$] Let $\sigma\in[1,2]$, $s>1$, $T\in\mathbf{R}^{+}$. Then there exists a constant $C=C(\nu,s,T)>0$ such that
\begin{equation}\label{time change160}
  \|c_{\pm}^{T}(\cdot,l)\|_{L^{2}} \leq C e^{-\frac{1\pm 2\nu}{2}l\sigma}
  \end{equation}
  for all $x\in G/\Gamma,l\in\mathbf{N}$. On the other hand, there exist $C_{0}=C_{0}(\nu,s)>0$, $T_{0}=T_{0}(\nu,s)>0$ such that
  \begin{equation}\label{time change159}
   \|c_{\pm}^{T_{0}}(\cdot,l)\|_{L^{2}} \geq  C_{0} e^{-\frac{1\pm 2\nu}{2}l\sigma}
  \end{equation}
  for all   $l\in\mathbf{N}$. Further, if    $Z\in C_{\mathfrak{g}}(U)$, $\lambda\in\mathbf{R}$, and $g\in (D^{\pm}_{\nu})^{\prime}$ such that the equation
   \begin{equation}\label{time change161}
     g-\phi^{Z}_{\lambda}g=Uf
   \end{equation}
   has no $L^{2}$-solutions $f$,
   then  there exist $C_{1}=C_{1}(\nu,s)>0$, $T_{1}=T_{1}(\nu,s)>0$ such that
    \begin{equation}\label{time change162}
    \|c_{\pm}^{T_{1}}(\cdot,l)-\phi^{Z}_{\lambda}c_{\pm}^{T_{1}}(\cdot,l)\|_{L^{2}}\geq  C_{1} e^{-\frac{1\pm 2\nu}{2}l\sigma}
   \end{equation}
   for all $l\in\mathbf{N}$.
\end{lem}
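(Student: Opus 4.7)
The proof plan is to mirror the strategy of the pointwise bound (Lemma \ref{time change153}) and combine it with the $L^{2}$-lower bounds for $c_{\pm}(\cdot,T)$ already established in Lemma \ref{time change158}. The three estimates (\ref{time change160}), (\ref{time change159}), (\ref{time change162}) will all be read off the same explicit solution of the recursion (\ref{time change146}).

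For the upper bound (\ref{time change160}), I would invoke Lemma \ref{time change147} applied to the recursion (\ref{time change146}) with $A = e^{-\frac{1\pm 2\nu}{2}\sigma}$, yielding
\[
c_{\pm}^{T}(x,l)= e^{-\frac{1\pm 2\nu}{2}l\sigma}\, c_{\pm}(x,T)+\sum_{j=0}^{l-1} e^{-\frac{1\pm 2\nu}{2}(l-j-1)\sigma}\, r_{\pm}^{T}(x,j).
\]
Taking $L^{2}$-norms, applying Minkowski's inequality and then Lemma \ref{time change158}(\ref{time change136}) for the first term and Lemma \ref{time change155} for the remainders gives
\[
\|c_{\pm}^{T}(\cdot,l)\|_{L^{2}}\leq C\,e^{-\frac{1\pm 2\nu}{2}l\sigma}+\frac{C}{T}\,e^{-\frac{1\pm 2\nu}{2}l\sigma}\sum_{j=0}^{l-1}e^{(\frac{-1\pm 2\nu}{2})j\sigma}.
\]
Since $\nu\in(0,\tfrac{1}{2})$, the exponent $\frac{-1\pm 2\nu}{2}$ is negative in both sign choices, the geometric series is uniformly bounded, and (\ref{time change160}) follows with constant $C(\nu,s,T)$.

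For the lower bound (\ref{time change159}), the same explicit formula together with the reverse triangle inequality yields
\[
\|c_{\pm}^{T}(\cdot,l)\|_{L^{2}}\geq e^{-\frac{1\pm 2\nu}{2}l\sigma}\Bigl[\|c_{\pm}(\cdot,T)\|_{L^{2}}-\tfrac{C}{T}\Bigr].
\]
The task reduces to producing a single $T_{0}=T_{0}(\nu,s)$ with $\|c_{\pm}(\cdot,T_{0})\|_{L^{2}}>2C/T_{0}$. If no such $T_{0}$ existed beyond some threshold, then $T\|c_{\pm}(\cdot,T)\|_{L^{2}}$ would be globally bounded, contradicting (\ref{time change137}). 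Hence $T_{0}$ exists, and setting $C_{0}:=C/T_{0}>0$ gives (\ref{time change159}).

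For the equivariant lower bound (\ref{time change162}), the key observation is that $\phi^{Z}_{\lambda}$ is a measure-preserving left translation, so it commutes with the passage to coefficients and preserves the $L^{2}$-norm. Therefore the differences
\[
d_{\pm}^{T}(x,l):=c_{\pm}^{T}(x,l)-\phi^{Z}_{\lambda}c_{\pm}^{T}(x,l)
\]
satisfy the same recursion
\[
d_{\pm}^{T}(x,l+1)=e^{-\frac{1\pm 2\nu}{2}\sigma}\,d_{\pm}^{T}(x,l)+\bigl(r_{\pm}^{T}(x,l)-\phi^{Z}_{\lambda}r_{\pm}^{T}(x,l)\bigr),
\]
the perturbations still obey $\|r_{\pm}^{T}(\cdot,j)-\phi^{Z}_{\lambda}r_{\pm}^{T}(\cdot,j)\|_{L^{2}}\leq 2C(e^{j\sigma}T)^{-1}$ by Lemma \ref{time change155}, and the same telescoping produces
\[
\|d_{\pm}^{T}(\cdot,l)\|_{L^{2}}\geq e^{-\frac{1\pm 2\nu}{2}l\sigma}\Bigl[\|c_{\pm}(\cdot,T)-\phi^{Z}_{\lambda}c_{\pm}(\cdot,T)\|_{L^{2}}-\tfrac{C}{T}\Bigr].
\]
The hypothesis that (\ref{time change161}) has no $L^{2}$-solution together with Lemma \ref{time change158}(\ref{time change138}) provides $T_{1}$ with the analogue $T_{1}\|c_{\pm}(\cdot,T_{1})-\phi^{Z}_{\lambda}c_{\pm}(\cdot,T_{1})\|_{L^{2}}>2C$, and (\ref{time change162}) follows. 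The main (mild) obstacle is bookkeeping the sign of the eigenvalue $\frac{1\pm 2\nu}{2}$ so that the geometric series in the upper and lower bounds are genuinely controlled by the leading term; beyond that the argument is algebraic manipulation of the already-established estimates.
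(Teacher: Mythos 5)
Your proposal is correct and follows essentially the same route as the paper: solve the recursion (\ref{time change146}) via Lemma \ref{time change147}, bound the remainder sum with Lemma \ref{time change155}, use Lemma \ref{time change158} for the initial term, and invoke (\ref{time change137}) and (\ref{time change138}) together with the reverse triangle inequality (and the fact that $\phi^{Z}_{\lambda}$ preserves $L^{2}$-norms) to obtain the two lower bounds. No gaps to report.
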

\begin{proof}
   By (\ref{time change146}) and Lemma \ref{time change147}, we obtain
   \begin{equation}\label{time change154}
    \|c_{\pm}^{T}(\cdot,l)\|_{L^{2}}\leq  \|c_{\pm}^{T}(\cdot,0)\|_{L^{2}}e^{-\frac{1\pm 2\nu}{2}l\sigma}+\sum_{j=0}^{l-1} \|r_{\pm}^{T}(\cdot,j)\|_{L^{2}}e^{-\frac{1\pm 2\nu}{2}(l-j-1)\sigma}.
   \end{equation}
   Similar to the pointwise upper bound (Lemma \ref{time change153}), we can apply Lemma  \ref{time change155} (cf. (\ref{time change151})), and obtain
   \begin{equation}\label{time change157}
     \sum_{j=0}^{l-1} \|r_{\pm}^{T}(\cdot,j)\|_{L^{2}}e^{-\frac{1\pm 2\nu}{2}(l-j-1)\sigma}\leq C (\nu,s)T^{-1}e^{-\frac{1\pm 2\nu}{2}l\sigma}.
   \end{equation}
   Combining with Lemma \ref{time change158}, we obtain the $L^{2}$-upper bound (\ref{time change160}).

   On the other hand, using again  (\ref{time change146}) and Lemma \ref{time change147}, and then (\ref{time change157}), we obtain a lower bound
      \begin{align}
    \|c_{\pm}^{T}(\cdot,l)\|_{L^{2}}\geq & \|c_{\pm}^{T}(\cdot,0)\|_{L^{2}}e^{-\frac{1\pm 2\nu}{2}l\sigma}-\sum_{j=0}^{l-1} \|r_{\pm}^{T}(\cdot,j)\|_{L^{2}}e^{-\frac{1\pm 2\nu}{2}(l-j-1)\sigma}\;\nonumber\\
\geq &  (\|c_{\pm}^{T}(\cdot,0)\|_{L^{2}}- C (\nu,s)T^{-1})e^{-\frac{1\pm 2\nu}{2}l\sigma}. \;  \label{time change156}
\end{align}
  By  (\ref{time change137}), there exists $T_{0}=T_{0}(\nu,s)>0$ such that
  \[T_{0}\|c_{\pm}^{T_{0}}(\cdot,0)\|_{L^{2}}=T_{0}\|c_{\pm}(\cdot,T_{0})\|_{L^{2}}>2 C (\nu,s).\]
  It follows that
  \[\|c_{\pm}^{T_{0}}(\cdot,l)\|_{L^{2}}\geq C (\nu,s)T_{0}^{-1}e^{-\frac{1\pm 2\nu}{2}l\sigma}.\]
  This proves (\ref{time change159}).

  Finally, using again  (\ref{time change146}) and Lemma \ref{time change147}, and then (\ref{time change157}), we obtain a lower bound
      \begin{align}
   &  \|c_{\pm}^{T}(\cdot,l)-\phi^{Z}_{\lambda}c_{\pm}^{T}(\cdot,l)\|_{L^{2}}\;\nonumber\\
    \geq & \|c_{\pm}^{T}(\cdot,0)-\phi^{Z}_{\lambda}c_{\pm}^{T}(\cdot,0)\|_{L^{2}}e^{-\frac{1\pm 2\nu}{2}l\sigma}-\sum_{j=0}^{l-1}( \|r_{\pm}^{T}(\cdot,j)\|_{L^{2}}+\|\phi^{Z}_{\lambda}r_{\pm}^{T}(\cdot,j)\|_{L^{2}})e^{-\frac{1\pm 2\nu}{2}(l-j-1)\sigma}\;\nonumber\\
     = & \|c_{\pm}^{T}(\cdot,0)-\phi^{Z}_{\lambda}c_{\pm}^{T}(\cdot,0)\|_{L^{2}}e^{-\frac{1\pm 2\nu}{2}l\sigma}-\sum_{j=0}^{l-1} 2 \|r_{\pm}^{T}(\cdot,j)\|_{L^{2}} e^{-\frac{1\pm 2\nu}{2}(l-j-1)\sigma}\;\nonumber\\
\geq &  (\|c_{\pm}^{T}(\cdot,0)-\phi^{Z}_{\lambda}c_{\pm}^{T}(\cdot,0)\|_{L^{2}}- 2C (\nu,s)T^{-1})e^{-\frac{1\pm 2\nu}{2}l\sigma}. \;  \label{time change163}
\end{align}
Then, with the assumption (\ref{time change161}),  (\ref{time change138}) shows that there exists $T_{1}=T_{1}(\nu,s)>0$ such that
  \[ T_{1}\|c_{\pm}(\cdot,T_{1})-\phi^{Z}_{\lambda}c_{\pm}(\cdot,T_{1})\|_{L^{2}}>3 C (\nu,s).\]
Then  (\ref{time change163}) shows
\[\|c_{\pm}^{T_{1}}(\cdot,l)-\phi^{Z}_{\lambda}c_{\pm}^{T_{1}}(\cdot,l)\|_{L^{2}}\geq C (\nu,s)T_{1}e^{-\frac{1\pm 2\nu}{2}l\sigma}.\]
It implies (\ref{time change162}).
\end{proof}

Now we recover the continuous time by replacing $e^{l\sigma}$ by $T$. Note that based on our assumption, we have $[1,\infty)\subset
\{e^{l\sigma}:l\in\mathbf{N},\sigma\in[1,2]\}$. Therefore, the above results can be translated to:
\begin{prop}[Estimates for $c_{\pm}(x,T)$]\label{time change164} Let $s>1$. Then there exists $T_{0}=T_{0}(\nu,s)>0$ such that
\[|c_{\pm}(x,T)|\ll_{\nu,s} T^{-\frac{1\pm\nu}{2}},\ \ \ \|c_{\pm}(\cdot,T)\|_{L^{2}} \gg_{\nu,s} T^{-\frac{1\pm 2\nu}{2}}\]
for all $x\in G/\Gamma$, $T\geq T_{0}$. Besides, if    $Z\in C_{\mathfrak{g}}(U)$, $\lambda\in\mathbf{R}$, and $g\in (D^{\pm}_{\nu})^{\prime}$ such that the equation
   \[  g-\phi^{Z}_{\lambda}g=Uf \]
   has no $L^{2}$-solutions $f$,
   then
   \[  \|c_{\pm} (\cdot,T)-\phi^{Z}_{\lambda}c_{\pm} (\cdot,T)\|_{L^{2}}\gg_{\nu,s} T^{-\frac{1\pm 2\nu}{2}} \]
   for all $T\geq T_{0}$.
\end{prop}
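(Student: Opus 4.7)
The plan is a change of variables from the discrete parameter $(l,\sigma)$ used in the previous lemmas to the continuous parameter $T$, via the identity (\ref{time change145})
\[
c_\pm^T(x,l) \;=\; c_\pm\bigl(\phi^{Y_n}_{l\sigma}(x),\, e^{l\sigma}T\bigr)
\]
together with the measure-preserving property of the geodesic flow $\phi^{Y_n}$. In particular, the pointwise supremum and the $L^2(X)$-norm of $c_\pm^T(\cdot,l)$ coincide with the corresponding quantities for $c_\pm(\cdot, e^{l\sigma}T)$. The map $(l,\sigma)\mapsto e^{l\sigma}$ sends $\mathbf{N}_{\geq 1}\times [1,2]$ surjectively onto $[e,\infty)$ (the intervals $[e^l,e^{2l}]$ and $[e^{l+1},e^{2l+2}]$ overlap for every $l\geq 1$), so every sufficiently large $T$ is of the form $e^{l\sigma}$ times a fixed constant.

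For the pointwise upper bound, Lemma \ref{time change153} with $T=1$ gives $|c_\pm^1(x,l)|\leq C(\nu,s)e^{-\frac{1\pm 2\nu}{2}l\sigma}$, which in continuous time reads $|c_\pm(y,T)|\ll_{\nu,s} T^{-\frac{1\pm 2\nu}{2}}$ uniformly in $y\in X$ for $T\geq e$. Analogously, the inequality (\ref{time change159}) at $T=T_0$ together with the isometry $\|c_\pm^{T_0}(\cdot,l)\|_{L^2}=\|c_\pm(\cdot,e^{l\sigma}T_0)\|_{L^2}$ yields the $L^2$ lower bound $\|c_\pm(\cdot,T)\|_{L^2}\gg_{\nu,s} T^{-\frac{1\pm 2\nu}{2}}$ for all $T\geq eT_0$, which we absorb into an enlarged $T_0$.

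The final assertion is the step requiring the most care, and I expect it to be the main obstacle. The naive transfer of (\ref{time change162}) produces a lower bound on $\|c_\pm(\cdot,T)-\phi^{Z'}_\lambda c_\pm(\cdot,T)\|_{L^2}$ with $Z'=\Ad(\exp(l\sigma Y_n))Z$ rather than with the original $Z$. On the semisimple summand $\mathfrak{so}(n-2)\subset C_\mathfrak{g}(U)$ in the decomposition (\ref{time change212}) this is harmless, since $[Y_n,Z]=0$ forces $Z'=Z$. On the nilpotent summand one has $\Ad(a^t)Z=e^{-t}Z$ and so $\phi^{Z'}_\lambda=\phi^{Z}_{e^{-l\sigma}\lambda}$, so a small adjustment is needed: I would replay the recurrence proof of (\ref{time change162}) with the operator $(I-\phi^{Z}_\lambda)$ replaced at level $l$ by $(I-\phi^{\Ad(a^{-l\sigma})Z}_\lambda)$, so that after conjugation by $\phi^{Y_n}_{l\sigma}$ the flow $\phi^{Z}_\lambda$ is recovered exactly. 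The initial condition at $l=0$ is unchanged, each $\phi^{\Ad(a^{-l\sigma})Z}_\lambda$ remains an $L^2$-isometry so the remainder bounds of Lemma \ref{time change155} survive, and the non-$L^2$-solvability hypothesis of Lemma \ref{time change158}, equation (\ref{time change138}), supplies the required uniform lower bound at a single value $T=T_1$, which propagates to all larger $T$ via the modified recurrence. Beyond this, the remaining work is purely bookkeeping: verifying uniformity of the implicit constants of Lemmas \ref{time change149}, \ref{time change155}, and \ref{time change153} in $\sigma\in[1,2]$, which is already built into their statements.
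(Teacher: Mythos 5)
For the pointwise upper bound and the $L^{2}$ lower bound your argument is the paper's own (the paper proves this proposition only by the remark that one "recovers continuous time by replacing $e^{l\sigma}$ by $T$"): the identity (\ref{time change145}), the $\mu$-invariance of $\phi^{Y_{n}}_{l\sigma}$, and the surjectivity of $(l,\sigma)\mapsto e^{l\sigma}$ onto $[e,\infty)$ convert Lemma \ref{time change153} and (\ref{time change159}) into the stated estimates; your exponent $\tfrac{1\pm2\nu}{2}$ in the pointwise bound is the natural one (and is what the subsequent corollary actually uses via (\ref{time change168})), so the $\tfrac{1\pm\nu}{2}$ in the statement should be read as a typo. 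You are also right that the naive transfer of (\ref{time change162}) yields the lower bound with $Z$ replaced by its conjugate under $a^{l\sigma}$, that this is vacuous on the $\mathfrak{so}(n-2)$ summand of (\ref{time change212}), and that on the nilpotent summand the displacement gets rescaled; the paper passes over this point silently.

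However, the fix you sketch for the third assertion has a genuine gap, and it is exactly at the "propagation" step you wave through. If you set $E^{T}(x,l)\coloneqq c^{T}_{\pm}(x,l)-c^{T}_{\pm}(\phi^{Z_{l}}_{\lambda}x,l)$ with the level-dependent $Z_{l}=\Ad(a^{-l\sigma})Z$, then applying the one-step relation (\ref{time change146}) to both terms produces at level $l$ the difference taken with $Z_{l+1}$, not $Z_{l}$: the recurrence for $E^{T}$ does not close, and the mismatch $\|(\phi^{Z_{l}}_{\lambda}-\phi^{Z_{l+1}}_{\lambda})c^{T}_{\pm}(\cdot,l)\|_{L^{2}}$ is of the same order as the main term (the elements $\exp(\lambda Z_{l})$ and $\exp(\lambda Z_{l+1})$ are far apart, so there is no smallness to absorb it into the remainder estimates of Lemma \ref{time change155}). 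The only way to "replay" the proof of (\ref{time change162}) is to unroll with a fixed operator $W=\Ad(a^{-l\sigma})Z$ held constant from the target level $l$ down to level $0$; this does close, but then the initial term you must bound from below is $\|c_{\pm}(\cdot,T_{1})-\phi^{W}_{\lambda}c_{\pm}(\cdot,T_{1})\|_{L^{2}}$, and you need this bounded below by a fixed multiple of $T_{1}^{-1}$ at a single time $T_{1}$ \emph{uniformly} over the whole family of rescaled displacements $\exp(e^{l\sigma}\lambda Z)$, $l\in\mathbf{N}$. Equation (\ref{time change138}) does not supply this: it produces one time $T_{1}$ for the single pair $(Z,\lambda)$, under a non-$L^{2}$-solvability hypothesis stated only for that pair, and nothing in Lemma \ref{time change158} is uniform in the displacement. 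So your claim that the hypothesis "supplies the required uniform lower bound at a single value $T=T_{1}$, which propagates to all larger $T$ via the modified recurrence" is precisely the missing step; to complete your route one would need an additional input, e.g.\ a displacement-uniform version of the Gottschalk--Hedlund non-equiboundedness in Lemma \ref{time change158}, or a decorrelation argument showing that $\|c_{\pm}(\cdot,T_{1})-\phi^{Z}_{\mu}c_{\pm}(\cdot,T_{1})\|_{L^{2}}$ stays bounded below as the displacement $\mu\to\infty$.
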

The following corollary, given by an elementary integral argument, is an important criterion for the existence of measurable solutions of the cohomological equation $Uf=g$:
\begin{cor}
   There exists $T_{0}=T_{0}(\nu,s)>0$, $\gamma=\gamma(\nu,s)>0$ such that for any $T\geq T_{0}$, there exists a measurable set $A_{T}\subset G/\Gamma$ of measure at least $\gamma$, such that
   \begin{equation}\label{time change165}
     |c_{\pm}(x,T)|\geq_{\nu,s}  T^{-\frac{1\pm 2\nu}{2}}
   \end{equation}
   for all $x\in A_{T}$. Besides, if    $Z\in C_{\mathfrak{g}}(U)$, $\lambda\in\mathbf{R}$, and $g\in (D^{\pm}_{\nu})^{\prime}$ such that the equation
   \begin{equation}\label{time change166}
     g-\phi^{Z}_{\lambda}g=Uf
   \end{equation}
   has no $L^{2}$-solutions $f$,
   then
   \begin{equation}\label{time change167}
    |c_{\pm}(x,T)-c_{\pm}(\phi^{Z}_{\lambda}(x),T)|\geq_{\nu,s}  T^{-\frac{1\pm 2\nu}{2}}
   \end{equation}
   for all $x\in A_{T}$.
\end{cor}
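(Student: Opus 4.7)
The plan is to convert the mismatch between the pointwise upper bound and the $L^{2}$-lower bound supplied by Proposition \ref{time change164} into a pointwise lower bound on a set of positive measure, via a Paley--Zygmund / Chebyshev-type variance argument.

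First I would record that Proposition \ref{time change164}, together with the sharp exponent from Lemma \ref{time change153}, yields constants $C_{1} = C_{1}(\nu,s) > 0$, $C_{0} = C_{0}(\nu,s) > 0$ and $T_{0} > 0$ such that, for every $T \geq T_{0}$,
\[
\|c_{\pm}(\cdot,T)\|_{L^{\infty}} \leq C_{1}\, T^{-\frac{1\pm 2\nu}{2}},
\qquad
\|c_{\pm}(\cdot,T)\|_{L^{2}} \geq C_{0}\, T^{-\frac{1\pm 2\nu}{2}}.
\]
Writing $M := T^{-\frac{1\pm 2\nu}{2}}$, fixing $c \in (0, C_{0})$, and defining $A_{T} := \{x \in G/\Gamma : |c_{\pm}(x,T)| \geq c M\}$, the split $\|c_{\pm}(\cdot,T)\|_{L^{2}}^{2} = \int_{A_{T}}|c_{\pm}|^{2} + \int_{A_{T}^{c}}|c_{\pm}|^{2}$ combined with the two displayed bounds gives
\[
C_{0}^{2} \leq C_{1}^{2}\, \mu(A_{T}) + c^{2}\bigl(1 - \mu(A_{T})\bigr),
\]
whence $\mu(A_{T}) \geq \gamma := (C_{0}^{2} - c^{2})/(C_{1}^{2} - c^{2}) > 0$, independently of $T$. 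This establishes (\ref{time change165}).

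For the conditional second assertion I would apply the identical scheme to the function $x \mapsto c_{\pm}(x,T) - c_{\pm}(\phi^{Z}_{\lambda} x, T)$ in place of $c_{\pm}(\cdot,T)$. Its $L^{\infty}$-norm is at most $2C_{1} M$ by the triangle inequality applied to the pointwise upper bound, and \emph{precisely} under the hypothesis that $g - \phi^{Z}_{\lambda} g = Uf$ admits no $L^{2}$-solution, Proposition \ref{time change164} also delivers the matching $L^{2}$-lower bound of order $M$. The same variance split then produces a set $A_{T}'$ with $\mu(A_{T}') \geq \gamma' > 0$ on which (\ref{time change167}) holds. To have both inequalities on a common set one replaces $A_{T}$ by $A_{T} \cap A_{T}'$, which has measure at least $\gamma + \gamma' - 1$; by choosing the thresholds $c$ sufficiently small in each of the two variance estimates (so that $\gamma, \gamma'$ are near $C_{0}^{2}/C_{1}^{2}$ and $(C_{0}')^{2}/(2C_{1})^{2}$ respectively, which can be made $> 1/2$ after an elementary normalization of the test function $g$), one secures positive measure uniformly in $T$.

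No step presents a genuine obstacle: all substantive quantitative work has been discharged by Proposition \ref{time change164}. What remains is the standard variance inequality, plus the routine verification that the constants $\gamma, \gamma'$ are uniform in $T$, which is automatic since $C_{0}, C_{0}', C_{1}$ do not depend on $T$. The only mildly delicate point is the bookkeeping needed to combine the two positive-measure sets into a single $A_{T}$, and this is purely elementary.
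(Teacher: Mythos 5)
Your main argument is exactly the paper's: the paper also converts the mismatch between the pointwise bound and the $L^{2}$ lower bound of Proposition \ref{time change164} into a pointwise lower bound on a set of definite measure via the same Chebyshev/variance split (the paper sets $A_{T}=\{x:|c_{\pm}(x,T)|>\tfrac12\|c_{\pm}(\cdot,T)\|_{L^{2}}\}$ and uses $|c_{\pm}(x,T)|\leq C\|c_{\pm}(\cdot,T)\|_{L^{2}}$ to get $\mu(A_{T})\geq\gamma(\nu,s)$, then runs "a similar argument" for the difference $c_{\pm}(x,T)-c_{\pm}(\phi^{Z}_{\lambda}(x),T)$ using the conditional $L^{2}$ lower bound). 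So for each of \eqref{time change165} and \eqref{time change167} separately, your proof is correct and coincides with the paper's.

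The one genuine problem is your final combination step. You insist on a single set on which both inequalities hold, and you justify $\mu(A_{T}\cap A_{T}')\geq\gamma+\gamma'-1>0$ by claiming $\gamma,\gamma'$ can be pushed above $1/2$ "after an elementary normalization of the test function $g$". This does not work: the coefficients $c_{\pm}(x,T)$ are intrinsic to the decomposition \eqref{dynamical systems24} of $S_{x,T}$ and do not depend on $g$ at all, and rescaling $g$ neither changes $C_{0},C_{1}$ (hence $\gamma\leq C_{0}^{2}/C_{1}^{2}$ is whatever Proposition \ref{time change164} gives, possibly far below $1/2$) nor affects whether $g-\phi^{Z}_{\lambda}g=Uf$ has an $L^{2}$ solution. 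So as written the intersection could have measure zero. Fortunately this step is not needed: the statement, read as in the paper's own proof and as used in Corollary \ref{time change202106.7}, only requires that each inequality hold on \emph{some} set of measure at least $\gamma$ (the two sets may differ, and one takes $\gamma$ to be the smaller of the two constants). If you drop the common-set claim, or simply allow different sets for \eqref{time change165} and \eqref{time change167}, your proof is complete and matches the paper.
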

\begin{proof}
   By Proposition \ref{time change164},  we have
   \[ |c_{\pm}(x,T)|\ll_{\nu,s} T^{-\frac{1\pm\nu}{2}}\ll_{\nu,s} \|c_{\pm}(\cdot,T)\|_{L^{2}}\]
   for $T\geq T_{0}$.
   More precisely, there is $C=C(\nu,s)>0$  such that
   \begin{equation}\label{time change168}
   |c_{\pm}(x,T)|\leq C \|c_{\pm}(\cdot,T)\|_{L^{2}}
   \end{equation}
  for $T\geq T_{0}$.  Now let
   \[A_{T}\coloneqq\{x\in G/\Gamma:|c_{\pm}(x,T)|>\frac{1}{2}\|c_{\pm}(\cdot,T)\|_{L^{2}}\}.\]
   Then, we have
   \[ \|c_{\pm}(\cdot,T)\|_{L^{2}} =\int_{A_{T}\cup((G/\Gamma)\setminus A_{T})} |c_{\pm}(x,T)|^{2}d\mu(x)\leq (C\mu(A_{T})+\frac{1}{2}) \|c_{\pm}(\cdot,T)\|_{L^{2}} \]
    for $T\geq T_{0}$.  It follows that for $T\geq T_{0}$
    \[\mu(A_{T})\geq \frac{1}{2C}\eqqcolon\gamma.\]
    This proves (\ref{time change165}).

    Next, assume that (\ref{time change166}) holds. Then using Proposition \ref{time change164} again,  we have
   \[ |c_{\pm}(x,T)-c_{\pm}(\phi^{Z}_{\lambda}(x),T)|\ll_{\nu,s} T^{-\frac{1\pm\nu}{2}}\ll_{\nu,s} \|c_{\pm} (\cdot,T)-\phi^{Z}_{\lambda}c_{\pm} (\cdot,T)\|_{L^{2}}\]
   for $T\geq T_{0}$. A   similar argument as above proves (\ref{time change167}).
\end{proof}

Now for $s>1$, we consider $g^{+}\in  (D^{+}_{\nu})^{\prime} \subset W^{s,r_{0},\nu}_{1}$  satisfying
  \[D^{+}_{\nu}(g^{+})\neq0,\ \ \ D^{-}_{\nu}(g^{+})= \mathcal{R}(x,T)(g^{+})=\mathcal{C}(x,T)(g^{+})=0.\]
  Then by (\ref{dynamical systems24}),the ergodic average $S_{x,T}$ of $g^{+}$ is
  \begin{equation}\label{time change170}
   S_{x,T}(g^{+})=\frac{1}{T}\int_{0}^{T}g^{+}(\phi^{U}_{t}(x))dt= c_{+}(x,T)\mathcal{D}_{\nu}^{+}(g^{+}).
  \end{equation}
 Then there are several interesting consequences related to these functions. The following result implies that the \textit{central limit theorem}\index{central limit theorem} does no hold for unipotent flow on $G/\Gamma$.
 \begin{cor}\label{time change202106.8}
   As $T\rightarrow\infty$, any weak limit of the probability distributions
    \[\frac{\frac{1}{T}\int_{0}^{T}g^{+}(\phi^{U}_{t}(x))dt}{\left\|\frac{1}{T}\int_{0}^{T}g^{+}(\phi^{U}_{t}(\cdot))dt\right\|_{L^{2}}}\]
has a nonzero compact support.
 \end{cor}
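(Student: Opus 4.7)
The plan is to reduce the normalized ergodic average to the single coefficient $c_{+}(x,T)$ and then exploit the fact that this coefficient is uniformly bounded, in absolute value, by a constant multiple of its own $L^{2}$ norm. This makes the pushforward measures tight and compactly supported, and the $L^{2}$ normalization then rules out a degenerate weak limit.

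First I would use (\ref{time change170}), which for $g^{+}\in(D_{\nu}^{+})^{\prime}$ yields
\[
\frac{1}{T}\int_{0}^{T}g^{+}(\phi_{t}^{U}(x))\,dt \;=\; c_{+}(x,T)\,\mathcal{D}_{\nu}^{+}(g^{+}),
\]
with $\mathcal{D}_{\nu}^{+}(g^{+})\neq 0$ by choice of $g^{+}$. Consequently the normalized ergodic average equals $\pm c_{+}(x,T)/\|c_{+}(\cdot,T)\|_{L^{2}}$, where the sign is just $\sgn\mathcal{D}_{\nu}^{+}(g^{+})$. So the probability distribution in the statement is the pushforward $(F_{T})_{\ast}\mu$ with $F_{T}(x)\coloneqq c_{+}(x,T)/\|c_{+}(\cdot,T)\|_{L^{2}}$, and by construction $\|F_{T}\|_{L^{2}}=1$.

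Second, I would appeal to the uniform comparison (\ref{time change168}): there exists $C=C(\nu,s)>0$ and $T_{0}$ such that $|c_{+}(x,T)|\le C\,\|c_{+}(\cdot,T)\|_{L^{2}}$ for all $x\in G/\Gamma$ and $T\ge T_{0}$. Dividing, $|F_{T}(x)|\le C$ uniformly, so every $(F_{T})_{\ast}\mu$ is supported in the compact interval $[-C,C]$. The family $\{(F_{T})_{\ast}\mu\}_{T\ge T_{0}}$ is therefore tight, and any weak limit is a Borel probability measure supported in $[-C,C]$; compactness of the support is immediate.

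Finally, to rule out $\delta_{0}$, I would use the second moment. Since $F_{T}$ is bounded by $C$ and $\|F_{T}\|_{L^{2}}=1$, the continuous bounded function $\varphi(y)=y^{2}$ on $[-C,C]$ satisfies
\[
\int_{-C}^{C}y^{2}\,d(F_{T})_{\ast}\mu(y) \;=\; \int_{G/\Gamma}F_{T}(x)^{2}\,d\mu(x) \;=\; 1
\]
for every $T\ge T_{0}$. If $\nu_{\infty}$ is any weak limit of $(F_{T_{n}})_{\ast}\mu$ along some $T_{n}\to\infty$, then since $\varphi$ is bounded continuous on the common compact support $[-C,C]$, passing to the limit gives $\int y^{2}\,d\nu_{\infty}(y)=1$. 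Hence $\nu_{\infty}\neq\delta_{0}$ and its support is a nonzero compact subset of $\mathbf{R}$.

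The only place requiring real care is the uniform pointwise/$L^{2}$ comparison: this is exactly the content of (\ref{time change168}), which in turn rests on Proposition \ref{time change164} (pointwise upper bound on $c_{+}(x,T)$ together with the matching $L^{2}$ lower bound on $\|c_{+}(\cdot,T)\|_{L^{2}}$). Once that comparison is in place, the remainder of the argument is a short and standard weak-compactness plus second-moment argument.
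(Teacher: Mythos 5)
Your argument is correct, and its first half is exactly the paper's: you normalize, observe via (\ref{time change170}) that the quotient is a unimodular multiple of $c_{+}(x,T)/\|c_{+}(\cdot,T)\|_{L^{2}}$, and invoke the uniform comparison (\ref{time change168}) (itself resting on Proposition \ref{time change164}) to place all the pushforward laws inside a fixed compact interval, which gives tightness and compact support of any weak limit. Where you diverge is in ruling out the degenerate limit $\delta_{0}$: the paper uses (\ref{time change165}), i.e.\ the existence of sets $A_{T}$ of measure at least $\gamma$ on which $|c_{+}(x,T)|$ exceeds a fixed fraction of $\|c_{+}(\cdot,T)\|_{L^{2}}$, so any weak limit carries mass at least $\gamma$ on $\{|y|\geq 1/2\}$; you instead exploit the normalization $\|F_{T}\|_{L^{2}}=1$ together with the uniform bound $|F_{T}|\leq C$ to pass the second moment to the limit (after truncating $y^{2}$ to a bounded continuous function, which is needed since weak convergence is tested against bounded functions, but is harmless because all measures live in $[-C,C]$). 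Your route is marginally more economical, since it needs only (\ref{time change168}) and not the positive-measure set of (\ref{time change165}); the paper's route is marginally more quantitative, as it shows every weak limit gives mass at least $\gamma$ to a region bounded away from zero rather than merely having unit second moment. Both yield the stated conclusion.
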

 \begin{proof}
    By (\ref{time change168}), the   distributions are uniformly bounded above by $C$ for sufficiently large $T$. On the other hand, (\ref{time change165}) shows that the distributions are bounded  below on a measurable set of positive measure $\gamma$. The consequence follows.
 \end{proof}

  Moreover, the functions $g^{+}$  is not \textit{measurably trivial}\index{measurably trivial}, in the sense that there are no measurable functions $f$ satisfy
  \begin{equation}\label{time change169}
 \int_{0}^{T}g^{+}(\phi^{U}_{t}(x))dt=f(\phi^{U}_{T}(x))-f( x).
  \end{equation}
  Hence, we finally arrive at Theorem \ref{time change191}:
  \begin{cor}\label{time change202106.7}
     The functions $g^{\pm}\in  (D^{\pm}_{\nu})^{\prime} \subset W^{s,r_{0},\nu}_{1}$ are not measurably trivial. Moreover, if there are some    $Z\in C_{\mathfrak{g}}(U)$, $\lambda\in\mathbf{R}$    such that  $\phi^{Z}_{\lambda}g^{\pm}$ are not $L^{2}$-cohomologous to $g^{\pm}$, then $\phi^{Z}_{\lambda}g^{\pm}$ are not  measurably cohomologous to $g^{\pm}$.
  \end{cor}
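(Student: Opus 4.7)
The plan is to combine the sharp lower bounds on the coefficients $c_\pm(x,T)$ from~(\ref{time change165}) and~(\ref{time change167}) with a Lusin-type $L^\infty$-truncation of any hypothetical measurable transfer function, in the spirit of the original Flaminio--Forni argument for horocycle flows. By the defining property of $g^+\in (D^+_\nu)'$, every term in the decomposition~(\ref{dynamical systems24}) of $S_{x,T}(g^+)$ vanishes except the $\mathcal{D}^+_\nu$-component, so~(\ref{time change170}) gives
\[
\int_0^T g^+(\phi^U_t x)\,dt \;=\; T\,c_+(x,T)\,\mathcal{D}^+_\nu(g^+),
\]
and analogously for $g^-$ with $c_-$ and $\mathcal{D}^-_\nu$.

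Assuming for contradiction that $g^+$ is measurably trivial with measurable transfer function $f$, I would invoke Lusin's theorem (Proposition~\ref{timechange21}) to find, for each $\varepsilon>0$, a compact set $K_\varepsilon\subset X$ with $\mu(K_\varepsilon)>1-\varepsilon$ on which $|f|\le M_\varepsilon$. The $\mu$-invariance of $\phi^U_T$ then makes $\widetilde K_\varepsilon\coloneqq K_\varepsilon\cap\phi^{-T}_U(K_\varepsilon)$ have measure $>1-2\varepsilon$, and on $\widetilde K_\varepsilon$
\[
T\,|c_+(x,T)|\,|\mathcal{D}^+_\nu(g^+)| \;=\; |f(\phi^U_T x)-f(x)| \;\le\; 2M_\varepsilon.
\]
By~(\ref{time change165}), however, there exist $T_0,\gamma>0$ and, for each $T\ge T_0$, a set $A_T$ with $\mu(A_T)\ge\gamma$ on which $|c_+(x,T)|\gg T^{-\frac{1+2\nu}{2}}$. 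Choosing $\varepsilon<\gamma/2$ forces $A_T\cap\widetilde K_\varepsilon$ to have positive measure, and on this intersection the two bounds combine to give
\[
2M_\varepsilon \;\gg\; T^{1-\frac{1+2\nu}{2}} \;=\; T^{\frac{1-2\nu}{2}}.
\]
Since $\nu\in(0,\tfrac12)$, the exponent $\tfrac{1-2\nu}{2}$ is strictly positive, so the right-hand side diverges as $T\to\infty$, the desired contradiction. The parallel argument for $g^-$ produces divergence of order $T^{\frac{1+2\nu}{2}}$, which is again positive.

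For the ``moreover'' part, I would apply the same Lusin truncation to the putative measurable coboundary $Uf=g^\pm-\phi^Z_\lambda g^\pm$. Because $Z\in C_{\mathfrak g}(U)$, the flow $\phi^Z_\lambda$ commutes with $\phi^U_t$, so $S_{x,T}(\phi^Z_\lambda g^\pm)$ is simply $c_\pm(\cdot,T)\,\mathcal{D}^\pm_\nu(g^\pm)$ composed with the $\phi^Z$-translation, and
\[
S_{x,T}(g^\pm-\phi^Z_\lambda g^\pm)\;=\;\bigl(c_\pm(x,T)-c_\pm(\phi^Z_\lambda x,T)\bigr)\mathcal{D}^\pm_\nu(g^\pm).
\]
The hypothesis that $g^\pm-\phi^Z_\lambda g^\pm$ is not $L^2$-cohomologous, combined with~(\ref{time change167}), supplies a set of measure $\ge\gamma$ on which $|c_\pm(x,T)-c_\pm(\phi^Z_\lambda x,T)|\gg T^{-\frac{1\pm 2\nu}{2}}$; intersecting with $\widetilde K_\varepsilon$ and multiplying by $T$ produces the same $T^{\frac{1\mp 2\nu}{2}}$ divergence, incompatible with the Lusin upper bound $2M_\varepsilon$.

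No serious obstacle is anticipated: every nontrivial ingredient---the branching Theorem~\ref{time change116}, the Flaminio--Forni classification (Theorem~\ref{time change125}) of $\mathcal{I}_U(\mathcal{H}_\nu)$, and the renormalisation scheme yielding the pointwise and $L^2$ bounds on $c_\pm$---has already been packaged in the preceding sections, so the proof reduces to the routine Lusin/divergence contradiction sketched above. The only technical care required is bookkeeping the sign convention for $\phi^Z_\lambda$ acting on functions versus on $X$, so that the translate appearing in $S_{x,T}(\phi^Z_\lambda g^\pm)$ matches the translate on the left-hand side of~(\ref{time change167}); this matching is automatic from $[U,Z]=0$ and the fact that $\phi^Z_\lambda$ and $\phi^U_t$ therefore commute.
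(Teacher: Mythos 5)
Your proposal is correct and follows essentially the same route as the paper's proof: assume a measurable transfer function, use Lusin's theorem to bound $\int_0^T g^{\pm}(\phi^U_t x)\,dt$ (resp.\ the difference with its $\phi^Z_\lambda$-translate) by a constant on a set of measure $>1-\gamma$, and contradict the lower bounds $T|c_{\pm}(x,T)|\gg T^{\frac{1\mp 2\nu}{2}}$ from (\ref{time change170}) together with (\ref{time change165}) and (\ref{time change167}) on the set $A_T$ of measure $\ge\gamma$. The only difference is presentational: you spell out the intersection $K_\varepsilon\cap\phi_U^{-T}(K_\varepsilon)$ explicitly, which the paper leaves implicit in its Luzin step.
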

\begin{proof}
   Assume by contradiction that there is a measurable function $f$ satisfies (\ref{time change169}). Then by \textit{Luzin's theorem}\index{Luzin's theorem}, for given $\gamma>0$, there exists a constant $C=C(\gamma)>0$ such that for any $T>0$, there exists a measurable set $B_{T}=B_{T}(\gamma)\subset G/\Gamma$ of measure  $\mu(B_{T})<\gamma$ such that
   \[ \left|\int_{0}^{T}g^{+}(\phi^{U}_{t}(x))dt\right|\leq C\]
   for all $x\in B_{T}^{c}$.
   On the other hand, (\ref{time change170}) and (\ref{time change165}) imply that there exists  a measurable set $A_{T} \subset G/\Gamma$ of measure  $\mu(A_{T})\geq\gamma$ such that
   \[\left|\int_{0}^{T}g^{+}(\phi^{U}_{t}(x))dt\right|= |Tc_{+}(x,T)|\geq_{\nu,s}  T^{\frac{1- 2\nu}{2}}\]
   for all $x\in A_{T}$. It is a contradiction. Thus, $g^{\pm}$ are not measurably trivial.

   Similarly, if there are some    $Z\in C_{\mathfrak{g}}(U)$, $\lambda\in\mathbf{R}$    such that  $\phi^{Z}_{\lambda}g^{+}$ is not $L^{2}$-cohomologous to $g^{+}$, then $\phi^{Z}_{\lambda}g^{\pm}$ are not  measurably cohomologous to $g^{\pm}$. Then (\ref{time change167}) and (\ref{time change170}) imply that there exists  a measurable set $A_{T} \subset G/\Gamma$ of measure  $\mu(A_{T})\geq\gamma$ such that
     \begin{align}
   & \left| \int_{0}^{T}g^{+}(\phi^{U}_{t}(x))- \phi^{Z}_{\lambda}g^{+}(\phi^{U}_{t}(x))dt\right|\;\nonumber\\
   =&  T|c_{+}(x,T)-c_{+}(\phi^{Z}_{\lambda}(x),T)|\geq_{\nu,s}    T^{\frac{1- 2\nu}{2}} . \;  \nonumber
\end{align}
Again, it contradicts   \textit{Luzin's theorem}\index{Luzin's theorem}.
\end{proof}

\bibliographystyle{alpha}
 \bibliography{text}
\end{document}